\documentclass[a4paper,11pt]{amsart}

\usepackage{amsmath,amssymb,amsthm,mathtools,tikz,comment,bbm,cprotect}
\usepackage[margin=2.5cm]{geometry}
\usepackage{thm-restate} 
\usepackage{enumerate}
\usepackage[shortlabels]{enumitem}
\usepackage{hyperref}
\usepackage[noadjust]{cite}

\theoremstyle{plain}
\newtheorem{thm}{Theorem}[section]
\newtheorem{prop}[thm]{Proposition}
\newtheorem{cor}[thm]{Corollary}

\newtheorem{lem}[thm]{Lemma}

\theoremstyle{definition}
\newtheorem{defn}[thm]{Definition}
\newtheorem{ex}[thm]{Example}
\newtheorem{rem}[thm]{Remark}

\newtheorem*{remark*}{Remark}
\newtheorem*{thm*}{Theorem}
\newtheorem*{defn*}{Definition}
\newtheorem*{lem*}{Lemma}
\newtheorem*{cor*}{Corollary}

\newcommand{\N}{\mathbb{N}}
\newcommand{\Z}{\mathbb{Z}}
\newcommand{\Q}{\mathbb{Q}}
\newcommand{\R}{\mathbb{R}}

\newcommand{\F}{\mathbb{F}}

\newcommand{\cosets}{\mathcal{C}}

\newcommand{\cosetsfin}{\cosets_{\textup{fin}}}
\newcommand{\cosetsinf}{\cosets_{\textup{inf}}}
\newcommand{\G}[2][G]{X_{#2}(#1)}
\newcommand{\Gfin}{\G{\textup{fin}}}
\newcommand{\sgpfin}{\leq_{\textup{fin}}}
\newcommand{\sgpnfin}{\unlhd_{\textup{fin}}}

\newcommand{\dc}[1][]{\operatorname{dc}_{#1}}
\newcommand{\dcRF}{\dc[\textup{RF}]}

\newcommand{\dcCpt}{\dc[\textup{Cpt}]}
\newcommand{\dcRAm}{\dc[\textup{RAm}]}
\newcommand{\dcCCM}{\dc[\textup{CCM}]}

\DeclareMathOperator{\Aut}{Aut}
\DeclareMathOperator{\Comm}{Comm}
\DeclareMathOperator{\Core}{Core}
\DeclareMathOperator{\res}{Res}

\newcommand{\Pset}{\mathcal{P}}
\newcommand{\Pfin}{\Pset_{\textup{fin}}}
\DeclareMathOperator{\cc}{cc}

\setcounter{tocdepth}{1} 
\setenumerate{label=\textup{(\roman*)}} 

\title[Coset correct means and degrees of commutativity]{Coset correct means on groups and the probability that two elements commute}
\author{Armando Martino}
\address[A. Martino]{Mathematical Sciences, University of Southampton, University Road, Southampton SO17 1BJ, United Kingdom}
\email{A.Martino@soton.ac.uk}
\author{Motiejus Valiunas}
\address[M. Valiunas]{Instytut Matematyczny, Uniwersytet Wroc{\l}awski, plac Grunwaldzki 2, 50-384 Wroc{\l}aw, Poland}
\email{motiejus.valiunas@math.uni.wroc.pl}


\newcommand{\ThmDCCCM}{%
Let $G$ be a group equipped with a CCM $\mu$. Then $\dc[\mu](G) > 0$ if and only if $G$ is finite-by-abelian-by-finite.  Moreover, $\dcCCM(G)\coloneqq\dc[\mu](G)$ is a rational number that does not depend on the choice of~$\mu$.%
}

\newcommand{\ThmDCRF}{%
Let $G$ be a residually finite group with profinite completion $\widehat{G}$, and let $\widehat\mu$ be the Haar measure on $\widehat{G}^2$.  Then we have
\[
\widehat\mu(\Comm(\widehat{G})) = \widehat\mu(\overline{\Comm(G)}) = \dcRF(G) = \dcCCM(G).
\]
Moreover, this number is positive if and only if $G$ is virtually abelian.%
}

\newcommand{\ThmDCAm}{%
Let $G$ be an amenable group, and let $\mu$ be a finitely additive left invariant probability mean on $G^2$.  Then $\mu(\Comm(G)) > 0$ if and only if $G$ is finite-by-abelian-by-finite.%
}

\newcommand{\ThmDefect}{%
Let $G$ be a group. Then the defect $\delta_G$ of $G$ is equal to $\delta_\ell(\mu)$ for some finitely additive probability mean $\mu$ on $G$. Moreover, $\delta_G$ equals either $0$ or $1$, and is $0$ precisely when $G$ is amenable.%
}


\begin{document}

\begin{abstract}
Amenable groups are those admitting an \emph{invariant mean}---a finitely additive probability mean that assigns equal ``weight'' to any two translates of the same set. 

We introduce \emph{coset correct means} (\emph{CCMs}), a class of finitely additive means that, for any subgroup, assigns equal weight to all its cosets, weakening and therefore generalising the notion of an invariant mean.  We show that, unlike the case for invariant means, \emph{every} group admits a CCM and give two constructions---one via the Ultrafilter Lemma and one via the Hahn--Banach Theorem---both relying on a Theorem of B.~H.\ Neumann.
Using CCMs, we define a degree of commutativity for arbitrary groups, measuring the ``probability'' that two random elements of a group commute. We prove that this degree of commutativity is independent of the choice of CCM and is positive precisely for finite‑by‑abelian‑by‑finite groups, recovering and unifying previous characterisations. 

We also introduce a defect function that quantifies the failure of left invariance for finitely additive means, and define the defect of a group as the infimum of these. We then prove a dichotomy: the defect for a group is either $0$ or $1$, with $0$ characterising amenable groups.
\end{abstract}

\maketitle

\tableofcontents

\section{Introduction}

The goal of this paper is to define finitely additive probability means for any group which give the ``correct'' answer for subgroups and their cosets, namely the reciprocal of the index of the subgroup. We call such a mean a coset correct mean, a CCM. We then use these means to define a probability that two elements of a group commute---the degree of commutativity---and thence deduce structural results about groups where this probability is positive. Such means are already seen to exist for amenable groups but this is well known to exclude many groups. 

As a starting point for thinking about such means it is perhaps instructive to consider the following well-known Theorem:
\begin{center}
    $\R^n$ is not the union of finitely many affine hyperplanes.
\end{center}
There are various proofs of this fact but one of the more straightforward ways to verify this is to consider the Lebesgue measure; the union of finitely many affine hyperplanes has measure~$0$ and so cannot equal Euclidean space. This is straightforward up to constructing the Lebesgue measure, of course. 

After some work one can verify the analogous result holds for any infinite field but the most general version of this type of result is due to B.~H.\ Neumann. 

\begin{thm}[Neumann {\cite[Lemma~4.5]{Neumann1954}}] \label{thm:neumann}
    Suppose that $G = \bigcup_{i=1}^k g_iH_i$ for a group $G$, elements $g_1,\ldots,g_k \in G$ and subgroups $H_1,\ldots,H_k \leq G$.  Then we have
    \[
    \sum_{i=1}^k \frac{1}{[G:H_i]} \geq 1,
    \]
    where the reciprocal $\frac{1}{[G:H_i]}$ is understood to be zero whenever $H_i$ has infinite index in $G$.  
    
    In particular, no coset of a finite index subgroup of $G$ can be covered by finitely many cosets of infinite index subgroups.
\end{thm}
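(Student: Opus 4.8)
The plan is to prove the displayed inequality, from which the last sentence follows easily, by induction on $k$; two cases are routine and a third carries all the weight. \emph{Routine cases.} If all the $H_i$ coincide with one subgroup $H$, then $G$ is a union of at most $k$ left cosets of $H$, so $[G:H]\le k$ and $\sum_{i=1}^{k}\tfrac1{[G:H_i]}=\tfrac{k}{[G:H]}\ge1$ (this subsumes the base case $k=1$). If instead every $H_i$ has finite index, set $K=\bigcap_{i=1}^{k}H_i$, of finite index $n$; each $g_iH_i$ is a disjoint union of exactly $[H_i:K]=n/[G:H_i]$ left cosets of $K$, so covering $G$, the union of its $n$ cosets of $K$, forces $\sum_{i=1}^{k}n/[G:H_i]\ge n$, i.e.\ $\sum_{i=1}^{k}\tfrac1{[G:H_i]}\ge1$. (Alternatively, pass to the finite quotient $G/\Core(K)$, where the cover descends with unchanged indices, and count there.)

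\emph{Reduction.} The general case is meant to reduce to the finite-index case via the statement that carries the real difficulty: \textbf{an irredundant cover — one from which no single coset can be deleted — uses only finite-index subgroups.} Granting this, take any cover $G=\bigcup_{i=1}^{k}g_iH_i$, delete cosets to reach an irredundant subcover $G=\bigcup_{i\in I}g_iH_i$, apply the finite-index case to it, and conclude $\sum_{i=1}^{k}\tfrac1{[G:H_i]}\ge\sum_{i\in I}\tfrac1{[G:H_i]}\ge1$. The last sentence of the theorem then follows: after translating, a finite-index subgroup $N$ is covered by $\bigcup_{i=1}^{k}g_iH_i$; intersecting with $N$ writes $N$ as a union of at most $k$ left cosets of the subgroups $N\cap H_i$ (using that $N\cap xH_i$ is empty or a coset of $N\cap H_i$ inside $N$), each of infinite index in $N$ since $[G:N\cap H_i]\ge[G:H_i]=\infty$ while $[G:N]<\infty$, contradicting the inequality in the group $N$.

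\emph{The obstacle.} The remaining work is to prove the claim in bold. The natural attempt is induction on $k$: in an irredundant cover with, say, $[G:H_1]=\infty$, the cosets of $H_1$ occurring in it, $g_1H_1,\dots,g_mH_1$, are distinct and finite in number, so $[G:H_1]=\infty$ produces a coset $zH_1$ occurring among none of them; disjointness forces $zH_1\subseteq\bigcup_{i>m}g_iH_i$, and translating by $z^{-1}$ and intersecting with $H_1$ exhibits $H_1$ as a union of fewer than $k$ cosets of the subgroups $H_1\cap H_i$, to which the inductive hypothesis applies inside $H_1$. The difficulty — and this is essentially the whole content of Neumann's lemma — is that no naive transfer of this information back to $G$ survives: the crude estimate $[H_1:H_1\cap H_i]\le[G:H_i]$ runs the wrong way, the sharp identity $[G:H_1\cap H_i]=[G:H_1]\,[H_1:H_1\cap H_i]$ only gives the vacuous $\sum_{i>m}\tfrac1{[G:H_i]}\ge\tfrac1{[G:H_1]}=0$, and re-covering the $H_1$-cosets by translates of the remaining subgroups multiplies the coset count, hence the weight, by $m+1$. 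Closing this gap requires a genuinely combinatorial argument that tracks how the finitely many cosets $g_iH_i$ ($i>m$) meet the infinitely many cosets of $H_1$ and invokes irredundancy directly, rather than collapsing everything to index inequalities; everything else is bookkeeping.
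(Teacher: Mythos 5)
Your scaffolding is right — the finite-index counting, the passage to an irredundant subcover, and the derivation of the ``in particular'' clause are all sound — but you leave the load-bearing claim unproved, and, as you observe yourself, the argument you try for it does not close. So as written this is a genuine gap, not a proof: you have correctly isolated the hard part of Neumann's lemma and then declined to prove it.

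The resolution is that the right induction variable is the \emph{number of distinct subgroups} appearing in the cover, not the number of cosets, and the right move is \emph{replacement inside $G$}, not intersection with $H_1$. Concretely, one first proves: no group is a finite union of cosets of infinite-index subgroups. Induct on the number $s$ of distinct subgroups; for $s=1$ a finite union of disjoint cosets of a single $H$ forces $[G:H]<\infty$. For $s\ge 2$ with $[G:H_1]=\infty$, only finitely many $H_1$-cosets appear, so some coset $zH_1$ appears in none of them; disjointness of $H_1$-cosets forces $zH_1\subseteq\bigcup_{H_i\ne H_1}g_iH_i$, and left-translating by $yz^{-1}$ shows every $yH_1$ is covered by translates $yz^{-1}g_iH_i$ of the remaining cosets. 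Substituting these for the $H_1$-cosets in the original cover yields a finite cover of $G$ with $s-1$ distinct (still all infinite-index) subgroups, and the induction closes. This is exactly the step that avoids your failed index bookkeeping: no indices are compared at all, one simply discards $H_1$ from the list of subgroups.

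With that in hand, your bold claim follows. In an irredundant cover, at least one $H_i$ has finite index (same replacement induction, now without the infinite-index hypothesis, shows some $H_i$ has finite index). Let $D$ be the intersection of the finite-index $H_i$'s, which has finite index, and let $S$ be the union of the finite-index cosets $g_iH_i$; then $S$ is a union of $D$-cosets. If $S\ne G$ there is a $D$-coset $yD\subseteq G\setminus S$, and it is covered by the infinite-index $g_iH_i$ alone; intersecting with $yD$ and translating exhibits $D$ as a finite union of cosets of the subgroups $D\cap H_i$, each of infinite index in $D$, contradicting the first lemma applied to $D$. So $S=G$, the infinite-index cosets are redundant, and irredundancy forces all $H_i$ to have finite index. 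Your induction ``inside $H_1$'' cannot work because, as you noted, the index identity $[G:H_1\cap H_i]=[G:H_1][H_1:H_1\cap H_i]$ becomes vacuous when $[G:H_1]=\infty$; the intersection-with-$D$ step above works precisely because $D$ has finite index, so nothing degenerates.
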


Our contribution is to show that this result actually allows one to construct a CCM, a mean on $G$ whereby all subsets are given some non-negative mean and for which all cosets of a given subgroup are given the same mean. Thus, just as in the case of affine hyperplanes in Euclidean space, no group can be realised as a finite union of cosets of infinite index subgroups because the mean of the latter is always zero. We give two constructions of our CCMs, one using the Ultrafilter Lemma and one using the Hahn--Banach Theorem, but both of these constructions rely on the result of B.~H.~Neumann above to verify consistency before appealing to some version of the axiom of choice to extend the mean to all subsets of the group.

In order to motivate the definition of a coset correct mean we recall the definition of an amenable group as a group admitting a finitely additive probability mean. Equivalently, an amenable group is one which does not admit a paradoxical decomposition in the sense of the Banach--Tarski paradox. We will largely take the former point of view, although there are many equivalent definitions. Precisely, 

\begin{defn*}
A group $G$ is said to be \emph{amenable} if there exists a function, $\mu\colon \Pset(G) \to [0,1]$, satisfying the following: 
\begin{enumerate}
    \item $\mu(G) =1$;
    \item $\mu(A \sqcup B) = \mu(A) + \mu(B)$, for all disjoint subsets $A,B \subseteq G$;
    \item \label{it:am-inv} $\mu(gA) = \mu(A)$, for all $A \subseteq G$ and all $g \in G$. 
\end{enumerate}
\end{defn*}

The function $\mu$ is called an \emph{invariant mean} on $G$. It is then clear that:

\begin{cor*}
If $G$ is amenable and $\mu$ is an invariant mean on $G$ then for any subgroup $H$ of $G$ and any $g \in G$ we have 
\[
\mu(gH) = \frac{1}{[G:H]}, 
\]
where the right hand side is understood to be zero when $H$ has infinite index. 
\end{cor*}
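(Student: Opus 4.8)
The plan is to use left invariance to reduce everything to a single number, then split into the finite- and infinite-index cases. First I would observe that axiom~\eqref{it:am-inv} forces all left cosets of $H$ to have the same $\mu$-mass: for any $g_1,g_2\in G$ we have $g_1H=(g_1g_2^{-1})(g_2H)$, so $\mu(g_1H)=\mu(g_2H)$. Write $c\coloneqq\mu(H)\in[0,1]$ for this common value; it then suffices to show $c=1/[G:H]$.

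Next I would treat the case $[G:H]=n<\infty$. Here $G$ is the disjoint union $G=\bigsqcup_{i=1}^{n}g_iH$ of its $n$ distinct left cosets, so finite additivity applied $n-1$ times, together with $\mu(G)=1$, gives $1=\sum_{i=1}^{n}\mu(g_iH)=nc$, whence $c=1/n=1/[G:H]$.

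Finally I would handle the case $[G:H]=\infty$. For each $N\in\N$ choose distinct left cosets $g_1H,\dots,g_NH$; their union is a disjoint subset of $G$, so monotonicity of $\mu$ (which follows from finite additivity and non-negativity, since $A\subseteq B$ gives $\mu(B)=\mu(A)+\mu(B\setminus A)\geq\mu(A)$) yields $Nc=\sum_{i=1}^{N}\mu(g_iH)=\mu\bigl(\bigsqcup_{i=1}^{N}g_iH\bigr)\leq\mu(G)=1$. As this holds for every $N$, we get $c=0$, which matches the stated convention.

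There is essentially no serious obstacle here; the only point worth flagging is that the infinite-index argument uses only finite additivity, via the uniform bound $Nc\leq1$ over arbitrarily large $N$, and does not require any form of countable additivity (which an invariant mean need not enjoy).
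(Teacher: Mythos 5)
Your proof is correct and is exactly the standard argument the paper has in mind when it states this corollary without proof (``It is then clear that\dots''): left invariance makes all cosets of $H$ have a common mass $c$, finite additivity gives $nc=1$ when $[G:H]=n<\infty$, and the bound $Nc\leq 1$ for arbitrarily many disjoint cosets forces $c=0$ in the infinite-index case. No gaps; the remark that only finite additivity is needed is accurate.
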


We recall that not all groups are amenable. In particular, any group containing a non-abelian free group is not amenable.

The definition of a coset correct mean (CCM) weakens condition \ref{it:am-inv} in the Definition above and replaces it with the statement of the Corollary. Equivalently, the definition of a CCM simply takes the Definition above of amenability and only asks that left invariance is enjoyed by subgroups (and their cosets). As a minor note, left invariance on subgroups is equivalent to right invariance on subgroups, see Remark~\ref{rem:left equals right}.

\begin{defn}[see Definition~\ref{def:ccm}]
Let $G$ be a group. A \emph{coset correct mean} (\emph{CCM}) on $G$ is a function $\mu \colon \Pset(G) \to [0,1]$ satisfying the following properties: 
\begin{enumerate}
    \item $\mu(G) =1$ (\emph{probability mean});
    \item if $A, B \subseteq G $ are disjoint, then $\mu(A \sqcup B) = \mu(A) + \mu(B)$ (\emph{finitely additive});
    \item $\mu(gH) = \frac{1}{[G:H]}$ for all subgroups $H \leq G$ and all $g \in G$ (\emph{left invariant on subgroups)}.
\end{enumerate}
As always, $\frac{1}{[G:H]}$ is understood to be zero when $H$ has infinite index.
\end{defn} 

We give three constructions of coset correct means. The first (in Section~\ref{sec:CCMs}), via ultrafilters and Neumann’s theorem, is combinatorial and forms the basis of our main arguments. The second (in Appendix~\ref{app:Hahn-Banach}), via the Hahn–Banach theorem, places the construction in a functional-analytic framework. The third (in Appendix~\ref{app:random-walks}), using random walks, shows how CCMs arise as limits of genuine probability measures and allows us to reduce the general case to finitely generated groups. While these constructions yield the same objects, they highlight different structural aspects of CCMs.

\begin{thm}[see Theorems~\ref{thm: existence of CCMs},~\ref{thm:HB implies CCM}~and~\ref{thm:CCMsviawalks}] \label{thm: existence of CCMs-intro}
    Every group admits a CCM.
\end{thm}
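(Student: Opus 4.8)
The plan is to construct a CCM on an arbitrary group $G$ by first specifying the mean on the linear span of all coset indicator functions and then extending it by the Hahn--Banach theorem; the role of Neumann's Theorem~\ref{thm:neumann} is precisely to guarantee that this partial assignment is consistent. Work inside the real Banach space $\ell^\infty(G)$ of bounded real functions on $G$ with the supremum norm, and let $V \leq \ell^\infty(G)$ be the linear span of $\{\mathbf 1_{gH} : H \leq G,\ g \in G\}$; note $\mathbf 1_G \in V$ since $G$ is a subgroup of index $1$. Define $\phi \colon V \to \R$ on generators by $\phi(\mathbf 1_{gH}) = 1/[G:H]$ (zero if $[G:H]=\infty$) and extend linearly. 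A CCM is then obtained by showing that $\phi$ is well defined and dominated by the sublinear functional $p(f) = \sup_{x \in G} f(x)$, extending it to $\tilde\phi \colon \ell^\infty(G) \to \R$ with $\tilde\phi \leq p$ via Hahn--Banach, and setting $\mu(A) = \tilde\phi(\mathbf 1_A)$: the inequality $\tilde\phi \leq p$ forces $\tilde\phi$ to be positive and to satisfy $\tilde\phi(\mathbf 1_G) = 1$, so $\mu$ is a finitely additive probability mean with values in $[0,1]$, and it agrees with $\phi$, hence with $1/[G:H]$, on all cosets.

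Everything therefore reduces to the following combinatorial claim, which simultaneously yields well-definedness of $\phi$ and the domination $\phi \leq p$: \emph{for any cosets $C_1 = g_1 H_1, \dots, C_k = g_k H_k$ and reals $a_1, \dots, a_k$ with $\sum_{i=1}^k a_i \mathbf 1_{C_i}(x) \leq 0$ for all $x \in G$, one has $\sum_{i=1}^k a_i / [G:H_i] \leq 0$.} Applying this to $f$ and to $-f$ gives that $\sum a_i \mathbf 1_{C_i} = 0$ implies $\sum a_i/[G:H_i] = 0$, i.e.\ well-definedness; applying it to $f - (\sup f)\mathbf 1_G$ gives $\phi(f) \leq \sup f$. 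To prove the claim, split the index set into $F = \{i : [G:H_i] < \infty\}$ and $I = \{i : [G:H_i] = \infty\}$, and set $N = \bigcap_{i \in F} \Core_G(H_i)$, a finite-index normal subgroup of $G$ contained in every $H_i$ with $i \in F$ (take $N = G$ if $F = \varnothing$). On a fixed $N$-coset $D$ the finite-index part $\sum_{i \in F} a_i \mathbf 1_{C_i}$ is constant, equal to $S(D) := \sum_{i \in F,\, D \subseteq C_i} a_i$, so the hypothesis reads $S(D) + \sum_{i \in I} a_i \mathbf 1_{C_i}(x) \leq 0$ for all $x \in D$.

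The crucial point is that $S(D) \leq 0$ for every $N$-coset $D$, and this is exactly where Theorem~\ref{thm:neumann} enters. If $S(D) > 0$, then $\sum_{i \in I} a_i \mathbf 1_{C_i}(x) \leq -S(D) < 0$ for every $x \in D$, so every $x \in D$ lies in some $C_i$ with $i \in I$; that is, the coset $D$ of the finite-index subgroup $N$ is covered by the finitely many cosets $C_i$ ($i \in I$) of the infinite-index subgroups $H_i$, contradicting the ``in particular'' part of Neumann's Theorem. Hence $S(D) \leq 0$ for all $D$, and summing over the $[G:N]$ cosets of $N$ yields $0 \geq \sum_D S(D) = \sum_{i \in F} a_i [H_i : N] = [G:N] \sum_{i \in F} a_i / [G:H_i]$; since the $I$-terms contribute $0$, the claim follows. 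I expect this verification that $S(D) \leq 0$ --- extracting a genuine covering of a finite-index coset from a sign condition on a linear combination of indicators and feeding it into Neumann's Theorem --- to be the main obstacle; the remaining invocation of Hahn--Banach and the verification of the mean axioms are routine. (Alternatively, having established the same consistency statement, one can bypass Hahn--Banach entirely: the assignment $gH \mapsto 1/[G:H]$ extends to a finitely additive measure on the Boolean subalgebra of $\Pset(G)$ generated by all cosets, and the Ultrafilter Lemma --- via the standard fact that a finitely additive measure on a subalgebra extends to the whole power set --- then delivers a CCM on $G$.)
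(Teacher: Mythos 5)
Your proof is correct, and it follows the same overall strategy as the paper's Hahn--Banach construction (given in the Appendix, Theorem~\ref{thm:HB implies CCM}): define the mean on a subspace of $\ell^\infty(G)$ spanned by coset indicators, use Neumann's Theorem~\ref{thm:neumann} to certify consistency, and extend. The difference lies in how consistency is established. The paper first develops the Boolean subring $\cosets$ generated by cosets, proves the decomposition $\cosets = \cosetsfin \oplus \cosetsinf$ with $\cosetsfin \cap \cosetsinf = \{\varnothing\}$ (this is where Neumann enters there), and deduces that $\cosets$ carries a unique CCM (Proposition~\ref{prop:CCM on cosets}); the functional on the span is then defined via partitions of $G$ into elements of $\cosets$ and extended by the norm-preserving Hahn--Banach theorem, with positivity recovered afterwards from $\|m\|=1$. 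You instead prove a single positivity inequality --- if $\sum_i a_i \mathbbm{1}_{g_iH_i} \leq 0$ pointwise then $\sum_i a_i/[G:H_i] \leq 0$ --- by passing to the finite-index normal subgroup $N = \bigcap_{i\in F}\Core_G(H_i)$, observing that the finite-index part is constant on each $N$-coset $D$, and using Neumann's theorem to rule out $S(D)>0$ (which would force $D$ to be covered by the infinite-index cosets); summing $S(D)\leq 0$ over the $[G:N]$ cosets and using $[H_i:N]=[G:N]/[G:H_i]$ gives the claim. This one inequality yields well-definedness and the domination $\phi \leq \sup$ simultaneously, and the sublinear-dominated form of Hahn--Banach then gives positivity for free. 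Your route is somewhat leaner: it bypasses the Boolean-ring machinery entirely, at the cost of not producing the uniqueness statement on $\cosets$ that the paper needs later (for Proposition~\ref{prop:CCM-dc-independent}). Note also that the paper's \emph{other} proof (Theorem~\ref{thm: existence of CCMs}) is genuinely different from both: it works with density functions $F \mapsto |A\cap F|/|F|$ on $\Pfin(G)$ and an ultrafilter containing the filter of Lemma~\ref{lem:index-consistent-filter}, rather than extending a finitely additive measure from a subalgebra as your closing parenthetical suggests.
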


As an application of this construction, we use this to measure the \textit{degree of commutativity} of a group $G$. That is, we consider the set, $\Comm(G) \coloneqq \{ (g,h) \in G^2 : gh=hg \}$, and measure the size of this set in $G^2$. 

For example, for a finite group $G$, we would take the uniform measure---count elements, in other words---and define $\dc(G) = \frac{\left|\Comm(G) \right|}{|G^2|}$.  For a residually finite group $G$, we could measure the degree of commutativity by looking at increasingly large finite quotients and define $\dcRF(G)$ as the infimum of $\dc(G/N)$ over all finite quotients $G/N$---see Definition~\ref{defn:dc for RF}. 

In the case of a group $G$ equipped with a CCM $\mu$, we first define the product mean $\mu^{(2)}$ on $G^2$, see Definition~\ref{defn: product mean}. From there we define the degree of commutativity of $G$ with respect to $\mu$ as $\dc[\mu](G) = \mu^{(2)}(\Comm(G))$: see Definition~\ref{defn: dc using CCMs}. 

Our results show that defining the degree of commutativity in various senses, this number is positive if and only if the group in question is finite-by-abelian-by-finite, or FAF. We recall that, given families of groups $\mathcal{A}$ and $\mathcal{B}$, a group $G$ is said to be $\mathcal{A}$-by-$\mathcal{B}$ if $G$ admits a normal subgroup $N$ so that $N$ is in $\mathcal{A}$ and $G/N$ is in $\mathcal{B}$. For instance, using the core construction one can show that a group is virtually abelian (admits a finite index abelian subgroup) if and only if it is abelian-by-finite. We note that the apparent ambiguity in calling a group FAF is genuinely only apparent as the class of (finite-by-abelian)-by-finite groups equals the class of finite-by-(abelian-by-finite) groups. More pithily, (FA)F = FAF = F(AF): see Lemma~\ref{lem:faf-equivalent}. 

We then characterise the groups having positive degree of commutativity in the following Theorems. 

\begin{thm}[see Theorem~\ref{thm:dc-CCM-nonzero}] \label{thm:dc-CCM-nonzero-intro}
    \ThmDCCCM
\end{thm}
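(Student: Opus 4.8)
The plan is to reduce the whole statement to the behaviour of the single bounded function $g\mapsto 1/|g^G|$, where $|g^G|$ is the size of the conjugacy class of $g$ (and $1/|g^G|:=0$ if the class is infinite). By the construction of the product mean (Definition~\ref{defn: product mean}), integrating one coordinate first gives
\[
\dc[\mu](G)=\mu^{(2)}(\Comm(G))=\int_G\mu(C_G(g))\,d\mu(g)=\int_G\frac{1}{|g^G|}\,d\mu(g),
\]
the last step because $\mu(C_G(g))=1/[G:C_G(g)]=1/|g^G|$ is an instance of coset-correctness. Since the integrand lies in $[0,1]$, this is positive if and only if $\mu(B_n)>0$ for some $n\in\N$, where $B_n:=\{g\in G:|g^G|\le n\}$. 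So I would split the proof into (a)~$G$ FAF $\Rightarrow$ some $B_n$ is non-null; (b)~some $B_n$ non-null $\Rightarrow$ $G$ FAF; (c)~when $G$ is FAF, $\int_G 1/|g^G|\,d\mu$ is a rational number independent of $\mu$.

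For (a): by Lemma~\ref{lem:faf-equivalent}, $G$ has a finite-index normal subgroup $H$ with $|H'|<\infty$ (pass to the core). For $h\in H$ the map $x\mapsto[h,x]$ lands in $H'$ and its fibres are cosets of $C_H(h)$, so $[H:C_H(h)]\le|H'|$; together with the $[G:H]$ cosets of $H$ this gives $|h^G|\le N:=[G:H]\,|H'|$ for all $h\in H$. Hence $1/|g^G|\ge \tfrac1N\mathbbm 1_H$ pointwise, so $\dc[\mu](G)\ge \tfrac1N\mu(H)=\tfrac1{N[G:H]}>0$.

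For (b): if $\mu(B_n)>0$ then, since $B_n$ sits inside the subgroup $\langle B_n\rangle$ and $\mu$ vanishes on every infinite-index subgroup, $\langle B_n\rangle$ has finite index in $G$. As FAF is closed under finite-index subgroups and finite extensions (Lemma~\ref{lem:faf-equivalent}) and rescaling $\mu$ to $\langle B_n\rangle$ still keeps $B_n$ non-null, I may assume $G=\langle B_n\rangle$; then $G$ is an FC-group, because $B_n$ lies in the FC-centre $\Delta(G)$, which is a subgroup. So (b) reduces to: an FC-group carrying a CCM under which some $B_n$ is non-null is boundedly FC, hence (B.~H.\ Neumann's theorem that BFC groups have finite commutator subgroup) has $G'$ finite and is FAF. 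This reduction and the BFC conclusion are, I expect, the main obstacle: one must show that in an FC-group which is \emph{not} BFC every $B_n$ is "sparse", i.e.\ contained in finite unions of cosets of finite-index subgroups of arbitrarily small total $\mu$-measure, forcing $\mu(B_n)=0$. The model case is $G=\bigoplus_{i\in\N}S_3$: there $|g^G|\le n$ forces $|\supp(g)|\le k:=\lfloor\log_2 n\rfloor$, and for any $m$ coordinates $C$ every such $g$ vanishes on all but $k$ of them, so $B_n$ lies in a union of $\binom{m}{k}$ subgroups of index $6^{m-k}$, whence $\mu(B_n)\le\binom{m}{k}6^{-(m-k)}\to0$. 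In general one reduces to $G/Z(G)$ (which is residually finite and periodic) and produces such covers whenever conjugacy classes are unbounded.

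For (c): when $G$ is FAF, $\Delta(G)$ is finite-index (it contains $H$) and is itself BFC, so $1/|g^G|$ takes values in $\{1,\tfrac12,\dots,\tfrac1N\}$ on $\Delta(G)$ and is $0$ off it; writing again $B_d=\{g:|g^G|\le d\}$ we get $\dc[\mu](G)=\sum_{d=1}^N\frac1d\bigl(\mu(B_d)-\mu(B_{d-1})\bigr)$, so it is enough to show each $\mu(B_d)$ is a $\mu$-independent rational. Here I would pass through the finite normal subgroup $H'$, so that $\overline G:=G/H'$ is abelian-by-finite, and analyse conjugacy-class sizes there: inside a finite-index subgroup with finite commutator subgroup $F$, the condition $|g^{\bullet}|\le d$ cuts out exactly the union of the finitely many subgroups $\{g:[g,x]\in F_0\ \forall x\}$ over normal subgroups $F_0\trianglelefteq\bullet$ of $F$ with $|F_0|\le d$, and the finitely many cross-coset contributions (over $G^2/\!\!\sim$) reduce to the same bookkeeping; consequently each $B_d$ agrees, up to a $\mu$-null set, with a finite union of cosets of finite-index subgroups of $G$. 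By the coset-correctness axiom alone $\mu$ of such a set is a fixed rational, so $\dcCCM(G):=\dc[\mu](G)\in\Q$ is forced and independent of $\mu$. The delicate point of (c) is the non-residually-finite FAF case (e.g.\ a finite-by-abelian group of infinite commutator rank), where one cannot read the value off a finite quotient and must run this level-set analysis directly.
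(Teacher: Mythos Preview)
Your overall architecture---reduce $\dc[\mu](G)$ to $\int_G 1/|g^G|\,d\mu$, study the level sets $B_n=\G{n}$, and for~(c) show these lie in the coset Boolean ring---matches the paper's, and part~(a) is essentially the paper's argument.

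The genuine gap is in~(b). Your reduction to FC-groups is correct and is in spirit what the paper does (once $G_0=\langle\G{m}\rangle$ has finite index, $G_0/Z(G_0)$ is residually finite precisely because $G_0$ is FC). But the step you yourself flag as ``the main obstacle''---that an FC-group with $\mu(B_n)>0$ must be BFC---is not proved, and your proposed route via covers does not obviously extend beyond the direct-sum model. In $\bigoplus S_3$ the covers come from the coordinate structure; in a general FC-group no such structure is available, and ``reduce to $G/Z(G)$ and produce such covers whenever conjugacy classes are unbounded'' is an assertion, not an argument. The paper takes a quite different route here: from residual finiteness of $G/Z(G_0)$ and the chain $\dcRF(G/Z(G_0))\ge\dcRF(G)\ge\dc[\mu](G)>0$ (Lemma~\ref{lem: dcCCM less that dcRF}), Lemma~\ref{lem:dcRF0} (which uses Gustafson's $5/8$ bound and Gallagher's multiplicativity) gives that $G/Z(G_0)$ is virtually abelian, so $G$ is virtually $2$-step nilpotent. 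The remaining work is then an induction \emph{inside a $2$-step nilpotent group} (Lemma~\ref{lem:dc-finite-by-abelian}) that alternately passes to a centraliser or quotients by a finite central cyclic $\langle[g,s_0]\rangle$; centrality of commutators is used essentially at both branches, and it is not clear how to replace this by a covering argument in a general FC-group.

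For~(c), your target (each $B_d$ lies in $\cosets$, so its $\mu$-value is forced by Proposition~\ref{prop:CCM on cosets}) is exactly Corollary~\ref{cor:FAF-Um}, but your sketch omits the point that makes it work: one must first arrange, via Remark~\ref{rem:FCAF}, that the finite subgroup $N=H'$ is \emph{central} in $H$. Only then is $[g,H\cap\overline K]$ a subgroup of $N$ for each $g$, and only then are the sets $L_M=\{g:[g,H\cap\overline K]\subseteq M\}$ themselves subgroups. Your phrase ``finite-index subgroup with finite commutator subgroup $F$'' does not impose centrality, and without it the claimed decomposition of $B_d$ into coset pieces need not hold. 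The paper's two-layer analysis---first centralisers in the virtually abelian $G/N$ (Proposition~\ref{prop:vab}), then the index $[\pi_N^{-1}(K):C_G(g)]$ via the commutator bookkeeping of Lemma~\ref{lem:FAF-technical}---is where the real work in~(c) lies.
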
 

In the case of a residually finite group we note that one can use the Haar measure on the profinite completion and we show that this also leads to the same result, on noting that a residually finite FAF group is AF, i.e.\ virtually abelian. 

\begin{thm}[see Theorem~\ref{thm:dc-res-finite}]
    \label{thm:dc-res-finite-intro}
    \ThmDCRF
\end{thm}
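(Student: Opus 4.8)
The plan is to establish the four-fold equality and then read off the ``Moreover'' clause from Theorem~\ref{thm:dc-CCM-nonzero-intro} (which gives $\dcCCM(G)>0$ iff $G$ is FAF) together with the fact, noted in the introduction, that a residually finite FAF group is virtually abelian. For the chain, write $\widehat G=\varprojlim_N G/N$ over the finite-index normal $N\trianglelefteq G$, with projections $\pi_N\colon\widehat G^2\to(G/N)^2$ and $q_N\colon G^2\to(G/N)^2$. Residual finiteness gives that two elements of $\widehat G$ commute iff their images commute in every $G/N$, so $\Comm(\widehat G)=\bigcap_N\pi_N^{-1}(\Comm(G/N))$, a filtered intersection of clopen sets of Haar measure $\dc(G/N)$; by continuity of the Haar measure along this family, $\widehat\mu(\Comm(\widehat G))=\inf_N\dc(G/N)=\dcRF(G)$. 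The same argument with $C_N\coloneqq q_N(\Comm(G))$ in place of $\Comm(G/N)$ identifies $\overline{\Comm(G)}$ (the closure in $\widehat G^2$) with $\bigcap_N\pi_N^{-1}(C_N)$ and gives $\widehat\mu(\overline{\Comm(G)})=\inf_N|C_N|/[G:N]^2$; since $C_N\subseteq\Comm(G/N)$ this is $\le\dcRF(G)$. Finally, for any CCM $\mu$ on $G$ we have $\Comm(G)\subseteq q_N^{-1}(C_N)$, a disjoint union of $|C_N|$ cosets of $N\times N$, each of $\mu^{(2)}$-measure $[G:N]^{-2}$, so $\dcCCM(G)=\mu^{(2)}(\Comm(G))\le|C_N|/[G:N]^2$ for all $N$, whence
\[
\dcCCM(G)\ \le\ \widehat\mu(\overline{\Comm(G)})\ \le\ \widehat\mu(\Comm(\widehat G))\ =\ \dcRF(G).
\]
It remains to prove the reverse inequality $\dcRF(G)\le\dcCCM(G)$, which I would do in two cases.

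If $G$ is not virtually abelian, then it is not FAF, so $\dcCCM(G)=0$ by Theorem~\ref{thm:dc-CCM-nonzero-intro}, and it is enough to show $\dcRF(G)=\inf_N\dc(G/N)=0$; this is the known characterisation (Antol\'in--Martino--Ventura) of residually finite groups with positive residually-finite degree of commutativity. Alternatively, to keep things self-contained: if $\dc(G/N)\ge c>0$ for all $N$, one applies a quantitative structure theorem for finite groups $Q$ with $\dc(Q)$ bounded below (due to P.~M.\ Neumann) giving $M=M(c)$ such that each $Q=G/N$ has a subgroup of index $\le M$ whose derived subgroup has order $\le M$, and passes to the limit over the finite nonempty sets of such subgroups of $G/N$ to produce an open subgroup of $\widehat G$ of index $\le M$ with finite derived subgroup; then $\widehat G$ is FAF, hence (being residually finite) virtually abelian, and so is $G$, a contradiction. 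Thus all four quantities vanish in this case.

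If $G$ is virtually abelian, fix a finite-index normal abelian $B\trianglelefteq G$, put $Q=G/B$, choose representatives $t_q\in G$, and let $\overline B\le\widehat G$ be the closure of $B$, an open profinite abelian subgroup of index $|Q|$ on which $Q$ acts by conjugation via the $t_q$, with associated cocycle $c(q_1,q_2)\in B$. A direct computation in $\overline B\rtimes Q$ shows that $(b_1t_{q_1},b_2t_{q_2})$ commute iff $q_1q_2=q_2q_1$ and $\phi_{q_1,q_2}(b_1,b_2)=d(q_1,q_2)$, where (additively) $\phi_{q_1,q_2}(b_1,b_2)=(1-q_2)\cdot b_1+(q_1-1)\cdot b_2$ and $d(q_1,q_2)=c(q_2,q_1)-c(q_1,q_2)$. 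Hence $\Comm(\widehat G)$, and likewise $\Comm(G)$, decomposes as a disjoint union, over commuting pairs $(q_1,q_2)$ with $d(q_1,q_2)\in\operatorname{im}\phi_{q_1,q_2}$, of cosets of $\ker\phi_{q_1,q_2}\le G^2$ (more precisely of $G^2$-conjugates of them, of the same index). Since $G$ is amenable I may take $\mu$ to be an invariant mean, so that $\mu^{(2)}$ is an invariant mean, hence a CCM, on $G^2$; summing its coset-correct values over this decomposition gives $\dcCCM(G)=\mu^{(2)}(\Comm(G))=\sum 1/[G^2:\ker\phi_{q_1,q_2}]$ over the relevant pairs, with $1/\infty=0$. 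Computing the Haar measure of the same decomposition of $\Comm(\widehat G)$ returns exactly this sum — here one uses that for a $\Z$-linear $\phi$ the closure of $\operatorname{im}\phi$ in $\overline B$ is finite precisely when $\operatorname{im}\phi$ is, which matches the finite- and infinite-index pieces on the two sides. So $\dcCCM(G)=\widehat\mu(\Comm(\widehat G))$, and the displayed inequalities then collapse the whole chain; in particular the closed form exhibits the common value as a rational number, reconfirming Theorem~\ref{thm:dc-CCM-nonzero-intro}.

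I expect the virtually abelian case to be the main obstacle: carrying out the commuting-pair computation cleanly (without assuming $B$ finitely generated, and reconciling the ``$\operatorname{im}\phi$ versus its closure'' bookkeeping for the group and for its completion, which affects which pieces are nonempty but never which pieces are non-null), and knowing that the product mean $\mu^{(2)}$ is genuinely coset-correct on $G^2$, so that it assigns $1/[G^2:K]$ to cosets of the non-split subgroups $K=\ker\phi_{q_1,q_2}$. The not-virtually-abelian case instead rests on the cited input on residually finite groups (or on the finite-group theorem of P.~M.\ Neumann); granting that, the inverse-limit argument there is routine, as are the Haar-measure continuity and coset-counting facts used for the ``free'' inequalities.
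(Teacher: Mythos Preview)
Your proof is correct and takes a genuinely different route from the paper in the main (virtually abelian) case.

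\textbf{Comparison.} Both arguments begin with the same easy inequalities and reduce to the case where $G$ is virtually abelian (you note correctly that the not-virtually-abelian case is handled by the characterisation $\dcRF(G)>0\iff G$ virtually abelian; this is Lemma~\ref{lem:dcRF0} in the paper, so the external citations to Antol\'in--Martino--Ventura or P.~M.~Neumann are unnecessary). From there the approaches diverge. The paper works $\varepsilon$-approximately: it constructs a finite-index normal $M\sgpnfin G$ such that $\Comm(G)\cap\Gfin^2$ is a union of cosets of $M^2$, and then, for each $\varepsilon>0$, a smaller $N_\varepsilon\sgpnfin G$ for which the ``bad'' commuting pairs in $(G/N_\varepsilon)^2$ outside $(\Gfin/N_\varepsilon)^2$ have density $<\varepsilon$; this yields $\dcCCM(G),\widehat\mu(\overline{\Comm(G)})\geq \dcRF(G)-\varepsilon$ for all $\varepsilon$. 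You instead compute both sides exactly: fixing an abelian $B\sgpnfin G$, you decompose $\Comm(G)$ (and $\Comm(\widehat G)$) as a finite disjoint union, indexed by commuting pairs in $G/B$, of cosets of the subgroups $\ker\phi_{q_1,q_2}\leq G^2$, and match the two sides term by term. Your approach yields a closed formula for the common value and makes the rationality transparent; the paper's approach avoids the cocycle bookkeeping and the ``$\operatorname{im}\phi$ versus its closure'' check you flag (which you handle correctly: finite image is already closed, infinite image gives measure zero on both sides). A further minor difference is that you obtain $\widehat\mu(\Comm(\widehat G))=\dcRF(G)$ directly via outer regularity of the Haar measure and compactness, whereas the paper only records the inequality $\leq$ and recovers equality at the end of the chain.

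Two small simplifications: (1) the paper proves in Section~\ref{sec:product} that $\mu^{(2)}$ is a CCM on $G^2$ for \emph{any} CCM $\mu$, so specialising to an invariant $\mu$ is unnecessary; (2) for the not-virtually-abelian case, Lemma~\ref{lem:dcRF0} (a short $5/8$-argument) already gives $\dcRF(G)=0$, so the inverse-limit/Neumann-structure detour can be dropped.
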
 

We also look at this degree of commutativity when $G$ is an amenable group and taking an invariant mean (not necessarily the product mean) on the square. In this setting we get:

\begin{thm}[see Theorem~\ref{thm:dc-amenable}] \label{thm:dc-amenable-intro}
    \ThmDCAm
\end{thm}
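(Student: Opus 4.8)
Proof proposal for Theorem~\ref{thm:dc-amenable-intro} (the amenable case: $\mu(\Comm(G)) > 0$ iff $G$ is FAF).

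The plan is to treat the two implications separately, in both cases first passing to a finite-index subgroup to put $\Comm(G)$ into a transparent form, and then controlling $\mu$ by translating. Throughout I use that an invariant mean on $G^2$ is in particular a CCM on $G^2$ (this is exactly the corollary following the definition of amenability, applied to the group $G^2$), so that $\mu(K)=1/[G^2:K]>0$ for every finite-index subgroup $K\le G^2$; and the routine fact that for a finite-index subgroup $G_1\le G$ one has $\Comm(G)\cap G_1^2=\Comm(G_1)$, $\mu(G_1^2)=[G:G_1]^{-2}>0$, and $A\mapsto\mu(A)/\mu(G_1^2)$ is again an invariant mean on $G_1^2$, so that positivity of $\mu(\Comm(G))$ is unchanged under passing to finite-index subgroups (with the corresponding mean).

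For the implication $(\Leftarrow)$, assume $G$ is finite-by-abelian-by-finite. By Lemma~\ref{lem:faf-equivalent} there is a finite-index $N\le G$ with $[N,N]$ finite; as $[N,N]$ is finite, $C_N([N,N])$ still has finite index, so replacing $N$ by $C_N([N,N])$ and then $G$ by $N$ we may assume $L:=[G,G]$ is finite and central, i.e.\ $G$ is nilpotent of class $\le 2$. Then $[g,h]$ depends only on the images $\bar g,\bar h$ in $Q:=G/Z(G)$, the map $h\mapsto[g,h]$ is a homomorphism $G\to L$, and $\Comm(G)=\pi^{-1}(S)$ where $\pi\colon G^2\twoheadrightarrow Q^2$ and $S=\{(\bar g,\bar h):[g,h]=1\}$; hence $\mu(\Comm(G))=\nu(S)$ for the invariant mean $\nu:=\pi_*\mu$ on the \emph{abelian} group $Q^2$, and it suffices to prove $\nu(S)>0$. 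For this, work with translates $S+(0,\bar b)$, $\bar b$ in a finite subgroup $B\le Q$ (here left- and right-translation coincide since $Q^2$ is abelian): since $h\mapsto[g,h]$ is a homomorphism, $\bigcup_{\bar b\in B}\bigl(S+(0,\bar b)\bigr)\supseteq V_B\times Q$, where $V_B$ is the set of $\bar g$ for which the values $\{[g,b]:b\in B\}$ already exhaust the image of $[g,\cdot]$. One checks that $Q\setminus V_B$ is a finite union of proper finite-index subgroups of $Q$ whose indices grow without bound as $B$ is enlarged to contain elements in sufficiently general position; by B.~H.\ Neumann's Theorem~\ref{thm:neumann} one can then arrange that the reciprocals of these indices sum to less than $1$, so $\nu(Q\setminus V_B)<1$ and hence $\nu(V_B\times Q)>0$. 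Combined with $\nu(S+(0,\bar b))=\nu(S)$ this gives $|B|\cdot\nu(S)\ge\nu(V_B\times Q)>0$, as required. (When $L$ is cyclic one $\bar b$ suffices: $\bigcup_{k}(S+(0,k\bar b))\supseteq(Q\setminus\ker[\,\cdot\,,b])\times Q$ already yields $\nu(S)\ge(p-1)/p^2$.)

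For the implication $(\Rightarrow)$, suppose $\mu(\Comm(G))>0$; we must show some finite-index subgroup of $G$ has finite derived subgroup, which by Lemma~\ref{lem:faf-equivalent} is equivalent to $G$ being finite-by-abelian-by-finite. Positivity of $\mu(\Comm(G))$ says precisely that $\Comm(G)$ has positive upper Banach density in $G^2$, so along some left-F\o lner sequence $(F_n)$ of $G^2$ a definite proportion $\varepsilon>0$ of pairs in $F_n$ commute. A counting/pigeonhole argument over the F\o lner sets — of the same shape as in the proof of Theorem~\ref{thm:dc-CCM-nonzero} — then yields, for each large $n$, an element of $G$ whose centraliser meets $F_n$ in a proportion bounded below by a function of $\varepsilon$ alone; iterating this and invoking the quantitative form of B.~H.\ (and P.~M.) Neumann's theorems on groups with boundedly finite conjugacy classes produces a finite-index subgroup $H\le G$, of index bounded in terms of $\varepsilon$, with all conjugacy classes of bounded size, hence with $[H,H]$ finite.

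The main obstacle is this last step: converting ``a positive proportion of pairs commute along a F\o lner sequence of $G^2$'' into a uniform structural bound (a finite-index subgroup with boundedly finite derived subgroup) that survives the passage to the limit, which requires carrying the quantitative Neumann-type estimates over from finite quotients to F\o lner approximations. The covering argument in $(\Leftarrow)$, while the other substantive ingredient, is comparatively self-contained once the reduction to a class-$2$ group with finite central derived subgroup is in place.
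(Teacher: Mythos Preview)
Your $(\Leftarrow)$ direction is in the right spirit---translate $\Comm(G)$ by finitely many elements so that the union has positive measure---but the assertion that $Q\setminus V_B$ is ``a finite union of proper finite-index subgroups of $Q$ whose indices grow without bound'' is not justified, and once $[g,G]$ is allowed to vary with $g$ this set need not decompose so simply. The paper avoids the issue by translating in \emph{both} coordinates rather than just the second: after reducing (as you do) to $G'$ finite and central, it lists all commutators $g_i=[x_i,y_i]$, sets $H=\bigcap_i\bigl(C_G(x_i)\cap C_G(y_i)\bigr)$ (finite index since each $C_G(x_i),C_G(y_i)$ has index at most $|G'|$), and uses class-$2$ identities to check that for $(x,y)\in H^2$ one has $(x,y)\in(x_i,y_i^{-1})\Comm(G)$ iff $[x,y]=g_i$. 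Since the $g_i$ exhaust all commutators, $\bigcup_i(x_i,y_i^{-1})\Comm(G)\supseteq H^2$, whence $n\cdot\mu(\Comm(G))\ge\mu(H^2)=[G:H]^{-2}>0$. No covering estimate or appeal to Theorem~\ref{thm:neumann} is needed.

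Your $(\Rightarrow)$ direction has the genuine gap you yourself flag: converting ``positive density along a F{\o}lner sequence of $G^2$'' into a centraliser bound on $G$ is the whole difficulty, and F{\o}lner sets of $G^2$ are not products, so there is no obvious ``first-coordinate'' pigeonhole. The paper's argument does not go through F{\o}lner sets. Instead it uses a \emph{disjointness} trick (Lemma~\ref{lem:small-CGhi} and Corollary~\ref{cor:am-mu}): if $\mu(\G{n}\times G)=0$ for every $n$, one can choose $h_1,\dots,h_n\in G$ with $\sum_{i<j}[G:C_G(h_i^{-1}h_j)]^{-1}$ arbitrarily small; setting $Y=\bigcup_{i<j}C_G(h_i^{-1}h_j)$, the translates $(1,h_i)\bigl(\Comm(G)\setminus(Y\times G)\bigr)$ are pairwise disjoint (two of them sharing a point $(g,h)$ would force $g\in C_G(h_i^{-1}h_j)\subseteq Y$), each of $\mu$-measure at least $\mu(\Comm(G))-\mu(Y\times G)>\tfrac{1}{2}\mu(\Comm(G))$, and for $n$ large this overfills $G^2$. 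Once $\mu(\G{n}\times G)>0$ for some $n$, the push-forward $\mu'(A)\coloneqq\mu(A\times G)$ is a left-invariant mean (hence a CCM) on $G$ with $\mu'(\G{n})>0$, and Proposition~\ref{prop:dc-faf} finishes.
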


Finally, since we are considering finitely additive probability means on groups which are invariant on some family of subsets it makes sense to define the \emph{left defect} of such a probability mean $\mu$ on a group $G$ as 
\[ \delta_\ell(\mu) = \sup \{ |\mu(gA)-\mu(A)| : g \in G, A \subseteq G \} \in [0,1].\]
We then define the (\emph{left}) \emph{defect} of $G$ as \[\delta_G \coloneqq \inf_{\mu} \delta_\ell(\mu),\] as in Definition~\ref{defn:defect}. We then get:

\begin{thm}[see Theorem~\ref{thm:defect}]
    \label{thm:defect-intro}
    \ThmDefect
\end{thm}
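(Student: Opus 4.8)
The plan is to prove three things in turn: that the infimum defining $\delta_G$ is attained; that $\delta_G=0$ precisely when $G$ is amenable; and the dichotomy $\delta_G\in\{0,1\}$.

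\emph{Attainment.} Identify the set $M$ of finitely additive probability means on $G$ with a subset of $[0,1]^{\Pset(G)}$ carrying the product topology. It is non-empty (the Dirac mass $\delta_g$, with $\delta_g(A)=1$ if $g\in A$ and $0$ otherwise, lies in $M$) and closed, since $\mu(G)=1$ and the finite additivity relations $\mu(A\sqcup B)=\mu(A)+\mu(B)$ are closed conditions; hence $M$ is compact by Tychonoff. Each map $\mu\mapsto|\mu(gA)-\mu(A)|$ is continuous (it is built from two coordinate projections), so $\delta_\ell=\sup_{g\in G,\,A\subseteq G}|\mu(gA)-\mu(A)|$ is lower semicontinuous on the compact space $M$ and therefore attains its infimum; fix $\mu_0$ with $\delta_\ell(\mu_0)=\delta_G$.

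\emph{The amenable case.} If $G$ is amenable, any invariant mean has left defect $0$, so $\delta_G=0$. Conversely, $\delta_G=0$ gives $\delta_\ell(\mu_0)=0$, i.e.\ $\mu_0(gA)=\mu_0(A)$ for all $g,A$, so $\mu_0$ is an invariant mean and $G$ is amenable.

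\emph{The dichotomy.} It remains to show that a non-amenable $G$ has $\delta_G=1$. Since $\mu$ takes values in $[0,1]$, always $\delta_\ell(\mu)\le1$, so it suffices to prove $\delta_\ell(\mu)\ge 1-\tfrac1N$ for every mean $\mu$ and every $N\in\N$. I would derive this from the following combinatorial statement, which is the heart of the matter: for every $N$ there are elements $h_1,\dots,h_N\in G$ and pairwise disjoint subsets $A_1,D_1,\dots,A_N,D_N\subseteq G$ with
\[
G=A_i\sqcup h_iD_i\qquad(1\le i\le N).
\]
Granting it, for a mean $\mu$ with $\delta_\ell(\mu)=c$ one has $1=\mu(A_i)+\mu(h_iD_i)\le\mu(A_i)+\mu(D_i)+c$ for each $i$; summing over $i$ and using disjointness,
\[
N(1-c)\le\sum_{i=1}^{N}\bigl(\mu(A_i)+\mu(D_i)\bigr)=\mu\Bigl(\bigsqcup_{i=1}^{N}(A_i\sqcup D_i)\Bigr)\le1,
\]
so $c\ge 1-\tfrac1N$; letting $N\to\infty$ gives $\delta_\ell(\mu)=1$, hence $\delta_G=1$.

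\emph{Producing the configuration, and the main obstacle.} To obtain this family one uses that a non-amenable group is paradoxical (Tarski's theorem), equivalently that Følner's criterion fails: there are a finite symmetric $S\ni e$ and $\lambda>1$ with $|SE|\ge\lambda|E|$ for every nonempty finite $E\subseteq G$, hence $|S^kE|\ge\lambda^k|E|$ and $G$ admits paradoxical decompositions of unbounded strength. A Hall's-marriage-type argument should then let one pack, for each $N$, $N$ ``disjointly supported copies'' of the elementary paradox $G=A\sqcup hD$ inside $G$, which is exactly the displayed configuration. For instance when $G=F_2=\langle a,b\rangle$ one can exhibit it directly: let $A_i$ be the reduced words beginning with $a^ib$, let $D_i$ be those beginning with $a^ib^{-1}$, and set $h_i=a^iba^{-i}$; then $h_iD_i=G\setminus A_i$, so $G=A_i\sqcup h_iD_i$, and the $2N$ sets $A_1,D_1,\dots,A_N,D_N$ are pairwise disjoint. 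The main difficulty is to establish this lemma for an arbitrary non-amenable $G$: the point is to run the Tarski/Følner machinery so that the number of non-trivial translators grows only linearly in $N$ (one per equation), since a crude paradoxical decomposition of bounded Tarski number, or the naive use of the expansion property, only yields $\delta_G\ge$ some positive constant rather than $\delta_G=1$.
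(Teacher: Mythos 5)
Your first two steps are fine: the compactness/lower-semicontinuity argument for attainment of the infimum is a valid repackaging of the paper's ultralimit argument, and the amenable case is immediate. The problem is the dichotomy, and specifically the combinatorial lemma you have isolated as ``the heart of the matter'' but not proved: it is not merely hard for a general non-amenable group, it is \emph{false}. Already for $N=2$, a configuration $G = A_1 \sqcup h_1D_1 = A_2 \sqcup h_2D_2$ with $A_1,D_1,A_2,D_2$ pairwise disjoint is a paradoxical decomposition of $G$ using four pieces, so it forces the Tarski number of $G$ to equal $4$, which by the J\'onsson--Dekker theorem happens exactly when $G$ contains a free subgroup of rank two. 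Non-amenable groups without free subgroups exist (Ol'shanskii's and Adian's examples, e.g.\ Tarski monsters and free Burnside groups of large odd exponent), and these have Tarski number at least $6$; for them your lemma fails even at $N=2$, so the $F_2$ computation you give cannot be extended. As you yourself observe, falling back on a paradoxical decomposition with more pieces per copy only yields $\delta_\ell(\mu) \geq 1/k$ for a constant $k$ depending on the Tarski number, not $\delta_\ell(\mu)=1$. So the route via packing elementary paradoxes is a dead end for the full dichotomy.

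The paper's argument for the dichotomy is of a completely different nature and avoids paradoxicality altogether: it is a self-improvement (averaging) argument. Given a mean $\mu$, one convolves it with itself, setting $\overline\mu(A) = \int \mu(hA)\,\mathrm{d}\mu(h)$, and shows (Lemma~\ref{lem:defects}) that if $\delta_\ell(\mu)>0$ and $\delta_r(\mu)<1$ then $\delta_\ell(\overline\mu) \leq \tfrac{1+\delta_r(\mu)}{2}\,\delta_\ell(\mu) < \delta_\ell(\mu)$ while $\delta_r(\overline\mu) \leq \delta_r(\mu)$; there is a symmetric statement with left and right exchanged. Combining this with your attainment step: if $0 < \delta_G < 1$ is realised by $\mu_0$, one first applies the right-handed version to force the right defect strictly below $1$ without increasing the left defect, and then the left-handed version to produce a mean with left defect strictly smaller than $\delta_G$, a contradiction (Proposition~\ref{prop: defect less than 1 is amenable}). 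This yields the dichotomy for \emph{all} groups with no case analysis on free subgroups, which is exactly what your approach cannot deliver. I would encourage you to keep your attainment argument and replace the packing lemma with this averaging step.
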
 

Therefore, while the defect provides a natural way to quantify the failure of (left) invariance in finitely additive means, its behaviour turns out to be unexpectedly rigid as it is always either $0$ or $1$. This dichotomy mirrors the amenability of the group (defect zero if and only if the group is amenable) and thus, while conceptually clean, the defect offers no finer gradation among non-amenable groups. In this sense, it is both striking and somewhat limited.

\subsection*{Results in the literature}

Amenable groups were introduced by John von Neumann in 1920s \cite{Neumann1929}, in order to understand the group-theoretic nature of the Banach--Tarski Paradox.  Since then, amenability has been widely studied: see \cite{Bartholdi2018} and references therein.  Throughout the years, many equivalent definitions of amenability have been introduced; the most relevant to us are the existence of invariant means---closely related to the CCMs studied here---and the F{\o}lner condition (Proposition~\ref{prop:folner condition}), related to our construction of CCMs in Section~\ref{sec:CCMs}.

Even though we deal with discrete groups in this paper, it is worth mentioning that such concepts make sense, more generally, for locally compact Hausdorff topological groups.  Such groups $G$ admit Borel measures, called \emph{Haar measures}, that are left-invariant and countably additive, see \cite{DiestelSpalsbury2014} and references therein; however, the total Haar measure for the whole $G$ is infinite unless $G$ is compact.  A notion of amenability for (locally compact Hausdorff) topological groups has also been extensively studied \cite{Willis2003}.

The degree of commutativity for finite groups was first introduced by Erd\H{o}s and Tur\'an in 1960s \cite{ErdosTuran1968}, and has been widely studied since then \cite{Gallagher1970,Gustafson1973,Rusin1979,Neumann1989}.  This has been later generalised to profinite groups by L\'evai and Pyber \cite{LevaiPyber2000} and, more generally, to compact groups by Hofmann and Russo \cite{HofmannRusso2012}.  In there, the degree of commutativity $\dcCpt(G)$ of a compact group $\mu$ was defined analogously to our definition of $\dc[\mu](G)$, by taking $\mu$ to be the Haar measure on $G$.  It was shown for any compact $G$, the number $\dcCpt(G)$ is rational, and positive if and only if $G$ has an open abelian subgroup; this can be therefore viewed as a topological version of Theorem~\ref{thm:dc-CCM-nonzero-intro} (or Theorem~\ref{thm:dc-amenable-intro}) for compact groups.

In \cite{AntolinMartinoVentura2017}, Antol\'in, Ventura and the first author introduced the study of the degree of commutativity $\dc[S](G)$ of a finitely generated group $G = \langle S \rangle$, by computing the proportion of commuting elements in a ball $B_{G,S}(n)$ of radius $n$ in the Cayley graph $\operatorname{Cay}(G,S)$, and taking the limit (superior) as $n \to \infty$. They showed that $\dc[S](G)$ is zero for non-elementary hyperbolic $G$, and similar results were later obtained by the second author for non-(virtually abelian) groups $G$ that are graph products \cite{Valiunas2019rational} and higher rank solvable Baumslag--Solitar groups \cite{Valiunas2019negligibity}.  The main obstacle in these cases, however, was that for many groups, the ball counting measure does not measure the index of subgroups correctly.

In order to avoid such problems, Tointon \cite{Tointon2020} studied sequences of measures that measure indices of subgroups correctly (in the limit), as well as the degree of commutativity for such sequences. In fact, this notion is equivalent to our notion of a CCM, as observed in Remark~\ref{rem:sequenceareCCM}. These measures  can  be obtained from random walks (in a finitely generated group), as well as F{\o}lner sequences (in a countable amenable group); in particular, a version of Theorem~\ref{thm: existence of CCMs-intro} for finitely generated groups follows from the results in \cite{Tointon2020}.  A version of Theorem~\ref{thm:dc-CCM-nonzero-intro} was also proved there, and this has been later extended by Tointon, Ventura and the authors \cite{MTVV} to the \emph{degree of $k$-step nilpotence} of a group $G$---the probability that the iterated commutator $[x_0,x_1,\ldots,x_k]$ is trivial for ``random'' elements $x_0,\ldots,x_k \in G$.

\subsection*{Structure of the paper}

In Section~\ref{sec:prelims} we give some preliminaries on Boolean rings, filters, ultrafilters and ultralimits, and basic structural results for finite-by-abelian-by-finite groups.  We prove Theorem~\ref{thm: existence of CCMs-intro} in Section~\ref{sec:CCMs} using the Ultrafilter Lemma, and give two alternative proofs in the Appendices.  We prove Theorem~\ref{thm:dc-CCM-nonzero-intro} in Sections~\ref{sec:dc>0=>FAF} and~\ref{sec:FAF=>dc-well-defined}, Theorem~\ref{thm:dc-res-finite-intro} in Section~\ref{sec:RF}, Theorem~\ref{thm:dc-amenable-intro} in Section~\ref{sec:Am}, and Theorem~\ref{thm:defect-intro} in Section~\ref{sec:defect}.  Finally, in Section~\ref{sec:conj-classes} we discuss an alternative generalisation of the degree of commutativity to infinite groups, by measuring sets of conjugacy class representatives.

\subsection*{Acknowledgement}

The work was partially supported by the National Science Centre (Poland) grant No.\ 2022/47/D/ST1/00779.

\section{Preliminaries}
\label{sec:prelims}

\subsection*{Boolean rings and filters}

Recall that a \textit{Boolean ring} is a ring in which $x^2=x$  for every element $x$ of the ring. Note that any Boolean ring is commutative and has characteristic 2. Our Boolean rings will always be unital. 

The main examples of Boolean rings arise as powersets. For any set $X$, the powerset of $X$, $\Pset(X)$, is a Boolean ring where the addition is symmetric difference and the multiplication is intersection: for $A , B \subseteq X$, the operations are
\begin{align*}
    A \oplus B &\coloneqq (A \cup B) - (A \cap B), \\
    AB &\coloneqq  A \cap B.
\end{align*}

It is straightforward to see that this defines a ring structure on noting the bijection between $\Pset(X)$ and $(\Z/2\Z)^X$; the operations in $\Pset(X)$ correspond to pointwise addition and multiplication in $(\Z/2\Z)^X$. The set $X$ is the multiplicative identity of the ring and $\varnothing$ is the additive identity. 

It is also worth noting that a Boolean ring is equivalent to a Boolean algebra, meaning that union, complementation and set difference can be expressed in terms of the ring operations:
\begin{align*}
    A \cup B & = A \oplus B \oplus AB, \\
    A^c & = 1 \oplus A, \\
    A - B & = A \oplus AB.
\end{align*}
Hence any (unital) subring of $\Pset(X)$ is also closed under finite unions, complementation and set difference. 

\begin{defn}
    Let \( X \) be a nonempty set. A \emph{filter} on \( X \) is a collection \( \mathcal{F} \subseteq \Pset(X) \) of subsets of \( X \) such that:

\begin{enumerate}
    \item \( \varnothing \notin \mathcal{F} \), \( X \in \mathcal{F} \);
    \item if \( A, B \in \mathcal{F} \), then \( A \cap B \in \mathcal{F} \);
    \item if \( A \in \mathcal{F} \) and \( A \subseteq B \subseteq X \), then \( B \in \mathcal{F} \).
\end{enumerate}

\medskip

A filter \( \mathcal{F} \) on \( X \) is called \emph{principal} if there exists a subset \( A \subseteq X \) such that
\[
\mathcal{F} = \{ B \subseteq X : A \subseteq B \}.
\]
In this case, \( \mathcal{F} \) is said to be \emph{generated} by \( A \), and we may write \( \mathcal{F} = \langle A \rangle \).

\medskip

A filter \( \mathcal{U} \) on \( X \) is called an \emph{ultrafilter} if it is a maximal proper filter on \( X \), that is, there is no filter \(\mathcal{F}'\) on \(X\) satisfying \( \mathcal{F}' \supsetneq \mathcal{U} \). Equivalently, a filter \( \mathcal{U} \) is an ultrafilter if for every subset \( A \subseteq X \), either \( A \in \mathcal{U} \) or \( X - A \in \mathcal{U} \), but not both.
\end{defn}

\begin{rem}
    Note that an ultrafilter is non-principal if and only if it contains no finite subsets of $X$. Hence all ultrafilters on finite sets are principal. A principal ultrafilter is always generated by a singleton. 
    
    Given any infinite set $X$, the Fr\'echet filter on $X$ is the collection of all co-finite sets (sets with finite complement). An ultrafilter on $X$ is non-principal if and only if it contains the Fr\'echet filter. 
\end{rem}

The following is standard. 

\begin{lem}
    Let $X$ be a set and $\mathcal{F} \subseteq \Pset(X)$. Define $\mathcal{F}^c \coloneqq \{ A \subseteq X : A^c \in \mathcal{F}\} $. Then, 
    \begin{enumerate}[label=\textup{(\roman*)}]
        \item $\mathcal{F}$ is a filter if and only if $\mathcal{F}^c$ is a proper ideal. 
        \item $\mathcal{F}$ is a principal filter if and only if $\mathcal{F}^c$ is a principal ideal (generated by a single element).
        \item $\mathcal{F}$ is an ultrafilter if and only if $\mathcal{F}^c$ is a maximal ideal.
    \end{enumerate}
\end{lem}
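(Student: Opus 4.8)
The plan is to exploit the complementation involution $A \mapsto A^c$ on $\Pset(X)$ together with the Boolean-algebra identities recalled above, and to verify, clause by clause, that the three filter axioms translate faithfully into the defining properties of a proper ideal. Note first that $\mathcal{F} \mapsto \mathcal{F}^c$ is an involution on $\Pset(\Pset(X))$, since $(A^c)^c = A$; so in each part it will suffice to prove one implication for $\mathcal{F}$ a filter (resp.\ principal filter, ultrafilter) and the analogous statement for $\mathcal{F}$ a proper ideal, then apply the latter to $\mathcal{F}^c$.

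For (i), assuming $\mathcal{F}$ is a filter: since $X^c = \varnothing \notin \mathcal{F}$ we get $X \notin \mathcal{F}^c$, so $\mathcal{F}^c$ omits the unit and is a proper subset. Closure under ring addition uses the identity $(A \oplus B)^c = (A \cap B) \cup (A^c \cap B^c)$: if $A, B \in \mathcal{F}^c$ then $A^c, B^c \in \mathcal{F}$, hence $A^c \cap B^c \in \mathcal{F}$ by the second filter axiom, and upward closure pushes this up to $(A \oplus B)^c \in \mathcal{F}$. Absorption uses $(AB)^c = A^c \cup B^c \supseteq A^c$: if $A \in \mathcal{F}^c$ and $B \subseteq X$ is arbitrary then $(AB)^c \in \mathcal{F}$ by upward closure. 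Conversely, if $I$ is a proper ideal, then $\varnothing \in I$ gives $X = \varnothing^c \in I^c$ while $X \notin I$ gives $\varnothing \notin I^c$; intersection-closure of $I^c$ follows from $(A \cap B)^c = A^c \cup B^c = A^c \oplus B^c \oplus (A^c \cap B^c) \in I$; and upward closure follows because $A \subseteq B$ forces $B^c = A^c \cap B^c \in I$ by absorption. Taking $I = \mathcal{F}^c$ and using $(\mathcal{F}^c)^c = \mathcal{F}$ closes the loop.

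For (ii) one computes both directions explicitly. If $\mathcal{F} = \langle A \rangle = \{B : A \subseteq B\}$, then $\mathcal{F}^c = \{C : A \subseteq C^c\} = \{C : C \subseteq A^c\} = \Pset(A^c)$, which is precisely the principal ideal of $\Pset(X)$ generated by $A^c$ (in a Boolean ring the principal ideal $(D)$ equals $\Pset(D)$, since $CD = C \cap D$). Dually, the principal ideal generated by $D$ is $\Pset(D)$, whose preimage under the involution is $\{B : B^c \subseteq D\} = \{B : D^c \subseteq B\} = \langle D^c \rangle$; combined with (i), this yields the equivalence.

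For (iii), observe that $\mathcal{F} \mapsto \mathcal{F}^c$ is inclusion-preserving ($\mathcal{F}_1 \subseteq \mathcal{F}_2$ trivially gives $\mathcal{F}_1^c \subseteq \mathcal{F}_2^c$), so by (i) it restricts to an order isomorphism between the poset of filters on $X$ and the poset of proper ideals of $\Pset(X)$. An ultrafilter is by definition a maximal element of the former and a maximal ideal a maximal element of the latter, so the two correspond. None of this presents a genuine obstacle: the lemma is bookkeeping, and the only point requiring minor care is tracking which filter/ideal axiom each Boolean identity invokes—chiefly the identity $(A \oplus B)^c = (A \cap B) \cup (A^c \cap B^c)$ used above.
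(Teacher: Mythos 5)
Your proof is correct. The paper states this lemma without proof (labelling it ``standard''), and your argument is precisely the standard verification it has in mind: the complementation involution exchanges the filter axioms with the ideal axioms via the Boolean identities $A\cup B = A\oplus B\oplus AB$ and $AB = A\cap B$, principal filters $\langle A\rangle$ correspond to principal ideals $\Pset(A^c)$, and maximality transfers along the resulting order isomorphism. The only point you leave implicit in the forward direction of (i) is that $X\in\mathcal{F}$ gives $\varnothing\in\mathcal{F}^c$, so that $\mathcal{F}^c$ contains the additive identity; this is immediate and does not affect correctness.
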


\begin{rem}
    Since $\Pset(X)$ is a Boolean ring, any quotient of $\Pset(X)$ is also a Boolean ring. In particular, any quotient which is a field satisfies $x^2=x$ for every element and so must be the field with two elements. Therefore, any maximal ideal has index~$2$.
\end{rem}

The existence of (non-principal) ultrafilters require a weak version of the Axiom of Choice. For those not worried about relative strengths, the following is an easy consequence of Zorn's Lemma. 

\begin{thm}[Ultrafilter Lemma] \label{thm:UL}
    Let $X$ be a set and $\mathcal{F}$ a filter on $X$. Then $\mathcal{F}$ is contained in an ultrafilter.
\end{thm}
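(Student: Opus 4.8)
The plan is the standard Zorn's-Lemma argument. Let $\mathcal{P}$ be the collection of all filters on $X$ containing $\mathcal{F}$, partially ordered by inclusion; note $\mathcal{F} \in \mathcal{P}$, so $\mathcal{P} \neq \varnothing$. The first step is to verify that every chain in $\mathcal{P}$ has an upper bound in $\mathcal{P}$. Given a chain $\mathcal{C} \subseteq \mathcal{P}$, I claim its union $\mathcal{G} \coloneqq \bigcup_{\mathcal{H} \in \mathcal{C}} \mathcal{H}$ is again a filter containing $\mathcal{F}$: we have $X \in \mathcal{G}$ and $\varnothing \notin \mathcal{G}$ (as $\varnothing$ lies in none of the $\mathcal{H}$), upward closure is immediate, and if $A, B \in \mathcal{G}$ then, $\mathcal{C}$ being a chain, both already lie in a common $\mathcal{H} \in \mathcal{C}$, so $A \cap B \in \mathcal{H} \subseteq \mathcal{G}$. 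Thus $\mathcal{G} \in \mathcal{P}$ is an upper bound for $\mathcal{C}$, and Zorn's Lemma provides a maximal element $\mathcal{U}$ of $\mathcal{P}$.

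It then remains to see that $\mathcal{U}$ is an ultrafilter, i.e.\ a maximal filter on $X$. This is immediate from maximality in $\mathcal{P}$: any filter $\mathcal{U}'$ with $\mathcal{U}' \supsetneq \mathcal{U}$ would satisfy $\mathcal{U}' \supseteq \mathcal{F}$, hence $\mathcal{U}' \in \mathcal{P}$, contradicting maximality of $\mathcal{U}$ there. Since by construction $\mathcal{F} \subseteq \mathcal{U}$, this proves the Lemma. For completeness one can also check directly the equivalent characterisation: if $A \subseteq X$ satisfies $A \notin \mathcal{U}$, then either $A \cap F = \varnothing$ for some $F \in \mathcal{U}$ --- whence $F \subseteq X - A$ and so $X - A \in \mathcal{U}$ by upward closure --- or else $A$ meets every member of $\mathcal{U}$, in which case $\{ B \subseteq X : A \cap F \subseteq B \text{ for some } F \in \mathcal{U}\}$ is a filter properly containing $\mathcal{U}$ (it contains $\mathcal{U}$ by taking $F = B$, contains $A$ by taking $F = X$, and omits $\varnothing$ precisely because $A$ meets every member of $\mathcal{U}$), contradicting maximality; so exactly one of $A$, $X - A$ lies in $\mathcal{U}$.

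There is no serious obstacle here; the work is confined to the two routine verifications above --- that a union of a chain of filters is a filter, and that a filter maximal among those containing $\mathcal{F}$ is genuinely maximal (equivalently, satisfies the above dichotomy). The one point worth flagging is meta-mathematical: this derivation uses Zorn's Lemma, hence the full Axiom of Choice, whereas the Ultrafilter Lemma is known to be strictly weaker; but, as the surrounding discussion indicates, the present exposition is content with the Zorn's-Lemma proof and does not pursue the finer calibration of choice principles.
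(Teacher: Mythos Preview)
Your proof is correct and follows exactly the approach the paper indicates: the paper does not actually prove the Ultrafilter Lemma but simply remarks that ``the following is an easy consequence of Zorn's Lemma,'' and your argument is precisely that standard Zorn's-Lemma derivation. Your closing meta-mathematical remark about the relative strength of the Ultrafilter Lemma versus full Choice is also well-aligned with the paper's own discussion.
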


\begin{rem}
    Note that the Ultrafilter Lemma gives us the existence of non-principal ultrafilters, since one can apply it to the Fr\'echet filter. 
\end{rem}

We make the following definitions. 

\begin{defn}
\label{def:finite intersection property}
    Let $X$ be a set and $S \subseteq \Pset(X)$. Then $S$ is said to have the \emph{finite intersection property} if the intersection of finitely many elements of $S$ is never the empty set. 
\end{defn}

We will also say that a subset $S \subseteq \Pset(X)$ generates a filter if the following set is a filter 
\[
\mathcal{F}_S \coloneqq \left\{ A \subseteq X : \bigcap_{i=1}^k S_i \subseteq A \text{ for some }  S_1, \ldots, S_k \in S   \right\}.
\]
Thus $\mathcal{F}_S$ is the smallest ``upwards closed'' subset of $\Pset(X)$ containing all the finite intersections of elements of $S$. Saying that $\mathcal{F}_S$ is a filter is equivalent to saying that $S$ has the finite intersection property. Hence we obtain the following, which is 
the main tool we will use to construct filters, and thereby ultrafilters. 

\begin{prop}
\label{prop:filter generation}
    Let $X$ be a set and $S \subseteq \Pset(X)$. Then $S$ generates a filter on $X$ if and only if it satisfies the finite intersection property. 
\end{prop}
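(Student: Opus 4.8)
The plan is a direct verification that the set $\mathcal{F}_S$ written down just before the statement satisfies the filter axioms precisely when $S$ has the finite intersection property; there is no hard step here, the content being entirely bookkeeping. The only place where any care is needed is in checking the two non-degeneracy conditions $\varnothing \notin \mathcal{F}_S$ and $X \in \mathcal{F}_S$, since the remaining two axioms are essentially built into the shape of the definition of $\mathcal{F}_S$.

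For the forward direction, suppose $\mathcal{F}_S$ is a filter, so $\varnothing \notin \mathcal{F}_S$. If $S$ failed the finite intersection property there would be $S_1,\ldots,S_k \in S$ with $\bigcap_{i=1}^k S_i = \varnothing$; since $\varnothing \subseteq \varnothing$, the defining condition for membership in $\mathcal{F}_S$ would then force $\varnothing \in \mathcal{F}_S$, a contradiction. Hence $S$ has the finite intersection property.

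For the converse, assume $S$ has the finite intersection property and check the three axioms for $\mathcal{F}_S$. Upward closure is immediate: if $\bigcap_{i=1}^k S_i \subseteq A$ and $A \subseteq B \subseteq X$, then $\bigcap_{i=1}^k S_i \subseteq B$, so $B \in \mathcal{F}_S$. Closure under intersection follows by concatenating witnesses: if $\bigcap_{i=1}^k S_i \subseteq A$ and $\bigcap_{j=1}^\ell T_j \subseteq B$, then $S_1,\ldots,S_k,T_1,\ldots,T_\ell$ is a finite family of elements of $S$ whose intersection is contained in $A \cap B$, so $A \cap B \in \mathcal{F}_S$. For $X \in \mathcal{F}_S$, any single $S_1 \in S$ satisfies $S_1 \subseteq X$, giving $X \in \mathcal{F}_S$; the convention that an empty intersection of subsets of $X$ equals $X$ covers the degenerate case $S = \varnothing$ as well, provided $X \neq \varnothing$. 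Finally, $\varnothing \notin \mathcal{F}_S$: if $\varnothing \in \mathcal{F}_S$, then $\bigcap_{i=1}^k S_i \subseteq \varnothing$, i.e.\ $\bigcap_{i=1}^k S_i = \varnothing$, for some $S_1,\ldots,S_k \in S$, directly contradicting the finite intersection property.

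The only genuine subtlety I would flag is this degenerate bookkeeping around the empty index set: one must fix the convention that the empty intersection of subsets of $X$ equals $X$, so that $\mathcal{F}_S$ is nonempty and contains $X$, and one implicitly needs $X \neq \varnothing$, consistent with the standing assumption in the definition of a filter. With those conventions in place, every assertion follows by simply unwinding the definition of $\mathcal{F}_S$, and Proposition~\ref{prop:filter generation} follows.
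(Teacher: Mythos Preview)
Your proof is correct and follows the same approach as the paper, which simply remarks before the proposition that ``saying that $\mathcal{F}_S$ is a filter is equivalent to saying that $S$ has the finite intersection property'' and leaves the routine verification implicit. Your write-up just makes that verification explicit, including the edge cases around $S = \varnothing$ and $X \neq \varnothing$, which the paper does not spell out.
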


In this case, by the Ultrafilter Lemma, $S$ will be contained in some ultrafilter on $X$.

\subsection*{Ultralimits}

The notions of a limit via filters and ultrafilters are fairly general but we shall be concerned with the following more restrictive notion. Throughout this one should regard $X$ as an indexing set.

\begin{defn}
\label{def:limitviafilter}
    Let $X$ be a set, $f\colon X \to \R$ a function and $\mathcal{F}$ a filter on $X$. Then for $a \in \R$, we say that the \emph{limit} of $f$ with respect to $\mathcal{F}$ is $a$, written $\lim_{\mathcal{F}} f = a$,  if for all $0 < \epsilon \in \R$, 
    \[
    \{ x \in X : |f(x) - a| < \epsilon \} \in \mathcal{F}.
    \]
    In the case where $\mathcal{F}$ is an ultrafilter, this is called the \emph{ultralimit} of $f$ with respect to $\mathcal{F}$. 
\end{defn}

\begin{rem}
    We are not guaranteed that this limit exists, but when it does it is unique since the topology on $\R$ is Hausdorff. 
\end{rem}

\begin{rem}
We note that Definition~\ref{def:limitviafilter} can also be obtained in the following way; let $Y$ be a topological space and $\mathcal{F}_Y$ a filter on $Y$. Then we say that $\mathcal{F}_Y \to y \in Y$ if every (open) neighbourhood of $y$ is an element of $\mathcal{F}_Y$. 

If $f\colon X \to Y$ is a function and $\mathcal{F}_X$ is a filter on $X$ then the push-forward of $\mathcal{F}_X$ is $f^*{\mathcal{F}_X} \coloneqq \{ A \subseteq Y : f^{-1}(A) \in \mathcal{F}_X\}$. This is a filter on $Y$ (and an ultrafilter when $\mathcal{F}_X$ is an ultrafilter).  

The notion of limit in Definition~\ref{def:limitviafilter} is the same as saying that $f^*{\mathcal{F}_X} \to y$. However, the formulation in Definition~\ref{def:limitviafilter} suits us better since we will keep the set $X$ constant and consider various functions $f$ whose limits are all defined via the same ultrafilter on $X$. In particular, we will be able to utilise an algebra of limits---Lemma~\ref{lem:algebraoflimits}. 
\end{rem}

The following is straightforward to prove: 

\begin{lem}
\label{lem:algebraoflimits}
    Let $X$ be a set and $\mathcal{U} $ an ultrafilter on $X$.
    \begin{enumerate}
        \item If $f\colon X \to \R$ is bounded, then $\lim_{\mathcal{U}} f$ exists and is unique. 
        \item If $f,g\colon X \to \R$ and $a, b \in \R$, then $\lim_{\mathcal{U}}(af+bg) = a \lim_{\mathcal{U}} f + b \lim_{\mathcal{U}} g$, whenever these limits exist. 
        \item If $\mathcal{U}$ is non-principal then $\lim_{\mathcal{U}} f$ is a limit point of $f$, where $y \in \R$ is called a \emph{limit point} of $f$ if $f^{-1}(O)$ is infinite for any open neighbourhood $O$ of $y$. 
        \item Conversely, if $y$ is a limit point of $f$, then there is some non-principal ultrafilter $\mathcal{U}$ such that $\lim_{\mathcal{U}} f = y$. 
        \item If $\mathcal{U}$ is principal and hence generated by some $\{ x_0 \}$, then $\lim_{\mathcal{U}}f = f(x_0)$.
    \end{enumerate}
\end{lem}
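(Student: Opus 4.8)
The plan is to dispatch the five parts in order; only part~(i) carries genuine content (the completeness of~$\R$), after which the rest are short $\epsilon$-chases that use only that an ultrafilter is closed under finite intersection, is upward closed, and contains exactly one of $A$ and $X\setminus A$ for every $A\subseteq X$.

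For~(i), boundedness gives $f(X)\subseteq[m,M]$, and uniqueness was already noted. For existence I would set
\[
a \coloneqq \inf\{t\in\R : f^{-1}((-\infty,t))\in\mathcal{U}\},
\]
a set that is nonempty (it contains $M+1$, since $f^{-1}((-\infty,M+1))=X$) and bounded below by~$m$, so $a\in\R$ is well defined. That set is an up-set in $\R$ with infimum~$a$, so $f^{-1}((-\infty,a+\epsilon))\in\mathcal{U}$ for every $\epsilon>0$, while $f^{-1}((-\infty,a-\epsilon))\notin\mathcal{U}$ and hence its complement $f^{-1}([a-\epsilon,\infty))\in\mathcal{U}$. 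Intersecting two such sets (with a slightly smaller radius on one side to get a strict inequality) shows $\{x:|f(x)-a|<\epsilon\}$ contains a member of~$\mathcal{U}$, hence lies in $\mathcal{U}$; so $\lim_{\mathcal{U}}f=a$. Repeated bisection of $[m,M]$, always keeping the half whose preimage is in~$\mathcal{U}$, is an equally good alternative.

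Parts~(ii), (iii) and~(v) are then immediate. For~(ii), given $\epsilon>0$ I would take $\delta=\epsilon/(|a|+|b|+1)$, note that the sets where $f$ is within $\delta$ of $\lim_{\mathcal{U}}f$ and $g$ within $\delta$ of $\lim_{\mathcal{U}}g$ both lie in~$\mathcal{U}$, hence so does their intersection, on which the triangle inequality bounds $|af+bg-(a\lim_{\mathcal{U}}f+b\lim_{\mathcal{U}}g)|$ by~$\epsilon$. For~(iii), if $\lim_{\mathcal{U}}f=a$ exists and $O\ni a$ is open, then $f^{-1}(O)$ contains $\{x:|f(x)-a|<\epsilon\}\in\mathcal{U}$ for small~$\epsilon$, so $f^{-1}(O)\in\mathcal{U}$; a non-principal ultrafilter has no finite members, so $f^{-1}(O)$ is infinite and $a$ is a limit point. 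For~(v), if $\mathcal{U}=\langle\{x_0\}\rangle$ then $\{x:|f(x)-f(x_0)|<\epsilon\}$ contains $x_0$, hence lies in $\mathcal{U}$, for every $\epsilon>0$.

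The remaining part~(iv) is where the Ultrafilter Lemma enters: given a limit point~$y$, I would form
\[
S \coloneqq \{f^{-1}((y-\tfrac1n,y+\tfrac1n)) : n\in\N\}\cup\{C\subseteq X : X\setminus C \text{ finite}\},
\]
observe that any finite intersection of members of~$S$ equals $f^{-1}((y-\tfrac1N,y+\tfrac1N))\cap C$ for some cofinite~$C$, which is infinite because $y$ is a limit point, so $S$ has the finite intersection property; then Proposition~\ref{prop:filter generation} and Theorem~\ref{thm:UL} yield an ultrafilter $\mathcal{U}\supseteq S$, which is non-principal because it contains the Fr\'echet filter, and $f^{-1}((y-\tfrac1n,y+\tfrac1n))\in\mathcal{U}$ for all~$n$ forces $\lim_{\mathcal{U}}f=y$. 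The only real obstacle is the existence claim in~(i); everything else is bookkeeping with the filter axioms and the $\epsilon$-definition of the ultralimit.
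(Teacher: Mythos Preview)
Your proof is correct. The paper itself omits any proof of this lemma, simply prefacing it with ``The following is straightforward to prove''; your argument supplies exactly the routine details one would expect, and all five parts are handled soundly.
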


\begin{rem}
    More generally, limits via filters always obey the algebra of limits when these exist and make sense. That is, these limits preserve sums and products in the usual way. 
\end{rem}

\subsection*{Finite-by-abelian-by-finite (FAF) groups}

We end these preliminaries by a short discussion of finite-by-abelian-by-finite groups, which we shall be analysing in significant detail. For now, we simply justify the lack of bracketing in the terminology. 

As in the introduction, given classes $\mathcal{A}$ and $\mathcal{B}$ of groups, a group $G$ is said to be $\mathcal{A}$-by-$\mathcal{B}$ if $G$ admits a normal subgroup $N$ so that $N$ is in $\mathcal{A}$ and $G/N$ is in $\mathcal{B}$. We also recall that a group is called virtually $\mathcal{A}$ if it admits a finite index subgroup in $\mathcal{A}$. 

\begin{rem} \label{rem:core}
If $\mathcal{A}$ is a class of groups closed under taking (finite index) subgroups, then a virtually $\mathcal{A}$ group is the same as an $\mathcal{A}$-by-finite group, as the following construction shows.  For a group $G$ and a subgroup $H \leq G$, we define the core of $H$ (in $G$) as $\Core_G(H) \coloneqq \bigcap_{g \in G} H^g$---the intersection of all the conjugates of $H$ in $G$; thus $\Core_G(H)$ is the largest normal subgroup of $G$ contained in $H$.  Crucially, if $H$ is a finite index subgroup of~$G$ then $\Core_G(H)$ is also a finite index subgroup of $G$ which is now normal. Hence a virtually $\mathcal{A}$ group is the same as an $\mathcal{A}$-by-finite group, as claimed. For instance, this will be the case when the class $\mathcal{A}$ consists of abelian groups.
\end{rem}

By a \emph{FAF group} we mean a finite-by-abelian-by-finite group.  The following result shows that this definition does not depend on the chosen bracketing of the phrase ``finite-by-abelian-by-finite''.  Recall that a group is \emph{residually finite} if the intersection of all its (normal) finite index subgroups is trivial. 

\begin{lem}[Equivalence of bracketings for FAF groups: (FA)F = FAF = F(AF)] \label{lem:faf-equivalent}
    Let $G$ be a group. Then $G$ is (finite-by-abelian)-by-finite if and only it is finite-by-(abelian-by-finite).  Moreover, if a FAF group $G$ is either finitely generated or residually finite, then $G$ is abelian-by-finite.
\end{lem}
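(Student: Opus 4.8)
The plan is to prove the equivalence of bracketings by establishing both implications, and then to deduce the refinement by reducing, in each of the two cases, to a group that is nilpotent of class at most two with finite commutator subgroup. One direction of the equivalence is immediate: if $N$ is a finite normal subgroup of $G$ with $G/N$ abelian-by-finite, then by Remark~\ref{rem:core} there is a normal subgroup $A$ of $G$ with $N\le A$, $[G:A]<\infty$ and $A/N$ abelian; then $A$ is finite-by-abelian and $G/A$ is finite, so $G$ is (finite-by-abelian)-by-finite.

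For the converse, suppose $K\trianglelefteq G$ has finite index and is finite-by-abelian, witnessed by a finite normal subgroup $F$ of $K$ with $K/F$ abelian. The difficulty is that $F$ need not be normal in $G$, so I would replace $F$ by its normal closure $N$ in $G$. Since $K\le N_G(F)$, the index $[G:N_G(F)]$ is finite, so $F$ has only finitely many $G$-conjugates $F^{g_1},\dots,F^{g_t}$; each of these lies in $K$ (as $K\trianglelefteq G$) and is normal in $K$ (conjugation by $g_i$ is an automorphism of $G$ carrying $K$ to itself and $F$ to $F^{g_i}$). Hence $N=F^{g_1}\cdots F^{g_t}$ is a product of finitely many finite normal subgroups of $K$, so it is a finite normal subgroup of $K$, and $N\trianglelefteq G$ by construction. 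Now $K/N$ is a quotient of the abelian group $K/F$, hence abelian, while $(G/N)/(K/N)\cong G/K$ is finite; so $G/N$ is abelian-by-finite with $N$ finite and normal, i.e.\ $G$ is finite-by-(abelian-by-finite).

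For the ``moreover'' statement, let $G$ be FAF and, using the equivalence just proved, fix a finite normal subgroup $N$ of $G$ with $G/N$ abelian-by-finite; by Remark~\ref{rem:core} pick a normal subgroup $B$ of $G$ with $N\le B$, $[G:B]<\infty$ and $B/N$ abelian, so that $B'\le N$ is finite. If $G$ is finitely generated then so is $B$, and for every $b\in B$ the conjugacy class $\{b^x:x\in B\}$ lies in the finite coset $bB'$, so $C_B(b)$ has finite index in $B$; taking $b$ through a finite generating set shows $Z(B)=\bigcap_b C_B(b)$ has finite index in $B$, hence in $G$, and $Z(B)$ is abelian, so $G$ is abelian-by-finite. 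If instead $G$ is residually finite, set $C:=C_B(B')$, which has finite index in $G$ and is residually finite; since $C$ centralises $B'\supseteq C'$, the subgroup $C'$ is central in $C$, so $C$ is nilpotent of class $\le 2$ with $C'$ finite. Choose a finite-index normal subgroup $M$ of $C$ with $M\cap C'=1$ and let $q\colon C\to Q:=C/M$; then $q$ is injective on $C'$, and $q(C')=[Q,Q]=Q'$ since $q$ is surjective, so $q$ restricts to an isomorphism $C'\xrightarrow{\sim}Q'$. Because $C$ and $Q$ have class $\le 2$, the commutator maps induce biadditive alternating forms $\beta_C\colon C/C'\times C/C'\to C'$ and $\beta_Q\colon Q/Q'\times Q/Q'\to Q'$, the radical of $\beta_C$ being $Z(C)/C'$, and $q$ intertwines $\beta_C$ with $\beta_Q$. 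If $u\in\ker\bar q$, where $\bar q\colon C/C'\to Q/Q'$ is the map induced by $q$, then $q(\beta_C(u,v))=\beta_Q(\bar q(u),\bar q(v))=0$ for every $v$, so $\beta_C(u,v)=1$ for every $v$ by injectivity of $q$ on $C'$, i.e.\ $u\in Z(C)/C'$; thus $\ker\bar q\subseteq Z(C)/C'$, and since $\ker\bar q$ has finite index in $C/C'$ (as $Q/Q'$ is finite), so does $Z(C)/C'$, whence $Z(C)$ has finite index in $C$ and in $G$, and again $G$ is abelian-by-finite.

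I expect the two genuinely nontrivial points to be: in the equivalence of bracketings, verifying that the normal closure of $F$ remains finite (this is where a Neumann-type phenomenon enters, through $[G:N_G(F)]\le[G:K]<\infty$, giving finitely many conjugates whose product is still finite); and, in the residually finite case of the refinement, the commutator-form argument, where residual finiteness is used precisely to obtain a finite quotient on which $C'$---equivalently the form $\beta_C$---injects, which is what forces the centre to have finite index. The finitely generated case, and the direction F(AF) $\Rightarrow$ (FA)F, are routine.
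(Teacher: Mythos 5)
Your proof is correct, and in two of the four sub-arguments it takes a genuinely different route from the paper, so a comparison is worthwhile.

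For F(AF) $\Rightarrow$ (FA)F, your pullback via Remark~\ref{rem:core} is essentially the paper's restriction-of-the-short-exact-sequence argument; no difference of substance.

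For (FA)F $\Rightarrow$ F(AF), the paper avoids your normal-closure computation entirely by noting that ``$K$ is finite-by-abelian'' is equivalent to ``$[K,K]$ is finite'', and then taking $[K,K]$ as the desired finite normal subgroup: it is characteristic in $K$, hence normal in $G$, and $K/[K,K]$ is abelian. Your replacement of $F$ by the product of its finitely many $G$-conjugates is correct (the verification that each $F^g$ is normal in $K$ uses exactly $K\trianglelefteq G$ and $F\trianglelefteq K$), and it is more elementary in the sense of not invoking the derived subgroup; but the derived subgroup route is shorter and bypasses the bookkeeping about conjugates and products of normal subgroups.

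For the finitely generated case of the ``moreover'' statement, your centraliser argument is the same as the paper's (with the minor and correct adjustment that you take $B$-conjugacy classes rather than $G$-conjugacy classes when comparing with $bB'$).

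For the residually finite case, the paper's argument is shorter and structurally different. It uses residual finiteness once, to find a finite-index subgroup $H\le G$ with $H\cap N=\{1\}$, whence $H$ embeds into $G/N$, which is abelian-by-finite; so $H$ (and hence $G$) has a finite-index abelian subgroup. Your argument instead reduces to $C=C_B(B')$, which is class-two nilpotent with finite derived subgroup and residually finite, then uses residual finiteness to produce a finite quotient $Q=C/M$ on which $C'$ injects, and deduces via the induced alternating commutator form $\beta_C$ on $C/C'$ (with radical $Z(C)/C'$) that $Z(C)$ has finite index. This is correct, and conceptually it highlights the bilinear-form structure of class-two groups, which is a useful perspective (and close in spirit to Remark~\ref{rem:FCAF}); but it is heavier machinery than the paper's one-line embedding $H\hookrightarrow G/N$. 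In particular, note that the paper never needs to pass to a class-two subgroup in the residually finite case, whereas your route does.

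In short: the proof is right, and your two alternative routes are sound; the paper's choices ($[K,K]$ in place of the normal closure; the $H\hookrightarrow G/N$ embedding in place of the form argument) yield a noticeably shorter proof.
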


\begin{proof}
    Suppose first that $G$ is finite-by-(abelian-by-finite).  Then we have short exact sequences of groups
    \[
    1 \to N \to G \xrightarrow{\pi_H} H \to 1 \qquad\text{and}\qquad 1 \to A \to H \xrightarrow{\pi_Q} Q \to 1
    \]
    with $N$, $Q$ finite and $A$ abelian.  Viewing $A$ as a normal subgroup of $H$ and restricting the first sequence to $G_0 \coloneqq \pi_H^{-1}(A)$, we get a short exact sequence $1 \to N \to G_0 \to A \to 1$ and so $G_0$ is finite-by-abelian, while $G/G_0 \cong Q$ is finite.  This shows that $G$ is (finite-by-abelian)-by-finite.

    Conversely, let $G$ be a (finite-by-abelian)-by-finite group.  Then we have a short exact sequence
    \[
    1 \to H \to G \to Q \to 1
    \]
    with $H$ finite-by-abelian and $Q$ finite.  Note that the fact that $H$ is finite-by-abelian means precisely that the derived subgroup $H' = [H,H]$ is finite.  Note that $H'$ is characteristic in $H$, so since $H$ is normal in $G$ it follows that $H'$ is normal in $G$ as well.  Since $H/H'$ is abelian and $G/H$ is finite, it follows that $G/H'$ is abelian-by-finite.  Thus $G$ is finite-by-(abelian-by-finite), as required.

    Suppose now that $G$ is a finitely generated FAF group. Then $G$ has a finite index finite-by-abelian normal subgroup $H$, and since $G$ is finitely generated, so is $H$; let $\{s_1,\ldots,s_k\}$ be a finite generating set of $H$.  Since $H$ is finite-by-abelian it follows that $|H'| < \infty$, and in particular the conjugacy class of each $s_i$, $\{ g^{-1}s_ig : g \in G \} = \{ s_i [s_i,g] : g \in G \}$, is contained in $s_iH'$ and so is finite.  It follows that $C_H(s_i)$ has finite index in $H$ for $1 \leq i \leq k$, and therefore the centre $Z(H) = \bigcap_{i=1}^k C_H(s_i)$ also has finite index in $H$.  Since $Z(H)$ is characteristic in $H$ and $H$ is normal in $G$, it follows that $Z(H)$ is normal in $G$, and since both $[H:Z(H)]$ and $[G:H]$ are finite, it follows that so is $[G:Z(H)]$.  As $Z(H)$ is clearly abelian, it follows that $G$ is abelian-by-finite, as required.

    Finally, suppose that $G$ is a residually finite FAF group.  Then $G$ has a finite normal subgroup $N$ such that $G/N$ is abelian-by-finite.  Since $N$ is finite and $G$ is residually finite, $G$ contains a finite index subgroup $H$ such that $H \cap N = \{1\}$.  It follows that the composite $H \hookrightarrow G \twoheadrightarrow G/N$ is injective, implying that $H$ is isomorphic to a subgroup of $G/N$.  Since $G/N$ is abelian-by-finite, it follows that $H$ has a finite index abelian subgroup $A_0$, which is thus a finite index abelian subgroup of $G$.  In particular, $G$ has a normal finite index abelian subgroup $A = \Core_G(A_0)$, and so is abelian-by-finite, as required.
\end{proof}

\begin{rem} \label{rem:FCAF}
    It can be seen from the proof that a FAF group $G$ contains a finite index normal subgroup $H$ that has a finite derived subgroup $H'$.  The fact that $H'$ is finite then implies that so is $\Aut(H')$, and in particular the kernel of the map $H \to \Aut(H')$ sending $h \in H$ to conjugation by $h$ has finite index in $H$.  But this kernel is precisely $C_H(H')$; moreover, since $H'$ is characteristic in $H$, so is $C_H(H')$.  In particular, $C_H(H')$ is characteristic and of finite index in $H$, and $H$ is normal and of finite index in $G$, implying that $C_H(H')$ is a normal finite index subgroup of $G$.  Moreover, $C_H(H')/Z(H')$ is isomorphic to the subgroup $C_H(H')H'/H'$ of $H/H'$, and in particular is abelian.  It follows, by considering $N_0 = Z(H')$ and $H_0 = C_H(H')$, that any FAF group $G$ has a finite normal subgroup $N_0 \unlhd G$ and a finite index normal subgroup $H_0 \unlhd G$ containing $N_0$ such that $N_0$ is central in $H_0$ and $H_0/N_0$ is abelian.

    That is, any FAF group is virtually a $2$-step nilpotent group whose derived subgroup (which is central) is finite. 
\end{rem}

\section{Construction of coset correct means}
\label{sec:CCMs}

The goal of this section is to prove that every group admits a CCM, defined below.  Our argument here is via ultrafilters, using the Ultrafilter Lemma (Theorem~\ref{thm:UL}), but we also give an argument using the Hahn--Banach Theorem (Theorem~\ref{thm:hahn-banach}) in the Appendix.

\begin{defn} \label{def:ccm}
Let $G$ be a group. A \emph{coset correct mean} (\emph{CCM}) on $G$ is a function $\mu \colon \Pset(G) \to [0,1]$ satisfying the following properties: 
\begin{enumerate}
    \item \label{zeroth-condition} $\mu(G) =1$ (\emph{probability mean});
    \item \label{first-condition} if $A, B \subseteq G $ are disjoint, then $\mu(A \sqcup B) = \mu(A) + \mu(B)$ (\emph{finitely additive});
    \item \label{second-condition} $\mu(gH) = \frac{1}{[G:H]}$ for all subgroups $H \leq G$ and all $g \in G$ (\emph{left invariant on subgroups)}.
\end{enumerate}
As always, $\frac{1}{[G:H]}$ is understood to be zero when $H$ has infinite index.    
\end{defn}

More generally, let $\mathcal{R}$ be a subring of the Boolean ring $\Pset(G)$.  We similarly define a \emph{coset correct mean} (\emph{CCM}) on $\mathcal{R}$ to be a function $\mu\colon \mathcal{R} \to [0,1]$ satisfying the conditions \ref{zeroth-condition}--\ref{second-condition} above whenever $\mu$ is defined on the corresponding subsets of $G$.

\begin{rem} \label{rem:left equals right}
    We also note that a \textit{left} CCM must also be a \textit{right} CCM since 
    \[
    \mu(Hg) = \mu(g (g^{-1} H g)) = \frac{1}{[G: H^g]} = \frac{1}{[G:H]} = \mu(H). 
    \]
That is, left invariance on cosets is equivalent to right invariance on cosets. 
\end{rem}

The strategy in our construction is the following: given a group $G$, we denote by $\Pfin(G)$ the set of finite subsets of $G$, which will serve as an ``indexing set''. We then define functions for each $A \subseteq G$, $f_A\colon \Pfin(G) \to [0,1]$, via
\[
f_A(F) = \frac{|A \cap F|}{|F|}.
\]
We then take an ultrafilter $\mathcal{U}$ on $\Pfin(G)$ and define $\mu(A) = \lim_{\mathcal{U}} f_A$. It is immediate that $\mu$ has total weight one, since $f_G$ is the constant function and it is also clear that $\mu$ is finitely additive since $f_{A \sqcup B} = f_A + f_B$ (using the algebra of ultralimits, Lemma~\ref{lem:algebraoflimits}). 

All the remains to show is that $\mu$ is left invariant on cosets. This is not true for all ultrafilters so the goal is to find some (ultra)-filter where it is true. However, the property that $\mu$ is left invariant on cosets determines certain subsets of $\Pfin(G)$ that would need to belong to the (ultra)-filter and hence our strategy becomes simply showing that these sets generate a filter, as in Proposition~\ref{prop:filter generation}. We actually show this in some strong sense in the following:

\begin{lem} \label{lem:index-consistent-filter}
Given a subgroup $H \leq G$ and a real number $\varepsilon > 0$, we define
\[
\mathcal{F}_\varepsilon(H) \coloneqq \left\{ F \in \Pfin(G) : \left| \frac{|gH \cap F|}{|F|} - \frac{1}{[G:H]} \right| < \varepsilon \text{ for all } g \in G \right\}.
\]
Then the collection $\mathcal{G} = \mathcal{G}_G \coloneqq \{ \mathcal{F}_\varepsilon(H) : H \leq G, \varepsilon > 0 \}$ generates a filter on $\Pfin(G)$, the set of finite subsets of $G$.
\end{lem}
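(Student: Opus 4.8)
The plan is to reduce the statement to the finite intersection property via Proposition~\ref{prop:filter generation}: I must show that, for any finitely many subgroups $H_1,\dots,H_n\le G$ and reals $\varepsilon_1,\dots,\varepsilon_n>0$, the intersection $\bigcap_{i=1}^n\mathcal{F}_{\varepsilon_i}(H_i)$ is nonempty, i.e.\ there is a single finite $F\subseteq G$ that is simultaneously almost-equidistributed across the cosets of every $H_i$. I would produce $F$ as a disjoint union $F=\bigsqcup_{\ell=1}^N y_\ell T$ of left-translates of one fixed finite set $T$, treating finite-index and infinite-index subgroups by different means.

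First I would handle the finite-index subgroups, in fact exactly. Let $K$ be the intersection of those $H_i$ that have finite index (and $K=G$ if there are none); then $d\coloneqq[G:K]<\infty$. Fix a left transversal $T=\{t_1,\dots,t_d\}$ of $K$. The key observation is that for every $x\in G$ the set $xT$ is again a left transversal of $K$, since left multiplication by $x$ permutes the left cosets of $K$; hence $xT$ meets each left coset of $K$ exactly once, and therefore meets each left coset of every finite-index $H_i$ (which contains $K$, so is a union of $[H_i:K]$ left cosets of $K$) in exactly $[H_i:K]$ points. Consequently, for any disjoint union $F=\bigsqcup_{\ell=1}^N y_\ell T$ one gets $|gH_i\cap F|=N[H_i:K]$ and $|F|=Nd$ for every $g$, so $\frac{|gH_i\cap F|}{|F|}=\frac{[H_i:K]}{d}=\frac{1}{[G:H_i]}$ exactly, whatever the $y_\ell$ are, provided the union stays disjoint.

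The substantive step is choosing the $y_\ell$ so as to control the infinite-index subgroups, and this is where Neumann's Theorem enters. Put $B\coloneqq\bigcup_{i:\,[G:H_i]=\infty}\;\bigcup_{1\le j,j'\le d} t_{j'}H_i t_j^{-1}$. Since $t_{j'}H_i t_j^{-1}=(t_{j'}t_j^{-1})(t_jH_i t_j^{-1})$ is a left coset of the conjugate $t_jH_i t_j^{-1}$, which has the same infinite index as $H_i$, the set $B$ is a finite union of cosets of infinite-index subgroups; note also $t_{j'}t_j^{-1}\in B$ for all $j,j'$ whenever $B\neq\varnothing$. I would choose $y_1,y_2,\dots$ greedily so that $y_{\ell'}^{-1}y_\ell\notin B$ for all $\ell\neq\ell'$: if $y_1,\dots,y_\ell$ have been chosen, then $\bigcup_{s\le\ell} y_sB$ is still a finite union of cosets of infinite-index subgroups, so by Theorem~\ref{thm:neumann} it is not all of $G$, and I may pick $y_{\ell+1}$ outside it. (If $B=\varnothing$, there are no infinite-index $H_i$ and $F=T$ already works by the previous paragraph.) Because $t_{j'}t_j^{-1}\in B$, this choice makes the translates $y_\ell T$ pairwise disjoint; and if a coset $gH_i$ of an infinite-index $H_i$ met both $y_\ell T$ and $y_{\ell'}T$ with $\ell\neq\ell'$, then $y_{\ell'}^{-1}y_\ell\in t_{j'}H_i t_j^{-1}\subseteq B$, a contradiction, so each such coset meets at most one $y_\ell T$; hence $|gH_i\cap F|\le|T|=d$ and $\frac{|gH_i\cap F|}{|F|}\le\frac1N$.

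Finally, taking $N$ with $\frac1N<\min_i\varepsilon_i$, the set $F=\bigsqcup_{\ell=1}^N y_\ell T$ belongs to every $\mathcal{F}_{\varepsilon_i}(H_i)$, which proves the finite intersection property and hence that $\mathcal{G}$ generates a filter. The only real obstacle is the infinite-index case: one needs the fact that a finite union of cosets of infinite-index subgroups never exhausts $G$, so that the greedy selection of the $y_\ell$ can continue—this is exactly Theorem~\ref{thm:neumann}. The finite-index part, by contrast, is dealt with cleanly and exactly by the transversal observation.
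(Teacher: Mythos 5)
Your proof is correct and rests on the same engine as the paper's: reduce to the finite intersection property, split the $H_i$ into finite-index and infinite-index families, build $F$ so that it equidistributes exactly over cosets of the finite-index ones and meets cosets of infinite-index ones rarely, and invoke Neumann's Theorem~\ref{thm:neumann} to guarantee the greedy selection never gets stuck. The one genuine difference is in the packaging of the construction. The paper builds $F$ element by element, placing $r$ elements in each coset of $H = \bigcap_{i>m} H_i$ and, at each step, avoiding the finitely many cosets $f_tH_j$ of the infinite-index $H_j$ that are already occupied. You instead build $F$ in \emph{blocks}: you fix a transversal $T$ of $K = \bigcap_{\text{fin}} H_i$ once and for all, note that every translate $xT$ is again a transversal of $K$ (so the finite-index count is automatic and exact for any choice of $y_\ell$), and then only need to choose the block-origins $y_\ell$ to avoid a single fixed finite ``bad set'' $B$ of differences; the symmetry $B = B^{-1}$ is what lets the one-sided condition $y_{\ell+1}\notin \bigcup_{s\le\ell}y_sB$ suffice. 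This is a slightly cleaner separation of concerns---the finite-index equidistribution is handled purely algebraically by the transversal observation, while Neumann is needed only to propagate the block selection---but the two constructions are interchangeable and yield the same bound $|gH_i\cap F|/|F|\le 1/N$ in the infinite-index case.
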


\begin{proof}
Note that, by Proposition~\ref{prop:filter generation}, it is enough to show that $\mathcal{G}$ satisfies the finite intersection property, Definition~\ref{def:finite intersection property}. Thus, let $\varepsilon_1,\ldots,\varepsilon_k > 0$ and $H_1,\ldots,H_k \leq G$. We then need to show that $\bigcap_{i=1}^k \mathcal{F}_{\varepsilon_i}(H_i) \neq \varnothing$.

After possibly permuting the indices, we may assume that, for some $m$ with $0 \leq m \leq k$, we have $[G:H_i] = \infty$ if $i \leq m$ and $[G:H_i] < \infty$ if $i > m$. Then $H \coloneqq \bigcap_{i=m+1}^k H_i$ is a finite index subgroup of $G$. Let $N = r[G:H]$ for some integer $r \geq 1$ large enough so that $\frac{1}{N} < \varepsilon_i$ for $1 \leq i \leq m$. We aim to construct a subset $F \in \mathcal{F}$ of cardinality $N$ that contains at most one element in each left coset of $H_i$ for $i \leq m$, and exactly $r$ elements in each left coset of $H$. This will imply that for each $g \in G$, we have $\frac{|gH_i \cap F|}{|F|} \leq \frac{1}{N} < \varepsilon_i$ for $i \leq m$, and $\frac{|gH_i \cap F|}{|F|} = \frac{r[H_i:H]}{r[G:H]} = \frac{1}{[G:H_i]}$ for $i > m$, and so $F \in \bigcap_{i=1}^k \mathcal{F}_{\varepsilon_i}(H_i)$, as required.

Let $H = g_1H,\ldots,g_nH$ be all the left cosets of $H$, where $n = [G:H]$. We construct the subset $F = \{ f_1,\ldots,f_N \}$, where $f_{ir+j} \in g_{i+1}H$ for $0 \leq i \leq n-1$ and $1 \leq j \leq r$, inductively as follows. Having constructed $\{ f_1,\ldots,f_{s-1} \}$ for some $s \geq 1$, let $f_s \in g_iH$, where $i = \lceil \frac{s}{r} \rceil$, be such that $f_sH_j \neq f_tH_j$ for any $j \leq m$ and $t < s$. Such a choice is possible: indeed, if it wasn't then we would have 
\[
g_iH \subseteq \bigcup_{j=1}^m \bigcup_{t=1}^{s-1} f_tH_j,
\]
i.e.\ $g_iH$ could be covered by a finite union of cosets of its infinite index subgroups of $G$, which is impossible by Theorem~\ref{thm:neumann} since $H$ has finite index in $G$.

Thus the subset $F = \{ f_1,\ldots,f_N \} \in \bigcap_{i=1}^k \mathcal{F}_{\varepsilon_i}(H_i)$ exists, as required.
\end{proof}

\begin{thm}
\label{thm: existence of CCMs}
    The Ultrafilter Lemma implies that every group $G$ admits a CCM.
\end{thm}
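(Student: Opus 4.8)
The plan is to carry out the construction sketched just before Lemma~\ref{lem:index-consistent-filter}. First I would invoke Lemma~\ref{lem:index-consistent-filter} to conclude that the collection $\mathcal{G} = \{\mathcal{F}_\varepsilon(H) : H \le G,\ \varepsilon > 0\}$ has the finite intersection property and hence, by Proposition~\ref{prop:filter generation}, generates a filter $\mathcal{F}_{\mathcal{G}}$ on $\Pfin(G)$. The Ultrafilter Lemma (Theorem~\ref{thm:UL}) then supplies an ultrafilter $\mathcal{U}$ on $\Pfin(G)$ with $\mathcal{F}_{\mathcal{G}} \subseteq \mathcal{U}$. With this $\mathcal{U}$ fixed, I would set $\mu(A) \coloneqq \lim_{\mathcal{U}} f_A$ for every $A \subseteq G$, where $f_A \colon \Pfin(G) \to [0,1]$ is given by $f_A(F) = |A \cap F|/|F|$. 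Since each $f_A$ is bounded (its values lie in $[0,1]$), part~(i) of Lemma~\ref{lem:algebraoflimits} guarantees that this ultralimit exists and is unique, and it lies in $[0,1]$ because every $f_A(F)$ does; so $\mu \colon \Pset(G) \to [0,1]$ is a well-defined function.

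Next I would verify the three defining properties of a CCM. For the probability mean property, $f_G$ is the constant function $1$, whence $\mu(G) = 1$. For finite additivity, if $A, B \subseteq G$ are disjoint then $|(A \sqcup B) \cap F| = |A \cap F| + |B \cap F|$ for every $F \in \Pfin(G)$, i.e.\ $f_{A \sqcup B} = f_A + f_B$ pointwise, so part~(ii) of Lemma~\ref{lem:algebraoflimits} (additivity of ultralimits) gives $\mu(A \sqcup B) = \mu(A) + \mu(B)$.

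The only property requiring the special choice of $\mathcal{U}$ is left invariance on subgroups. Fix $H \le G$ and $g \in G$; I must show $\lim_{\mathcal{U}} f_{gH} = \frac{1}{[G:H]}$. Given $\varepsilon > 0$, the set $\mathcal{F}_\varepsilon(H)$ belongs to $\mathcal{F}_{\mathcal{G}} \subseteq \mathcal{U}$ by construction, and by the definition of $\mathcal{F}_\varepsilon(H)$ every $F \in \mathcal{F}_\varepsilon(H)$ satisfies $\bigl| f_{gH}(F) - \frac{1}{[G:H]} \bigr| < \varepsilon$. Thus $\mathcal{F}_\varepsilon(H) \subseteq \{ F \in \Pfin(G) : |f_{gH}(F) - \tfrac{1}{[G:H]}| < \varepsilon \}$, and since filters are closed upwards the latter set also lies in $\mathcal{U}$. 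As this holds for every $\varepsilon > 0$, Definition~\ref{def:limitviafilter} yields $\mu(gH) = \lim_{\mathcal{U}} f_{gH} = \frac{1}{[G:H]}$, with the convention that this is $0$ when $[G:H] = \infty$. Hence $\mu$ is a CCM on $G$.

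As for where the difficulty lies: essentially all of the substance has already been absorbed into Lemma~\ref{lem:index-consistent-filter}, whose proof rests on Neumann's covering theorem (Theorem~\ref{thm:neumann})---the fact that a coset of a finite-index subgroup cannot be covered by finitely many cosets of infinite-index subgroups. Once that finite-intersection-property statement is in hand, the present argument is a routine assembly: the algebra of ultralimits (Lemma~\ref{lem:algebraoflimits}) delivers the probability-mean and additivity axioms automatically, and upward closure of the ultrafilter converts ``$\mathcal{F}_\varepsilon(H) \in \mathcal{U}$'' into the required ultralimit. So I expect no genuine obstacle beyond citing the preliminary lemmas correctly; the one point to keep an eye on is that each $f_A$ must be bounded for its ultralimit to exist, which is immediate here since $0 \le f_A \le 1$.
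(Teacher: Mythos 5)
Your proposal is correct and follows essentially the same route as the paper: Lemma~\ref{lem:index-consistent-filter} plus the Ultrafilter Lemma to get $\mathcal{U}$, then $\mu(A)=\lim_{\mathcal{U}} f_A$ with the algebra of ultralimits handling additivity and the membership $\mathcal{F}_\varepsilon(H)\in\mathcal{U}$ handling coset correctness. You merely spell out the verification that the paper leaves as ``clear from the remarks above,'' and your expanded details are accurate.
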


\begin{proof}
    Let $\mathcal{U}$ be any ultrafilter containing $\mathcal{G}_G$ from Lemma~\ref{lem:index-consistent-filter}, the existence of which follows from the Ultrafilter Lemma (Theorem~\ref{thm:UL}). As above, we have functions for each $A \subseteq G$, $f_A: \Pfin(G) \to [0,1]$ defined by
    \[
    f_A(F) = \frac{|A \cap F|}{|F|}.
    \]
    We then set, 
    \[
    \mu(A) \coloneqq \lim_{\mathcal{U}} f_A. 
    \]
    From the remarks above, it is clear that this is a finitely additive probability mean. Moreover, by construction, $\mu(gH) = \mu(H)$ for all subgroups $H$ and all $g \in G$. Hence $\mu$ is a CCM.
\end{proof}

\begin{rem}
Note that as long as $G$ is infinite, for any $\varepsilon > 0$ the subset $\mathcal{F}_\varepsilon(\{1\}) \in \mathcal{G}$ does not contain any $F \in \Pfin(G)$ of cardinality $\leq \frac{1}{\varepsilon}$. This implies that the filter constructed above is non-principal. 

Alternatively, if $\mathcal{U}$ is a principal ultrafilter on $\Pfin(G)$ then it is generated by some fixed $F_0 \in \Pfin(G)$; that is, $\mathcal{U} = \langle F_0 \rangle$. Then,  

\[
\lim_{\langle F_0 \rangle} \frac{|F_0 \cap F|}{|F|} = \frac{|F_0 \cap F_0|}{|F_0|} = 1.
\]
But $F_0$ is a disjoint union of cosets of the trivial subgroup and so a CCM will assign it density $\frac{|F_0|}{[G: \{ 1\} ]} = \frac{|F_0|}{|G|}$. Hence $\mathcal{U} = \langle F_0 \rangle$  can only define a CCM on $G$ if $F_0=G$ is finite.  

Thus we are observing that, when $G$ is infinite, our construction produces a non-principal ultrafilter and that \textit{any} ultrafilter used to construct a CCM must also be non-principal. 
\end{rem}

\begin{rem}
Instead of the subsets $\mathcal{F}_\varepsilon(H)$, given any $H \leq G$, $g \in G$ and $\varepsilon > 0$ one may consider the subset $\mathcal{F}_\varepsilon'(gH) \coloneqq \left\{ F \in \Pfin(G) : \left| \frac{|gH \cap F|}{|F|} - \frac{1}{[G:H]} \right| < \varepsilon \right\}$. Since clearly $\mathcal{F}_\varepsilon'(gH) \supseteq \mathcal{F}_\varepsilon(H)$ for all $H$, $g$ and $\varepsilon$, it follows that the collection $\mathcal{G}' \coloneqq \{ \mathcal{F}_\varepsilon'(gH) : H \leq G, g \in G, \varepsilon > 0 \}$ generates a filter (that is a sub-filter of that generated by $\mathcal{G}$).

In a sense, the filter we construct in Lemma~\ref{lem:index-consistent-filter} is the uniform version of what we need, as it deals with all cosets of a given subgroup at the same time rather than one-by-one. 
\end{rem}

While the following results are known, we present elementary proofs that are particularly suited to our context. This is not merely for completeness: the combinatorial nature of our setting, especially the role played by finite sets and the Boolean algebra they generate, makes it natural to revisit these results from a more foundational perspective. The reader may wish to compare with \cite{Hopfensperger}, particularly Theorems 2.6 and 5.9, where related arguments are given in a broader setting.

\begin{prop}
\label{prop:means are given by ultralimits}
    Let $X$ be a set and let $\mu$ be a finitely additive probability mean on $X$. That is, $\mu\colon \Pset(X) \to [0,1]$ is a function such that (i) $\mu(X)=1$ and (ii) $\mu(A \sqcup B) = \mu(A) + \mu(B)$ for all disjoint subsets $A, B $ of $X$. Moreover, suppose that $\mu(F) =0$ for any finite subset $F$ of $X$. 

    Then there exists an ultrafilter $\mathcal{U}$ on $\Pfin(X)$ such that for all $ A \subseteq X$,
    \[
    \mu(A) = \lim_{\mathcal{U}} \frac{|A \cap F|}{|F|}.
    \]
\end{prop}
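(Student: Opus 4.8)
The plan is to realise $\mu$ as an ultralimit of the density functionals $f_A(F) = |A \cap F|/|F|$, following the same template as in the construction of CCMs but now working in reverse: instead of prescribing which sets the filter must contain, we must extract a filter from the data of $\mu$ itself. The key observation is that for each $A \subseteq X$ and each $\varepsilon > 0$, the set
\[
\mathcal{F}_\varepsilon(A) \coloneqq \left\{ F \in \Pfin(X) : \left| \frac{|A \cap F|}{|F|} - \mu(A) \right| < \varepsilon \right\}
\]
is exactly the set that an ultrafilter $\mathcal{U}$ would need to contain in order for $\lim_\mathcal{U} f_A = \mu(A)$ to hold. So the strategy is: show that the collection $\{ \mathcal{F}_\varepsilon(A) : A \subseteq X, \varepsilon > 0 \}$ has the finite intersection property, invoke Proposition~\ref{prop:filter generation} and the Ultrafilter Lemma to get an ultrafilter $\mathcal{U}$ containing all of them, and then observe that this forces $\mu(A) = \lim_\mathcal{U} f_A$ for every $A$ (the ultralimit exists by Lemma~\ref{lem:algebraoflimits}(i) since $f_A$ is bounded in $[0,1]$, and it must equal $\mu(A)$ since every $\mathcal{F}_\varepsilon(A) \in \mathcal{U}$).

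So everything reduces to the finite intersection property. Given finitely many sets $A_1, \dots, A_k \subseteq X$ and tolerances $\varepsilon_1, \dots, \varepsilon_k > 0$, I must produce a single finite set $F$ that simultaneously approximates each $\mu(A_i)$ to within $\varepsilon_i$. The natural idea is to refine the $A_i$ into the Boolean algebra they generate: the atoms $B_1, \dots, B_m$ of this algebra (nonempty intersections $\bigcap_i A_i^{\pm}$) partition $X$ into finitely many pieces, and each $A_i$ is a union of atoms, so $\mu(A_i) = \sum_{j : B_j \subseteq A_i} \mu(B_j)$ by finite additivity. If I can find $F$ with $|F \cap B_j|/|F| \approx \mu(B_j)$ for every atom $j$, then summing gives $|F \cap A_i|/|F| \approx \mu(A_i)$ for every $i$. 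Thus I am reduced to the case where the $A_i$ themselves form a finite partition of $X$ with $\sum \mu(A_i) = 1$. In that case I want a large finite set $F$ with roughly $\mu(A_j) \cdot |F|$ elements in each $A_j$. Choose $N$ large (larger than $1/\min_j \varepsilon_j'$, where $\varepsilon_j'$ absorbs the refinement bookkeeping), and try to pick $n_j$ points from each $A_j$ with $n_j/N$ within the required tolerance of $\mu(A_j)$ and $\sum_j n_j = N$; this is possible by a standard rounding argument provided each $A_j$ with $\mu(A_j) > 0$ is actually nonempty (which it is, being an atom) and, more delicately, is \emph{infinite} whenever we need to draw more than a bounded number of points from it.

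The main obstacle is precisely this last point: an atom $B_j$ with $\mu(B_j) > 0$ could a priori be finite, and then I cannot extract arbitrarily many points from it. Here is where the hypothesis that $\mu$ vanishes on finite sets is essential: if $B_j$ were finite it would have $\mu(B_j) = 0$, a contradiction. So every atom of positive measure is infinite, and I can take $N$ as large as I like and still find enough points in each such atom; atoms of measure zero contribute nothing and can be assigned $n_j = 0$ (or at most a bounded, and hence negligible-after-dividing-by-$N$, number of points). With this in hand the rounding goes through: set $n_j = \lfloor \mu(B_j) N \rfloor$ for each atom, distribute the leftover $N - \sum_j n_j$ (which is at most $m$) among the positive-measure atoms, and choose $F$ to consist of $n_j$ distinct points from $B_j$; then $|F \cap B_j|/|F| = n_j/N$ differs from $\mu(B_j)$ by at most $(m+1)/N$, which is $< \varepsilon_i'$ for all $i$ by the choice of $N$. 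Pushing the error through the sum over atoms contained in each $A_i$ gives $F \in \bigcap_{i=1}^k \mathcal{F}_{\varepsilon_i}(A_i)$, completing the verification of the finite intersection property and hence the proof.
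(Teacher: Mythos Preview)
Your proposal is correct and follows essentially the same route as the paper's proof: define the sets $\mathcal{F}_\varepsilon(A)$, reduce the finite intersection property to the case of a finite partition by passing to the atoms of the Boolean algebra generated by $A_1,\ldots,A_k$, use the hypothesis $\mu(F)=0$ for finite $F$ to guarantee that positive-measure atoms are infinite, and then build $F$ by a rounding argument with $n_j \approx \mu(B_j)N$. The paper is slightly tidier in the bookkeeping (it distributes the leftover so that each $n_j$ differs from $\mu(B_j)N$ by at most~$1$, giving a clean $1/N$ error rather than your $(m+1)/N$), but the argument is the same.
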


\begin{proof}
    For any $\epsilon > 0$ and any $A \subseteq X$, consider the sets 
    \[
    \mathcal{F}_\varepsilon(A) = \left\{ F \in  \Pfin(X) :  \left| \frac{|F \cap A|}{|F|} - \mu(A) \right| < \varepsilon \right\} \subseteq \Pfin(X).
    \]
    Note that if these sets satisfy the finite intersection property (Definition~\ref{def:finite intersection property}) then they will be contained in some ultrafilter $\mathcal{U}$ on $\Pfin(X)$. It then follows, by construction, that 
    \[
    \mu(A) = \lim_{\mathcal{U}} \frac{|A \cap F|}{|F|}.
    \]
    Hence it is enough to show that these sets satisfy the finite intersection property. 

    To that end, let $A_1,\ldots,A_k \subseteq X$ and $\varepsilon_1,\ldots,\varepsilon_k > 0$. We then need to show that we have $\bigcap_{i=1}^k \mathcal{F}_{\varepsilon_i}(A_i) \neq \varnothing$.

    Given any function $f\colon \{1,\ldots,k\} \to \{0,1\}$, consider $B_f = \bigcap_{i=1}^k A_i^{(f(i))} \subseteq X$, where $A_i^{(0)} = X - A_i$ and $A_i^{(1)} = A_i$. Then the sets $B_f$ are pairwise disjoint and cover $X$. In other language, we are taking the Boolean subalgebra of $\Pset(X)$ generated by the $A_i$ and considering the atoms, which are exactly the $B_j$.

    Since $A_i$ is the union of all those $B_f$ for which $f(i) = 1$, it is clear by finite additivity of $\mu$ and the triangle inequality that we have
    \[
    \mathcal{F}_{\varepsilon_i}(A_i) \supseteq \bigcap_{\substack{f\colon \{1,\ldots,k\} \to \{0,1\} \\ f(i)=1}} \mathcal{F}_{\varepsilon_i/2^{k-1}}(B_f).
    \]
    In particular, if the intersection $\bigcap_{f\colon \{1,\ldots,k\} \to \{0,1\}} \mathcal{F}_\varepsilon(B_f)$ is non-empty, where $\varepsilon = \frac{\min\{\varepsilon_1,\ldots,\varepsilon_k\}}{2^{k-1}}$, then so is $\bigcap_{i=1}^k \mathcal{F}_{\varepsilon_i}(A_i)$.  We can therefore assume, without loss of generality, that the $A_i$ are pairwise disjoint and cover $X$, and that all the $\varepsilon_i$ are equal to a fixed $\varepsilon > 0$.

    Now suppose that $A_i$ are ordered so that we have $\mu(A_i) > 0$ for $i \leq m$ and $\mu(A_i) = 0$ for $i > m$. It then follows that $A_i$ is infinite for all $i \leq m$. Pick an integer $N > \frac{1}{\varepsilon}$, and for $1 \leq i \leq m$ let $F_i' \subset A_i$ be any subset of cardinality $\lfloor \mu(A_i)N \rfloor$. Note that we have $\sum_{i=1}^m \mu(A_i) = \sum_{i=1}^k \mu(A_i) = \mu(X) = 1$ since the $A_i$ are pairwise disjoint and cover $X$ and since $\mu$ is finitely additive.  It follows that
    \[
    N - m = \sum_{i=1}^m (\mu(A_i)N-1) \leq \sum_{i=1}^m \lfloor \mu(A_i)N \rfloor = \sum_{i=1}^m |F_i'| \leq \sum_{i=1}^m \mu(A_i)N = N.
    \]
    This implies that we have subsets $F_i \subset A_i$, each obtained by adding at most one element to $F_i'$, such that we have $\sum_{i=1}^m |F_i| = N$. Note that $\lfloor \mu(A_i)N \rfloor \leq |F_i| \leq \lfloor \mu(A_i)N \rfloor + 1$, and consequently $\big| |F_i|-\mu(A_i)N \big| \leq 1$.

    We now claim that $F = \bigcup_{i=1}^m F_i$ belongs to $\mathcal{F}_\varepsilon(A_i)$ for each $i$. Indeed, for $i > m$ we have $F \cap A_i = \varnothing$ and $\mu(A_i) = 0$, implying that $\left| \frac{|F \cap A_i|}{|F|} - \mu(A_i) \right| = 0 < \varepsilon$. On the other hand, for $i \leq m$ we have $F \cap A_i = F_i$ and therefore
    \[
    \left| \frac{|F \cap A_i|}{|F|} - \mu(A_i) \right| = \left| \frac{|F_i|}{N} - \mu(A_i) \right| = \frac{\big| |F_i| - \mu(A_i)N \big|}{N} \leq \frac{1}{N} < \varepsilon,
    \]
    as claimed. Thus we indeed have $\bigcap_{i=1}^k \mathcal{F}_\varepsilon(A_i) \neq \varnothing$.

    As we observed at the start, this is sufficient to prove the result. 
\end{proof}

\begin{rem}
    It is easy to verify that if $X=G$ is a group and the ultrafilter constructed above satisfies the following F{\o}lner condition, namely: $\lim_{\mathcal{U}} \frac{|F \oplus gF|}{|F|} =0$ for all $g \in G$, then the mean constructed will be a (left) invariant mean and $G$ will be amenable. However, notice that in the construction we can always insist that our finite sets satisfy $ F \cap gF = \varnothing$ and so $\lim_{\mathcal{U}} \frac{|F \oplus gF|}{|F|} = 2$, even if the mean itself is (left) invariant. We now do this explicitly.
\end{rem}

\begin{cor}
\label{cor:disjoint subsets}
Let $G$ be an infinite group and $\mu$ a finitely additive probability mean on $G$ with $\mu(F)=0$ for all finite $F\subset G$.  
Then, using the construction of the finite approximating sets in the proof of Proposition~\ref{prop:means are given by ultralimits}, we may choose each finite set $F$ so that, for any finite $S\subset G$ and any finite family of subsets $\mathcal{A}\subset \Pset(G)$,
\[
F \cap sF = \varnothing \quad \text{for all } 1\neq s\in S,
\]
while simultaneously 
\[
\left| \frac{|F \cap A|}{|F|} - \mu(A) \right| < \varepsilon
\quad \text{for all } A \in \mathcal{A}.
\]
In particular, there is an ultrafilter $\mathcal{U}$ on $\Pfin(G)$ such that
\[
\mu(A) = \lim_{\mathcal{U}} \frac{|F \cap A|}{|F|} \quad \text{and} \quad \lim_{\mathcal{U}} \frac{|F \oplus gF|}{|F|} = 2 \ \forall g \in G.
\]
\end{cor}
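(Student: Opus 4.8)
The plan is to re-run the construction from the proof of Proposition~\ref{prop:means are given by ultralimits}, changing only \emph{which} elements we put into the finite approximating sets, so that these sets avoid the prescribed translates of themselves. Recall the shape of that argument: one first reduces (passing to the atoms of the finite Boolean subalgebra generated by $\mathcal{A}$, and replacing each $\varepsilon_i$ by a common $\varepsilon$) to the case in which the members of $\mathcal{A}$ are pairwise disjoint and cover $G$; after reordering, $\mu(A_i)>0$ for $i\le m$ and $\mu(A_i)=0$ for $i>m$, and since $\mu$ vanishes on finite sets each $A_i$ with $i\le m$ is infinite. One then fixes an integer $N>\tfrac{1}{\varepsilon}$ and target cardinalities $n_1,\dots,n_m$ with $\sum_{i=1}^m n_i=N$ and $\big| n_i-\mu(A_i)N \big|\le 1$ (possible exactly as in that proof), and takes $F$ to be a disjoint union of sets $F_i\subset A_i$ with $|F_i|=n_i$.

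The only modification is how the $n_i$ elements of each $A_i$ are chosen. We may assume $S=S^{-1}$ (replacing $S$ by $S\cup S^{-1}$ only strengthens the conclusion). We build $F=\{f_1,\dots,f_N\}$ one element at a time, fixing in advance which $A_i$ each $f_t$ should lie in so that exactly $n_i$ of them land in $A_i$. Having chosen $f_1,\dots,f_{t-1}$, we pick $f_t$ in
\[
A_i \setminus \bigcup_{u<t} S f_u ,
\]
where $A_i$ is the set prescribed for step $t$; this is possible because $A_i$ is infinite while the excluded set is finite (of size at most $|S|(t-1)$). By construction, for distinct $f,f'\in F$, the one chosen later avoids $S$ times the other, so $f(f')^{-1}\notin S$; hence $F\cap sF=\varnothing$ for every $1\ne s\in S$. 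The estimate $\big|\tfrac{|F\cap A|}{|F|}-\mu(A)\big|<\varepsilon$ for all $A\in\mathcal{A}$ is verified verbatim as in Proposition~\ref{prop:means are given by ultralimits}, since that computation used only the cardinalities $|F_i|$ and $|F|=N$, not the particular elements.

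For the ultrafilter statement, set, for finite $\mathcal{A}\subset\Pset(G)$, finite $S\subset G$ and $\varepsilon>0$,
\[
\mathcal{F}_\varepsilon(\mathcal{A},S)\coloneqq\left\{ F\in\Pfin(G) \ :\ F\cap sF=\varnothing \ \forall\, 1\ne s\in S,\ \ \Big|\tfrac{|F\cap A|}{|F|}-\mu(A)\Big|<\varepsilon \ \forall\, A\in\mathcal{A} \right\}.
\]
The previous paragraph shows each such set is non-empty, and for finitely many triples $(\mathcal{A}_l,S_l,\varepsilon_l)$ one has $\mathcal{F}_{\varepsilon}\!\left(\bigcup_l\mathcal{A}_l,\bigcup_l S_l\right)\subseteq\bigcap_l\mathcal{F}_{\varepsilon_l}(\mathcal{A}_l,S_l)$ with $\varepsilon=\min_l\varepsilon_l$; so these sets have the finite intersection property and, by Proposition~\ref{prop:filter generation} and the Ultrafilter Lemma, lie in some ultrafilter $\mathcal{U}$ on $\Pfin(G)$. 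For any $A\subseteq G$ and $\varepsilon>0$ the set $\big\{F:\big|\tfrac{|F\cap A|}{|F|}-\mu(A)\big|<\varepsilon\big\}$ contains $\mathcal{F}_\varepsilon(\{A\},\{1\})\in\mathcal{U}$, giving $\mu(A)=\lim_{\mathcal{U}}\tfrac{|F\cap A|}{|F|}$; and for $1\ne g\in G$ the set $\{F:F\cap gF=\varnothing\}=\mathcal{F}_\varepsilon(\varnothing,\{g\})$ lies in $\mathcal{U}$, and on it $|F\oplus gF|=|F|+|gF|=2|F|$, so $\lim_{\mathcal{U}}\tfrac{|F\oplus gF|}{|F|}=2$ (the case $g=1$ being trivially excluded). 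The only point on which anything hinges—and hence the sole ``obstacle''—is the fact that each $A_i$ with $\mu(A_i)>0$ is infinite, which lets the greedy choice always dodge the finitely many forbidden translates; everything else is bookkeeping inherited from Proposition~\ref{prop:means are given by ultralimits}.
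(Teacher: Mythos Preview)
Your proof is correct and follows essentially the same approach as the paper: both re-run the construction of Proposition~\ref{prop:means are given by ultralimits}, choosing elements one-by-one from the infinite atoms while avoiding finitely many forbidden translates, and then package the resulting sets into a filter via the finite intersection property. Your choice to symmetrise $S$ up front is equivalent to the paper's device of excluding $\bigcup_{s\in S}(F_0\cup sF_0\cup s^{-1}F_0)$ at each stage, and your explicit handling of the ultrafilter statement spells out what the paper leaves to the reader.
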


\begin{proof}
The corollary follows directly from the construction in the proof of Proposition~\ref{prop:means are given by ultralimits}, with a minor refinement to ensure the disjointness condition for finite subsets, $S$ of $G$. The existence of the ultrafilter then follows from Proposition~\ref{prop:filter generation} and Theorem~\ref{thm:UL}. 

As in the proof of the Proposition, for a finite family of sets $\mathcal{A} = \{A_1,\dots,A_k\}$, we consider the Boolean algebra they generate and its atoms $B_1,\dots,B_m$.  
For each atom $B_j$, we choose a finite number $p_j$ of elements to include in $F$, where $p_j>0$ only if $B_j$ is infinite, and $p_j$ is determined by the required approximation of $\mu(A_i)$.

The key observation is that each $B_j$ from which we choose is infinite, so we are free to avoid finitely many “forbidden” elements at each stage. In particular, as we select the $p_j$ elements from $B_j$, we can always ensure that $F \cap sF = \varnothing$ for all $1\neq s \in S$: explicitly, we may pick elements one-by-one from the complement of $\bigcup_{s \in S}(F_0 \cup sF_0 \cup s^{-1}F_0)$, where $F_0 \subseteq F$ is the subset already constructed at a certain stage.  Since the number of forbidden elements at each stage is finite, and each ``operative'' $B_j$ is infinite, this process can be carried out without affecting the required counts $p_j$. Thus the resulting finite set $F$ simultaneously approximates $\mu$ on all $A_i$ and satisfies the disjointness conditions $F \cap sF = \varnothing$. 
\end{proof}

We then show that for an amenable group, any invariant mean may be obtained via an ultralimit that does satisfy the F{\o}lner condition. To do that, we use the classical F{\o}lner condtion. 

\begin{prop}[\cite{Folner1955}]
\label{prop:folner condition}
    Let \( G \) be a group. Then \( G \) is amenable if and only if for every finite subset \( K \subseteq G \) and every \( \varepsilon > 0 \), there exists a non-empty finite subset \( S \subseteq G \) such that
\[
\frac{|S \oplus gS|}{|S|} < \varepsilon \quad \text{for all } g \in K.
\]
\end{prop}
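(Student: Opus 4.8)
This is the classical F{\o}lner theorem \cite{Folner1955}, so a citation would suffice; for completeness I would give a self-contained argument dovetailing with the machinery already set up. The plan is to prove the two implications separately: the direction ``F{\o}lner $\Rightarrow$ amenable'' via the ultrafilter construction of Section~\ref{sec:CCMs}, and the direction ``amenable $\Rightarrow$ F{\o}lner'' via a Hahn--Banach separation followed by a ``layer cake'' passage from functions to sets.

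For ``F{\o}lner $\Rightarrow$ amenable'' I would repeat the proof of Theorem~\ref{thm: existence of CCMs} over the indexing set $\Pfin(G)$. For $g \in G$ and $\varepsilon > 0$ put $\mathcal{F}_\varepsilon(g) \coloneqq \{ S \in \Pfin(G) \setminus \{\varnothing\} : |S \oplus gS| < \varepsilon |S| \}$. Given $g_1,\dots,g_n$ and $\varepsilon_1,\dots,\varepsilon_n > 0$, applying the F{\o}lner condition with $K = \{g_1,\dots,g_n\}$ and $\varepsilon = \min_i \varepsilon_i$ produces a single non-empty finite $S$ lying in every $\mathcal{F}_{\varepsilon_i}(g_i)$, so the family $\{ \mathcal{F}_\varepsilon(g) : g \in G,\, \varepsilon > 0 \}$ has the finite intersection property and, by Proposition~\ref{prop:filter generation} and Theorem~\ref{thm:UL}, is contained in an ultrafilter $\mathcal{U}$ on $\Pfin(G)$. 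Then $\mu(A) \coloneqq \lim_{\mathcal{U}} |A \cap S|/|S|$ is a finitely additive probability mean by Lemma~\ref{lem:algebraoflimits}, exactly as in Theorem~\ref{thm: existence of CCMs}; and since $|gA \cap S| = |A \cap g^{-1}S|$ gives $\big| |gA \cap S| - |A \cap S| \big| \le |g^{-1}S \oplus S| = |S \oplus gS|$, the fact that $\mathcal{F}_\varepsilon(g) \in \mathcal{U}$ for every $\varepsilon > 0$ forces $\mu(gA) = \mu(A)$, so $\mu$ is an invariant mean and $G$ is amenable.

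For ``amenable $\Rightarrow$ F{\o}lner'', fix an invariant mean and extend it (by integrating simple functions) to a left-invariant state $m$ on $\ell^\infty(G)$, so $m \ge 0$, $m$ sends the constant function $1$ to $1$, and $m(g \cdot \psi) = m(\psi)$ where $(g \cdot \psi)(x) = \psi(g^{-1}x)$. Fix finite $K \subseteq G$ and $\varepsilon > 0$. The crux is to show that $0$ lies in the $\ell^1$-closure of the convex set $C \coloneqq \{ (g \cdot f - f)_{g \in K} : f \in \ell^1(G),\, f \ge 0,\, \|f\|_1 = 1 \} \subseteq \ell^1(K \times G)$. If it did not, Hahn--Banach separation would give $(\phi_g)_{g \in K} \in \ell^\infty(K \times G)$ and $\delta > 0$ with $\sum_{g \in K} \langle \phi_g, g \cdot f - f \rangle \le -\delta$ for all admissible $f$; rewriting the left side as $\langle \Phi, f \rangle$ for $\Phi \coloneqq \sum_{g \in K}(g^{-1} \cdot \phi_g - \phi_g) \in \ell^\infty(G)$ and testing on point masses would give $\Phi \le -\delta$ pointwise, whence $m(\Phi) \le -\delta$ by positivity of $m$, while $m(\Phi) = 0$ by invariance of $m$ --- a contradiction. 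Hence there is $f \ge 0$, $\|f\|_1 = 1$, with $\sum_{g \in K} \|g \cdot f - f\|_1 < \varepsilon$. Writing $f = \int_0^\infty \mathbbm{1}_{S_t}\,dt$ with $S_t = \{ x : f(x) > t \}$ (a finite set, as $f \in \ell^1$), the coarea identities $\|f\|_1 = \int_0^\infty |S_t|\,dt$ and $\|g \cdot f - f\|_1 = \int_0^\infty |g S_t \oplus S_t|\,dt$ yield $\int_0^\infty \big( \sum_{g \in K} |g S_t \oplus S_t| - \varepsilon |S_t| \big)\,dt < 0$, so for some $t$ the set $S \coloneqq S_t$ is non-empty and satisfies $|S \oplus gS| < \varepsilon |S|$ for every $g \in K$.

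The main obstacle is the reverse direction, and within it the Hahn--Banach separation step, which is what converts the abstract invariance of $m$ into approximate invariance of a genuine $\ell^1$-vector; the subsequent coarea (layer-cake) reduction from vectors to sets is routine, needing only the remark that the level set selected is automatically non-empty, since its cardinality is positive. One could instead avoid Hahn--Banach by a Namioka-type convexity argument, but this amounts to the same separation in different language; given that the statement is classical, simply citing \cite{Folner1955} is also a legitimate option.
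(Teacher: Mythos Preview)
The paper does not prove this proposition at all: it is stated with the attribution \cite{Folner1955} and used as a black box, so there is nothing to compare your argument against. Your self-contained sketch is correct and is one of the standard modern routes to the theorem.

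A couple of minor remarks on the write-up. In the reverse direction, when you say ``$S_t$ is finite, as $f \in \ell^1$'', you should note this holds for $t>0$ (via $|S_t| \le \|f\|_1/t$); the value of $t$ you ultimately select is indeed positive because the integrand is zero on $\{t : S_t = \varnothing\}$, but it is worth saying so. Also, the Hahn--Banach separation you invoke is the geometric form (separating a point from a closed convex set in a Banach space), which is slightly stronger than the extension form stated in the paper's Appendix; this is harmless since both follow from the same axiom-of-choice strength, but if you want to stay strictly within the paper's toolkit you might prefer the Namioka/Day weak$^*$-approximation argument you allude to at the end.
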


\begin{prop}
    \label{prop:invariant means and Folner}
Let $G$ be an infinite amenable discrete group and let $\mu\colon \Pset(G) \to [0,1]$ be a left invariant mean on $G$. Then there exists an ultrafilter $\mathcal{U}$ on $\Pfin(G)$ which both realises $\mu$ and satisfies the F{\o}lner criterion as follows, 
\[
\mu(A) = \lim_{\mathcal{U}} \frac{|A \cap F|}{|F|} \quad \text{ and } \quad \lim_{\mathcal{U}} \frac{|F \oplus gF|}{|F|} = 0 \ \forall g \in G.
\]
\end{prop}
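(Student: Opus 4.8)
The goal is to upgrade the ultrafilter produced by Proposition~\ref{prop:means are given by ultralimits} so that the finite approximating sets $F$ form a F{\o}lner sequence along $\mathcal{U}$, i.e.\ $\lim_{\mathcal{U}} \frac{|F \oplus gF|}{|F|} = 0$ for every $g \in G$. The natural approach is to run the same filter-generation machinery as in the proofs of Lemma~\ref{lem:index-consistent-filter} and Proposition~\ref{prop:means are given by ultralimits}, but now adding to the generating collection, for each finite $K \subseteq G$ and each $\varepsilon > 0$, the set
\[
\mathcal{D}_\varepsilon(K) \coloneqq \left\{ F \in \Pfin(G) : \frac{|F \oplus gF|}{|F|} < \varepsilon \text{ for all } g \in K \right\},
\]
alongside the sets $\mathcal{F}_\varepsilon(A) = \{ F : | \frac{|F \cap A|}{|F|} - \mu(A)| < \varepsilon \}$ from Proposition~\ref{prop:means are given by ultralimits}. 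It then suffices to verify the finite intersection property for this enlarged collection, after which the Ultrafilter Lemma (Theorem~\ref{thm:UL}) produces an ultrafilter $\mathcal{U}$ containing all of these sets, and the displayed conclusions follow exactly as in the earlier proofs.

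\textbf{Key steps.} First, reduce as in Proposition~\ref{prop:means are given by ultralimits}: given finitely many $A_1,\dots,A_k$ and a single $\varepsilon>0$ (uniformising by passing to the Boolean algebra they generate and its atoms $B_1,\dots,B_m$), and a finite set $K \subseteq G$, we must produce a single $F$ lying in every $\mathcal{F}_{\varepsilon}(A_i)$ and in $\mathcal{D}_\varepsilon(K)$. Second, invoke Proposition~\ref{prop:folner condition} to obtain a finite F{\o}lner set $S$ with $\frac{|S \oplus gS|}{|S|}$ as small as we like for all $g \in K$. Third --- and this is the crux --- build $F$ by taking, inside each infinite atom $B_j$ on which $\mu$ is supported, a translate-like ``copy'' of $S$ of the appropriate proportional size: roughly, $F = \bigsqcup_j c_j S$ for suitable $c_j \in G$, with the number of atoms' worth chosen so that $|c_j S \cap A_i|/|F| \approx \mu(A_i)$. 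The point is that $F \oplus gF \subseteq \bigsqcup_j (c_j S \oplus g c_j S)$, and since each $c_j S \oplus g c_j S$ is a translate of $S \oplus (c_j^{-1} g c_j) S$ --- which is F{\o}lner-small when $c_j^{-1} g c_j \in K$ --- we get $F \in \mathcal{D}_\varepsilon(K)$ provided we pre-enlarged $K$ to be closed under the relevant conjugations. A cleaner alternative, avoiding conjugation bookkeeping entirely, is to pass to the subgroup picture only after noting that we may assume $K$ is symmetric and work with \emph{left} translates $s F_0$ of an already-chosen piece: arrange $F$ to be (nearly) a disjoint union of left cosets of a large enough structure, or simply tensor the F{\o}lner set $S$ with a large ``selector'' set inside each atom and absorb the conjugation issue by replacing $K$ with $\bigcup_{j} c_j^{-1} K c_j$ at the outset, which is still finite. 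Either way, the atoms are infinite, so the required disjoint placement of the copies $c_j S$ inside the $B_j$ is possible, exactly as in the proof of Corollary~\ref{cor:disjoint subsets}.

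\textbf{Main obstacle.} The delicate point is reconciling two competing demands on $F$: the F{\o}lner condition wants $F$ to look like a single translate of a fixed F{\o}lner set (so that boundaries stay small), while the requirement $|F \cap A_i|/|F| \approx \mu(A_i)$ forces $F$ to be spread across the atoms in proportions dictated by $\mu$, which a priori has nothing to do with $S$. The resolution is that the F{\o}lner property is preserved under disjoint unions of translates $\bigsqcup_j c_j S$ (the boundary of the union is contained in the union of the boundaries), so one is free to distribute many disjoint copies of $S$ among the atoms $B_j$ with the correct frequencies --- the only care needed is to control how translation by $g \in K$ interacts with the translating elements $c_j$, handled by enlarging $K$ to be conjugation-stable under the finitely many $c_j$ used. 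Once this combinatorial placement is done, verifying $F \oplus gF$ is small and $|F \cap A_i|/|F| \approx \mu(A_i)$ is routine counting, and the finite intersection property --- hence, via Theorem~\ref{thm:UL}, the existence of the desired $\mathcal{U}$ --- follows.
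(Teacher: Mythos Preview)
Your overall strategy---add F{\o}lner witnesses $\mathcal{D}_\varepsilon(K)$ to the generating family and check the finite intersection property---is exactly right, and is what the paper does. The gap is in how you build the witness $F$.

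Your core construction is to take a F{\o}lner set $S$ and form $F = \bigsqcup_j c_j S$ with the copies $c_j S$ \emph{placed inside the atoms} $B_j$ so that the proportions match $\mu$. But the atoms $B_j$ are completely arbitrary infinite subsets of $G$; there is no reason any left translate of a prescribed finite set $S$ should sit inside a given $B_j$. (Already in $G=\Z$ with $B_j=\{n^2:n\ge 0\}$ and $S=\{0,1\}$ this fails.) So the ``disjoint placement of the copies $c_jS$ inside the $B_j$'' is not possible in general, and Corollary~\ref{cor:disjoint subsets} does not help here: that corollary places \emph{single points} one at a time, not rigid translates of a fixed shape. Your conjugation bookkeeping also hides a circularity: you want to enlarge $K$ to $\bigcup_j c_j^{-1}Kc_j$ before choosing $S$, but the $c_j$ were supposed to be chosen to slot copies of $S$ into atoms, which already presupposes $S$.

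The paper's fix is to reverse the roles of ``F{\o}lner'' and ``selector'': set $F = S F_0$ with $S$ on the \emph{left}. Then $gF=(gS)F_0$, so the F{\o}lner estimate is immediate (no conjugation arises at all). For the densities, one uses not the atoms of $\mathcal{A}$ but left invariance of $\mu$: since $|sF_0\cap A_i|=|F_0\cap s^{-1}A_i|$ and $\mu(s^{-1}A_i)=\mu(A_i)$, it suffices that $F_0$ approximate $\mu$ on the \emph{enlarged} family $\{s^{-1}A_i : s\in S,\ i\le k\}$, which Corollary~\ref{cor:disjoint subsets} provides (together with the disjointness $F_0\cap gF_0=\varnothing$ for $g\in S^{-1}S\setminus\{1\}$ needed for $|F|=|S|\,|F_0|$). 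The point you are missing is precisely this use of the invariance of $\mu$ to trade intersections with $A_i$ for intersections with translates of $A_i$; once you see that, no placement inside atoms and no conjugation is needed.
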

\begin{proof}
    Note that as $G$ is an infinite amenable group, $\mu$ returns a density of $0$ for any finite subset of $G$ and hence we may use Corollary~\ref{cor:disjoint subsets}, which we shall do below.  

    To construct the ultrafilter we invoke Proposition~\ref{prop:filter generation} as before. It is clearly sufficient to verify the finite intersection property for the following family of subsets,
    \begin{align*}
    \mathcal{F}_{\varepsilon}(\mathcal{A}) &\coloneqq \left\{ F \in \Pfin(G) : \left|\tfrac{|F \cap A_i|}{|F|} - \mu(A_i)\right| < \varepsilon \: \forall A_i \in \mathcal{A} \right\}, \ \mathcal{A} = \{A_1,\dots,A_r\} \subseteq \Pset(G), \ \varepsilon > 0, \\
    \mathcal{E}_{\varepsilon}(K) &\coloneqq \left\{ F \in \Pfin(G) : \tfrac{|F \oplus gF|}{|F|} < \varepsilon \: \forall g \in K \right\}, \  K \in \Pfin(G), \ \varepsilon > 0.
    \end{align*}

    Notice that Corollary~\ref{cor:disjoint subsets} already gives us that $\mathcal{F}_{\varepsilon}(\mathcal{A})$ is non-empty for any $\mathcal{A} = \{ A_1, \ldots, A_r\} \subset \Pset(G) $ and any $\varepsilon > 0$. One easily observes that to verify the finite intersection property for this family, it is enough to show that 
    \[
    \mathcal{F}_{\varepsilon} (\mathcal{A})  \cap \mathcal{E}_{\varepsilon}(K)  \neq \varnothing \quad \text{for any } \mathcal{A}, K \text{ and } \varepsilon>0. 
    \]
    So let's assume that $\mathcal{A}$, $K$ and $\varepsilon$ are given and proceed to show that the intersection above is non-empty. 

    We first invoke Proposition~\ref{prop:folner condition} to find a finite subset $S$ of $G$ such that $|S \oplus gS| < \varepsilon |S| $ for all $g \in S$. Now we invoke Corollary~\ref{cor:disjoint subsets} to find an $F_0 \in \mathcal{F}_{\varepsilon} (K \mathcal{A})$ such that $F_0 \cap gF_0 = \varnothing$ for all $1 \neq g \in S^{-1}S$, where $K \mathcal{A} \coloneqq \{ kA_i : k \in K, A_i \in \mathcal{A} \} $. We claim that $F \coloneqq SF_0 = \bigcup_{g \in S} gF_0$ is in  $\mathcal{F}_{\varepsilon} (\mathcal{A})  \cap \mathcal{E}_{\varepsilon}(K)$. 

    To see this, note that  $F_0 \cap gF_0 = \varnothing$ for all $1 \neq g \in S^{-1}S$ implies that the sets $gF_0$, for $g \in S$, are pairwise disjoint. Hence $|F| = |S||F_0|$. In particular, $|F \oplus gF| \leq |S \oplus gS| |F_0| < \varepsilon |S||F_0| = \varepsilon |F|$ for all $g \in K$. Therefore, $F \in \mathcal{E}_{\varepsilon}(K)$. To see that $F \in \mathcal{F}_{\varepsilon}(\mathcal{A})$, we calculate
    \begin{align*}
    \big|\,|F \cap A_i| - \mu(A_i)|F|\,\big|
    &\leq \sum_{g \in S} \big|\,|gF_0 \cap A_i| - \mu(A_i)|F_0|\,\big| && \text{as } |F| = |S||F_0| \\
    &= \sum_{g \in S} \big|\,|F_0 \cap g^{-1}A_i| - \mu(g^{-1}A_i)|F_0|\,\big| && \text{as } \mu(A_i) = \mu(g^{-1}A_i) \\
    &< \sum_{g \in S} \varepsilon|F_0| && \text{since } F_0 \in \mathcal{F}_{\varepsilon}(K\mathcal{A}) \\
    &= \varepsilon|F|.
    \end{align*}
    Hence $F$ is also in $\mathcal{F}_{\varepsilon} (\mathcal{A}) $ and we are done. 
\end{proof}

\begin{rem}
The construction above uses a two-scale idea: first, we choose a ``core'' set $F_0$ that approximates the mean on all relevant translates of the given subsets and satisfies strong disjointness conditions. Then we amplify $F_0$ by multiplying with a large F{\o}lner set $S$  to form $F = S F_0$. This ensures that the relative boundary $\frac{|F \oplus gF|}{|F|}$ becomes arbitrarily small while preserving the density approximations, thanks to the invariance of $\mu$. In essence, the invariance allows us to control averages over translates, and the use of $S$ maintains the F{\o}lner property.
\end{rem}

\section{Product means and a degree of commutativity}
\label{sec:product}

As an application of the existence of a CCM, we use it to measure the ``probability'' that two elements of a group commute and eventually derive structural results about groups from this quantity. Our point of view is to understand the probability of $\Comm(G)$ in $G^2$ and so we extend our CCM from $G$ to $G^2$ via a product mean. 

\begin{defn} \label{defn: product mean}
Suppose that $\mu$ is a CCM on a group $G$.  We may then define a mean $\mu^{(2)}$ on $G^2$, which we refer to as a \emph{product mean}, by integrating: given $S \subseteq G^2$ we set
\[
\mu^{(2)}(S) \coloneqq \int \mu(S_g) \,\mathrm{d}\mu(g),
\]
where $S_g = \{ h \in G : (g,h) \in S \}$; see \cite[Chapter~III]{DunfordSchwartz1988}.
\end{defn}

\begin{rem} \label{rem:product-asymmetric}
    There is certain asymmetry in this definition.  Indeed, suppose that $\mu$ is a CCM on a group $G$.  We could have also defined a mean $\widehat\mu^{(2)}$ on $G^2$ by setting
    \[
    \widehat\mu^{(2)}(S) \coloneqq \int \mu(\widehat{S}_g) \,\mathrm{d}\mu(g),
    \]
    where $\widehat{S}_g = \{ h \in G : (h,g) \in S \}$.  
    
    These two definitions agree when $G$ is finite (in which case $\mu^{(2)}(S) = \frac{|S|}{|G|^2} = \widehat\mu^{(2)}(S)$ for any $S \subseteq G^2$), but not when $G$ is infinite. For instance, if $G = \{g_0,g_1,\ldots \}$ is countable and infinite, then we have $\mu^{(2)}(S) = 1$ but $\widehat\mu^{(2)}(S) = 0$, where $S = \{ (g_i,g_j) \in G^2 : i < j \}$.  A slightly more involved argument shows the same result when $G = \{ g_i : i < \kappa \}$ is any infinite group, where $\kappa$ is the initial ordinal of the cardinal $|G|$.
\end{rem}

We note that if $\mu$ is a CCM, then so is $\mu^{(2)}$.

\begin{lem}
    Let $\mu$ be a CCM on a group $G$.  Then $\mu^{(2)}$ is a CCM on $G^2$.
\end{lem}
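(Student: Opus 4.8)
The plan is to check the three defining properties of a CCM for $\mu^{(2)}$ directly from the definition $\mu^{(2)}(S) = \int \mu(S_g)\,\mathrm{d}\mu(g)$. The first two are quick. For the probability-mean condition, observe that $(G^2)_g = G$ for every $g \in G$, so the integrand $g \mapsto \mu((G^2)_g)$ is constantly $1$ and hence $\mu^{(2)}(G^2) = 1$; moreover $\mu^{(2)}(S) \in [0,1]$ for all $S$ by monotonicity of the integral. For finite additivity, if $S, T \subseteq G^2$ are disjoint then $(S \sqcup T)_g = S_g \sqcup T_g$ for each $g$, so $\mu((S \sqcup T)_g) = \mu(S_g) + \mu(T_g)$ by finite additivity of $\mu$; since integration of bounded functions against a finitely additive mean is positive-linear (see \cite[Chapter~III]{DunfordSchwartz1988}), integrating over $g$ gives $\mu^{(2)}(S \sqcup T) = \mu^{(2)}(S) + \mu^{(2)}(T)$.

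The substantive point is left invariance on subgroups. Fix a subgroup $K \leq G^2$ and an element $(a,b) \in G^2$, and let $\pi_1 \colon G^2 \to G$ be the first projection, $D \coloneqq \pi_1(K) \leq G$ and $K_1 \coloneqq \{ h \in G : (1,h) \in K \} = \ker(\pi_1|_K) \leq G$. The key computation is that of the sections of the coset $(a,b)K$: since $(g,h) \in (a,b)K$ iff $(a^{-1}g, b^{-1}h) \in K$, the section $((a,b)K)_g$ is empty unless $a^{-1}g \in D$, and for such $g$, fixing any $h_0 \in G$ with $(a^{-1}g, h_0) \in K$, one computes $((a,b)K)_g = b h_0 K_1$, a single left coset of $K_1$ in $G$. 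Hence the integrand $g \mapsto \mu(((a,b)K)_g)$ is the simple function taking the value $\frac{1}{[G:K_1]}$ on the coset $aD$ and $0$ elsewhere (using that $\mu$ is a CCM, with the convention that a reciprocal of an infinite index is $0$), so that
\[
\mu^{(2)}((a,b)K) = \frac{1}{[G:K_1]} \cdot \mu(aD) = \frac{1}{[G:D]} \cdot \frac{1}{[G:K_1]}.
\]

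Finally I would identify the product $[G:D] \cdot [G:K_1]$ with $[G^2:K]$. Because $\pi_1(K) = D$, we have $K \cdot (\{1\} \times G) = D \times G$, so applying the second isomorphism theorem to the normal subgroup $\{1\} \times G$ of $D \times G$ gives $[D \times G : K] = [\{1\} \times G : K \cap (\{1\} \times G)] = [G : K_1]$; together with $[G^2 : D \times G] = [G:D]$ and multiplicativity of indices along the chain $K \leq D \times G \leq G^2$ (valid also when an index is infinite), this yields $[G^2:K] = [G:D] \cdot [G:K_1]$, and hence $\mu^{(2)}((a,b)K) = \frac{1}{[G^2:K]}$, as required. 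The only delicate point is this last index bookkeeping — in particular confirming that in the degenerate cases (where $D$ or $K_1$ has infinite index in $G$) both sides of the displayed identity vanish, matching $[G^2:K] = \infty$ — but this is routine, and structurally the argument is just Goursat's lemma viewed through the projection $\pi_1$.
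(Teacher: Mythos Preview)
Your proof is correct and follows essentially the same approach as the paper: both arguments identify the sections $((a,b)K)_g$ as either empty or a single left coset of $K_1 = \{h : (1,h) \in K\}$, supported precisely on the coset $a\,\pi_1(K)$, and then reduce to computing $[G^2:K]$ in terms of $[G:\pi_1(K)]$ and $[G:K_1]$. The only organisational difference is that the paper treats the infinite-index and finite-index cases separately (and in the latter passes through the further subgroup $H_1 \times H_2 \leq K$), whereas you package everything into the single index identity $[G^2:K] = [G:\pi_1(K)]\cdot[G:K_1]$ via the chain $K \leq \pi_1(K) \times G \leq G^2$; your version is marginally cleaner, but the content is the same.
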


\begin{proof}
    It is easy to see that $\mu^{(2)}$ is a finitely additive probability mean.  It remains to show that it is left invariant on cosets.  In particular, given $H \leq G^2$ and $(x,y) \in G^2$, we aim to show that $\mu((x,y)H) = \frac{1}{[G^2:H]}$.

    First we let $H_2 = \{ h \in G : (1,h) \in H \} $ and $S=(x,y) H$. Note that $H_2$ is a subgroup of $G$, and that each $S_g$ (for $g \in G$) is either empty, or a single coset of $H_2$ in $G$. If $H_2$ has infinite index, therefore, we conclude that $\mu^{(2)} ( (x,y) H ) =0$. 

    Otherwise, we have that $H_2$ is a finite index subgroup of $G$. In this case, set $\widehat{H}_1 = \pi_1(H)$, where $\pi_1\colon G^2 \to G$ is the projection to the first coordinate.  Note that $\widehat{H}_1 \times G$ is a finite union of left cosets of $H$ (we can take these cosets to be $(1,g)H$, where $g$ ranges over a left transversal of $H_2$ in $G$).  Therefore, if $H$ has infinite index in $G^2$ then $\widehat{H}_1$ has infinite index in $G$, and consequently
    \[
    \mu^{(2)}((x,y)H) \leq \mu^{(2)}(x\widehat{H}_1 \times G) = \mu(x\widehat{H}_1) \cdot \mu(G) = 0 \cdot 1 = 0,
    \]
    and hence $\mu^{(2)}((x,y)H) = 0$ whenever $H$ has infinite index in $G$.

    In the case where $H$ has finite index in $G$, both $H_1 = \{ g \in G \mid (g,1) \in H \}$ and $H_2$ have finite index in $G$, and we have $H_1 \times H_2 \leq H$. Then every coset of $H_1 \times H_2$ in $G$ has the same mean, $\mu(H_1)\mu(H_2) = \frac{1}{[G:H_1][G:H_2]}$, since $\mu$ is left invariant on cosets. In particular, since each coset of $H$ will be a union of $[H:H_1 \times H_2] = \frac{[G^2:H_1 \times H_2]}{[G^2:H]} = \frac{[G:H_1][G:H_2]}{[G^2:H]}$ cosets of $H_1 \times H_2$, we have
    \[
    \mu^{(2)}((x,y)H) = \frac{[G:H_1][G:H_2]}{[G^2:H]} \mu^{(2)}(H_1 \times H_2) = \frac{1}{[G^2:H]},
    \]
    as required.
\end{proof}

\begin{rem}
    Note that not every CCM on $G^2$ arises as a product mean---for instance, one could take the average of the CCMs $\mu^{(2)}$ and $\widehat\mu^{(2)}$, as in Definition~\ref{defn: product mean} and Remark~\ref{rem:product-asymmetric}. We return to this fact and its relationship to the degree of commutativity in Example~\ref{ex:heisenberg}. 
\end{rem}

Nevertheless, armed with the product mean, we can define a degree of commutativity with respect to it.

\begin{defn}
\label{defn: dc using CCMs}
    Let $\mu$ be a CCM on a group $G$.  We define the \emph{degree of commutativity} of $G$ with respect to $\mu$ as
    \[
    \dc[\mu](G) \coloneqq \mu^{(2)}(\Comm(G)),
    \]
    where $\Comm(G) = \{ (g,h) \in G^2 : [g,h] = 1 \}$.
\end{defn}

We will subsequently prove that this number is the same for \emph{any} CCM $\mu$ in Proposition~\ref{prop:CCM-dc-independent}. 

In the case where $G$ is finite, there is clearly a unique CCM on $G$ and it is already (left) invariant: namely, the uniform measure. Hence the following definition is consistent with the previous one and has been widely studied. 

\begin{defn}
\label{defn: dc for finite groups}
    Let $G$ be a finite group. We define the \emph{degree of commutativity} of $G$ as
    \[
    \dc(G) \coloneqq \frac{\left|\Comm(G)\right|}{|G|^2}. 
    \]
\end{defn}

\section{Groups with positive degree of commutativity are FAF}
\label{sec:dc>0=>FAF}

\subsection*{Finite quotients and residual finiteness}

Since CCMs are, by definition, well behaved for subgroups it is natural to ask about the relationship between $\dc[\mu](G)$ and $\dc(G/N)$, whenever $N$ is a finite index normal subgroup. This leads us naturally to a degree of commutativity which is seen by the finite quotients of a group. This will actually be important for showing that a positive degree of commutativity with respect to a CCM implies the group is FAF, but we need some preliminary results relating this to finite quotients first. 

\begin{defn} \label{defn:dc for RF}
    For a group $G$, we define
    \[
    \dcRF(G) = \inf \{ \dc(G/N) : N \unlhd G, [G:N] < \infty \}.
    \]
    We call this the \emph{residually finite degree of commutativity} for $G$. 
\end{defn}

There is an easy relationship between the residually finite degree of commutativity and that arising from a CCM. 

\begin{lem}
\label{lem: dcCCM less that dcRF}
    Let $G$ be a group and $\mu$ a CCM on $G$. Then $\dc[\mu](G) \leq \dcRF(G)$. 
 \end{lem}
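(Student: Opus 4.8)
The inequality reduces to a single containment together with the fact (proved in the previous lemma) that the product mean $\mu^{(2)}$ is itself a CCM on $G^2$. The plan is to fix an arbitrary finite index normal subgroup $N \unlhd G$, compare $\Comm(G)$ with the preimage of $\Comm(G/N)$ under the quotient map, evaluate $\mu^{(2)}$ on that preimage using the coset-correctness of $\mu^{(2)}$, and then take the infimum over all such $N$.

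\textbf{Steps.} First I would let $q\colon G \to G/N$ be the quotient map and $\pi = q \times q\colon G^2 \to (G/N)^2$ its coordinatewise extension, which is a group homomorphism with kernel $N \times N$. Since any homomorphism carries commuting pairs to commuting pairs, $\Comm(G) \subseteq \pi^{-1}(\Comm(G/N))$. Next I would observe that $\pi^{-1}(\Comm(G/N))$ is the disjoint union of the fibres $\pi^{-1}(aN,bN) = (a,b)(N \times N)$ over the $|\Comm(G/N)|$ pairs $(aN,bN) \in \Comm(G/N)$; that is, it is a disjoint union of exactly $|\Comm(G/N)|$ cosets of the finite index subgroup $N \times N \leq G^2$, where $[G^2 : N \times N] = [G:N]^2$. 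Since $\mu^{(2)}$ is a CCM on $G^2$, each such coset has $\mu^{(2)}$-measure $\frac{1}{[G:N]^2}$, and by finite additivity
\[
\mu^{(2)}\bigl(\pi^{-1}(\Comm(G/N))\bigr) = \frac{|\Comm(G/N)|}{[G:N]^2} = \dc(G/N).
\]
Finally, finite additivity of $\mu^{(2)}$ makes it monotone (if $A \subseteq B$ then $\mu^{(2)}(B) = \mu^{(2)}(A) + \mu^{(2)}(B \setminus A) \geq \mu^{(2)}(A)$), so from the containment of the first step,
\[
\dc[\mu](G) = \mu^{(2)}(\Comm(G)) \leq \mu^{(2)}\bigl(\pi^{-1}(\Comm(G/N))\bigr) = \dc(G/N).
\]
As $N$ was an arbitrary finite index normal subgroup, taking the infimum over all such $N$ yields $\dc[\mu](G) \leq \dcRF(G)$.

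\textbf{Main obstacle.} There is no serious difficulty here; the only point requiring care is the bookkeeping in the third step—recognising that $\pi^{-1}(\Comm(G/N))$ is a \emph{finite} union of cosets of $N \times N$ (which is what lets us apply the CCM property rather than needing countable additivity), and correctly identifying the number of these cosets as $|\Comm(G/N)|$ and their common index as $[G:N]^2$. Everything else is formal, relying only on the previously established fact that $\mu^{(2)}$ is a CCM on $G^2$.
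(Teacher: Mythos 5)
Your proof is correct and follows essentially the same route as the paper: the containment $\Comm(G) \subseteq \pi^{-1}(\Comm(G/N))$, the decomposition of the preimage into $|\Comm(G/N)|$ cosets of $N\times N$, evaluation of $\mu^{(2)}$ via coset-correctness, and the infimum over $N$. The only cosmetic difference is that you explicitly cite the lemma that $\mu^{(2)}$ is a CCM on $G^2$, whereas the paper invokes the same fact implicitly when assigning measure $1/[G:N]^2$ to each coset.
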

\begin{proof}
    Let $N \unlhd G$ be a normal subgroup of finite index, and let $\pi_N\colon G^2 \to (G/N)^2$ be the natural projection. Then any pair of commuting elements also commute modulo $N$, implying that $\Comm(G) \subseteq \pi_N^{-1}(\Comm(G/N))$ and therefore
    \[
    \mu^{(2)}(\Comm(G)) \leq \mu^{(2)}(\pi_N^{-1}(\Comm(G/N))).
    \]
    The right hand side equals $\dc(G/N)$ because the set $\pi_N^{-1}(\Comm(G/N))$ is a union of exactly $\left|\Comm(G/N)\right|$ cosets of $N^2$ in $G^2$, each of which has measure $1/[G:N]^2 = 1/|G/N|^2$ under $\mu^{(2)}$. Hence,
    \[
    \mu^{(2)}\left(\pi_N^{-1}(\Comm(G/N))\right) = \frac{\left|\Comm(G/N)\right|}{|G/N|^2} = \dc(G/N),
    \]
    and taking the infimum over all such $N$ gives $\dc[\mu](G) \leq \dcRF(G)$.
\end{proof}

Note that $\dcRF(G)$ only really gives information if $G$ admits sufficiently many finite quotients. Residually finite groups are defined as precisely the class of groups where the finite quotients can distinguish elements. More concretely, $G$ is called \emph{residually finite} if $\bigcap_{N \sgpnfin G} N = \{ 1\} $, where we use the symbol $N \sgpnfin G$ to denote that $N$ is a normal subgroup of finite index in $G$. (Via the core construction---see Remark~\ref{rem:core}---one can drop the normality assumption on the finite index subgroups.)

It is then fairly straightforward to see the following. We note that this result fits the pattern of saying that the degree of commutativity is positive exactly when the group is FAF, since residually finite FAF groups are virtually abelian by Lemma~\ref{lem:faf-equivalent}.  

\begin{lem} \label{lem:dcRF0}
    Let $G$ be a residually finite group.  Then $\dcRF(G) > 0$ if and only if $G$ is virtually abelian.
\end{lem}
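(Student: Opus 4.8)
The plan is to prove the two implications separately, using the Erd\H{o}s--Tur\'an-type bound for finite groups in one direction and a structural argument in the other. First I would recall the classical fact that for a finite group $K$, the number $\dc(K)$ equals $k(K)/|K|$ where $k(K)$ is the number of conjugacy classes, and that $\dc(K) > c$ for a fixed constant $c > 0$ forces $K$ to have a bounded-index abelian subgroup; more precisely, one uses that if $\dc(K) \ge c$ then $[K : Z(K)] \le C(c)$ for some bound depending only on $c$ (this follows from the class equation, since a non-central element has centraliser of index $\ge 2$, so $k(K) \le |Z(K)| + (|K| - |Z(K)|)/2$, giving $\dc(K) = k(K)/|K| \le \frac{1}{2} + \frac{|Z(K)|}{2|K|}$ and hence $[K:Z(K)] \le \frac{1}{2\dc(K) - 1}$ when $\dc(K) > \tfrac12$; for smaller positive values one instead iterates or invokes the standard result that $\dc(K) \ge c$ implies $K$ has an abelian subgroup of index bounded in terms of $c$).

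For the forward direction, suppose $\dcRF(G) = c > 0$. Then every finite quotient $G/N$ satisfies $\dc(G/N) \ge c$, so by the above each $G/N$ has an abelian subgroup of index at most $C = C(c)$, equivalently (via the core construction, Remark~\ref{rem:core}) a \emph{normal} abelian subgroup of index at most $C! \eqqcolon C'$. Let $M$ range over the normal finite-index subgroups of $G$. For each such $M$, the set $\mathcal{S}_M$ of subgroups $N$ with $M \le N \unlhd G$, $[G:N] \le C'$, and $N/M$ abelian, is non-empty and finite. I would then run a compactness / inverse-limit argument: the system $\{\mathcal{S}_M\}$ is a directed inverse system of non-empty finite sets (if $M_1 \le M_2$, intersecting a witness for $M_1$ with a witness for $M_2$\dots — more carefully, one passes to $N \cap N'$ and controls the index), so by K\"onig's lemma / the compactness of profinite spaces there is a coherent choice, yielding a single subgroup $H \le G$ of index at most $C'$ with $H/(H \cap M)$ abelian for all $M$. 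Since $G$ is residually finite, $\bigcap_M (H \cap M) = \{1\}$, and $H' = [H,H]$ lies in every $H \cap M$, so $H' = \{1\}$; thus $H$ is abelian of finite index and $G$ is virtually abelian. The main obstacle is making this compactness step clean: one must ensure the indices stay bounded when intersecting witnesses, which is why I would phrase it as choosing, for each $M$, the \emph{core} of a bounded-index abelian subgroup of $G/M$ and noting these cores form a coherent family under the projection maps $G/M_2 \to G/M_1$.

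For the reverse direction, suppose $G$ is virtually abelian, so it has a normal abelian subgroup $A$ of finite index, say $[G:A] = d$. For any $N \unlhd G$ of finite index, $AN/N$ is a normal abelian subgroup of $G/N$ of index dividing $d$. A standard bound (e.g. the one used by Gustafson) gives $\dc(K) \ge 1/m^2$ — in fact $\dc(K) \ge (m+1)/(2m) \cdot$ something, but crudely $\dc(K) \ge 1/[K:B]^2$ — whenever $K$ has an abelian subgroup $B$ of index $m$, since at least the $|B|^2$ pairs from $B$ commute... — actually one gets the cleaner bound $\dc(K) \ge \frac{1}{[K:Z(K)]}$ is false in general, so I would use: if $B \le K$ is abelian of index $m$ then the number of commuting pairs is at least $|B|^2$, hence $\dc(K) \ge |B|^2/|K|^2 = 1/m^2$. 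Applying this to every $G/N$ with $m \le d$ gives $\dc(G/N) \ge 1/d^2$ uniformly, hence $\dcRF(G) \ge 1/d^2 > 0$. This direction is routine and the only care needed is quoting a correct explicit lower bound for $\dc$ of a virtually abelian finite group.
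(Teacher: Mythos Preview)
Your backward direction is correct and essentially identical to the paper's: if $A \unlhd G$ is abelian of index $d$, then $AN/N$ is abelian of index $\leq d$ in every finite quotient $G/N$, so $\dc(G/N) \geq 1/d^2$.

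Your forward direction, however, rests on a claim that is false. You assert as a ``standard result'' that if a finite group $K$ satisfies $\dc(K) \geq c$, then $K$ has an abelian subgroup of index bounded in terms of $c$ alone. This is not true. Take $K$ to be an extraspecial $2$-group of order $2^{2n+1}$: here $|Z(K)| = 2$, every non-central element has centraliser of index $2$, and a short count gives $\dc(K) = (2^{2n}+1)/2^{2n+1} > 1/2$ for all $n$. Yet the largest abelian subgroup of $K$ has order $2^{n+1}$ (a maximal isotropic subspace of $K/Z(K)$ together with the centre), hence index $2^n \to \infty$. So there is no uniform bound, and your compactness argument---which needs precisely such a bound to produce a single finite-index abelian $H \leq G$---cannot proceed as written.

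The paper avoids this entirely. It proves the contrapositive: assuming $G$ is not virtually abelian, it builds a descending chain $G = N_0 \supseteq N_1 \supseteq \cdots$ of finite-index normal subgroups with $\dc(G/N_i) \leq (5/8)^i$. At each step $N_i$ is non-abelian (else $G$ would be virtually abelian), so one picks a non-trivial commutator in $N_i$, uses residual finiteness to separate it from $1$ by some $M_{i+1} \sgpnfin G$, and sets $N_{i+1} = N_i \cap M_{i+1}$. Then $N_i/N_{i+1}$ is non-abelian, so $\dc(N_i/N_{i+1}) \leq 5/8$ by Gustafson, and the submultiplicativity $\dc(G/N_{i+1}) \leq \dc(G/N_i)\cdot\dc(N_i/N_{i+1})$ due to Gallagher finishes the induction.

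Your compactness strategy is salvageable, but only if you replace the false ingredient with P.~M.\ Neumann's actual 1989 theorem: $\dc(K) \geq c$ implies $K$ has a normal subgroup $H$ with both $[K:H]$ and $|H'|$ bounded in terms of $c$. Running compactness on \emph{that} data and using residual finiteness would yield $G$ FAF, after which Lemma~\ref{lem:faf-equivalent} gives virtually abelian. But this is a different argument from the one you sketched.
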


\begin{proof}
    If $G$ is virtually abelian, then it has an abelian subgroup $H \leq G$ of index $k < \infty$.  Given any finite index normal subgroup $N \unlhd G$, the subgroup $HN/N$ of $G/N$ is an abelian subgroup of index $\leq k$, implying that $\dc(G/N) \geq 1/k^2$.  Thus $\dcRF(G) \geq 1/k^2 > 0$.

    Conversely, suppose that $G$ is not virtually abelian.  We will inductively construct a sequence $G=N_0,N_1,\ldots$ of finite index normal subgroups of $G$ such that $\dc(G/N_i) \leq (5/8)^i$, from which the result will follow.

    Indeed, suppose that for some $i \geq 0$ the group $N_i$ has been constructed.  Since $G$ is not virtually abelian, it follows that $N_i$ is not abelian and so contains elements $g_i,h_i \in N_i$ such that $[g_i,h_i] \neq 1$.  Since $G$ is residually finite, there exists a finite index normal subgroup $M_{i+1} \unlhd G$ such that $[g_i,h_i] \notin M_{i+1}$.  We set $N_{i+1} = N_i \cap M_{i+1}$.

    Then $N_{i+1}$ is a normal finite index subgroup of $G$ since so are $N_i$ and $M_{i+1}$, and $N_{i+1} \unlhd N_i$.  Moreover, by construction $N_i/N_{i+1}$ is non-abelian, and therefore $\dc(N_i/N_{i+1}) \leq 5/8$ by \cite[\S1]{Gustafson1973}.  It then follows by the result of \cite{Gallagher1970} and the inductive hypothesis that
    \[
    \dc(G/N_{i+1}) \leq \dc(G/N_i) \cdot \dc(N_i/N_{i+1}) \leq (5/8)^i \cdot (5/8) = (5/8)^{i+1}, 
    \]
    as required.
\end{proof}

\begin{rem} \label{rem:G/resG}
    For a group $G$ the residual of $G$, $\res(G)$, is defined to be the intersection of all the normal finite index subgroups of $G$. The residual of a group $G$ defines its largest residually finite quotient. The Lemma above then clearly shows that even if $G$ is not residually finite, $\dcRF(G) > 0$ if and only if $G/\res(G)$ is virtually abelian.
\end{rem}

\subsection*{Centralisers and degree of commutativity}

We are now ready to show that having $\dc[\mu](G) > 0$ implies that $G$ is FAF. We will do this by looking at centralisers. 

Throughout the remainder of the paper, we will refer to subsets of $G$ consisting of elements whose centralisers have index $\leq n$, using the following notation.

\begin{defn}
\label{defn:x_n}
    Let $G$ be a group and let $n \in \N$. We denote
    \[
    \G{n} = \{ g \in G : [G:C_G(g)] \leq n \},
    \]
    and we set $\Gfin = \bigcup_{i=1}^\infty \G{n}$.
\end{defn}

Note that for any $g \in \G{n}$ and $h \in \G{m}$, we have
\[
[G:C_G(gh)] \leq [G:C_G(g) \cap C_G(h)] \leq [G:C_G(g)] \cdot [G:C_G(h)] \leq mn,
\]
implying that $\G{n}\G{m} \subseteq \G{nm}$.  In particular, $\Gfin$ is a subgroup of $G$ (and it is normal, since each $\G{n}$ is invariant under conjugation).

Next we observe that having positive degree of commutativity implies that ``many'' elements have centraliser of small index. 

\begin{lem} \label{lem:bigcentraliser}
    Let $\mu$ be a CCM on a group $G$, and suppose that $\dc[\mu](G) = \alpha > 0$. Then $\mu(\G{n}) >0$ for any $n > \alpha^{-1}$.
\end{lem}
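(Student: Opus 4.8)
The plan is to use the product-mean structure together with a pigeonhole-type argument on the "fibres" $\Comm(G)_g = C_G(g)$. First I would observe that, by definition of the product mean, we have
\[
\alpha = \dc[\mu](G) = \mu^{(2)}(\Comm(G)) = \int \mu(C_G(g)) \,\mathrm{d}\mu(g),
\]
since the fibre of $\Comm(G)$ over $g \in G$ is exactly the centraliser $C_G(g)$. Because $\mu$ is a CCM, $\mu(C_G(g)) = \frac{1}{[G:C_G(g)]}$, which is $0$ whenever $g \notin \Gfin$. So in fact the integrand is supported on $\Gfin$, and more precisely $\mu(C_G(g)) = \frac{1}{[G:C_G(g)]} \leq \frac{1}{n+1} < \frac{1}{n} \cdot \frac{1}{1}$ — wait, I want the bound in terms of membership in $\G{n}$: if $g \notin \G{n}$ then $[G:C_G(g)] > n$, so $\mu(C_G(g)) < \frac1n$ (using $[G:C_G(g)] \geq n+1$ gives $\mu(C_G(g)) \leq \frac{1}{n+1}$, but $<\frac1n$ suffices and is cleaner to state).

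Next I would split the integral over the set $\G{n}$ and its complement: the integrand is bounded above by $1$ on $\G{n}$ and bounded above by $\frac1n$ on $G \setminus \G{n}$. Hence
\[
\alpha = \int_{\G{n}} \mu(C_G(g)) \,\mathrm{d}\mu(g) + \int_{G \setminus \G{n}} \mu(C_G(g)) \,\mathrm{d}\mu(g) \leq \mu(\G{n}) \cdot 1 + \bigl(1 - \mu(\G{n})\bigr) \cdot \tfrac1n \leq \mu(\G{n}) + \tfrac1n.
\]
Rearranging gives $\mu(\G{n}) \geq \alpha - \frac1n$, which is strictly positive as soon as $n > \alpha^{-1}$, i.e. $\frac1n < \alpha$. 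This is exactly the claimed conclusion.

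The only genuinely delicate point is justifying the manipulation of the integral $\int \mu(S_g)\,\mathrm{d}\mu(g)$ — that is, that one may split the domain of integration across a set ($\G{n}$, which is measurable since it is a union of conjugacy-closed pieces and the CCM is defined on all of $\Pset(G)$) and bound the integrand pointwise by simple functions. Since $\mu$ is merely finitely additive, this requires care: one should either invoke the relevant finitely-additive integration theory as cited (\cite[Chapter~III]{DunfordSchwartz1988}), or, more elementarily, bound $\Comm(G) \subseteq \bigl(\G{n} \times G\bigr) \cup \bigl((G \setminus \G{n}) \times \G{n}'\bigr)$ appropriately — here it is cleanest to note $\mathbbm{1}_{\Comm(G)}(g,h) \le \mathbbm{1}_{\G{n}}(g) + \mathbbm{1}_{G\setminus\G{n}}(g)\,\mathbbm{1}_{C_G(g)}(h)$ as functions on $G^2$, integrate the fibre over $h$ first (monotonicity and additivity of $\mu$ on a finite partition of each fibre suffice), and then integrate over $g$ using monotonicity of the outer integral. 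I expect this measurability/integrability bookkeeping to be the main obstacle, whereas the arithmetic of the final inequality is routine.
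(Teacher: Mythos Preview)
Your proposal is correct and is essentially the same argument as the paper's: split $\Comm(G)$ according to whether the first coordinate lies in $\G{n}$ or not, bound $\mu(C_G(g))$ by $1$ on $\G{n}$ and by $\frac1n$ off it, and obtain $\alpha \leq \mu(\G{n}) + \frac1n$. Your careful discussion of the finitely-additive integration bookkeeping is more explicit than the paper's (which simply cites \cite[Chapter~III]{DunfordSchwartz1988} and writes the bound $\mathbbm{1}_{\G{n}^c} \leq \mathbbm{1}_G$), but the underlying idea and the arithmetic are identical.
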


\begin{proof}
     We have
     \begin{align*}
         \dc[\mu](G) &= \mu^{(2)}(\{ (x,y) \in G^2 : xy=yx \text{ and } x \in \G{n} \}) \\ &+ \mu^{(2)}(\{ (x,y) \in G^2 : xy=yx \text{ and } x \not\in \G{n} \})
         \\ &\leq \int \mathbbm{1}_{\G{n}} d\mu + \frac{1}{n} \int \mathbbm{1}_G d\mu
         \\ &= \mu(\G{n}) + \frac{1}{n}.
     \end{align*}
    The third line is obtained by estimating $\mu(C_G(x)) \leq 1$ for all $x \in \G{n}$ and $\mu(C_G(x)) < \frac{1}{n}$ for all $x \not\in \G{n}$, as well as $\mathbbm{1}_{\G{n}^c}  \leq  \mathbbm{1}_G$.

    Hence $\mu(\G{n}) > 0$. 
\end{proof}

We now prove that having a positive degree of commutativity implies that a group is FAF. We do this in two stages; we first prove it for $2$-step nilpotent groups (i.e.\ nilpotent groups of class at most~$2$) and then, for the general case, reduce the problem to precisely the class of $2$-step nilpotent groups. 

\begin{rem}
 We recall that a group is nilpotent of class $c$ if its lower central series terminates at the identity in $c$ steps. In particular, $2$-step nilpotent groups have central derived subgroups.
\end{rem}

\begin{lem} \label{lem:dc-finite-by-abelian}
    Let $G$ be a $2$-step nilpotent group equipped with a CCM $\mu$. Suppose that for some $m \in \N$, there exists subset $S \subseteq \G{m}$ such that $\mu(S) > 0$. Then $G$ is FAF.
\end{lem}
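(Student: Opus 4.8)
The plan is to prove that the derived subgroup $G'$ is finite; then $G$ is finite‑by‑abelian, hence FAF, and since the original group contains a finite‑index FAF subgroup it is itself FAF by Lemma~\ref{lem:faf-equivalent}.

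The key structural fact is that in a $2$‑step nilpotent group commutators are bilinear: for each $x\in G$ the map $\phi_x\colon g\mapsto[x,g]$ is a homomorphism $G\to G'$ with kernel $C_G(x)$, so $[G:C_G(x)]=|W_x|$ where $W_x:=[x,G]\le G'$, and $W_x$ depends only on the coset $xZ(G)$. Hence $\G{m}=\{x:|W_x|\le m\}$ is a union of cosets of $Z(G)$; it is contained in the subgroup $G_L:=\{x\in G:[x,G]\subseteq\{c\in G':c^L=1\}\}$ where $L:=\operatorname{lcm}(1,\dots,m)$ (this is a subgroup by bilinearity, and the containment holds because $|W_x|\le m$ forces $\exp(W_x)\mid L$); and it is contained in $\langle\G{m}\rangle$, which in turn lies inside the FC‑centre $\Gfin$ since every $g\in\G{m}$ has $|g^G|=[G:C_G(g)]\le m$. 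From $\mu(S)>0$ we get $\mu(\langle\G{m}\rangle)\ge\mu(\G{m})\ge\mu(S)>0$, and since a CCM assigns measure zero to every coset of an infinite‑index subgroup (cf.\ Theorem~\ref{thm:neumann}), $\langle\G{m}\rangle$ has finite index in $G$. Replacing $G$ by $\langle\G{m}\rangle$ and $\mu$ by its rescaled restriction $[G:\langle\G{m}\rangle]\cdot\mu|_{\Pset(\langle\G{m}\rangle)}$ — a CCM — and noting that $\G{m}$ has not shrunk (centraliser indices only decrease), that $G=\langle\G{m}\rangle$, that $G'=[\langle\G{m}\rangle,\langle\G{m}\rangle]\subseteq[G_L,G_L]$ has exponent dividing $L$, and that consequently $\bar G:=G/Z(G)$ is also abelian of exponent dividing $L$ (by non‑degeneracy of the alternating pairing $\bar G\times\bar G\to G'$), we may henceforth assume: $G$ is a $2$‑step nilpotent FC‑group (being a subgroup of $\Gfin$), $G=\langle\G{m}\rangle$, $\mu(\G{m})>0$, and $G'$ is abelian of exponent dividing $L$. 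It now suffices to prove $G'$ is finite.

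Assume, for contradiction, that $G'$ is infinite. Quotienting by a suitable characteristic subgroup of $G'$ — and noting that the hypotheses just listed persist in the quotient, with the derived subgroup still infinite — we may further assume $G'$ is an infinite elementary abelian $p$‑group, and hence so is $\bar G$. Then $G'=\sum_{v\in\G{m}}W_v$ (because $\bar S:=\pi(\G{m})$ generates $\bar G$ and the pairing is bilinear) is an infinite‑dimensional $\F_p$‑space, while each $W_v$ is finite‑dimensional (the FC condition) and $\dim W_v\le\log_p m$ for $v\in\G{m}$. The plan is then to construct a sequence of finite quotients $q_n\colon\bar G\twoheadrightarrow Q_n$ with $|Q_n|\to\infty$ for which $q_n(\bar S)$ occupies a vanishing proportion of $Q_n$: the non‑degeneracy of the commutator pairing together with its ``local finiteness'' ($\dim W_v<\infty$ for all $v$, but unbounded over $\bar G$ since $G'$ is infinite‑dimensional) lets one take $Q_n=\bar G/Y_n^{\perp}$ for finite‑dimensional subspaces $Y_n\le\bar G$ of growing dimension on which the form is non‑degenerate (the pairing $Y_n\times\bar G\to G'$ having finite‑dimensional image is what makes $Q_n$ finite), while the defining inequality $\dim W_v\le\log_p m$ of $\bar S$ forces $q_n(\bar S)$ to lie among the vectors of $Q_n$ of bounded ``rank'' with respect to the induced structure, of which there are only polynomially many in $|Q_n|$. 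Granting this, $\mu(\G{m})=\bar\mu(\bar S)\le|q_n(\bar S)|/|Q_n|$ for every $n$ — since $\G{m}=\pi^{-1}(\bar S)$ is covered by that many cosets of the finite‑index subgroup $\ker(q_n\circ\pi)$, and $\bar\mu:=\pi_*\mu$ is a CCM on $\bar G$ — so $\mu(\G{m})=0$, contradicting $\mu(\G{m})>0$. Therefore $G'$ is finite.

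The hard part is the construction in the third paragraph: producing the finite quotients $q_n$ and bounding $|q_n(\bar S)|$, i.e.\ simultaneously exploiting the non‑degeneracy of the commutator pairing and the sparsity of the bounded‑rank set $\bar S$ in the infinite‑dimensional FC setting. (As a partial cross‑check on this step, note that $\mu(\G{m})>0$ already gives $\dc[\mu](G)\ge\mu(\G{m})/m>0$, hence $\dcRF(G)>0$ by Lemma~\ref{lem: dcCCM less that dcRF} and so $G/\res(G)$ is virtually abelian by Lemma~\ref{lem:dcRF0}; what then remains is precisely to control the residual obstruction $G'\cap\res(G)$, which is the role played by the combinatorial argument above.)
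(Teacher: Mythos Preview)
Your reductions in the first two paragraphs are sound: passing to $\langle \G{m}\rangle$, establishing that $G'$ and $\bar G$ have bounded exponent, and reducing modulo a central subgroup so that $G'$ becomes an infinite elementary abelian $p$-group are all legitimate manoeuvres. The problem is the third paragraph, which you yourself flag as ``the hard part'' and explicitly do not prove (``Granting this\ldots''). This is not a detail to be filled in later---it is the entire content of the lemma.

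Concretely, you want to show that $\bar S=\{v\in\bar G:\dim W_v\le d\}$ has measure zero under any CCM by exhibiting finite quotients $Q_n$ in which the image of $\bar S$ has vanishing density. Your proposed quotients $Q_n=\bar G/Y_n^{\perp}$ do embed into $\operatorname{Hom}(Y_n,G')$ via $w\mapsto(y\mapsto\omega(y,w))$, and the image of $\bar S$ does land among maps of rank $\le d$. But $Q_n$ sits inside this Hom-space as an \emph{unknown subspace}, and a linear subspace of a matrix space can perfectly well consist entirely of low-rank matrices; you give no mechanism for choosing $Y_n$ so as to rule this out. You also assert without proof that $\dim W_v$ is unbounded over $\bar G$ (needed to produce $Y_n$ of growing dimension ``on which the form is non-degenerate''); this is plausible from $G'=\sum_v W_v$ being infinite-dimensional, but it is not automatic and needs an argument. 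The closing parenthetical correctly notes that $G/\res(G)$ is virtually abelian, but then concedes that the whole difficulty is controlling $G'\cap\res(G)$---which is precisely what the unproven combinatorial step was meant to accomplish.

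By contrast, the paper's proof is a short double induction on $(m,\lceil\mu(S)^{-1}\rceil)$ in lexicographic order, avoiding any global analysis of the commutator pairing. One fixes a non-central $s_0\in S$ and splits into two cases. If $\mu(S\cap C_G(s_0))=\mu(S)$, pass to $G_1=C_G(s_0)$ with the rescaled CCM; the second parameter strictly drops. Otherwise some coset $gC_G(s_0)$ with $g\notin C_G(s_0)$ meets $S$ in positive measure, giving a nontrivial finite-order central element $z=[g,s_0]$; quotient by $\langle z\rangle$, and a short computation using $|[s',G_2]|=|[s',G]|/|\langle z\rangle|$ shows the parameter $m$ drops to at most $m/2$.
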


\begin{proof}
    Let $k = \lceil \mu(S)^{-1}\rceil$. We prove the result by induction on $(m,k)$, in the lexicographic order.

    For the base case, suppose that $m = 1$. Then we have $S \subseteq Z(G)$, and thus $\langle S \rangle \leq Z(G)$. On the other hand, we have $\mu(\langle S \rangle) \geq \mu(S) > 0$, implying that $\langle S \rangle$ has finite index in $G$. Thus $G$ is virtually abelian and hence FAF, as required.

    Suppose now that $m \geq 2$, and pick any non-central element $s_0 \in S$. The argument splits into two cases, depending on the relationship between $\mu(S \cap C_G(s_0))$ and $\mu(S)$.

    Suppose first that $\mu(S \cap C_G(s_0)) = \mu(S)$. Let $G_1 = C_G(s_0)$ and $S_1 = S \cap G_1$, and define a mean $\mu_1$ on $G_1$ by setting $\mu_1(A) = [G:G_1] \mu(A)$. It is easy to check that $\mu_1$ is a CCM on $G_1$; moreover, for any $s \in S_1$ we have $[G_1:C_{G_1}(s)] = [C_G(s)G_1 : C_G(s)] \leq m$ (note that $C_G(s)$ is normal in $G$ since $G$ is $2$-step nilpotent). Since $s_0$ is non-central, we have $\mu(S_1) \leq \mu(C_G(s_0)) \leq \frac{1}{2}$ and thus
    \[
    \mu_1(S_1)^{-1} = \frac{1}{[G:G_1]\mu(S_1)} \leq \frac{1}{2} \mu(S_1)^{-1} \leq \mu(S_1)^{-1}-1 = \mu(S)^{-1}-1,
    \]
    and so $\lceil \mu_1(S_1)^{-1} \rceil < \lceil \mu(S)^{-1} \rceil$. It then follows by the inductive hypothesis that $G_1$ is FAF.  But $[G:G_1] \leq m$, implying that $G$ is virtually FAF and so FAF as well, as required.

    Suppose now that $\mu(S \cap C_G(s_0)) < \mu(S)$. Since $C_G(s_0)$ has finite index in $G$ and since $\mu$ is finitely additive, it follows that $\mu(S \cap gC_G(s_0)) > 0$ for some $g \in G - C_G(s_0)$. Let $z = [g,s_0]$, and note that, since $G$ is $2$-step nilpotent, the subgroup $\langle z \rangle \leq G$ is central and finite, as the fact that $m_0 \coloneqq [G:C_G(s_0)] \leq m < \infty$ implies that $z^{m_0} = [g^{m_0},s_0] = 1$. Let $G_2 = G/\langle z \rangle$ and $S_2 = (S\langle z \rangle \cap C_G(s_0)) / \langle z \rangle \subseteq G_2$, and define a mean $\mu_2$ on $G_2$ by setting $\mu_2(A\langle z \rangle/\langle z \rangle) = \mu(A\langle z \rangle)$. It is easy to check that $\mu_2$ is a CCM on $G_2$.
    
    We claim that $[G_2:C_{G_2}(s)] \leq \frac{m}{2}$ for all $s \in S_2$. Indeed, given any $g \in G$, we have $|[g,G]| = |g^{-1} \cdot g^G| = |g^G| = [G:C_G(g)]$; similarly, $|[g\langle z \rangle,G_2]| = [G_2:C_{G_2}(g\langle z \rangle)]$. Now for any element $s  = s'\langle z \rangle \in S_2$ (where $s' \in S \cap gC_G(s_0)$), we have $1 \neq z = [g,s_0] = [s',s_0] \in [s',G]$, implying that
    \[
    [G_2:C_{G_2}(s)] = |[s,G_2]| = \left| \frac{[s',G]\langle z \rangle}{\langle z \rangle} \right| = \frac{|[s',G]|}{|\langle z \rangle|} = \frac{[G:C_G(s')]}{|\langle z \rangle|} \leq \frac{m}{|\langle z \rangle|} \leq \frac{m}{2},
    \]
    as claimed.

    Now it follows from the claim that $S_2 \subseteq G_2$ satisfies $\mu_2(S_2) > 0$ and $[G_2:C_{G_2}(s)] \leq \frac{m}{2} \leq m-1$ for all $s \in S_2$. Thus, by the inductive hypothesis, $G_2 = G/\langle z \rangle$ is FAF.  Since $\langle z \rangle$ is finite, it follows that $G$ is FAF as well, as required.    
\end{proof}

We now reduce the general case to the nilpotent case in the following Proposition.

\begin{prop} \label{prop:dc-faf}
    Let $G$ be a group equipped with a CCM $\mu$ such that $\dc[\mu](G) > 0$.  Then $G$ is FAF.
\end{prop}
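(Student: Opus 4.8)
The plan is to reduce the general case to the $2$-step nilpotent case handled by Lemma~\ref{lem:dc-finite-by-abelian}, via the structural description of $\Gfin$ and its relationship to the CCM. The main tool is Lemma~\ref{lem:bigcentraliser}: since $\dc[\mu](G) = \alpha > 0$, there is some $n > \alpha^{-1}$ with $\mu(\G{n}) > 0$, and hence $\mu(\Gfin) > 0$ since $\G{n} \subseteq \Gfin$. Recall $\Gfin$ is a normal subgroup of $G$, so by coset correctness $\Gfin$ has finite index in $G$. Thus it suffices to prove that $\Gfin$ itself is FAF, since FAF is closed under extensions by finite groups (being virtually FAF implies FAF, as FAF equals virtually-FAF here). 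Replacing $G$ by $\Gfin$ and rescaling $\mu$ to a CCM on $\Gfin$ (as in the proof of Lemma~\ref{lem:dc-finite-by-abelian}), we may assume $G = \Gfin$, i.e.\ every element of $G$ has a centraliser of finite index, so $G$ is an FC-group (every conjugacy class is finite).

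Next I would use the fact, recorded in Definition~\ref{defn:x_n} and the discussion around it, that $\G{n}$ is not quite a subgroup but $\G{n}\G{m} \subseteq \G{nm}$. The key point is to locate a large $2$-step nilpotent subquotient. Concretely: since $\mu(\G{n}) > 0$ for suitable $n$, let $S = \G{n}$, so $\mu(S) > 0$ and $[G:C_G(s)] \leq n$ for all $s \in S$. I want to pass to a quotient of $G$ (by a finite normal subgroup, or a finite-index subgroup) that is $2$-step nilpotent, so that Lemma~\ref{lem:dc-finite-by-abelian} applies with this $S$. The natural candidate comes from Remark~\ref{rem:FCAF}'s strategy run in reverse: inside the FC-group $G$, consider the subgroup $L$ generated by finitely many elements $s_1, \dots, s_r \in S$ whose joint centraliser $C \coloneqq \bigcap_i C_G(s_i)$ has small enough index that $\mu(S \cap C) > 0$ (possible since $C$ has finite index and $\mu$ is finitely additive, choosing the $s_i$ greedily). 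Then elements of $S \cap C$ centralise each $s_i$; iterating and quotienting by the (finite) subgroup generated by the relevant commutators, one forces the relevant subquotient to be nilpotent of class $\leq 2$.

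More precisely, the cleanest route is probably an induction mirroring Lemma~\ref{lem:dc-finite-by-abelian} but at the outer level, reducing the nilpotency class. Start with $S = \G{n}$, $\mu(S) > 0$. If some $s_0 \in S$ is central, fine; otherwise pick non-central $s_0 \in S$ and split on whether $\mu(S \cap C_G(s_0))$ equals $\mu(S)$ or is strictly smaller, exactly as in the proof of Lemma~\ref{lem:dc-finite-by-abelian}. In the first case, pass to $G_1 = C_G(s_0)$, a finite-index subgroup, with rescaled CCM $\mu_1$ and $S_1 = S \cap G_1$; note for $s \in S_1$, $C_{G_1}(s) \supseteq C_G(s) \cap G_1$ still has bounded index, and the relevant complexity parameter strictly decreases. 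In the second case, for suitable $g$ with $\mu(S \cap gC_G(s_0)) > 0$, the commutator $z = [g, s_0]$ generates a finite subgroup (because $s_0^{[G:C_G(s_0)]}$-type relations hold — here one uses that $G$ is FC, so $\langle z \rangle$ is finite as $z$ lies in a finite conjugacy class and has finite order by a standard FC-group argument), and passing to $G/\langle z \rangle$ lowers the centraliser-index bound on the image of $S$. The well-foundedness of this process — and the verification that the terminal case is genuinely $2$-step nilpotent, where Lemma~\ref{lem:dc-finite-by-abelian} delivers the conclusion, which then lifts back through the finite kernels and finite-index overgroups — is the content of the argument.

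The main obstacle I expect is organising the induction so that the "complexity" being reduced is genuinely well-founded and so that the FC-group hypothesis ($G = \Gfin$) is actually available and preserved under the two reduction moves: passing to a finite-index subgroup $C_G(s_0)$ and passing to a finite quotient $G/\langle z \rangle$ both clearly keep one inside FC-groups, but one must be careful that the subset $S$ and the index bounds track correctly, and that finiteness of $\langle z \rangle$ really follows from the FC condition (it does: in an FC-group every element has finitely many conjugates, and a commutator $[g,s_0]$ with $[G:C_G(s_0)]$ finite satisfies $z^{[G:C_G(s_0)]} = [g^{[G:C_G(s_0)]},s_0]\cdot(\text{central corrections}) = 1$ once one knows the relevant part is abelian — which is exactly why the nilpotent reduction is needed). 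Getting the bookkeeping of the lexicographic induction parameter right, so that each move strictly decreases it while $\mu(S) > 0$ is maintained, is where the real care lies; the rest is an application of Lemma~\ref{lem:dc-finite-by-abelian} together with closure of the FAF class under finite extensions and finite quotients.
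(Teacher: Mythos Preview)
Your reduction to $G = \Gfin$ is sound, and your instinct that one must somehow reach a $2$-step nilpotent situation before invoking Lemma~\ref{lem:dc-finite-by-abelian} is exactly right. But the induction you propose to achieve this is circular, and you essentially say so yourself. The second branch of your dichotomy (the ``quotient by $\langle z \rangle$'' move) needs three things that all rely on $2$-step nilpotency: that $\langle z \rangle$ is normal, that $z$ has finite order via the identity $z^{m_0} = [g^{m_0},s_0]$, and---most seriously---that the centraliser index of elements of $S$ drops by a factor of at least $2$ in the quotient. In Lemma~\ref{lem:dc-finite-by-abelian} this last step uses that $[g,G]$ is a \emph{group} of order $[G:C_G(g)]$, which is false outside the $2$-step nilpotent setting. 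Without it, there is no well-founded complexity parameter, and the induction does not terminate. The FC-group hypothesis alone buys you that $[G,G]$ is locally finite (so $\langle z^G \rangle$ is at least finite and normal), but not the index-halving.

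The paper sidesteps this entirely with a different idea you did not find: it sets $G_0 = \langle \G{m} \rangle$ (finite index, normal) and observes that $Z(G_0) = \bigcap_{s \in \G{m}} C_{G_0}(s)$ is an intersection of finite-index subgroups, so $G_0/Z(G_0)$---and hence $G/Z(G_0)$---is residually finite. Then Lemma~\ref{lem: dcCCM less that dcRF} gives $\dcRF(G/Z(G_0)) \geq \dc[\mu](G) > 0$, whence $G/Z(G_0)$ is virtually abelian by Lemma~\ref{lem:dcRF0}, so $G$ is virtually $2$-step nilpotent. Only then does one pass to a finite-index $2$-step nilpotent subgroup and apply Lemma~\ref{lem:dc-finite-by-abelian}. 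The detour through residual finiteness and $\dcRF$ is the missing ingredient in your proposal.
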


\begin{proof}
    Choose some $m \in \N$ such that $\frac{1}{\dc[\mu](G)} < m$ and let $S = \G{m}$.  By Lemma~\ref{lem:bigcentraliser}, $\mu(S) > 0$. We have $\mu(\langle S \rangle) \geq \mu(S) > 0$, implying that the subgroup $G_0 \coloneqq \langle S \rangle$ has finite index in $G$.  Note also that $S$ is invariant under conjugacy and so $G_0 \sgpnfin G$.
    
    Now the centre $Z(G_0)$ of $G_0$ is equal to $\bigcap_{s \in S} C_{G_0}(s)$, and in particular it is the intersection of finite index subgroups of $G_0$. Thus $G_0/Z(G_0)$ is residually finite, and hence so is $G_1 \coloneqq G/Z(G_0)$.  
    
    By Lemma~\ref{lem: dcCCM less that dcRF}, $\dcRF(G) \geq \dc[\mu](G)$ and it is clear that $\dcRF(G_1) \geq \dcRF(G)$ and thus $\dcRF(G_1) \geq \dc[\mu](G) > 0$. It follows by Lemma~\ref{lem:dcRF0} that $G_1$ is virtually abelian, and hence $G$ is virtually $2$-step nilpotent.

    Let $G_2 \leq G$ be a finite index $2$-step nilpotent subgroup.  Since $\mu(S) > 0$, it then follows that $\mu(S_0) > 0$, where $S_0 = S \cap gG_2$, for some $g \in G$.  Let $s_0 \in S_0$.  Since $S^{-1}S = \G{m}^2 \subseteq \G{m^2}$, we then get $S_2 \coloneqq s_0^{-1}S_0 \subseteq G_2 \cap \G{m^2} \subseteq \G[G_2]{m^2}$ and moreover $\mu_2(S_2) > 0$, where $\mu_2$ is a mean on $G_2$ defined by $\mu_2(A) = [G:G_2]\mu(s_0A)$. It is easy to see that $\mu_2$ is a CCM on $G_2$ and thus, by Lemma~\ref{lem:dc-finite-by-abelian}, $G_2$ is FAF and hence so is $G$, as required.
\end{proof}

\section{Finite-by-abelian-by-finite (FAF) groups}
\label{sec:FAF=>dc-well-defined}

Since we have shown that positivity of the degree of commutativity corresponds to a group being FAF, the invariance of the actual quantity reduces to the study of it in FAF groups. This section is therefore devoted to showing that in a FAF group, the degree of commutativity is the same for every CCM. From this it immediately follows that the degree of commutativity is independent of the  CCM used.

\subsection*{Indices of centralisers}

Recall from Definition~\ref{defn:x_n}, that for any group $G$ the set $\G{n}$ denotes the set of elements whose centraliser has index at most $n$. Our next aim is to describe the sets $\G{n}$ in a FAF group and relate these to the degree of commutativity. 

We start by looking at the Boolean subring of $\Pset(G)$ generated by the cosets of subgroups. 

\begin{defn} \label{defn:cosets}
    Let $G$ be a group and $\Pset(G)$ the power set of $G$ considered as a Boolean ring.
 We denote by $\cosets$ the subring of $\Pset(G)$ generated by the cosets of subgroups of $G$.
\end{defn}

\begin{rem}
    Note that the identity $gH = (gHg^{-1})g$ means that we do not need to specify whether we intend to use left cosets or right cosets or both, as these all define the same object.
\end{rem}

\begin{lem}
\label{lem:C has a normal form}
    Let $G$ be a group, $\Pset(G)$ the power set of $G$ and $\cosets$ be the subring of generated by the cosets of subgroups of $G$. Then,
    \begin{enumerate}
        \item \label{it:normal-subgroup} $\cosets$ is equal to the additive subgroup generated by the cosets of subgroups of $G$. 
        \item \label{it:normal-expression} In particular, every element of $\cosets$ can be written as a finite sum $H_1g_1 \oplus \cdots \oplus H_kg_k$ for some subgroups $H_i$ of $G$ and elements $g_i$.
    \end{enumerate}
\end{lem}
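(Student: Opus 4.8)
The plan is to deduce part~\ref{it:normal-expression} directly from part~\ref{it:normal-subgroup}, and to prove the latter by showing that the additive subgroup generated by the cosets is already closed under multiplication, and hence is a subring. Since $\Pset(G)$ is a Boolean ring, it has characteristic $2$, so every element is its own additive inverse; consequently the additive subgroup generated by any family of elements is precisely the set of finite sums (under $\oplus$) of members of that family. Thus, once we know $\cosets$ coincides with the additive subgroup generated by the cosets of subgroups, every element of $\cosets$ is automatically a finite symmetric difference $H_1g_1 \oplus \cdots \oplus H_kg_k$, which is exactly~\ref{it:normal-expression}. So the whole content is in~\ref{it:normal-subgroup}.

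To prove~\ref{it:normal-subgroup}, I would write $\cosets_0$ for the additive subgroup of $\Pset(G)$ generated by all cosets of subgroups of $G$. Clearly $\cosets_0 \subseteq \cosets$, so it suffices to show that $\cosets_0$ is a unital subring of $\Pset(G)$, because $\cosets$ is by definition the smallest such subring containing all cosets. Additive closure and containment of the additive identity $\varnothing$ hold by construction, and $G$ itself is a coset of the subgroup $G \leq G$, so $\cosets_0$ also contains the multiplicative identity $G$ of $\Pset(G)$. What remains is to check that $\cosets_0$ is closed under multiplication, i.e.\ under intersection.

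Since intersection distributes over the symmetric difference $\oplus$, it is enough to verify that the intersection of two cosets again lies in $\cosets_0$, and this is the one step that requires a genuine (if brief) argument and is really the heart of the proof: for subgroups $H_1, H_2 \leq G$ and elements $g_1, g_2 \in G$, the set $g_1H_1 \cap g_2H_2$ is either empty or a single coset of $H_1 \cap H_2$. Indeed, if the intersection is non-empty, one picks $x$ in it; then $g_iH_i = xH_i$ for $i = 1, 2$, whence $g_1H_1 \cap g_2H_2 = xH_1 \cap xH_2 = x(H_1 \cap H_2)$, a coset of the subgroup $H_1 \cap H_2$. In either case the intersection is a coset or $\varnothing$, hence an element of $\cosets_0$. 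This shows $\cosets_0$ is a unital subring containing every coset, so $\cosets \subseteq \cosets_0$, and therefore $\cosets = \cosets_0$, which proves~\ref{it:normal-subgroup} and, with it,~\ref{it:normal-expression}. I do not anticipate any real obstacle beyond correctly handling the possibility that an intersection of cosets is empty (so that the "normal form'' should be read as allowing the empty sum).
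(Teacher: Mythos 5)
Your proof is correct and follows essentially the same route as the paper's: both arguments rest on the observation that characteristic $2$ makes the additive subgroup equal to the set of finite sums, and that the intersection of two cosets is either empty or a coset of the intersection of the subgroups, so this set is closed under multiplication. The paper merely phrases the logical dependence in the opposite direction (deducing (i) from (ii) rather than (ii) from (i)), and you spell out a couple of routine verifications (unitality, distributivity) that the paper leaves implicit.
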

\begin{proof}
    Note that \ref{it:normal-subgroup} follows from \ref{it:normal-expression}, since the ring has characteristic $2$ and hence the finite sums form a subgroup. To observe \ref{it:normal-expression}, simply note that the intersection of two cosets is either empty or is a coset of the intersection, and hence the finite sums are closed under multiplication. 
\end{proof}

We next analyse some sub-objects of $\cosets$ in order to show that $\cosets$ always admits a unique CCM. 

\begin{defn}
    Given the Boolean subring $\cosets$ of $\Pset(G)$, we let 
    \begin{enumerate}
        \item $\cosetsfin$ denote the additive subgroup generated by the cosets of finite index subgroups of~$G$; and 
        \item $\cosetsinf$ denote the additive subgroup generated by the cosets of infinite index subgroups of~$G$. 
    \end{enumerate}
\end{defn}

We then have:

\begin{prop}
\label{prop:CCM on cosets}
Let $G$ be a group and $\cosets$ be the Boolean subring generated by the cosets of subgroups of $G$. Then,
\begin{enumerate}
    \item \label{prop:fin-subring} $\cosetsfin$ is a subring of $\cosets$; 
    \item \label{prop:inf-ideal} $\cosetsinf$ is an ideal of $\cosets$;
    \item \label{prop:direct-sum} $\cosetsfin \cap \cosetsinf = \{ \varnothing \} $ and $\cosets = \cosetsfin \oplus \cosetsinf$ as abelian groups;
    \item \label{prop:projection} there is a (ring) projection, $\pi\colon \cosets \to \cosetsfin$;
    \item \label{prop:nu-unique} there exists a unique CCM $\nu$ on $\cosetsfin$, and it takes only rational values;
    \item \label{prop:mu-unique} there exists a unique CCM $\mu$ on $\cosets$, given by the composition $\nu \circ \pi$ and only taking rational values.
\end{enumerate}
\end{prop}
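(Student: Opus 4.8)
The plan is to establish the six claims roughly in the order stated, with the structural facts \ref{prop:fin-subring}--\ref{prop:projection} coming first and feeding into the uniqueness/rationality statements \ref{prop:nu-unique}--\ref{prop:mu-unique}. For \ref{prop:fin-subring}, using Lemma~\ref{lem:C has a normal form} it suffices to observe that the intersection of two cosets of finite-index subgroups is either empty or a coset of their intersection, which again has finite index; so the finite sums of such cosets are closed under multiplication. For \ref{prop:inf-ideal}, the key point is that if $H$ has infinite index and $K$ is arbitrary, then $gH \cap g'K$ is either empty or a coset of $H \cap K$, and $H \cap K$ still has infinite index in $G$ (since $[G:H\cap K] \ge [G:H]$); hence multiplying a generator of $\cosetsinf$ by any coset lands back in $\cosetsinf$, and by Lemma~\ref{lem:C has a normal form}\ref{it:normal-subgroup} this extends additively to all of $\cosets$. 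For \ref{prop:direct-sum}, the containment $\cosets = \cosetsfin + \cosetsinf$ is immediate from the normal form in Lemma~\ref{lem:C has a normal form}\ref{it:normal-expression} (split the summands by index); the real content is $\cosetsfin \cap \cosetsinf = \{\varnothing\}$, and here is where Neumann's Theorem~\ref{thm:neumann} enters: a nonzero element of $\cosetsfin$ contains, after reducing the normal form, a coset of a finite-index subgroup, which cannot be covered by finitely many cosets of infinite-index subgroups, so it cannot lie in $\cosetsinf$. (One should be a little careful: an element written as a symmetric difference is not literally a single coset, so the argument is: if $X \in \cosetsfin \cap \cosetsinf$ and $X \ne \varnothing$, intersect with a suitable finite-index subgroup to isolate a genuine coset of a finite-index subgroup inside $X$, then derive a contradiction with Neumann.) Statement \ref{prop:projection} is then the projection onto the first summand of this internal direct sum; that it is a ring homomorphism follows because $\cosetsinf$ is an ideal, so $\pi(ab) = \pi(a)\pi(b)$ holds by expanding $a = \pi(a) + a_\infty$, $b = \pi(b) + b_\infty$ and noting the cross terms lie in $\cosetsinf$.

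For \ref{prop:nu-unique}, existence: on $\cosetsfin$, declare $\nu(gH) = 1/[G:H]$ for a finite-index $H$ and extend by finite additivity on atoms. Concretely, given finitely many cosets of finite-index subgroups generating a finite Boolean subalgebra, its atoms are cosets of the (finite-index) intersection $H_0$ of all the subgroups involved, each of which gets weight $1/[G:H_0]$; one checks this assignment is consistent (independent of the generating family, because refining the subalgebra subdivides each atom into equal pieces) and hence defines a finitely additive $\nu$ taking only rational values, and it is left-invariant on cosets by construction. Uniqueness: any CCM on $\cosetsfin$ must agree with $\nu$ on every coset of a finite-index subgroup by axiom~\ref{second-condition}, hence on every atom of every finite subalgebra, hence everywhere by finite additivity. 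For \ref{prop:mu-unique}, set $\mu = \nu \circ \pi$. It is a finitely additive probability mean (composition of a ring-compatible projection with a finitely additive mean; note $\pi$ preserves disjointness since disjoint sets have disjoint images under projection, as $\pi(A)\pi(B) = \pi(AB) = \pi(\varnothing) = \varnothing$), it takes only rational values, and it is left-invariant on cosets: for a finite-index $H$, $\pi(gH) = gH$ so $\mu(gH) = 1/[G:H]$; for an infinite-index $H$, $gH \in \cosetsinf$ so $\pi(gH) = \varnothing$ and $\mu(gH) = 0 = 1/[G:H]$. Uniqueness of $\mu$: any CCM $\mu'$ on $\cosets$ vanishes on $\cosetsinf$ (its generators are cosets of infinite-index subgroups, which get weight $0$, and a finitely additive mean taking a nonnegative value $0$ on a set forces $0$ on subsets; then additivity propagates to all of the ideal, using that any element of $\cosetsinf$ is contained in a finite union of infinite-index cosets), hence $\mu'(X) = \mu'(\pi(X))$ for all $X$, and $\mu'|_{\cosetsfin}$ is a CCM on $\cosetsfin$, so equals $\nu$ by \ref{prop:nu-unique}; therefore $\mu' = \nu\circ\pi = \mu$.

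The main obstacle I expect is the pair of facts that $\cosetsfin \cap \cosetsinf = \{\varnothing\}$ (part \ref{prop:direct-sum}) and that a CCM must vanish on all of $\cosetsinf$ (used in \ref{prop:mu-unique}); both require translating the "single coset" content of Neumann's Theorem~\ref{thm:neumann} into a statement about symmetric differences, i.e.\ about elements of the Boolean ring that are not literally cosets. The technical device in each case is the same: intersect with a well-chosen finite-index subgroup to extract a genuine coset (for the first) or to control finitely many infinite-index cosets simultaneously (for the second), and then invoke the covering inequality $\sum 1/[G:H_i] \ge 1$. A secondary, more routine, point needing care is the consistency check in the construction of $\nu$: one must verify that the atom-weight assignment does not depend on which finite family of finite-index cosets one uses to generate the ambient finite subalgebra, which follows from the observation that passing to a common refinement replaces each old atom (a coset of $H_0$) by $[H_0 : H_0']$ new atoms (cosets of a smaller finite-index $H_0'$), each of weight $1/[G:H_0'] = (1/[H_0:H_0'])\cdot(1/[G:H_0])$, summing back to the old weight.
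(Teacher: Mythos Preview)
Your proposal is correct and follows essentially the same approach as the paper. The only notable streamlining in the paper is that it observes upfront that every nonempty element of $\cosetsfin$ \emph{is} a disjoint union of cosets of a single finite-index subgroup (take the intersection of the finitely many subgroups appearing in a normal form), which immediately handles both the ``contains a genuine coset'' step in \ref{prop:direct-sum} and the consistency check in \ref{prop:nu-unique}, so the obstacle you flag dissolves more quickly than you anticipate.
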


\begin{proof}
    We deal with these in turn. 
    \begin{enumerate}
    
        \item This follows since the intersection of two cosets of finite index is either empty or a coset of the intersection, which is again of finite index. 
        
        \item This follows similarly, since the non-empty intersection of a coset of infinite index with an arbitrary coset is a coset of infinite index. 
        
        \item Every non-empty element of $\cosetsfin$ is equal to a disjoint union of cosets of a single finite index subgroup. In contrast, elements of $\cosetsinf$ are finite symmetric differences of cosets of infinite index subgroups, and are not generally disjoint unions of such cosets.
        
        Suppose $X \in \cosetsfin \cap \cosetsinf$ is non-empty. Then $X$ contains a coset of a finite index subgroup and, being in $\cosetsinf$, is contained in a finite union of cosets of infinite index subgroups. This contradicts Theorem~\ref{thm:neumann}. 

        Hence, $\cosetsfin \cap \cosetsinf = \{ \varnothing \}$. The decomposition $\cosets = \cosetsfin \oplus \cosetsinf$ as abelian groups now follows from Lemma~\ref{lem:C has a normal form}.

        \item This is immediate from \ref{prop:inf-ideal} and \ref{prop:direct-sum}. 
        
        \item Since every element $X$ of $\cosetsfin$ is a disjoint union of cosets of a single finite index subgroup $H = H(X)$, the values of a CCM $\nu$ on elements of this subring are uniquely determined and rational. The only thing to check is that the values of $\nu$ do not depend on the choices of the $H(X)$, and that $\nu$ is finitely additive; this comes down to saying that if $K \leq H \leq G$ are finite index subgroups of $G$, then every coset $gH$ is a disjoint union of $[H:K]$  cosets of $K$, and hence the CCM gives the same value whether we think of $gH$ as an $H$-coset or a union of $K$-cosets.
        
        \item Since $\pi$ is a ring homomorphism, it is readily seen that $\mu$ is a CCM on $\cosets$, and it takes only rational values since so does $\nu$. Moreover, if $\mu'$ were another CCM on $\cosets$ then we can use \ref{prop:direct-sum} to write any $X \in \cosets$ as $X=X_{\textup{fin}} \oplus X_{\textup{inf}}$ uniquely, where $X_{\textup{fin}} \in \cosetsfin$ and $X_{\textup{inf}} \in \cosetsinf$. From finite additivity we get that
        \[
        \mu'(X_{\textup{fin}}) = \mu'(X_{\textup{fin}}) - \mu'(X_{\textup{inf}}) \leq \mu'(X_{\textup{fin}} \oplus X_{\textup{inf}}) \leq \mu'(X_{\textup{fin}}) + \mu'(X_{\textup{inf}}) = \mu'(X_{\textup{fin}}),
        \]
        from which it follows that $\mu'=\mu$ by \ref{prop:nu-unique}. \qedhere
    \end{enumerate}
\end{proof}

We now aim to show that the sets $U_m \coloneqq \G{m} - \G{m-1}$ are elements of $\cosets$ in an arbitrary FAF group $G$.  The idea is to take a FAF group $G$ with its normal finite index subgroup $H \sgpnfin G$ such that $N \coloneqq [H,H]$ is central in $H$ and finite (the existence of such an $H$ is guaranteed by Remark~\ref{rem:FCAF}), and consider the inclusions $C_G(g) \leq \overline{C}(gN) \leq G$, where $\overline{C}(gN)$ is the preimage of $C_{G/N}(gN)$ under the quotient map $G \to G/N$.  The index $[\overline{C}(gN):C_G(g)]$ is studied in Lemma~\ref{lem:FAF-technical}, and the index $[G:\overline{C}(gN)]$ in Proposition~\ref{prop:vab} (which applies here since $G/N$ is virtually abelian).  These results are then combined in Corollary~\ref{cor:FAF-Um} to show that $U_m \in \cosets$.

\begin{prop} \label{prop:vab}
    Let $G$ be a virtually abelian group, and let $\mathcal{K}$ be the set of all centralisers of elements $g \in \Gfin$.  Then $\mathcal{K}$ is finite, and we have
    \[
    S_K \coloneqq \{ g \in G : C_G(g) = K \} \in \cosets \quad \text{for all } K \in \mathcal{K}.
    \]
\end{prop}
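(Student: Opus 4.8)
The plan is to reduce to a concrete normal subgroup and then argue coset-by-coset. First, using the core construction (Remark~\ref{rem:core}), I would fix a normal abelian subgroup $A \unlhd G$ of finite index together with a transversal $t_1 = 1, t_2, \dots, t_k$ of $A$ in $G$, so that $G = \bigsqcup_{i=1}^k t_iA$. A short computation using that $A$ is abelian and normal shows that for $g \in t_iA$ one has $C_A(g) = \{ a \in A : a^{t_i} = a \} =: B_i$, a subgroup of $A$ depending only on the coset $t_iA$. Since $[G:C_G(g)] = [G : C_G(g)A]\,[A : B_i]$ with $[G:C_G(g)A] \le [G:A] < \infty$, finiteness of $[G:C_G(g)]$ is equivalent to finiteness of $[A:B_i]$; hence, after reordering so that $[A:B_i] < \infty$ precisely for $i \le p$, we get $\Gfin = \bigsqcup_{i \le p} t_iA \in \cosets$.

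Next I would extract the key structural fact. Fix $i \le p$ and $g \in t_iA$. Then $C_G(g)$ contains $B_i$ and satisfies $C_G(g) \cap A = B_i$, so for each $j$ the set $C_G(g) \cap t_jA$ is either empty or a single coset of $B_i$ (if $x_0, x_1$ lie in it, then $x_0^{-1}x_1 \in C_G(g) \cap A = B_i$). Since there are only $d_i := [A:B_i] < \infty$ cosets of $B_i$ inside $t_jA$, I fix representatives $z_{j,1}, \dots, z_{j,d_i} \in t_jA$; then $C_G(g)$ is completely determined by the finite set $T(g) := \{ (j,l) : z_{j,l} \in C_G(g) \}$, because $C_G(g) = \bigcup \{ z_{j,l}B_i : (j,l) \in T(g) \}$ and $T(g)$ has at most one pair with any given first coordinate. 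This at once yields finiteness of $\mathcal{K}$: a centraliser of an element of $t_iA$ is one of finitely many unions of $B_i$-cosets, and $i$ ranges over a finite set. (Equivalently, $\mathcal{K}$ is contained in the finite collection of overgroups of the finitely many finite-index subgroups $B_1, \dots, B_p$.)

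For $S_K \in \cosets$, argue on each coset. We have $S_K \cap t_iA = \varnothing$ whenever $i > p$ (then $C_G(g)$ has infinite index $\ne [G:K]$) or $K \cap A \ne B_i$. If $i \le p$ and $K \cap A = B_i$, then $K = \bigcup\{ z_{j,l}B_i : (j,l) \in T_K \}$ for $T_K := \{ (j,l) : z_{j,l} \in K \}$, and for $g \in t_iA$ one has $C_G(g) = K$ iff $T(g) = T_K$, i.e.\ iff $z_{j,l} \in C_G(g)$ for $(j,l) \in T_K$ and $z_{j,l} \notin C_G(g)$ for $(j,l) \notin T_K$. Using $z_{j,l} \in C_G(g) \iff g \in C_G(z_{j,l})$, this gives
\[
S_K \cap t_iA = t_iA \cap \bigcap_{(j,l) \in T_K} C_G(z_{j,l}) \cap \bigcap_{(j,l) \notin T_K} \bigl( G \setminus C_G(z_{j,l}) \bigr),
\]
a Boolean combination of finitely many cosets of subgroups, hence an element of $\cosets$; taking the finite union over $i$ gives $S_K \in \cosets$. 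I expect the one point that needs care is the collapse from the a priori infinitely many commuting conditions defining $C_G(g) = K$ to finitely many: this works precisely because $C_G(g)$ always contains the fixed finite-index subgroup $B_i$, which depends only on the coset and not on the particular $g$, so the centraliser is pinned down by which of finitely many $B_i$-cosets it meets.
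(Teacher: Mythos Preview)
Your argument is correct. Both proofs start from the same key observation---that for $g$ in a fixed coset of a normal abelian finite-index subgroup $A$, the subgroup $C_A(g)$ depends only on that coset---but they then package the conclusion differently.

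The paper, after noting that the finitely many $C_A(g)$'s intersect to a finite-index subgroup, passes to its core $N \unlhd G$ and observes that every $K \in \mathcal{K}$ contains $N$; thus $\mathcal{K}$ sits inside the finite lattice of subgroups between $N$ and $G$, and one gets the single intrinsic formula
\[
S_K = C_G(K) \setminus \bigcup_{K \lneq K_0 \in \mathcal{K}} C_G(K_0),
\]
which is coordinate-free and makes the inclusion--exclusion on the poset $\mathcal{K}$ transparent. You instead work one coset $t_iA$ at a time, choose $B_i$-coset representatives $z_{j,l}$, and encode $C_G(g)$ by the finite incidence set $T(g)$; this yields an explicit Boolean expression for $S_K \cap t_iA$ in terms of the specific subgroups $C_G(z_{j,l})$. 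Your approach is more constructive (one can read off the cosets involved), at the price of carrying around coset representatives that depend on $i$; the paper's is shorter and avoids any choices. The underlying content is the same: once a centraliser is known to contain a fixed finite-index subgroup depending only on the coset, it is pinned down by finitely many membership conditions.
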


\begin{proof}
    Let $A \leq G$ be a finite index abelian subgroup.  Then for all $g \in G$ and $a \in A$, we have $C_A(g) = C_A(ga)$: indeed, an arbitrary element $b \in A$ commutes with $a$ since $A$ is abelian, and therefore $b$ commutes with $g$ if and only if it commutes with $ga$.  In particular, since $A$ has finite index in $G$, there are only finitely many subgroups of the form $C_A(g)$ when $g$ ranges over the elements of $G$.  Note that if $g \in \Gfin$ then $[G:C_A(g)] = [G:A][A:C_A(g)] < \infty$, and therefore $N_0 = \bigcap_{g \in \Gfin} C_A(g)$ has finite index in $G$.

    Let $N = \Core_G(N_0) \unlhd G$, and note that $N$ has finite index in $G$.  Then for every $g \in \Gfin$, the centraliser $C_G(g)$ contains $N$, and therefore is the preimage in $G$ of a subgroup of $G/N$.  Since $G/N$ is finite it only has finitely many subgroups, implying that the set $\mathcal{K}$ is finite.

    Now note that given an element $g \in \Gfin$ and a subgroup $K \in \mathcal{K}$, we have $C_G(g) = K$ if and only if $g$ centralises $K$ but does not centralise $K_0$ for any $K \lneq K_0 \in \mathcal{K}$.  This shows that
    \[
    S_K = C_G(K) - \bigcup_{K \lneq K_0 \in \mathcal{K}} C_G(K_0).
    \]
    In particular, since $\cosets$ is closed under finite unions and set differences, we have $S_K \in \cosets$, as required.
\end{proof}

The following Lemma is technical.  Given a finite index subgroup $K \sgpfin G/N$, we consider its preimage $\overline{K} \leq G$ as well as the subgroup $\overline{C}_K \leq G$ of elements that centralise $\overline{K}$ modulo $N$.  We show that given any $g \in \overline{C}_K$, the set of commutators $M_g \coloneqq [g,H \cap \overline{K}] \subseteq N$ is a subgroup, and for any subgroup $M \leq N$, the set $L_M \coloneqq \{ g \in \overline{C}_K : M_g \leq M \}$ is a subgroup as well.  This allows us to control the set of elements for which $M_g$ has a certain cardinality.  Moreover, we show that given $g \in L_M$, the coset $[g,h]M$ for $h \in \overline{K}$ depends only on the coset $h(H \cap \overline{K})$, which allows us to have control over the number $|[g,\overline{K}]| / |M_g|$.  Combining these results, we get a description of the set of elements $g \in \overline{C}_K$ for which $[g,\overline{K}]$ has some fixed cardinality $m \in \N$.  But this cardinality is the same as the cardinality of the $\overline{K}$-conjugacy class of $g$, i.e.\ equal to the index $[\overline{K} : C_{\overline{K}}(g)]$.  The result then follows by noting that if $C_{G/N}(gN) = K$, then we have $C_G(g) \subseteq \overline{K}$ and therefore $C_{\overline{K}}(g) = C_G(g)$.

\begin{lem} \label{lem:FAF-technical}
    Let $G$ be a group that has a finite normal subgroup $N \unlhd G$ and a finite index normal subgroup $H \sgpnfin G$ containing $N$ such that $N$ is central in $H$ and $H/N$ is abelian.  Let $K \sgpfin G/N$, let $m \in \N \cup \{\infty\}$, and write $\pi_N\colon G \to G/N$ for the quotient map.  Then we have
    \[
    S_{K,m} \coloneqq \{ g \in \overline{S}_K : [\pi_N^{-1}(K) : C_G(g)] = m \} \in \cosets,
    \]
    where $\overline{S}_K = \{ g \in G \colon C_{G/N}(gN) = K \}$, and $S_{K,m} = \varnothing$ for all $m > |N|$.
\end{lem}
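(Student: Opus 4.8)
For $g\in\overline{S}_K$ I will reduce the index $[\pi_N^{-1}(K):C_G(g)]$ to the cardinality of a commutator set lying inside the finite group $N$, and then express the relevant subsets of $G$ as finite Boolean combinations of cosets of subgroups, i.e.\ as elements of $\cosets$. Throughout write $\overline{K}=\pi_N^{-1}(K)$, $\overline{C}_K=\pi_N^{-1}(C_{G/N}(K))$ and $W=H\cap\overline{K}$. First I would record the cheap structural facts: $N\leq W$ (since $N\leq H$ and $\pi_N(N)=\{1\}\subseteq K$); $[\overline{K}:W]\leq[G:H]<\infty$; $\overline{C}_K$ normalises $H$, $\overline{K}$ and hence $W$; and $[g,\overline{K}]\subseteq N$ for every $g\in\overline{C}_K$, because $\pi_N(g)$ centralises $K$. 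As $\overline{S}_K\subseteq\overline{C}_K$, any $g\in\overline{S}_K$ satisfies $C_G(g)\leq\overline{K}$, so $[\overline{K}:C_G(g)]=[\overline{K}:C_{\overline{K}}(g)]=|[g,\overline{K}]|\leq|N|$; this already settles $S_{K,m}=\varnothing$ whenever $m\notin\{1,\dots,|N|\}$ (in particular for $m=\infty$). I would also note that $\overline{S}_K=\pi_N^{-1}\bigl(\{\,xN : C_{G/N}(xN)=K\,\}\bigr)\in\cosets$: assuming $\overline{S}_K\neq\varnothing$, $K$ is a finite-index centraliser in $G/N$, so Proposition~\ref{prop:vab} applies to the virtually abelian group $G/N$, and $\pi_N^{-1}$ carries cosets to cosets and commutes with Boolean operations.

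\textbf{The commutator set $[g,\overline{K}]$.} For $g\in\overline{C}_K$ the map $\phi_g\colon W\to N$, $w\mapsto[g,w]$, is a homomorphism: from $[g,w_1w_2]=[g,w_2]\,[g,w_1]^{w_2}$ and the fact that $[g,w_1]\in N$ is central in $H\ni w_2$ one gets $[g,w_1w_2]=[g,w_2][g,w_1]=[g,w_1][g,w_2]$ ($N$ being abelian). Put $M_g\coloneqq\operatorname{im}\phi_g=[g,W]\leq N$; note $[g,N]\leq M_g$ because $N\leq W$. Since $g$ normalises $W$ it normalises $M_g=[g,W]$, and from $[g^{-1},w]=([g,w]^{-1})^{g^{-1}}$ one deduces $M_{g^{-1}}=M_g^{\,g^{-1}}=M_g$. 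Fixing a left transversal $h_1,\dots,h_t$ of $W$ in $\overline{K}$ (so $t=[\overline{K}:W]<\infty$), the identity $[g,h_iw]=[g,w]\,[g,h_i]^{w}=[g,w][g,h_i]$ (valid since $[g,h_i]\in N$ is central in $H\ni w$) gives $\{[g,k] : k\in h_iW\}=[g,h_i]M_g$; hence
\[
[g,\overline{K}]=\bigcup_{i=1}^{t}[g,h_i]M_g
\qquad\text{and}\qquad
|[g,\overline{K}]|=|M_g|\cdot\bigl|\{[g,h_i]M_g : 1\leq i\leq t\}\bigr|.
\]

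\textbf{The subgroups $L_M$.} The heart of the argument is to show that the pieces of this description are $\cosets$-definable. For a subgroup $M\leq N$ I claim $L_M\coloneqq\{g\in\overline{C}_K : M_g\leq M\}$ is a subgroup. Closure under products follows from $[g_1g_2,w]=[g_1,w]^{g_2}[g_2,w]$ together with $[g_1,w]^{g_2}=[g_1,w]\cdot[\,[g_1,w],g_2\,]$ and the key point that $[g_1,w]\in M\leq N\leq W$, so $[\,[g_1,w],g_2\,]=\phi_{g_2}([g_1,w])^{-1}\in M_{g_2}\leq M$; closure under inverses is immediate from $M_{g^{-1}}=M_g$. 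The same mechanism shows that for fixed $h_i$ and $n_0\in N$ the set $\{g\in L_M : [g,h_i]\in n_0M\}$ is either empty or a (right) coset of the subgroup $\{g\in L_M : [g,h_i]\in M\}$, and hence lies in $\cosets$. Consequently $V_M\coloneqq\{g\in\overline{C}_K : M_g=M\}=L_M\setminus\bigcup\{L_{M'} : M'\text{ a maximal proper subgroup of }M\}$ lies in $\cosets$, and for every map $\sigma\colon\{1,\dots,t\}\to N/M$ the set $\{g\in V_M : [g,h_i]M=\sigma(i)\text{ for all }i\}=V_M\cap\bigcap_{i=1}^{t}\{g\in L_M : [g,h_i]\in\sigma(i)\}$ lies in $\cosets$; on it $|[g,\overline{K}]|=|M|\cdot|\sigma(\{1,\dots,t\})|$.

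\textbf{Conclusion and the hard part.} Since $N$ is finite and $t<\infty$, there are only finitely many pairs $(M,\sigma)$, and $S_{K,m}$ is the intersection of $\overline{S}_K$ with the (finite) union of the sets $\{g\in V_M : [g,h_i]M=\sigma(i)\ \forall i\}$ over those pairs with $|M|\cdot|\sigma(\{1,\dots,t\})|=m$; therefore $S_{K,m}\in\cosets$, as required, and $S_{K,m}=\varnothing$ for $m>|N|$. The step I expect to be the main obstacle is verifying that $L_M$ and the auxiliary sets $\{g\in L_M : [g,h_i]\in n_0M\}$ are subgroups/cosets: this is precisely where the hypotheses ``$N$ central in $H$'' and ``$N\leq W$'' are used, forcing conjugation by an element of $\overline{C}_K$ to move elements of $N$ only within the relevant commutator subgroup $M_g$. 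Everything else is routine bookkeeping, kept finite by $[G:H]<\infty$.
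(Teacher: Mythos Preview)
Your proposal is correct and follows essentially the same strategy as the paper's proof: reducing $[\overline{K}:C_G(g)]$ to $|[g,\overline{K}]|$, showing $\phi_g\colon W\to N$ is a homomorphism, proving $L_M$ is a subgroup, and decomposing $[g,\overline{K}]$ into cosets of $M_g$ indexed by a transversal of $W$ in $\overline{K}$. The only organisational difference is that where the paper packages the data into a single homomorphism $\Psi\colon L_M\to (N/M)^{\overline{K}/(H\cap\overline{K})}$ (so that the sets $\mathcal{F}_r$ are unions of cosets of $\ker\Psi$), you work coordinate-by-coordinate with the individual maps $g\mapsto[g,h_i]M$ and show directly that each fibre $\{g\in L_M:[g,h_i]\in n_0M\}$ is a coset; these are equivalent, since your coordinate maps are precisely the components of the paper's $\Psi$.
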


\begin{proof}
    We write $\overline{K} \coloneqq \pi_N^{-1}(K)$ and $\overline{C}_K \coloneqq \pi_N^{-1}(C_{G/N}(K))$; note that $\overline{S}_K \subseteq \overline{C}_K$ and that since $G/N$ is virtually abelian, we have $\overline{S}_K \in \cosets$ by Proposition~\ref{prop:vab}.  Given a subgroup $M \leq N$, we let
    \[
    L_M \coloneqq \{ g \in \overline{C}_K : [g,H \cap \overline{K}] \subseteq M \},
    \]
    where $[g,H \cap \overline{K}] = \{ [g,h] : h \in H \cap \overline{K} \}$.  The proof then proceeds in the following steps:
    \begin{enumerate}
        \item \label{it:faf-ghk} showing that $[g,H \cap \overline{K}]$ is a subgroup of $N$ for every $g \in \overline{C}_K$;
        \item \label{it:faf-gk} showing that $[g,\overline{K}]$ is a union of cosets of $[g,H \cap \overline{K}]$ in $N$ for every $g \in \overline{C}_K$;
        \item \label{it:faf-lm} showing that $L_M$ is a subgroup of $G$;
        \item \label{it:faf-phig} showing that for any $g \in L_M$, the function (which is not necessarily a group homomorphism) $\phi_g\colon \overline{K} \to N/M$, defined by $\phi_g(h) = [g,h]M$, factors through the quotient map $\overline{K} \xrightarrow{q} \overline{K}/(H \cap \overline{K})$, i.e.\ $\phi_g = \Phi_g \circ q$ for some function $\Phi_g\colon \overline{K}/(H \cap \overline{K}) \to N/M$;
        \item \label{it:faf-psi} showing that the function $\Psi\colon L_M \to (N/M)^{\overline{K}/(H \cap \overline{K})}$ defined by $g \mapsto \Phi_g$ is a group homomorphism.
    \end{enumerate}

    After proving all these points, the proof can be finalised as follows.  Given a subgroup $M \leq N$, we set $T_M = \{ g \in \overline{C}_K : [g,H \cap \overline{K}] = M \}$.  It then follows from point~\ref{it:faf-ghk} that we have
    \[
    T_M = L_M - \bigcup_{M_0 \lneq M} L_{M_0}.
    \]
    By point \ref{it:faf-lm} we have $L_{M_0} \in \cosets$ for all $M_0 \leq M$; in particular, since $M$ is finite (and so has finitely many subgroups) and since $\cosets$ is closed under finite unions and set differences, it follows that $T_M \in \cosets$.

    Now given any $g \in \overline{S}_K$, note that $\overline{K} = \pi_N^{-1}(K)$ is precisely the set of elements $h \in G$ such that $[g,h] \in N$, and in particular $C_G(g) \leq \overline{K}$.  On the other hand, $\overline{K}$ acts transitively on the set of $\overline{K}$-conjugacy classes of $g$, $\{ h^{-1}gh : h \in \overline{K} \} = g[g,\overline{K}]$, and $C_G(g)$ is precisely the stabiliser of $g$ under this action.  In particular, we have $[\overline{K} : C_G(g)] = |[g,\overline{K}]|$.  Since $[g,\overline{K}] \subseteq N$, it also follows that $S_{K,m} = \varnothing$ for all $m > |N|$.

    Note that if $g \in T_M$ for some $M \leq N$, then it follows from point~\ref{it:faf-gk} and the definition of $\phi_g$ that we have $|[g,\overline{K}]| = |M| \cdot |\phi_g(\overline{K})|$.  In particular, if given any $r \in \N$ we set $\mathcal{F}_r \coloneqq \{ g \in L_M : |\phi_g(\overline{K})| = r \}$, we then have
    \[
    S_{K,m} = \overline{S}_K \cap \bigsqcup_{\substack{M \leq N \\ m/|M| \in \N}} T_M \cap \mathcal{F}_{m/|M|}.
    \]
    Now by point~\ref{it:faf-psi}, it follows that for each $r$, the set $\mathcal{F}_r$ is a union of cosets of $\ker(\Psi)$.  Since $H$ has finite index in $G$ and since $N$ is finite, it follows that both $\overline{K}/(H \cap \overline{K})$ and $N/M$ are finite (for all $M \leq N$), and therefore $(N/M)^{\overline{K}/(H \cap \overline{K})}$ is also finite.  We therefore have that $\ker(\Psi)$ is a finite index subgroup of $L_M$ and so $\mathcal{F}_r$ is a finite union of cosets of $\ker(\Psi)$ for each $r \in \N$; thus $\mathcal{F}_r \in \cosets$.  Since $\cosets$ is closed under finite unions and intersections, and since $T_M \in \cosets$ for all $M \leq N$ (by the argument above) and $\overline{S}_K \in \cosets$ (by Proposition~\ref{prop:vab}), it then follows that $S_{K,m} \in \cosets$, as required.

    It thus remains to prove the points \ref{it:faf-ghk}--\ref{it:faf-psi}.

    \begin{enumerate}
    
        \item Given $h_1,h_2 \in H \cap \overline{K}$, note that $[g,h_1] \in N$ since $g \in \overline{C}_K$ and $h_1 \in \overline{K}$, and therefore $[g,h_1]^{h_2} = [g,h_1]$ since $N$ is central in $H$.  We then have, using commutator calculus and the fact that $N$ is abelian,
        \[
        [g,h_1h_2] = [g,h_2] [g,h_1]^{h_2} = [g,h_2] [g,h_1] = [g,h_1] [g,h_2],
        \]
        and so the map $H \cap \overline{K} \to N$ sending $h \mapsto [g,h]$ is a group homomorphism.  It follows that the image of this map, $[g,H \cap \overline{K}]$, is a subgroup of $N$, as required.

        \item \label{it:faf-gk-pf} This is similar to the previous point: it is enough to show that given $k \in \overline{K}$ we have $[g,k][g,H \cap \overline{K}] \subseteq [g,\overline{K}]$---that is, $[g,k][g,h] \in [g,\overline{K}]$ for every $h \in H \cap \overline{K}$.  But we have $[g,k] \in N$ since $g \in \overline{C}_k$ and therefore $[g,k]^h = [g,k]$ since $N$ is central in $H$.  We thus have
        \[
        [g,k][g,h] = [g,k]^h [g,h] = [g,h] [g,k]^h = [g,kh] \in [g,\overline{K}],
        \]
        as required.

        \item Clearly $1 \in L_M$.  We first claim that $g^{-1} \in L_M$ for a given $g \in L_M$.  Indeed, for any $h \in H \cap \overline{K}$ we have
        \[
        [g^{-1},h] = [h,g]^{g^{-1}} = [h^{g^{-1}},g] = [g,h^{g^{-1}}]^{-1}.
        \]
        On the other hand, note that $h^{g^{-1}} \in H$ since $H$ is normal in $G$, and $h^{g^{-1}} \in \overline{K}$ since the fact that $g \in \overline{C}_K$ implies that $\overline{K}$ is invariant under conjugation by $g$.  Thus we have $[g^{-1},h] \in [g,H \cap \overline{K}]^{-1} \subseteq M$ for all $h \in H \cap \overline{K}$, showing that $g^{-1} \in L_M$, as claimed.

        We now claim that $g_1g_2 \in L_M$ for given $g_1,g_2 \in L_M$.  Indeed, for any $h \in H \cap \overline{K}$ we have, by commutator calculus,
        \[
        [g_1g_2,h] = [g_1,h]^{g_2} [g_2,h] = [g_1,h] [[g_1,h],g_2] [g_2,h] = [g_1,h] [g_2,[g_1,h]]^{-1} [g_2,h].
        \]
        Since $[g_1,h] \in N \subseteq H \cap \overline{K}$, it follows that all three terms in the equation above are elements of $M$ for any $h \in H \cap \overline{K}$ and so $[g_1g_2,h] \in M$, implying that $g_1g_2 \in L_M$, as claimed.

        \item It is enough to show that we have $\phi_g(k) = \phi_g(kh)$ for all $k \in \overline{K}$ and $h \in H \cap \overline{K}$.  But this follows from the computation in point~\ref{it:faf-gk-pf} above: indeed, we have
        \[
        \phi_g(kh) = [g,kh]M = [g,k][g,h]M = [g,k]M = \phi_g(k),
        \]
        since the fact that $g \in L_M$ implies that $[g,h] \in M$.

        \item Given $g_1,g_2 \in L_M$ and $k \in \overline{K}$, note that we have
        \[
        [g_2,[g_1,k]] \in [g_2,N] \subseteq [g_2,H \cap \overline{K}] \subseteq M.
        \]
        We thus have, by commutator calculus,
        \begin{align*}
             \phi_{g_1g_2}(k) &= [g_1g_2,k]M = [g_1,k]^{g_2}[g_2,k]M = [g_1,k] [g_2,[g_1,k]]^{-1} [g_2,k] M \\ &= [g_1,k] [g_2,k] M = \phi_{g_1}(k) \phi_{g_2}(k) = (\phi_{g_1}\phi_{g_2})(k),
        \end{align*}
        and thus $\Psi$ is a group homomorphism, as required. \qedhere
        
    \end{enumerate}
\end{proof}

\begin{cor} \label{cor:FAF-Um}
    Let $G$ be a FAF group, and let $m \in \N \cup \{\infty\}$.  Then we have
    \[
    U_m \coloneqq \{ g \in G : [G:C_G(g)] = m \} \in \cosets,
    \]
    and only finitely many of the $U_m$ are non-empty.
\end{cor}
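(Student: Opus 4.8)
The plan is to reduce to the virtually abelian quotient furnished by Remark~\ref{rem:FCAF} and then reassemble each $U_m$ out of the sets provided by Proposition~\ref{prop:vab} and Lemma~\ref{lem:FAF-technical}.

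First I would fix, using Remark~\ref{rem:FCAF}, a finite normal subgroup $N \unlhd G$ and a finite index normal subgroup $H \sgpnfin G$ with $N \le H$, with $N$ central in $H$ and $H/N$ abelian; in particular $G/N$ is virtually abelian, so Proposition~\ref{prop:vab} applies to it. Writing $\pi_N\colon G \to G/N$ for the quotient map, I would let $\mathcal{K}$ be the (finite) set of centralisers $C_{G/N}(\bar g)$ with $\bar g \in \G[G/N]{\textup{fin}}$, and for each $K \in \mathcal{K}$ put $\overline{K} \coloneqq \pi_N^{-1}(K)$, a finite index subgroup of $G$ with $d_K \coloneqq [G:\overline{K}] = [G/N:K]$; by Proposition~\ref{prop:vab} the set $\overline{S}_K \coloneqq \{ g \in G : C_{G/N}(gN) = K \}$ lies in $\cosets$.

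The next step is the index bookkeeping. If $[G:C_G(g)] = m < \infty$, then $\pi_N(C_G(g))$ has index $\le m$ in $G/N$ and centralises $gN$, so $C_{G/N}(gN)$ has finite index and hence lies in $\mathcal{K}$; thus, for finite $m$, the sets $U_m \cap \overline{S}_K$ with $K \in \mathcal{K}$ partition $U_m$. Moreover, for $g \in \overline{S}_K$ one has $[g,h] \in N$ precisely when $h \in \overline{K}$, so $C_G(g) \le \overline{K}$ and therefore
\[
[G:C_G(g)] = d_K \cdot [\overline{K} : C_G(g)].
\]
Hence $U_m \cap \overline{S}_K$ equals the set $S_{K,\,m/d_K} = \{ g \in \overline{S}_K : [\overline{K} : C_G(g)] = m/d_K \}$ of Lemma~\ref{lem:FAF-technical} when $d_K \mid m$, and is empty otherwise.

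To finish, I would invoke Lemma~\ref{lem:FAF-technical}: each $S_{K,m'} \in \cosets$, and $S_{K,m'} = \varnothing$ once $m' > |N|$. Thus $U_m = \bigsqcup_{K \in \mathcal{K},\ d_K \mid m} S_{K,\,m/d_K}$ is a finite union of members of $\cosets$, so $U_m \in \cosets$, and $U_m = \varnothing$ whenever $m > |N| \cdot \max_{K \in \mathcal{K}} d_K$. In particular only finitely many $U_m$ with $m \in \N$ are non-empty, so $\Gfin = \bigsqcup_{m \in \N} U_m$ is a finite union of members of $\cosets$, giving $\Gfin \in \cosets$; and since $\cosets$ is a Boolean subring of $\Pset(G)$ containing $G$, also $U_\infty = G - \Gfin \in \cosets$. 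Since Proposition~\ref{prop:vab} and Lemma~\ref{lem:FAF-technical} carry all the substance, the only delicate point is the clean splitting $[G:C_G(g)] = d_K[\overline{K} : C_G(g)]$, which hinges on the containment $C_G(g) \le \overline{K}$ for $g \in \overline{S}_K$ and on checking that, for finite $m$, the partition of $U_m$ by the $\overline{S}_K$ with $K \in \mathcal{K}$ is exhaustive.
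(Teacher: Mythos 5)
Your proposal is correct and follows essentially the same route as the paper: reduce to the virtually abelian quotient $G/N$ via Remark~\ref{rem:FCAF}, use Proposition~\ref{prop:vab} to get the finite set $\mathcal{K}$ and the sets $\overline{S}_K \in \cosets$, and then write $U_m$ (for finite $m$) as the disjoint union of the sets $S_{K,m/d_K}$ from Lemma~\ref{lem:FAF-technical} over those $K \in \mathcal{K}$ with $d_K \mid m$, deducing finiteness from the bound $S_{K,m'} = \varnothing$ for $m' > |N|$ and handling $U_\infty$ as a complement. The index splitting $[G:C_G(g)] = d_K[\overline{K}:C_G(g)]$ and the exhaustiveness of the partition by the $\overline{S}_K$ are exactly the points the paper also relies on, and your verification of them is sound.
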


\begin{proof}
    By Remark~\ref{rem:FCAF}, we may assume that $G$ has subgroups $N$ and $H$ as in the statement of Lemma~\ref{lem:FAF-technical}.  Note that the group $G/N$ is virtually abelian.

    We first observe that only finitely many of the $U_m$ are non-empty.  Indeed, note that by Proposition~\ref{prop:vab}, only finitely many finite index subgroups of $G/N$ can appear as centralisers of elements of $G/N$.  On the other hand, it follows from Lemma~\ref{lem:FAF-technical} that for any $g \in \Gfin$ we have $[\pi_N^{-1}(C_{G/N}(gN)) : C_G(g)] \leq |N|$.  Combining these two facts, we get an upper bound on the finite indices of centralisers of elements in $G$, as required.

    In particular, it is enough to show that $U_m \in \cosets$ for all $m \in \N$, as then $U_\infty$ will be the complement of a finite union of elements of $\cosets$ and so will also belong to $\cosets$.

    Now by Proposition~\ref{prop:vab}, there is a finite set $\mathcal{K}$ of finite index subgroups of $G/N$ that occur as centralisers of elements.  Moreover, by Lemma~\ref{lem:FAF-technical}, for any $K \in \mathcal{K}$ and any $r \in \N$ we have
    \[
    S_{K,r} \coloneqq \{ g \in G : C_{G/N}(gN) = K, [\pi_N^{-1}(K):C_G(g)] = r \} \in \cosets.
    \]
    In particular, if we denote by $i_K$ the index $[G:\pi_N^{-1}(K)]$ for any $K \in \mathcal{K}$, it then follows that
    \[
    U_m = \bigsqcup_{\substack{K \in \mathcal{K} \\ m/i_K \in \N}} S_{K,m/i_K},
    \]
    and so $U_m \in \cosets$ for any $m \in \N$, as required.
\end{proof}

\subsection*{Degree of commutativity}

We are now in a position to prove the degree of commutativity is independent of the CCM used for a FAF group. 

\begin{prop} \label{prop:CCM-dc-independent}
    Let $\mu$ and $\nu$ be CCMs on a FAF group $G$.  Then $\dc[\mu](G) = \dc[\nu](G) \in \Q$.
\end{prop}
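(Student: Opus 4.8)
The plan is to reduce the statement to the two structural results already established: that the sets $U_m = \{g \in G : [G:C_G(g)] = m\}$ all lie in the subring $\cosets$ for FAF groups (Corollary~\ref{cor:FAF-Um}), and that $\cosets$ carries a unique, rational-valued CCM (Proposition~\ref{prop:CCM on cosets}). Once $\dc[\mu](G)$ is written purely in terms of the $\mu$-values of the $U_m$, independence of the CCM and rationality both drop out.

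First I would unwind the product mean. For $S = \Comm(G)$ and $g \in G$ the slice $S_g = \{h \in G : gh = hg\}$ is precisely the centraliser $C_G(g)$, so
\[
\dc[\mu](G) = \mu^{(2)}(\Comm(G)) = \int \mu(C_G(g)) \, \mathrm{d}\mu(g).
\]
Since $\mu$ is a CCM, $\mu(C_G(g)) = \frac{1}{[G:C_G(g)]}$, which equals $\frac1m$ when $g \in U_m$ and $0$ when $C_G(g)$ has infinite index; that is, the integrand is the simple function $g \mapsto \sum_{m \in \N} \frac1m \mathbbm{1}_{U_m}(g)$, and by Corollary~\ref{cor:FAF-Um} only finitely many terms are non-zero. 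Integrating a finite simple function against a finitely additive mean is unambiguous and additive over the (disjoint) pieces, so
\[
\dc[\mu](G) = \sum_{m \in \N} \frac{1}{m}\, \mu(U_m),
\]
a finite sum.

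Next I would invoke uniqueness on $\cosets$. Each $U_m$ lies in $\cosets$ by Corollary~\ref{cor:FAF-Um}, and the restriction $\mu|_{\cosets}$ is a CCM on the subring $\cosets$ in the sense of Definition~\ref{def:ccm}: the three defining conditions only ever reference $G$ itself, disjoint unions, and cosets of subgroups, all of which belong to $\cosets$. By Proposition~\ref{prop:CCM on cosets}\ref{prop:mu-unique} there is a unique such CCM and it is rational-valued, so $\mu(U_m) = \nu(U_m) \in \Q$ for every $m$. Substituting back,
\[
\dc[\mu](G) = \sum_{m \in \N} \frac{1}{m}\, \mu(U_m) = \sum_{m \in \N} \frac{1}{m}\, \nu(U_m) = \dc[\nu](G),
\]
and being a finite sum of rationals, this lies in $\Q$.

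The real work here is all in the supporting results (Lemma~\ref{lem:FAF-technical} and Corollary~\ref{cor:FAF-Um} for $U_m \in \cosets$, and Proposition~\ref{prop:CCM on cosets} for the uniqueness of the CCM on $\cosets$), which I am taking as given. The only point in this proof needing any care is the passage from the integral defining $\mu^{(2)}(\Comm(G))$ to the finite sum $\sum_m \frac1m \mu(U_m)$, and that is immediate once one observes the integrand is a simple function supported on finitely many pairwise disjoint members of $\cosets$.
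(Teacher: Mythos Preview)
Your proof is correct and follows essentially the same route as the paper's: both reduce $\dc[\mu](G)$ to the finite sum $\sum_m \frac{1}{m}\mu(U_m)$ by recognising the integrand $\mu(C_G(g)) = [G:C_G(g)]^{-1}$ as a simple function on the disjoint pieces $U_m$, then invoke Corollary~\ref{cor:FAF-Um} for $U_m \in \cosets$ and Proposition~\ref{prop:CCM on cosets}\ref{prop:mu-unique} for uniqueness and rationality.
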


\begin{proof}
    By Corollary~\ref{cor:FAF-Um}, we have $G = \left( \bigsqcup_{m=1}^M U_m \right) \sqcup U_\infty$ for some $M \in \N$, where
    \[
    U_m = \{ g \in G : [G:C_G(g)] = m \} \in \cosets.
    \]
    In particular, we have
    \begin{align*}
        \dc[\mu](G) &= \int \mu(C_G(g)) \,\mathrm{d}\mu(g) = \int \frac{1}{[G:C_G(g)]} \,\mathrm{d}\mu(g)
        \\ &= \left( \sum_{m=1}^M \int \frac{1}{m} \mathbbm{1}_{U_m} \,\mathrm{d}\mu \right) + \int 0 \cdot \mathbbm{1}_{U_\infty} \,\mathrm{d\mu} = \sum_{m=1}^M \frac{\mu(U_m)}{m},
    \end{align*}
    and similarly $\dc[\nu](G) = \sum_{m=1}^M \frac{\nu(U_m)}{m}$.  But for $1 \leq m \leq M$ we have $U_m \in \cosets$ and therefore $\mu(U_m) = \nu(U_m) \in \Q$ by Lemma~\ref{prop:CCM on cosets}\ref{prop:mu-unique}, implying that $\dc[\mu](G) = \dc[\nu](G) \in \Q$, as required.
\end{proof}

\begin{thm} \label{thm:dc-CCM-nonzero}
    \ThmDCCCM
\end{thm}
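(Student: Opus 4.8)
The statement bundles three assertions: that $\dc[\mu](G) > 0$ forces $G$ to be FAF; that $G$ being FAF forces $\dc[\mu](G) > 0$; and that for FAF $G$ the value $\dc[\mu](G)$ is rational and independent of $\mu$. Two of these are already available: the implication ``$\dc[\mu](G) > 0 \Rightarrow G$ is FAF'' is precisely Proposition~\ref{prop:dc-faf}, while Proposition~\ref{prop:CCM-dc-independent} supplies both the rationality and the independence of the chosen CCM once $G$ is known to be FAF. So the plan is to reduce the whole theorem to the single remaining claim: \emph{if $G$ is FAF then $\dc[\mu](G) > 0$.}

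For that direction the key point is that the FC-centre $\Gfin = \{ g \in G : [G:C_G(g)] < \infty \}$ has finite index in a FAF group. To see this I would invoke Lemma~\ref{lem:faf-equivalent} to write $G$ as (finite-by-abelian)-by-finite, obtaining a finite index normal subgroup $H_0 \unlhd G$ whose derived subgroup $H_0' = [H_0,H_0]$ is finite. For any $h \in H_0$ the conjugacy class $h^{H_0} = \{ h[h,h'] : h' \in H_0 \}$ is contained in the coset $hH_0'$, hence is finite, so $[H_0 : C_{H_0}(h)] = |h^{H_0}| \leq |H_0'| < \infty$; since $C_{H_0}(h) = C_G(h) \cap H_0$ and $[G:H_0] < \infty$, this yields $[G:C_G(h)] \leq [G:H_0]\,[H_0:C_{H_0}(h)] < \infty$, i.e.\ $h \in \Gfin$. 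Thus $H_0 \subseteq \Gfin$, and since $\Gfin$ is a subgroup of $G$ (as noted after Definition~\ref{defn:x_n}) containing the finite index subgroup $H_0$, it has finite index, so the CCM axiom gives $\mu(\Gfin) = \tfrac{1}{[G:\Gfin]} > 0$. To conclude, I would appeal to Corollary~\ref{cor:FAF-Um} and the computation carried out in the proof of Proposition~\ref{prop:CCM-dc-independent}: there is an $M \in \N$ with $\Gfin = \bigsqcup_{m=1}^M U_m$ (where $U_m = \{ g : [G:C_G(g)] = m \}$) and $\dc[\mu](G) = \sum_{m=1}^M \tfrac{\mu(U_m)}{m}$. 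Since $\sum_{m=1}^M \mu(U_m) = \mu(\Gfin)$, this gives $\dc[\mu](G) \geq \tfrac{1}{M}\,\mu(\Gfin) > 0$, completing the missing direction; combined with Propositions~\ref{prop:dc-faf} and~\ref{prop:CCM-dc-independent} the full statement follows.

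I do not expect a genuine obstacle here: all of the substantial work—the inductive argument behind ``$\dc > 0 \Rightarrow$ FAF'' and the rigidity argument in the Boolean ring $\cosets$ behind well-definedness and rationality—is already packaged in Propositions~\ref{prop:dc-faf} and~\ref{prop:CCM-dc-independent} and Corollary~\ref{cor:FAF-Um}. The only new input is the elementary fact that a FAF group has finite index FC-centre, and the one point needing a little care is simply that finite additivity of a CCM, together with $\Gfin$ containing a finite index subgroup, forces $\mu(\Gfin) > 0$.
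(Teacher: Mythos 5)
Your proposal is correct, and its overall architecture (splitting the theorem into ``$\dc[\mu](G)>0 \Rightarrow$ FAF'' via Proposition~\ref{prop:dc-faf}, well-definedness and rationality via Proposition~\ref{prop:CCM-dc-independent}, and a separate argument for ``FAF $\Rightarrow \dc[\mu](G)>0$'') matches the paper exactly. The only divergence is in that last direction. The paper argues directly: by Remark~\ref{rem:FCAF} it takes a finite index normal $H \unlhd G$ with $N=[H,H]$ finite and central, observes that $x \mapsto [g,x]$ is a homomorphism $H \to N$ so that $[H:C_H(g)] \leq |N|$ for $g \in H$, and bounds $\dc[\mu](G) = \int [G:C_G(g)]^{-1}\,\mathrm{d}\mu \geq (|N|\cdot[G:H]^2)^{-1}$ with an explicit constant. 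You instead show elementarily that $\Gfin$ has finite index (hence positive mean), and then import the uniform bound $M$ on finite centraliser indices from Corollary~\ref{cor:FAF-Um} together with the formula $\dc[\mu](G) = \sum_{m=1}^M \mu(U_m)/m$ from the proof of Proposition~\ref{prop:CCM-dc-independent}, getting $\dc[\mu](G) \geq \mu(\Gfin)/M$. Both are valid; you are right that the uniform bound on $[G:C_G(g)]$ over $\Gfin$ is the crucial point (without it, a finitely additive mean could in principle concentrate on elements with large centraliser index and the integral could vanish), and Corollary~\ref{cor:FAF-Um} does supply it. The trade-off is that your route leans on the heavier machinery of Section~\ref{sec:FAF=>dc-well-defined} for a step the paper handles with a two-line commutator estimate, though since that machinery is needed anyway for the independence claim, your reuse of it is economical rather than circular.
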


\begin{proof}
    Suppose that $G$ is FAF. In particular, $G$ has a finite index normal subgroup $H \unlhd G$ such that $N \coloneqq [H,H]$ is finite and central in $H$ (see Remark~\ref{rem:FCAF}).  Since $H$ is $2$-step nilpotent, it follows from commutator calculus that the map $\psi_g\colon H \to N$ defined by $\psi_g(x) = [g,x]$ is a group homomorphism for any $g \in H$.  In particular, we have $[H:C_H(g)] = [H:\ker(\psi_g)] = |\psi_g(H)| \leq |N|$ for every $g \in H$, implying that
    \begin{align*}
    \dc[\mu](G) &= \int \mu(C_G(g)) \,\mathrm{d}\mu = \int [G:C_G(g)]^{-1} \,\mathrm{d}\mu \geq \int [G:C_G(g)]^{-1}\mathbbm{1}_H \,\mathrm{d}\mu \\ &\geq \int [G:C_H(g)]^{-1}\mathbbm{1}_H \,\mathrm{d}\mu \geq (|N| \cdot [G:H])^{-1} \int \mathbbm{1}_H \,\mathrm{d}\mu = \frac{1}{|N| \cdot [G:H]^2}
    \end{align*}
    and thus $\dc[\mu](G) > 0$, as required.

    Suppose now instead that $\dc[\mu](G) > 0$. By Lemma~\ref{lem:bigcentraliser}, there exists $m \in \N$ such that $\mu(\G{m}) > 0$.  It then follows by Proposition~\ref{prop:dc-faf} that $G$ is FAF.

    Finally, the rationality and independence from the choice of $\mu$ follows from Proposition~\ref{prop:CCM-dc-independent} if $G$ is FAF, and from the fact that $\dc[\mu](G) = 0$ otherwise.
\end{proof}

\section{Residually finite groups}
\label{sec:RF}

Having defined the residually finite degree of commutativity in Definition~\ref{defn:dc for RF} it is natural to see whether this quantity agrees with the analogous number obtained using the Haar measure. 

\begin{defn}
    For a residually finite group $G$, the \emph{profinite completion} $\widehat{G}$ is the inverse limit of the directed system of finite quotients of $G$:
    \[
    \widehat{G} \coloneqq \varprojlim \{ G/N : N \sgpnfin G \}.
    \]
    The diagonal embedding $g \mapsto (gN)$ embeds $G$ as a dense subgroup into $\widehat{G}$, and we will therefore identify $G$ with a subgroup of $\widehat{G}$.  The group $\widehat{G}$, with the inverse limit topology (induced by the discrete topology on the finite quotients of $G$), is a compact Hausdorff totally disconnected topological group and has a unique left invariant inner-regular and outer-regular Borel probability measure $\widehat{\mu}$, called the \emph{Haar measure}.
\end{defn}

\begin{thm} \label{thm:dc-res-finite}
    \ThmDCRF
\end{thm}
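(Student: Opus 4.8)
The plan is to prove the four-way equality by running through the chain
\[
\dc[\mu](G) \;\le\; \widehat\mu(\overline{\Comm(G)}) \;\le\; \widehat\mu(\Comm(\widehat G)) \;=\; \dcRF(G),
\]
valid for every CCM $\mu$ on $G$, and then establishing the single identity $\dc[\mu](G) = \widehat\mu(\Comm(\widehat G))$, which collapses the chain. The positivity statement will then be immediate: if $G$ is not virtually abelian then, being residually finite, it is not FAF by Lemma~\ref{lem:faf-equivalent}, so $\dcCCM(G) = \dc[\mu](G) = 0$ by Theorem~\ref{thm:dc-CCM-nonzero} and $\dcRF(G) = 0$ by Lemma~\ref{lem:dcRF0} (whence all four quantities vanish); while if $G$ is virtually abelian it is abelian-by-finite, hence FAF, so $\dcCCM(G) > 0$ by Theorem~\ref{thm:dc-CCM-nonzero} and $\dcRF(G) > 0$ by Lemma~\ref{lem:dcRF0}.

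For the equality $\widehat\mu(\Comm(\widehat G)) = \dcRF(G)$: writing $p_N\colon\widehat G^2\to(G/N)^2$ for the canonical map (for $N\sgpnfin G$), the set $p_N^{-1}(\Comm(G/N))$ is clopen, being a disjoint union of $|\Comm(G/N)|$ cosets of the open subgroup $\overline N\times\overline N$, so $\widehat\mu(p_N^{-1}(\Comm(G/N))) = \dc(G/N)$. These sets are downward directed in $N$, and their intersection is $\Comm(\widehat G)$ because $\widehat G = \varprojlim (G/N)$, so that $[x,y] = 1$ in $\widehat G$ iff $[x,y]\in\overline N$ for all $N$. Since the subgroups $\overline N\times\overline N$ form a neighbourhood basis of the identity of the compact group $\widehat G^2$, every open set containing the compact set $\Comm(\widehat G)$ contains some $p_N^{-1}(\Comm(G/N))$, so outer regularity of $\widehat\mu$ gives $\widehat\mu(\Comm(\widehat G)) = \inf_N \dc(G/N) = \dcRF(G)$. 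The two left-hand inequalities are soft: $\overline{\Comm(G)}\subseteq\Comm(\widehat G)$ since $\Comm(\widehat G)$ is closed, and, writing $\Comm_N$ for the image of $\Comm(G)$ in $(G/N)^2$, we have $\overline{\Comm(G)} = \bigcap_N \Comm(G)(\overline N\times\overline N) = \bigcap_N p_N^{-1}(\Comm_N)$, so $\widehat\mu(\overline{\Comm(G)}) = \inf_N |\Comm_N|/[G:N]^2$; since $\Comm(G)\subseteq\pi_N^{-1}(\Comm_N)$ inside $G^2$ and $\mu^{(2)}(\pi_N^{-1}(\Comm_N)) = |\Comm_N|/[G:N]^2$ (a union of $|\Comm_N|$ cosets of $N^2$, each of $\mu^{(2)}$-measure $[G:N]^{-2}$ as $\mu^{(2)}$ is a CCM), this yields $\dc[\mu](G)\le\widehat\mu(\overline{\Comm(G)})$.

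The heart of the proof is $\dc[\mu](G) = \widehat\mu(\Comm(\widehat G))$, which by the dichotomy only needs to be treated when $G$ is virtually abelian. Fix a finite-index abelian normal subgroup $A\unlhd G$ (replace any finite-index abelian subgroup by its core), put $Q = G/A = \widehat G/\overline A$ (a finite group), and pick a lift $\tilde q\in G\subseteq\widehat G$ of each $q\in Q$. Decomposing $\widehat G^2$ into the blocks $\tilde q_1\overline A\times\tilde q_2\overline A$, one sees that $\Comm(\widehat G)$ (resp.\ $\Comm(G)$) meets a block only when $q_1,q_2$ commute in $Q$. For such a pair, define $\delta_q\colon A\to A$ by $\delta_q(a) = a^{\tilde q}a^{-1}$ (a homomorphism, as $A$ is abelian, depending only on $q$) and $\psi_{q_1,q_2}\colon A^2\to A$ by $\psi_{q_1,q_2}(a,b) = \delta_{q_2}(a)\delta_{q_1}(b)^{-1}$; a commutator computation shows that $(\tilde q_1 a,\tilde q_2 b)$ commute in $G$ exactly when $\psi_{q_1,q_2}(a,b) = [\tilde q_1,\tilde q_2]^{-1}$. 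Hence $\Comm(G)\cap(\tilde q_1 A\times\tilde q_2 A)$ is either empty or a left coset of $K_{q_1,q_2} := \ker\psi_{q_1,q_2}$ in $G^2$, and it is non-empty iff $[\tilde q_1,\tilde q_2]\in\operatorname{im}\psi_{q_1,q_2} = [q_1,A]+[q_2,A]$ (a condition independent of the lifts). Since $\mu^{(2)}$ is a CCM on $G^2$ it assigns measure $[G^2:K_{q_1,q_2}]^{-1}$ — read as $0$ when this index is infinite — to any coset of $K_{q_1,q_2}$, so
\[
\dc[\mu](G) \;=\; \sum_{\substack{(q_1,q_2)\in\Comm(Q)\\ [\tilde q_1,\tilde q_2]\,\in\,[q_1,A]+[q_2,A]}} \frac{1}{[G^2:K_{q_1,q_2}]}.
\]
The identical argument in $\widehat G$ — where the commuting pairs in a block form a coset of $\ker(\widehat{\psi_{q_1,q_2}})$ for $\widehat{\psi_{q_1,q_2}}\colon\overline A^2\to\overline A$ the continuous extension, and $\widehat\mu$ gives such a coset measure $[\widehat G^2:\ker\widehat{\psi_{q_1,q_2}}]^{-1}$ — produces the same sum with $A$ replaced by $\overline A$. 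These sums agree term by term: $[G^2:K_{q_1,q_2}]$ is the cardinality of $[q_1,A]+[q_2,A]$, so it is finite precisely when this subgroup is finite, in which case the subgroup is closed in $\overline A$, so $A$ and $\overline A$ give the same non-emptiness condition and $[\widehat G^2:\ker\widehat{\psi_{q_1,q_2}}] = [G^2:K_{q_1,q_2}]$; and when $[q_1,A]+[q_2,A]$ is infinite, both terms are $0$. Thus $\dc[\mu](G) = \widehat\mu(\Comm(\widehat G))$, the chain collapses, and all four quantities coincide.

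The step I expect to be the main obstacle is the last paragraph — identifying the ``coset formula'' for the degree of commutativity of a virtually abelian group and checking it transfers between $G$ and $\widehat G$. The two delicate points are: (i) that the set of commuting pairs inside a block $\tilde q_1 A\times\tilde q_2 A$ is a genuine left coset of $K_{q_1,q_2}$ in $G^2$, so that any CCM measures it correctly directly from the CCM axioms, with no appeal to translation invariance; and (ii) the comparison of the indices $[G^2:K_{q_1,q_2}]$ and $[\widehat G^2:\ker\widehat{\psi_{q_1,q_2}}]$, which rests on the elementary facts that $K_{q_1,q_2}$ has finite index exactly when $\operatorname{im}\psi_{q_1,q_2}$ is finite and that finite subsets of a profinite group are closed.
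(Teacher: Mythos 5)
Your proof is correct, and for the key inequality $\dcRF(G) \leq \min\{\dc[\mu](G),\, \widehat\mu(\overline{\Comm(G)})\}$ it takes a genuinely different route from the paper's. The paper establishes the same soft inequalities $\widehat\mu(\overline{\Comm(G)}) \leq \widehat\mu(\Comm(\widehat{G})) \leq \dcRF(G)$ and $\dc[\mu](G) \leq \dcRF(G)$ that you do, but closes the loop quantitatively: for each $\varepsilon>0$ it constructs finite-index normal subgroups $M, N_\varepsilon \sgpnfin G$ so that $G/N_\varepsilon$ detects $\Comm(G) \cap \Gfin^2$ exactly while the commuting pairs of $(G/N_\varepsilon)^2$ outside $(\Gfin/N_\varepsilon)^2$ contribute at most $\varepsilon$; residual finiteness is used twice there (to separate a finite set of commutators from $M$, and to force centralisers of elements outside $\Gfin$ to have index at least $n$ in $G/N_\varepsilon$). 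You instead compute $\dc[\mu](G)$ and $\widehat\mu(\Comm(\widehat{G}))$ \emph{exactly}, as matching finite sums over $\Comm(G/A)$ of reciprocals of indices of kernels of the homomorphisms $\psi_{q_1,q_2}\colon A^2 \to A$, and identify $\widehat\mu(\Comm(\widehat{G}))$ with $\dcRF(G)$ by a compactness/outer-regularity argument; both the coset-of-$K_{q_1,q_2}$ description of the commuting pairs in a block and the term-by-term comparison with $\overline{A}$ check out (finite images are closed in $\overline{A}$, and infinite images annihilate both terms, the profinite one because a closed subgroup of infinite index has Haar measure zero). Your approach buys an explicit rational formula for the common value and shows, as a by-product, that \emph{any} CCM on $G^2$ (not only a product mean) assigns $\Comm(G)$ the value $\dcRF(G)$ when $G$ is residually finite and virtually abelian; the paper's approach avoids the regularity argument but pays for it with the more delicate quantitative construction. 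One cosmetic slip: $[G^2:K_{q_1,q_2}]$ equals $[G:A]^2$ times the cardinality of $[q_1,A]+[q_2,A]$, not that cardinality alone; since the same factor $[G:A]^2 = [\widehat{G}:\overline{A}]^2$ appears on the profinite side, this does not affect the comparison.
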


\begin{proof}
The last statement of this Theorem is simply Lemma~\ref{lem:dcRF0}, so it suffices to prove the displayed equalities.

    Note that $\Comm(\widehat{G})$ is closed in $\widehat{G}^2$, since the commutator map $(g,h) \mapsto [g,h]$ is continuous and since $\{1\}$ is closed in $\widehat{G}$.  As $\Comm(G) = G^2 \cap \Comm(\widehat{G}) \subseteq \Comm(\widehat{G})$, it follows that $\overline{\Comm(G)} \subseteq \Comm(\widehat{G})$.  This implies that $\widehat\mu(\overline{\Comm(G)}) \leq \widehat\mu(\Comm(\widehat{G}))$.
    
    On the other hand, if $N \unlhd G$ is a normal finite index subgroup of $G$, then the preimage of $\Comm(G/N)$ in $\widehat{G}^2$ is equal to $\{ (g,h) \in \widehat{G}^2 : [g,h] \in \overline{N} \}$ and therefore contains $\Comm(\widehat{G})$, and the Haar measure of this preimage is exactly equal to $\dc(G/N)$.  It follows that $\widehat\mu(\Comm(\widehat{G})) \leq \dc(G/N)$ for all $N \sgpnfin G$, and thus $\widehat\mu(\Comm(\widehat{G})) \leq \dcRF(G)$.  In particular, we have
    \[
    \widehat\mu(\overline{\Comm(G)}) \leq \widehat\mu(\Comm(\widehat{G})) \leq \dcRF(G).
    \]
    Similarly, $\dc[\nu](G) \leq \dcRF(G)$ by Lemma~\ref{lem: dcCCM less that dcRF}.  It is hence enough to show that $\dcRF(G) \leq \widehat\mu(\overline{\Comm(G)})$ and $\dcRF(G) \leq \dc[\nu](G)$.

    Now if $\dcRF(G) = 0$, then the result follows immediately, so we may assume that $\dcRF(G) > 0$.  Then, by Lemma~\ref{lem:dcRF0}, $G$ is virtually abelian, and so contains a normal finite index abelian subgroup $A \unlhd G$.  Recall that the set $\Gfin$ of elements of $G$ whose centraliser has finite index forms a normal subgroup, and note that $\Gfin = \{ g \in G : [G:C_A(g)] < \infty \}$ as $A$ has finite index in $G$.  We aim to construct:
    \begin{enumerate}
        \item a finite index normal subgroup $M \sgpnfin G$ that is contained in $\Gfin$ such that we have $\Comm(G) \cap \Gfin^2 = \{ (g,h) \in \Gfin^2 : [g,h] \in M \}$; and
        \item for any $\varepsilon > 0$, a finite index normal subgroup $N_\varepsilon \sgpnfin G$ contained in $M$ such that $(G/N_\varepsilon)^2$ contains at most $\varepsilon|G/N_\varepsilon|^{2}$ commuting pairs outside of $(\Gfin/N_\varepsilon)^2$.
    \end{enumerate}
    Now having constructed $N_\varepsilon$, note that since $N_\varepsilon \subseteq M \subseteq \Gfin$, we have that $\Comm(G) \cap \Gfin^2$ is a union of cosets of~$N_\varepsilon^2$ in $G^2$, and that the image of $\Comm(G) \cap \Gfin^2$ in $(G/N_\varepsilon)^2$ is precisely $\Comm(G/N_\varepsilon) \cap (\Gfin/N_\varepsilon)^2$.  It follows that $\overline{\Comm(G)} \subseteq \widehat{G}^2$ contains the preimage of $\Comm(G/N_\varepsilon) \cap (\Gfin/N_\varepsilon)^2$ under the surjection $\widehat{G}^2 \to (G/N_\varepsilon)^2$, and therefore
    \[
    \widehat\mu(\overline{\Comm(G)}) \geq \frac{|\Comm(G/N_\varepsilon) \cap (\Gfin/N_\varepsilon)^2|}{|G/N_\varepsilon|^2}.
    \]
    Similarly, $\Comm(G)$ contains the preimage of $\Comm(G/N_\varepsilon) \cap (\Gfin/N_\varepsilon)^2$ under the surjection $G^2 \to (G/N_\varepsilon)^2$, and therefore
    \[
    \dc[\nu](G) \geq \frac{|\Comm(G/N_\varepsilon) \cap (\Gfin/N_\varepsilon)^2|}{|G/N_\varepsilon|^2}.
    \]
    
    On the other hand, we have $\frac{|\Comm(G/N_\varepsilon) - (\Gfin/N_\varepsilon)^2|}{|G/N_\varepsilon|^2} \leq \varepsilon$ by the choice of $N_\varepsilon$, and therefore
    \[
    \dc(G/N_\varepsilon) - \frac{|\Comm(G/N_\varepsilon) \cap (\Gfin/N_\varepsilon)^2|}{|G/N_\varepsilon|^2} \leq \varepsilon.
    \]
    It follows that $\widehat\mu(\overline{\Comm(G)}) \geq \dc(G/N_\varepsilon) - \varepsilon \geq \dcRF(G) - \varepsilon$, and similarly $\dc[\nu](G) \geq \dcRF(G)-\varepsilon$.  But $\varepsilon > 0$ was arbitrary, implying that we have $\widehat\mu(\overline{\Comm(G)}) \geq \dcRF(G)$ and $\dc[\nu](G) \geq \dcRF(G)$, as required.

    It remains to construct the subgroups $M$ and $N_\varepsilon$ as above.

    \begin{enumerate}
    
        \item Note that $[G:\Gfin] < \infty$ (since $A \subseteq \Gfin$) and set
        \[
        M_0 \coloneqq \bigcap_{g \in \Gfin} C_A(g).
        \]
        Note that $M_0 \unlhd G$ (since $\Gfin \unlhd G$ and $A \unlhd G$).  Moreover, given any $g \in G$ and $a \in C_A(g)$, a given element $b \in A$ commutes with $a$, and so it commutes with $g$ if and only if it commutes with $ga$, showing that $C_A(g) = C_A(ga)$.  Thus $C_A(g)$ is constant as $g$ ranges over a coset of $A$ in $G$; as $[G:A] < \infty$, it follows that $M_0$ is the intersection of \emph{finitely many} finite index subgroups of $G$ and therefore $[G:M_0] < \infty$.

        Now let $\mathcal{T}$ be a transversal of $M_0$ in $\Gfin$ containing $1$ (note that $M_0 \subseteq A \subseteq \Gfin$), and let $C = \{ [g,h] : (g,h) \in \mathcal{T}^2 \}$.  Then $C$ is a finite subset of $G$ containing~$1$.  Since $G$ is residually finite, there exists a finite index normal subgroup $M_1 \unlhd G$ such that the quotient map $G \to G/M_1$ is injective on $C$, and in particular $C \cap M_1 = \{1\}$.
        
        We set $M = M_0 \cap M_1$; by construction, $M$ is a normal finite index subgroup of $G$ contained in $\Gfin$.  Given any $g,h \in \mathcal{T}$ and $a,b \in M_0$, note that $[ga,b] = 1 = [a,h]$ by the definition of $M_0$, implying that
        \[
        [ga,hb] = [ga,b] [ga,h]^b = [ga,h]^b = [g,h]^{ab} [a,h]^b = [g,h]^{ab}.
        \]
        In particular, since $M \unlhd G$, we have $[ga,hb] \in M$ if and only if $[g,h] \in M$, which happens if and only if $[g,h] = 1$ by the choice of $M_1$.  It follows that for any $g,h \in \Gfin$, we have either $[g,h] = 1$ or $[g,h] \notin M$, and therefore $\Comm(G) \cap \Gfin^2 = \{ (g,h) \in \Gfin^2 : [g,h] \in M \}$, as required.

        \item Note that given $g \in G$ and $a,b \in A$, we have
        \[
        [ga,b] = [g,b]^a [a,b] = [g,b]
        \]
        since $A$ is normal and abelian, and in particular $[g,A] = [ga,A]$.  Thus $[g,A]$ is constant as $g$ ranges over elements in a coset of $A$ in $G$, and so, since $[G:A] < \infty$, there are only finitely many subsets of $G$ of the form $[g,A]$, for $g \in G$.  Note also that we have $|[g,A]| = |g[g,A]| = |g^A| = [G:C_A(g)]$ by the orbit-stabiliser theorem, and in particular $|[g,A]| < \infty$ if and only if $g \in \Gfin$.

        Let $n \in \N$ be such that $2/n \leq \varepsilon$.  By the paragraph above, there exists a finite subset $S \subseteq G$ such that for each $g \in G - \Gfin$, the subset $S$ contains at least $n$ commutators of the form $[g,a]$ for $a \in A$.  Since $G$ is residually finite, there exists a finite index normal subgroup $N_\varepsilon \sgpnfin G$ such that the quotient map $G \to G/N_\varepsilon$ is injective on $S$; by replacing $N_\varepsilon$ with $N_\varepsilon \cap A \cap M$ if necessary, we may further assume that $N_\varepsilon \subseteq A$ and $N_\varepsilon \subseteq M$.

        It then follows that for any $g \in G - \Gfin$, the subset $[gN_\varepsilon,A/N_\varepsilon] = [g,A]N_\varepsilon/N_\varepsilon \subseteq G/N_\varepsilon$ contains at least $n$ elements, and therefore
        \[
        [G/N_\varepsilon : C_{G/N_\varepsilon}(gN_{\varepsilon})] = |[gN_\varepsilon,G/N_\varepsilon]| \geq |[gN_\varepsilon,A/N_\varepsilon]| \geq n
        \]
        by the orbit-stabiliser theorem.  Now note that we have
        \begin{align*}
            \Comm(G/N_\varepsilon) - (\Gfin/N_\varepsilon)^2 &= \left( \bigcup_{g \in G-\Gfin} \{gN_\varepsilon\} \times C_{G/N_\varepsilon}(gN_{\varepsilon}) \right) \\ &\cup \left( \bigcup_{h \in G-\Gfin} C_{G/N_\varepsilon}(hN_{\varepsilon}) \times \{hN_\varepsilon\} \right),
        \end{align*}
        and therefore, by symmetry,
        \begin{align*}
            \left| \Comm(G/N_\varepsilon) - (\Gfin/N_\varepsilon)^2 \right| &\leq 2 \sum_{gN_\varepsilon \in (G-\Gfin)/N_\varepsilon} |C_{G/N_\varepsilon}(gN_{\varepsilon})|
            \\ &\leq 2 \cdot |(G-\Gfin)/N_\varepsilon| \cdot \frac{|G/N_\varepsilon|}{n} 
            \\ &\leq \frac{2|G/N_\varepsilon|^2}{n} \leq \varepsilon|G/N_\varepsilon|^2,
        \end{align*}
        as required. \qedhere
        
    \end{enumerate}
\end{proof}

\section{Amenable groups}
\label{sec:Am}

We note that our construction to date has been to look at a CCM on a group $G$ and extend to a product mean on $G^2$ from which we define a degree of commutativity and deduce structural results on $G$. However, one could instead look at an arbitrary CCM on $G^2$ and, as we shall see---Example~\ref{ex:heisenberg}---this can produce a different answer. 

However we can also ask the same questions for invariant means and amenable groups. Namely, what can we say about a degree of commutativity for an amenable group which is defined using a non-product mean on $G^2$? In particular, one should bear in mind that FAF groups are all amenable. 

Our result here is that the positivity of these quantities corresponds to the group being FAF in the amenable case, even if one doesn't use the product mean.

\subsection*{Non-product CCMs on products}

We recall, Definition~\ref{defn:x_n}, that $\G{n}$ consists of all the elements of $G$ whose centraliser has index at most $n$, and that $\Gfin$ is a normal subgroup of $G$ that is the union of all the $\G{n}$.

\begin{lem} \label{lem:small-CGhi}
    Suppose that either $\G{n} \neq \Gfin$ for any $n \in \N$, or $[G:\Gfin] = \infty$.  Then for every $n \in \N$ and every $\varepsilon > 0$, there exist $h_1,\ldots,h_n \in G$ such that
    \[
    \sum_{1 \leq i < j \leq n} [G:C_G(h_i^{-1}h_j)]^{-1} < \varepsilon.
    \]
\end{lem}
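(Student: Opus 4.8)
The plan is to treat the two alternatives of the hypothesis separately, dealing first with the ``infinite index'' case and then with the ``unbounded conjugacy classes'' case (the latter must hold whenever the former fails).

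If $[G:\Gfin] = \infty$, the statement is almost immediate: since there are then at least $n$ distinct left cosets of $\Gfin$ in $G$, I would pick $h_1,\ldots,h_n$ in $n$ pairwise distinct such cosets. For each $i < j$ the element $h_i^{-1}h_j$ then lies outside the subgroup $\Gfin = \bigcup_k \G{k}$, so $[G : C_G(h_i^{-1}h_j)]$ is infinite and the corresponding term vanishes; the whole sum is $0 < \varepsilon$.

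Now suppose instead that $\G{m} \neq \Gfin$ for every $m \in \N$. Fix an integer $K$ with $K > \binom{n}{2}/\varepsilon$; it is then enough to produce $h_1,\ldots,h_n \in \Gfin$ with $[G : C_G(h_i^{-1}h_j)] \geq K$ for all $i < j$, since each of the $\binom{n}{2}$ summands will be at most $1/K$. I would build the $h_i$ one at a time, starting from $h_1 = 1$. Given $h_1,\ldots,h_m \in \Gfin$ with the required property and writing $L \coloneqq \max_{1 \leq i \leq m}[G:C_G(h_i)] < \infty$ (finite because each $h_i \in \Gfin$), the crux is the claim that some $h_{m+1} \in \Gfin$ satisfies $h_i^{-1}h_{m+1} \notin \G{K-1}$ for all $i \leq m$; this suffices, because $h_i^{-1}h_{m+1}$ automatically lies in the subgroup $\Gfin$, so its centraliser has finite index, necessarily $\geq K$. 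To prove the claim I would argue by contradiction: if every $g \in \Gfin$ had $h_i^{-1}g \in \G{K-1}$ for some $i$, then from $C_G(h_i) \cap C_G(h_i^{-1}g) \leq C_G(g)$ and submultiplicativity of indices one gets $[G:C_G(g)] \leq [G:C_G(h_i)]\,[G:C_G(h_i^{-1}g)] < LK$ for all $g \in \Gfin$, i.e.\ $\Gfin = \G{LK-1}$, contradicting the hypothesis.

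The argument is short, and I do not anticipate a genuine obstacle. The one point to get right is the step converting ``$\Gfin$ is covered by $m$ left translates of $\G{K-1}$'' into ``conjugacy-class sizes in $\Gfin$ are uniformly bounded''; this is what makes the case hypothesis bite, and it is also the reason one cannot simply quote Neumann's theorem (Theorem~\ref{thm:neumann}) for the cover $\Gfin = \bigcup_{i=1}^m h_i \G{K-1}$, since $\G{K-1}$ is not a subgroup.
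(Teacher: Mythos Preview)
Your proof is correct. The first case is handled identically to the paper. In the second case, both arguments build the sequence inductively and rest on the same centraliser inequality $C_G(h_i)\cap C_G(h_i^{-1}g)\leq C_G(g)$, but they are organised differently. The paper, at stage $k$, directly picks $h_{k+1}\in\Gfin$ with $[G:C_G(h_{k+1})]$ larger than a threshold $N_k$ growing geometrically in $k$; from the inequality it then deduces $[G:C_G(h_i^{-1}h_{k+1})]\geq 2^k k/\varepsilon$, and a telescoping sum gives $c_n\leq\varepsilon(1-2^{1-n})$. You instead fix a single uniform target $K>\binom{n}{2}/\varepsilon$ and at each stage argue by contradiction: if no $h_{m+1}$ worked, then $\Gfin\subseteq\bigcup_i h_i\G{K-1}$, and the same inequality forces $\Gfin=\G{LK-1}$. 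Your packaging avoids the geometric bookkeeping entirely and yields the cleaner conclusion that every cross term is already below the uniform threshold; the paper's version is slightly more explicit about which element to choose at each step. Either way the content is the same.
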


\begin{proof}
    Suppose first that $[G:\Gfin] = \infty$.  We then choose any elements $h_1,\ldots,h_n$ that represent $n$ distinct cosets of $\Gfin$ in $G$.  This implies that $h_i^{-1}h_j \notin \Gfin$ and therefore $[G:C_G(h_i^{-1}h_j)] = \infty$ for all $1 \leq i < j \leq n$, implying that $\sum_{1 \leq i < j \leq n} [G:C_G(h_i^{-1}h_j)]^{-1} = 0 < \varepsilon$, as required.

    Suppose now that $\G{n} \neq \Gfin$ for any $n \in \N$.  We will then construct $h_1,\ldots,h_n \in \Gfin$ such that
    \[
    c_k \coloneqq \sum_{1 \leq i < j \leq k} [G:C_G(h_i^{-1}h_j)]^{-1} \leq \varepsilon(1-2^{1-k})
    \]
    for $1 \leq k \leq n$, inductively as follows.  We pick an arbitrary $h_1 \in \Gfin$.  Having constructed $h_1,\ldots,h_k$ for some $k \geq 1$, let $N_k \in \N$ be such that $N_k \geq \frac{2^kkM_k}{\varepsilon}$, where $M_k = \max \{ [G:C_G(h_i)] : 1 \leq i \leq k \}$.  Since $\G{N_k} \neq \Gfin$, there exists $h_{k+1} \in \Gfin$ such that $[G:C_G(h_{k+1})] > N_k$.  We then have
    \[
    N_k < [G:C_G(h_{k+1})] \leq [G:C_G(h_i) \cap C_G(h_i^{-1}h_{k+1})] \leq M_k \cdot [G:C_G(h_i^{-1}h_{k+1})]
    \]
    for $1 \leq i \leq k$, implying that
    \[
    [G:C_G(h_i^{-1}h_{k+1})] \geq \frac{N_k}{M_k} \geq \frac{2^kk}{\varepsilon}.
    \]
    In particular, it follows that
    \[
    c_{k+1} = c_k + \sum_{i=1}^k [G:C_G(h_i^{-1}h_{k+1})]^{-1} \leq \varepsilon(1-2^{1-k}) + k \cdot \frac{\varepsilon}{2^kk} = \varepsilon(1-2^{-k}),
    \]
    as required.
\end{proof}

\begin{cor} \label{cor:am-mu}
    Suppose that $G$ is amenable, and let $\mu$ be any invariant mean on $G^2$.  If $\mu(\Comm(G)) > 0$, then there exists $n \in \N$ such that $\mu(\G{n} \times G) > 0$.
\end{cor}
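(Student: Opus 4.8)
The plan is to establish the contrapositive: assuming $\mu(\G{n} \times G) = 0$ for every $n \in \N$, I will show $\mu(\Comm(G)) = 0$. The key input is Lemma~\ref{lem:small-CGhi}, so the first thing to check is that its hypothesis holds. If it did not, then $\G{N} = \Gfin$ for some $N \in \N$ and $[G : \Gfin] < \infty$; but then $\G{N} \times G = \Gfin \times G$ is a finite-index subgroup of $G^2$, and since $\mu$ is invariant it assigns it the value $\frac{1}{[G:\Gfin]} > 0$, contradicting the assumption. Hence the hypothesis of Lemma~\ref{lem:small-CGhi} holds, and for every $n \in \N$ and every $\varepsilon > 0$ we can find $h_1, \dots, h_n \in G$ with $\sum_{1 \leq i < j \leq n} [G : C_G(h_i^{-1} h_j)]^{-1} < \varepsilon$.

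Now fix $n \in \N$ and set $A \coloneqq \{ (g, x) \in \Comm(G) : g \notin \G{n} \}$; since $\mu(\G{n} \times G) = 0$, finite additivity gives $\mu(\Comm(G)) = \mu(A)$, so it suffices to bound $\mu(A)$. Applying Lemma~\ref{lem:small-CGhi} with $\varepsilon = 1$ we obtain $h_1, \dots, h_n \in G$ as above, and we consider the translates $B_i \coloneqq (1, h_i) \cdot A = \{ (g, y) \in G^2 : g \notin \G{n},\ y \in h_i C_G(g) \}$. Invariance of $\mu$ gives $\mu(B_i) = \mu(A)$ for each $i$. The crucial observation concerns the overlaps: for $i \neq j$ and a fixed $g \notin \G{n}$, the fibres $h_i C_G(g)$ and $h_j C_G(g)$ are left cosets of the \emph{same} subgroup $C_G(g)$, hence are either equal or disjoint, and they coincide exactly when $h_i^{-1} h_j \in C_G(g)$, i.e.\ when $g \in C_G(h_i^{-1} h_j)$. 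Consequently $B_i \cap B_j \subseteq C_G(h_i^{-1} h_j) \times G$, and therefore $\mu(B_i \cap B_j) \leq [G : C_G(h_i^{-1} h_j)]^{-1}$, using that an invariant mean assigns an infinite-index subgroup measure zero and a finite-index one the reciprocal of its index (this applies to $C_G(h_i^{-1}h_j) \times G \leq G^2$).

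It remains to combine these facts via the Bonferroni inequality, which is valid for any finitely additive mean because the relevant inequality of indicator functions holds atom-wise on the finite Boolean algebra generated by $B_1, \dots, B_n$. This yields
\[
1 \;\geq\; \mu\Bigl(\textstyle\bigcup_{i=1}^n B_i\Bigr) \;\geq\; n\,\mu(A) - \sum_{1 \leq i < j \leq n} [G : C_G(h_i^{-1} h_j)]^{-1} \;>\; n\,\mu(A) - 1,
\]
so $\mu(A) < 2/n$, and hence $\mu(\Comm(G)) = \mu(A) < 2/n$. Since $n$ was arbitrary, $\mu(\Comm(G)) = 0$, which is the contrapositive of the assertion.

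I expect the only genuinely delicate point to be the reduction in the first paragraph: noticing that the hypothesis ``$\mu(\G{n} \times G) = 0$ for all $n$'' is exactly what forces $\Gfin$ either to have unbounded centraliser indices or to be of infinite index in $G$, which is precisely what Lemma~\ref{lem:small-CGhi} requires. The translate-and-Bonferroni step, while it is the bulk of the argument, is essentially bookkeeping once one sees that the right sets to translate are the ``$g \notin \G{n}$'' part of $\Comm(G)$ by elements $(1,h_i)$, and that the pairwise overlaps are controlled by the subgroups $C_G(h_i^{-1} h_j)$.
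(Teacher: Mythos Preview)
Your proof is correct and follows essentially the same approach as the paper's: both verify the hypothesis of Lemma~\ref{lem:small-CGhi}, translate $\Comm(G)$ (or a full-measure subset of it) by the elements $(1,h_i)$, and observe that the pairwise overlaps lie in $C_G(h_i^{-1}h_j)\times G$. The only cosmetic difference is that the paper subtracts the small set $Y\times G=\bigcup_{i<j}C_G(h_i^{-1}h_j)\times G$ up front to make the translates genuinely disjoint, whereas you keep everything and absorb the overlaps via Bonferroni---functionally the same argument (and your restriction to $g\notin\G{n}$ in $A$ is harmless but not actually needed for the overlap bound).
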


\begin{proof}
    Suppose, for contradiction, that $\mu(\G{n} \times G) = 0$ for all $n \in \N$.  If we have $\G{n} = \Gfin$ for some $n \in \N$, then we have $\mu(\Gfin \times G) = 0$ and therefore, as $\Gfin \times G$ is a subgroup of $G^2$ and $\mu$ is an invariant mean, we have $[G:\Gfin] = \infty$.  Thus either way, the assumption of Lemma~\ref{lem:small-CGhi} is satisfied.
    
    Taking $\varepsilon = \frac{\mu(\Comm(G))}{2}$ and taking $n \in \N$ such that $n > \frac{1}{\varepsilon}$, we then get elements $h_1,\ldots,h_n \in G$ such that
    \[
    \sum_{1 \leq i < j \leq n} \mu(C_G(h_i^{-1}h_j) \times G) < \varepsilon.
    \]
    In particular, taking $Y = \bigcup_{1 \leq i < j \leq n} C_G(h_i^{-1}h_j) \subseteq G$, we then have $\mu(Y \times G) < \varepsilon$ and therefore
    \[
    \mu(\Comm(G) - Y \times G) \geq \mu(\Comm(G)) - \mu(Y \times G) > 2\varepsilon-\varepsilon = \varepsilon.
    \]
    On the other hand, for $1 \leq i \leq n$ define $Y_i = (1,h_i)(\Comm(G) - Y \times G)$.  Then $\mu(Y_i) \geq \varepsilon$ for all $i$ since $\mu$ is left-invariant.  However, given $(g,h) \in G^2 - Y \times G$ we have
    \[
    (g,h) \in Y_i \iff  [g,h_i^{-1}h] = 1 \iff h \in h_iC_G(g),
    \]
    and therefore if $(g,h) \in Y_i \cap Y_j$ for some $i < j$, then $h_iC_G(g) = h_jC_G(g)$, or equivalently $g \in C_G(h_i^{-1}h_j)$, which is impossible since $g \notin Y$.  Thus the subsets $Y_1,\ldots,Y_n$ are pairwise disjoint, implying that
    \[
    1 = \mu(G^2) \geq \mu(Y_1 \cup \cdots \cup Y_n) = \sum_{i=1}^n \mu(Y_i) \geq n\varepsilon,
    \]
    contradicting the fact that $n > \frac{1}{\varepsilon}$.
    
    Hence we must have $\mu(\G{n} \times G) > 0$ for some $n \in \N$, as required.
\end{proof}

\begin{prop} \label{prop:am-faf}
    Let $G$ be a FAF group.  Then there exists a constant $\alpha > 0$ such that $\mu(\Comm(G)) \geq \alpha$ for any invariant mean $\mu$ on $G^2$.
\end{prop}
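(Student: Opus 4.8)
The plan is to pass to a small $2$-step nilpotent subgroup, replace the commuting relation by a nondegenerate alternating bilinear form, and then run an averaging argument over a finite subgroup. By Remark~\ref{rem:FCAF}, $G$ has a finite index normal subgroup $H \unlhd G$ with $N := [H,H]$ finite and central in $H$; thus $H$ is $2$-step nilpotent and $H/N$ is abelian. Since $H^2 \leq G^2$ has finite index and $\mu$ is left invariant, every left coset of $H^2$ has the same measure, so $\mu(H^2) = [G:H]^{-2}$, and $\nu := [G:H]^2 \cdot \mu|_{H^2}$ is a left invariant mean on $H^2$. As $\Comm(H) = \Comm(G) \cap H^2 \subseteq \Comm(G)$ we get $\mu(\Comm(G)) \geq \mu(\Comm(H)) = [G:H]^{-2}\,\nu(\Comm(H))$, so it suffices to bound $\nu(\Comm(H))$ below, uniformly in $\nu$. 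Now, because $N$ is central and $H$ is $2$-step nilpotent, $[g,h]$ depends only on $gZ(H)$ and $hZ(H)$ and is biadditive and alternating in them, so it defines a \emph{nondegenerate} alternating bilinear form $\beta\colon A \times A \to N$ on the abelian group $A := H/Z(H)$. Then $\Comm(H)$ is the full preimage of $\beta^{-1}(1) := \{(\bar g,\bar h) \in A^2 : \beta(\bar g,\bar h)=1\}$ under $H^2 \twoheadrightarrow A^2$, so $\nu(\Comm(H)) = \overline\nu(\beta^{-1}(1))$, where $\overline\nu$ is the (left invariant) push-forward of $\nu$ to $A^2$. Hence it is enough to prove: for any nondegenerate alternating bilinear form $\beta$ on an abelian group $A$ valued in a finite abelian group $N$, there is $\alpha_0 > 0$ with $\overline\nu(\beta^{-1}(1)) \geq \alpha_0$ for every left invariant mean $\overline\nu$ on $A^2$.

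Since $N$ has finite exponent $e$, nondegeneracy forces $eA = 0$, so $A$ has bounded exponent. Using the structure theory of alternating forms on abelian groups of bounded exponent (``symplectic basis'' extension, applied after the primary and bounded-exponent decompositions of $A$), one finds a \emph{finite} subgroup $V \leq A$ with $A = V \oplus V^\perp$ — where $V^\perp := \{a \in A : \beta(a,v)=1 \text{ for all } v \in V\}$ — and with $\beta_V := \beta|_{V \times V}$ having the same image in $N$ as $\beta$. Put $\alpha_0 := |V|^{-2}\min\{\,|\beta_V^{-1}(z)| : z \in \operatorname{im}\beta_V\,\}$; this is a positive rational, since $V$ is finite and every fibre of $\beta_V$ over a value it attains is non-empty.

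Fix a left invariant mean $\overline\nu$ on $A^2$. As $V^2 \leq A^2$ is finite and $\overline\nu$ is translation invariant, averaging $\mathbbm 1_{\beta^{-1}(1)}$ over its $|V|^2$ translates by $V^2$ gives
\[
\overline\nu(\beta^{-1}(1)) = \int_{A^2} \frac{\bigl|\beta^{-1}(1) \cap (p + V^2)\bigr|}{|V|^2}\, \mathrm d\overline\nu(p).
\]
Given $p = (a,b)$, write $a = a_V + a'$ and $b = b_V + b'$ with $a_V,b_V \in V$ and $a',b' \in V^\perp$. A short computation using bilinearity and $\beta(V,V^\perp)=1$ shows $\beta(a+u,\,b+u') = c(p)\cdot\beta_V(a_V+u,\,b_V+u')$ for all $u,u' \in V$, where $c(p) := \beta(a,b)\,\beta_V(a_V,b_V)^{-1} = \beta(a',b') \in \operatorname{im}\beta_V$. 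As $(u,u')$ runs over $V^2$ so does $(a_V+u,b_V+u')$, so the integrand equals $|V|^{-2}\,|\beta_V^{-1}(c(p)^{-1})| \geq \alpha_0$ for every $p$; hence $\overline\nu(\beta^{-1}(1)) \geq \alpha_0$. Tracing back through the reductions, $\mu(\Comm(G)) \geq \alpha_0/[G:H]^2$ for every invariant mean $\mu$ on $G^2$, proving the proposition with $\alpha := \alpha_0/[G:H]^2$.

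The routine parts are the reductions and the commutator identities; the crux is the second step — extracting from the (possibly enormous) group $A$ a finite subgroup that is an orthogonal direct summand and already realises every value of $\beta$ — which is precisely where the classification of nondegenerate alternating forms over the rings $\mathbb Z/p^k$ enters.
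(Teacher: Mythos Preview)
Your reduction to a $2$-step nilpotent $H$ with finite central $N=[H,H]$, the passage to the alternating form $\beta$ on $A=H/Z(H)$, and the averaging over a finite subgroup $V^2$ are all sound, and this is the same mechanism the paper uses. The problem is the step you flag as the crux: the existence of a \emph{finite} $V\le A$ with $A=V\oplus V^\perp$ and $\operatorname{im}\beta_V=\operatorname{im}\beta$. The ``symplectic basis'' classification you invoke is standard when the target $N$ is \emph{cyclic} (then hyperbolic planes split off and one inducts), but here $N=[H,H]$ is an arbitrary finite abelian group. For non-cyclic $N$, after you split off a hyperbolic plane $\langle a,b\rangle$ with $\beta(a,b)$ of maximal order, an element $c\in A$ may satisfy $\beta(c,a)\notin\langle\beta(a,b)\rangle$, so the equation $\beta(v,\,\cdot\,)|_{\langle a,b\rangle}=\beta(c,\,\cdot\,)|_{\langle a,b\rangle}$ need not be solvable for $v\in\langle a,b\rangle$, and $\langle a,b\rangle+\langle a,b\rangle^\perp\subsetneq A$. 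So the summand claim is, at best, a nontrivial lemma you have not proved; your one-line appeal to ``structure theory over $\mathbb Z/p^k$'' does not cover it.

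The good news is that you do not need the summand condition at all. Take any finite $V\le A$ with $\operatorname{im}\beta_V=\operatorname{im}\beta$ (just let $V$ be generated by finitely many pairs realising every value of $\beta$), and restrict your averaging integral to the finite-index subgroup $(V^\perp)^2$. For $(a,b)\in(V^\perp)^2$ and $(u,u')\in V^2$ one has $\beta(a+u,b+u')=\beta(a,b)\cdot\beta_V(u,u')$, so the integrand is $|V|^{-2}\,|\beta_V^{-1}(\beta(a,b)^{-1})|\ge\alpha_0$, and hence $\overline\nu(\beta^{-1}(1))\ge\alpha_0\cdot\overline\nu((V^\perp)^2)=\alpha_0/[A:V^\perp]^2$. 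This is exactly the paper's argument, phrased in your bilinear-form language: the paper picks $(x_i,y_i)$ with $[x_i,y_i]$ ranging over all commutators, sets $H=\bigcap_i\big(C_G(x_i)\cap C_G(y_i)\big)$ (your $V^\perp$), and shows $H^2\subseteq\bigcup_i(x_i,y_i^{-1})\Comm(G)$ directly, without ever asking for an orthogonal complement.
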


\begin{proof}
    By Remark~\ref{rem:FCAF}, $G$ has a finite index subgroup $G_0 \leq G$ such that $G_0'$ is finite and central in $G_0$.  Then the function $\mu_0\colon \Pset(G_0^2) \to [0,1]$ defined by $\mu_0(A) = [G:G_0]^2 \mu(A)$ is an invariant mean on $G_0^2$, and if we have $\mu_0(\Comm(G_0)) > 0$ then it also follows that
    \[
    \mu(\Comm(G)) \geq \mu(\Comm(G_0)) = [G:G_0]^{-2}\mu_0(\Comm(G_0)) > 0.
    \]
    We may thus assume, by replacing $G$ with $G_0$ and multiplying $\alpha$ by $[G:G_0]^{-2}$ if necessary, that $G'$ is finite and central in $G$.  In particular, $G$ is $2$-step nilpotent.

    Now let $g_1 = [x_1,y_1],\ldots,g_n = [x_n,y_n]$ be all the possible commutators in $G$, and let $H = \bigcap_{i=1}^n (C_G(x_i) \cap C_G(y_i))$.  Note that we have $[G:C_G(x_i)],[G:C_G(y_i)] \leq |G'| < \infty$ for all $i$, and therefore $[G:H] < \infty$.  On the other hand, since $\mu$ is an invariant mean on $G^2$, we obtain
    \begin{align*}
        \mu(\Comm(G)) &= \frac{1}{n} \sum_{i=1}^n \mu\!\left( (x_i,y_i^{-1})\Comm(G) \right) \geq \frac{1}{n} \mu\!\left( \bigcup_{i=1}^n (x_i,y_i^{-1})\Comm(G) \right)
        \\ &\geq \frac{1}{n} \mu\!\left( H^2 \cap \bigcup_{i=1}^n (x_i,y_i^{-1})\Comm(G) \right) 
        \\ &= \frac{1}{n} \mu\!\left( \left\{ (x,y) \in H^2 : [x_i^{-1}x,y_iy] = 1 \text{ for some } i \in \{1,\ldots,n\} \right\} \right)
        \\ &= \frac{1}{n} \mu\!\left( \left\{ (x,y) \in H^2 : [x,y] = [x_i,y_i] \text{ for some } i \in \{1,\ldots,n\} \right\} \right),
    \end{align*}
    where the last equality follows from commutator calculus (in the $2$-step nilpotent group $G$) and the fact that $[x_i,y] = [x,y_i] = 1$ for all $(x,y) \in H^2$.  But since the $g_i = [x_i,y_i]$ are all the possible commutators in $G$, it follows that $\mu(\Comm(G)) \geq \frac{1}{n} \mu(H^2) = \frac{1}{n[G:H]^2} > 0$, so we obtain the result by setting $\alpha = \frac{1}{n[G:H]^2}$.
\end{proof}

\begin{thm} \label{thm:dc-amenable}
    \ThmDCAm
\end{thm}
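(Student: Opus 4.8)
The plan is to treat the two implications separately; both reduce quickly to results already proved. For the \emph{if} direction there is nothing to do beyond citing Proposition~\ref{prop:am-faf}: if $G$ is FAF it provides a constant $\alpha = \alpha(G) > 0$ with $\mu(\Comm(G)) \geq \alpha$ for \emph{every} invariant mean $\mu$ on $G^2$, so in particular $\mu(\Comm(G)) > 0$.

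For the converse, suppose $\mu$ is a left-invariant finitely additive probability mean on $G^2$ with $\mu(\Comm(G)) > 0$. First I would apply Corollary~\ref{cor:am-mu} to obtain some $n \in \N$ with $\mu(\G{n} \times G) > 0$. The crux is then to pass from $\mu$, which need not be a product mean, to its first marginal: define $\bar\mu \colon \Pset(G) \to [0,1]$ by $\bar\mu(A) \coloneqq \mu(A \times G)$. This is clearly a finitely additive probability mean, and since left translation of $G^2$ by $(g,1)$ carries $A \times G$ to $gA \times G$, left-invariance of $\mu$ makes $\bar\mu$ left-invariant on $G$; a left-invariant mean on $G$ is automatically a CCM (this is the content of the Corollary following the definition of amenability, itself an instance of Neumann's Theorem~\ref{thm:neumann}). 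By construction $\bar\mu(\G{n}) = \mu(\G{n} \times G) > 0$.

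It then remains to feed $\bar\mu$ into the CCM theory already developed. Using $\dc[\bar\mu](G) = \int \bar\mu(C_G(g))\,\mathrm{d}\bar\mu(g)$ (exactly as in the proof of Proposition~\ref{prop:CCM-dc-independent}), the bound $\bar\mu(C_G(g)) = [G:C_G(g)]^{-1} \geq n^{-1}$ for $g \in \G{n}$, and monotonicity of the integral, one gets
\[
\dc[\bar\mu](G) \;\geq\; \frac{1}{n}\,\bar\mu(\G{n}) \;>\; 0,
\]
and Theorem~\ref{thm:dc-CCM-nonzero}, applied to the CCM $\bar\mu$, forces $G$ to be FAF. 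I do not anticipate a real obstacle here: the substance already lives in Proposition~\ref{prop:am-faf} (for one direction) and in Corollary~\ref{cor:am-mu} together with Theorem~\ref{thm:dc-CCM-nonzero} (for the other). The only subtlety is that $\mu$ is an arbitrary invariant mean on $G^2$, not a product mean, so Theorem~\ref{thm:dc-CCM-nonzero} cannot be invoked for $\mu$ directly; routing through the marginal $\bar\mu$—which is a genuine CCM and still carries the trace $\bar\mu(\G{n}) > 0$ of the hypothesis—is what bridges that gap.
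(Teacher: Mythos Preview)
Your proposal is correct and follows essentially the same route as the paper: cite Proposition~\ref{prop:am-faf} for the ``if'' direction, and for the converse apply Corollary~\ref{cor:am-mu}, pass to the marginal mean $\bar\mu(A)=\mu(A\times G)$ (which the paper calls $\mu'$), observe it is a left-invariant mean and hence a CCM with $\bar\mu(\G{n})>0$, and conclude via the CCM theory. The only cosmetic difference is that you explicitly verify $\dc[\bar\mu](G)\geq \tfrac{1}{n}\bar\mu(\G{n})>0$ before invoking Theorem~\ref{thm:dc-CCM-nonzero}, whereas the paper jumps straight to Proposition~\ref{prop:dc-faf} from $\mu'(\G{n})>0$ (implicitly relying on the fact that the \emph{proof} of that proposition only uses this weaker input); your version is arguably tidier in this respect.
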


\begin{proof}
    If $G$ is FAF, then $\mu(\Comm(G)) > 0$ by Proposition~\ref{prop:am-faf}.  Conversely, if $\mu(\Comm(G)) > 0$, then we have $\mu(\G{n} \times G) > 0$ for some $n \in \N$ by Corollary~\ref{cor:am-mu}.  Define $\mu'\colon \Pset(G) \to [0,1]$ by setting $\mu'(A) = \mu(A \times G)$ for all $A \subseteq G$, and note that $\mu'$ is clearly a left invariant finitely additive probability mean, and therefore a CCM, on $G$.  Since $\mu'(\G{n}) > 0$, it then follows from Proposition~\ref{prop:dc-faf} that $G$ is FAF.
\end{proof}

\begin{rem}
    A natural question is whether $\mu(\Comm(G))$ depends on the choice of an invariant mean $\mu$ on a FAF group $G$.  We have not been able to prove or to find a counterexample for this.  However, if a FAF group $G$ is residually finite (for instance, when it is finitely generated), then the independence of the choice of $\mu$ follows from the proof of Theorem~\ref{thm:dc-res-finite}.
\end{rem}

\subsection*{Residually amenable groups}

We may similarly deduce that if $G$ is a residually amenable group, then it has non-zero degree of commutativity if and only if it is is FAF.  In particular, for a residually amenable $G$, we can set $\dcRAm(G)$ to be the infimum of all $\mu(\Comm(H))$, where $H$ ranges over amenable quotients of $G$ and where $\mu$ ranges over invariant means on $H^2$.  If $G$ is FAF, then it is amenable (since finite and abelian groups are amenable and since amenability is closed under extensions), and so $\dcRAm(G)$ is equal to the infimum of all $\mu(\Comm(G))$, where $\mu$ ranges over invariant means on $G^2$, which is positive by Proposition~\ref{prop:am-faf}.

Conversely, if $\dcRAm(G) > 0$, then any amenable quotient of $G$ is FAF by Theorem~\ref{thm:dc-amenable}, and therefore virtually metabelian by Remark~\ref{rem:FCAF}.  Since $G$ is residually amenable, it follows that the residual $\res(G)$ of $G$ is residually metabelian, and therefore metabelian since being metabelian is defined by a group law.  On the other hand, since finite groups are amenable we have $\dcRF(G/\res(G)) \geq \dcRAm(G) > 0$, and so $G/\res(G)$ is virtually abelian: see Remark~\ref{rem:G/resG}.  It follows that $G$ is metabelian-by-(virtually abelian), and so virtually solvable of derived length~$3$; in particular, $G$ is amenable.  We therefore have $\mu(\Comm(G)) \geq \dcRAm(G) > 0$ for any invariant mean $\mu$ on $G^2$, and so $G$ is FAF by Theorem~\ref{thm:dc-amenable}.

\subsection*{Counterexamples}

Here we show that Theorem~\ref{thm:dc-amenable} does not hold for CCMs. That is, there exists a group $G$ and CCMs on $G^2$ such that the degrees of commutativity with respect to these CCMs on $G^2$ are 0 and 1.

\begin{ex} 
\label{ex:heisenberg}

    In this example we shall find a group $H$ and two CCMs $\mu_0, \mu_1$ on $H^2$ such that $\mu_0(\Comm(H)) =0$ whereas $\mu_1(\Comm(H))=1$. Here $H$ will be a nilpotent group, and therefore an amenable group. In fact, $H$ will be finite-by-abelian, FA (and hence also FAF). But $H$ is not abelian, and so one has that product means on $H^2$ give a degree of commutativity strictly between $0$ and $1$. The point being that neither $\mu_0$ nor $\mu_1$ are product means on the square, nor invariant means.

\medskip

    Let $\F_2$ be the field of order $2$, and consider the group
    \[
    H = \left\{ h(\mathbf{x},\mathbf{y},z) \coloneqq \begin{pmatrix} 1 & \mathbf{x} & z \\ 0 & 1 & \mathbf{y} \\ 0 & 0 & 1 \end{pmatrix} : \mathbf{x},\mathbf{y} \in V, z \in \F_2 \right\},
    \]
    where $V = \bigoplus_{i=1}^\infty \F_2$ and the group operation is ``matrix multiplication'':
    \[
    h(\mathbf{x},\mathbf{y},z) \cdot h(\mathbf{x}',\mathbf{y}',z') = h(\mathbf{x}+\mathbf{x}',\mathbf{y}+\mathbf{y}',z+z'+\langle \mathbf{x},\mathbf{y}' \rangle);
    \]
    here $\langle \mathbf{x},\mathbf{y}' \rangle = \sum_{i=1}^\infty x_iy_i'$ for $\mathbf{x} = (x_1,x_2,\ldots)$ and $\mathbf{y}' = (y_1',y_2',\ldots)$.  One may check that $Z(H) = \{ h(\mathbf{0},\mathbf{0},z) : z \in \F_2 \}$ and that $[H:C_H(h)] = 2$ for all $h \in H - Z(H)$.  We should thus expect to have ``$\dc(H) = \frac{1}{2}$'' for any ``reasonable'' definition of $\dc$. In particular, $\dcCCM(H) = \frac{1}{2}$. 
    
    However, we will show that there exists a CCM $\mu_1$ on $H^2$ for which $\mu_1(\Comm(H)) = 1$, by using the construction of CCMs in \S\ref{sec:CCMs}.  In order to do this, it is enough to show that the set $\Pfin(\Comm(H))$ generates a filter together with the sets of the form
    \[
    \mathcal{F}_\varepsilon(K) \coloneqq \left\{ F \in \Pfin(H^2) : \left| \frac{|gK \cap F|}{|F|} - \frac{1}{[H^2:K]} \right| < \varepsilon \text{ for all } g \in H^2 \right\}
    \]
    where $K \leq H^2$ and $\varepsilon > 0$.  In particular, let $K_1,\ldots,K_n \leq H^2$ and $\varepsilon > 0$.  It is then enough to show that $\bigcap_{i=1}^n \mathcal{F}_\varepsilon(K_i)$ contains an element $F \subset H^2$ such that $[h_1,h_2] = 1$ for all $(h_1,h_2) \in F$.

    Suppose, without loss of generality, that for some $m$ we have $[H^2:K_i] < \infty$ for $1 \leq i \leq m$ and $[H^2:K_j] = \infty$ for $m+1 \leq j \leq n$.  For $m+1 \leq j \leq n$, note that $K_j$ is a finite index subgroup of $K_jZ(H)^2$, and thus $\widehat{K}_j \coloneqq K_jZ(H)^2/Z(H)^2$ is an infinite index subgroup of $(H/Z(H))^2 \cong V^4 \cong (\bigoplus_{i=1}^\infty \F_2,+)$.  It follows that $\widehat{K}_j$ is an $\F_2$-subspace of $(H/Z(H))^2$ of infinite codimension, and hence is contained in an $\F_2$-subspace $\widehat{L}_j$ of finite codimension $N > -\log_2 \varepsilon$.  Letting $L_j$ be the preimage of $\widehat{L}_j$ under the quotient map $H^2 \to (H/Z(H))^2$, it then follows that we have $K_j < L_j \leq H^2$ and $\frac{1}{\varepsilon} < [H^2:L_j] < \infty$.

    Now let $K = \bigcap_{i=1}^m K_i \cap \bigcap_{j=m+1}^n L_j$.  Then $K$ is a finite index subgroup of $H^2$.  By Lemma~\ref{lem:heisenberg} below, there exists a (left) transversal $F$ of $K$ in $H^2$ such that $F \subset \Comm(H)$.  By construction, for any $g \in H^2$ we have
    \[
    \frac{|gK_i \cap F|}{|F|} = \frac{1}{[H^2:K_i]}
    \]
    for $1 \leq i \leq m$ and
    \[
    \left| \frac{|gK_j \cap F|}{|F|} - \frac{1}{[H^2:K_j]} \right| = \frac{|gK_j \cap F|}{|F|} \leq \frac{|gL_j \cap F|}{|F|} = \frac{1}{[H^2:L_j]} < \varepsilon
    \]
    for $m+1 \leq j \leq n$, implying that $F \in \bigcap_{i=1}^n \mathcal{F}_\varepsilon(K_i)$, as required.

\medskip

    Using a similar argument, we may show that $\Pfin(\Comm(H)^c)$ together with the $\mathcal{F}_\varepsilon(K)$ generates a filter, and therefore there exists another CCM $\mu_0$ on $H^2$ such that $\mu_0(\Comm(H)) = 0$.
\qed
\end{ex}

This is the technical Lemma used in the example above. 

\begin{lem} \label{lem:heisenberg}
    Let $H$ be as in Example~\ref{ex:heisenberg}, let $K \leq H^2$ be a finite index subgroup, and let $g \in H^2$.  Then there exists an element $(h_1,h_2) \in gK$ such that $[h_1,h_2] = 1$.
\end{lem}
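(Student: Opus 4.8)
The plan is to pass to the quotient of $H^2$ by its (finite) centre and reduce the claim to the existence of a zero of an explicit $\F_2$-quadratic form inside a finite-codimension affine subspace, where it can be settled by an elementary parity argument. First I would record the structural facts. Since $h(\mathbf{x},\mathbf{y},z)^{-1}=h(\mathbf{x},\mathbf{y},z+\langle\mathbf{x},\mathbf{y}\rangle)$, a direct commutator computation in $H$ shows that $h(\mathbf{a},\mathbf{b},s)$ and $h(\mathbf{c},\mathbf{d},t)$ commute if and only if $\langle\mathbf{a},\mathbf{d}\rangle+\langle\mathbf{b},\mathbf{c}\rangle=0$ in $\F_2$. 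As $H/Z(H)\cong V\oplus V$ via $h(\mathbf{x},\mathbf{y},z)\mapsto(\mathbf{x},\mathbf{y})$, there is a surjective homomorphism $\pi\colon H^2\to V^4$ with finite kernel $Z(H^2)=Z(H)^2$, and the commuting condition depends only on $\pi$-images. Hence $\Comm(H)=\pi^{-1}(Q)$ with $Q=\{(\mathbf{a},\mathbf{b},\mathbf{c},\mathbf{d})\in V^4:q(\mathbf{a},\mathbf{b},\mathbf{c},\mathbf{d})=0\}$, where $q(\mathbf{a},\mathbf{b},\mathbf{c},\mathbf{d})=\langle\mathbf{a},\mathbf{d}\rangle+\langle\mathbf{b},\mathbf{c}\rangle$ is a quadratic form on the $\F_2$-vector space $V^4$. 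Because $\Comm(H)$ is a full $\pi$-preimage, $gK$ contains a commuting pair if and only if $\pi(gK)$ meets $Q$; and $\pi(gK)=\mathbf{v}_0+W$, where $\mathbf{v}_0=\pi(g)$ and $W=\pi(K)$ is an $\F_2$-subspace of $V^4$ of finite codimension $c$ (since $[H^2:K]<\infty$). So it suffices to show that $\mathbf{v}_0+W$ contains a zero of $q$.

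For this I would use the polar form $\beta(x,y)=q(x+y)+q(x)+q(y)$; explicitly $\beta\big((\mathbf{a},\mathbf{b},\mathbf{c},\mathbf{d}),(\mathbf{a}',\mathbf{b}',\mathbf{c}',\mathbf{d}')\big)=\langle\mathbf{a},\mathbf{d}'\rangle+\langle\mathbf{a}',\mathbf{d}\rangle+\langle\mathbf{b},\mathbf{c}'\rangle+\langle\mathbf{b}',\mathbf{c}\rangle$, which is alternating and, as one checks by evaluating against the vectors $(e_i,\mathbf 0,\mathbf 0,\mathbf 0)$, $(\mathbf 0,\mathbf 0,\mathbf 0,e_i)$, $(\mathbf 0,e_i,\mathbf 0,\mathbf 0)$, $(\mathbf 0,\mathbf 0,e_i,\mathbf 0)$, non-degenerate on $V^4$. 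Since $W$ has finite codimension while $V^4$ is infinite-dimensional, $\beta$ cannot vanish identically on $W$ (its $\beta$-orthogonal complement has dimension at most $c$, so $\beta|_W\equiv 0$ would force $\dim W\le c$), so I may choose $u_1,u_2\in W$ with $\beta(u_1,u_2)=1$. Using the identity $q(\mathbf{v}_0+u)=q(\mathbf{v}_0)+\beta(\mathbf{v}_0,u)+q(u)$, a short computation shows the four values $q(\mathbf{v}_0+su_1+tu_2)$ for $s,t\in\F_2$ sum to $\beta(u_1,u_2)=1$ modulo $2$; hence they are not all equal to $1$, so at least one of these four points of $\mathbf{v}_0+W$ is a zero of $q$. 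Lifting that point through $\pi$ yields $(h_1,h_2)\in gK$ with $[h_1,h_2]=1$, as required.

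I do not expect a genuine obstacle. The points that need care are getting the translation right — in particular noticing that $\Comm(H)$ is an exact preimage under $\pi$, so one never has to track how $K$ interacts with the centre — and the parity identity, which is the real engine of the argument and depends on working over $\F_2$ with $q$ having non-degenerate polar form. If one prefers to avoid invoking non-degeneracy abstractly, the pair $u_1,u_2$ can be produced concretely: the subspaces $P_i=\operatorname{span}\{(e_i,\mathbf 0,\mathbf 0,\mathbf 0),(\mathbf 0,\mathbf 0,\mathbf 0,e_i)\}$ are mutually $\beta$-orthogonal hyperbolic planes, and intersecting $W$ with $P_1\oplus\cdots\oplus P_{c+1}$ already gives a subspace of $W$ on which $\beta$ is non-zero, which is all that is needed.
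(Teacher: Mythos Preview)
Your proof is correct and takes a genuinely different route from the paper's. The paper first reduces to the case $K=L^2$ for a single finite index subgroup $L\le H$, then argues by contradiction: if no pair in $gL^2$ commutes, commutator identities together with $|[H,H]|=2$ force $L\subseteq C_H(h)$ for all $h\in L$, so $L$ is abelian, contradicting the (externally cited) fact that $H$ is not virtually abelian. You instead linearise from the outset: pass to $V^4=H^2/Z(H)^2$, recognise the commuting condition as the vanishing of the $\F_2$-quadratic form $q$, and then run a parity argument on an affine $\F_2$-plane inside $W=\pi(K)$, using that the polar form $\beta$ is non-degenerate so $W$ cannot be totally $\beta$-isotropic. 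Your argument is self-contained (it does not need the ``$H$ is not virtually abelian'' input) and in fact constructive, producing a commuting pair among four explicit candidates; the paper's argument is shorter and more group-theoretic but leans on that external structural fact about $H$.
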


\begin{proof}
    Since $K$ has finite index in $H^2$ we have $K \cap (H \times \{1\}) = L_1 \times \{1\}$ and $K \cap (\{1\} \times H) = \{1\} \times L_2$ for finite index subgroups $L_1,L_2 \sgpfin H$. Let $L = L_1 \cap L_2$.  Then $L^2 \sgpfin K \sgpfin H^2$ and so we may assume, without loss of generality, that $K = L^2$.

    Let $g = (g_1,g_2) \in H^2$, and suppose for contradiction that $[g_1h_1,g_2h_2] \neq 1$ for all $h_1,h_2 \in L$.  Note that, as $H$ is $2$-step nilpotent, we have
    \[
    [g_1h_1,h_2] = [g_1h_1,g_2]^{-1}[g_1h_1,g_2h_2],
    \]
    and thus, since $[H,H]$ is cyclic of order $2$, we have $[g_1h_1,h_2] = 1$.  Since $h_2 \in L$ was arbitrary, it follows that $L \subseteq C_H(g_1h_1)$ for all $h_1 \in L$.  Taking $h_1 = 1$ we get $L \subseteq C_H(g_1)$, implying that $L \subseteq C_H(g_1h) \cap C_H(g_1) \subseteq C_H(h)$ for all $h \in L$.  In particular, $L$ is abelian.

    On the other hand, $L$ has finite index in $H$, so this contradicts the fact that $H$ is not virtually abelian (see \cite[\S 7.3]{MTVV}).
\end{proof}

\begin{rem}
    The CCM $\mu$ in Example~\ref{ex:heisenberg} is not an invariant mean, as for any invariant mean $\mu$ on $G^2$, where $G$ is a non-abelian group, we have $\mu(\Comm(G)) < 1$.  Indeed, if $g \in G$ is any non-central element, then we have
    \begin{align*}
        \Comm(G)^c \cup (1,g)\Comm(G)^c &= \{ (x,y) \in G^2 : [x,y] \neq 1 \text{ or } [x,g^{-1}y] \neq 1\} \\ &= \{ (x,y) \in G^2 : [x,y] = [x,g^{-1}y] = 1\}^c \\ &\supseteq \{ (x,y) \in G^2 : [x,g]=1 \}^c = (G-C_G(g)) \times G,
    \end{align*}
    so if $\mu$ is an invariant mean on $G^2$ then we get
    \[
    \mu(\Comm(G)^c) \geq \frac{1}{2} \mu((G-C_G(g)) \times G) = \frac{1}{2}\left( 1 - \frac{1}{[G:C_G(g)]} \right) \geq \frac{1}{4}
    \]
    and therefore $\mu(\Comm(G)) \leq \frac{3}{4}$.
\end{rem}

\section{Means on non-amenable groups}
\label{sec:defect}

Since the paper has discussed various means on groups that exhibit partial invariance, it is natural to ask whether this can be quantified. We do this via a defect function which measures the worst case in which a mean on a group deviates from invariance. We show that while this is defined as a real number in the interval $[0,1]$, it turns out to be equal to either $0$ or $1$ with the former case corresponding to the case of amenability. Therefore, even though we construct means which are invariant on subgroups, the defect function sees these as generally no closer to an invariant mean than an arbitrary one.

\begin{defn}
\label{defn:defect}
    Given a finitely additive probability mean $\mu$ on a group $G$, we define the \emph{left defect} of $\mu$ as
    \[
    \delta_\ell(\mu) = \sup \{ |\mu(gA)-\mu(A)| : g \in G, A \subseteq G \} \in [0,1],
    \]
    and the \emph{right defect} of $\mu$ as
    \[
    \delta_r(\mu) = \sup \{ |\mu(Ag)-\mu(A)| : g \in G, A \subseteq G \} \in [0,1].
    \]
    Moreover, we define the (\emph{left}) \emph{defect} of $G$ to be
    \[ \delta_G = \inf_{\mu} \delta_\ell(\mu), \]  where the infimum ranges over all finitely additive probability means $\mu$ on $G$. 
\end{defn}

\begin{rem}
    We are omitting the right defect for a group but it has an analogous definition and is easily seen to be equal to the left defect. That is, if \(\mu\) is a finitely additive probability mean, we can define \(\nu(A) \coloneqq \mu(A^{-1})\), and then \(\delta_\ell(\nu) = \delta_r(\mu)\).
\end{rem}

\begin{lem} \label{lem:defects}
    Let $\mu$ be a finitely additive probability mean on a group $G$.
    \begin{enumerate}
        \item \label{it:defects-left} Suppose that $\delta_\ell(\mu) > 0$ and $\delta_r(\mu) < 1$.  Then $G$ admits a finitely additive probability mean $\overline\mu$ such that $\delta_\ell(\overline\mu) < \delta_\ell(\mu)$ and $\delta_r(\overline\mu) \leq \delta_r(\mu)$.
        \item \label{it:defects-right} Suppose that $\delta_r(\mu) > 0$ and $\delta_\ell(\mu) < 1$.  Then $G$ admits a finitely additive probability mean $\overline\mu$ such that $\delta_r(\overline\mu) < \delta_r(\mu)$ and $\delta_\ell(\overline\mu) \leq \delta_\ell(\mu)$.
    \end{enumerate}
\end{lem}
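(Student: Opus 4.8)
The plan is to prove part~\ref{it:defects-left} by writing down an explicit $\overline\mu$, obtained by averaging $\mu$ against itself on the left, and then to deduce part~\ref{it:defects-right} from it via the left--right symmetry. For part~\ref{it:defects-left}, write $L := \delta_\ell(\mu) > 0$ and $R := \delta_r(\mu) < 1$, and set
\[
\overline\mu(A) := \int_G \mu(gA)\,\mathrm{d}\mu(g) \qquad (A \subseteq G),
\]
where the bounded function $g \mapsto \mu(gA)$ is integrated against the finitely additive mean $\mu$ (cf.\ \cite[Chapter~III]{DunfordSchwartz1988}). One checks directly that $\overline\mu$ is again a finitely additive probability mean, using that left translation is a bijection (so $\mu(g(A \sqcup B)) = \mu(gA) + \mu(gB)$) and linearity of the integral. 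The right defect is immediate: for all $A \subseteq G$ and $h \in G$ one has $\overline\mu(Ah) - \overline\mu(A) = \int_G \big(\mu((gA)h) - \mu(gA)\big)\,\mathrm{d}\mu(g)$, and the integrand has absolute value $\le R$ at every $g$, whence $\delta_r(\overline\mu) \le R = \delta_r(\mu)$.

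The heart of the argument is the bound $\delta_\ell(\overline\mu) \le \delta_\ell(\mu)\,\delta_r(\mu)$, which is $< \delta_\ell(\mu)$ precisely because $\delta_r(\mu) < 1$. Fix $h \in G$ and $A \subseteq G$ and put $f(g) := \mu(gA) \in [0,1]$, so that $\overline\mu(hA) - \overline\mu(A) = \int_G \big(f(gh) - f(g)\big)\,\mathrm{d}\mu(g)$. Two one-sided estimates combine here. First, $f$ has small oscillation: for $g_1, g_2 \in G$ one has $\mu(g_1 A) - \mu(g_2 A) = \mu\big((g_1 g_2^{-1})(g_2 A)\big) - \mu(g_2 A)$, so $\sup_g f - \inf_g f \le \delta_\ell(\mu) = L$; hence the recentred function $\phi := f - c$, with $c := \tfrac{1}{2}(\sup_g f + \inf_g f)$, satisfies $\|\phi\|_\infty \le L/2$ while $f(gh) - f(g) = \phi(gh) - \phi(g)$ pointwise. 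Second, the finitely additive quantity $\tau(B) := \mu(Bh^{-1}) - \mu(B)$ has small total variation: $\tau(G) = 0$, and for a finite partition $\{B_i\}$ of $G$, letting $P$ be the union of those blocks with $\tau(B_i) \ge 0$, finite additivity gives $\sum_i |\tau(B_i)| = 2\tau(P) = 2\big(\mu(Ph^{-1}) - \mu(P)\big) \le 2\,\delta_r(\mu) = 2R$. For a simple function $\phi = \sum_i c_i\mathbbm{1}_{B_i}$ (with $\{B_i\}$ a partition) one has $\int_G \big(\phi(gh) - \phi(g)\big)\,\mathrm{d}\mu(g) = \sum_i c_i\,\tau(B_i)$, and so $\big|\int_G \big(\phi(gh) - \phi(g)\big)\,\mathrm{d}\mu(g)\big| \le \|\phi\|_\infty \sum_i |\tau(B_i)| \le (L/2)(2R) = LR$; passing to the limit along a uniform approximation of $\phi$ (keeping the sup-norm bounded by $L/2$) gives $|\overline\mu(hA) - \overline\mu(A)| \le LR$ for all $h, A$, i.e.\ $\delta_\ell(\overline\mu) \le LR < L$.

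Part~\ref{it:defects-right} will then follow by applying part~\ref{it:defects-left} to $\nu(A) := \mu(A^{-1})$: the substitution $A \mapsto A^{-1}$ swaps left and right translation, so $\delta_\ell(\nu) = \delta_r(\mu) > 0$ and $\delta_r(\nu) = \delta_\ell(\mu) < 1$; if $\overline\nu$ is the mean furnished by part~\ref{it:defects-left}, then $\overline\mu(A) := \overline\nu(A^{-1})$ satisfies $\delta_r(\overline\mu) = \delta_\ell(\overline\nu) < \delta_\ell(\nu) = \delta_r(\mu)$ and $\delta_\ell(\overline\mu) = \delta_r(\overline\nu) \le \delta_r(\nu) = \delta_\ell(\mu)$.

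The only genuinely creative step is the choice of $\overline\mu$: averaging $\mu$ against itself on the left makes the test function $g \mapsto \mu(gA)$ see the outer copy of $\mu$ only through its oscillation (governed by $\delta_\ell(\mu)$), while the averaging itself is a right-translation effect (governed by $\delta_r(\mu)$), so the two defects \emph{multiply} rather than add. The step I expect to require the most care is the handling of the finitely additive integral---in particular the change-of-variables identity $\int_G \phi(gh)\,\mathrm{d}\mu(g) = \sum_i c_i\,\mu(B_i h^{-1})$ for simple $\phi = \sum_i c_i\mathbbm{1}_{B_i}$, its extension by uniform approximation, and the elementary ``Hahn partition'' bound $\sum_i|\tau(B_i)| \le 2\delta_r(\mu)$---together with keeping the two one-sided estimates aligned so that their product, not their sum, controls the final bound.
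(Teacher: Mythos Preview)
Your proof is correct and follows the same overall architecture as the paper: you construct the same mean $\overline\mu(A) = \int_G \mu(gA)\,\mathrm{d}\mu(g)$, dispose of $\delta_r(\overline\mu) \le \delta_r(\mu)$ in the same one-line way, and derive part~\ref{it:defects-right} from part~\ref{it:defects-left} via $A \mapsto A^{-1}$ exactly as the paper does.

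The one place you genuinely diverge is the bound on $\delta_\ell(\overline\mu)$. The paper uses a single level set $B = \{h : \mu(hA) \ge m\}$ (with $m$ the midpoint of the range of $h \mapsto \mu(hA)$), sandwiches $\overline\mu(gA)$ between $m_- + D\,\mu(Bg^{-1})$ and $m + D\,\mu(Bg^{-1})$, and reads off $\delta_\ell(\overline\mu) \le \tfrac{1+R}{2}\,L$. Your argument instead controls $\int (\phi(gh)-\phi(g))\,\mathrm{d}\mu(g)$ by the total variation of the signed charge $\tau(B) = \mu(Bh^{-1}) - \mu(B)$, using a Hahn-partition estimate $\sum_i |\tau(B_i)| \le 2R$ together with $\|\phi\|_\infty \le L/2$; this yields the strictly sharper bound $\delta_\ell(\overline\mu) \le LR$. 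The paper's estimate is more elementary (no approximation by simple functions, no variation bound), while yours is cleaner quantitatively---in particular it shows the defects multiply, which makes the iteration in Proposition~\ref{prop: defect less than 1 is amenable} converge geometrically rather than at the slower rate the paper's bound would give.
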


\begin{proof}
    Suppose first that $\delta_\ell(\mu) > 0$ and $\delta_r(\mu) < 1$. Define $\overline\mu\colon \Pset(G) \to [0,1]$ by setting
    \[
    \overline\mu(A) = \int \mu(hA) \,\mathrm{d}\mu(h)
    \]
    for all $A \subseteq G$.  It is then easy to see that $\overline\mu$ is a finitely additive probability mean on $G$ satisfying $\delta_r(\overline\mu) \leq \delta_r(\mu)$.  Now let $A \subseteq G$, let $m_- = \inf_{g \in G} \mu(gA)$, let $m_+ = \sup_{g \in G} \mu(gA)$, let $m = \frac{m_-+m_+}{2}$, let $D = \frac{m_+-m_-}{2}$, and let $\delta = \delta_\ell(\mu)$.  Note that $D = m_+-m = m-m_-$ and that $m_+-m_- \leq \delta$; in particular, $D \leq \frac{\delta}{2}$.  Setting $B = \{ h \in G : \mu(hA) \geq m \}$ we get $Bg^{-1} = \{ h \in G : \mu(hgA) \geq m \}$ for any $g \in G$, which allows us to estimate
    \[
    m_- + D\mu(Bg^{-1}) \leq \overline\mu(gA) \leq m + D\mu(Bg^{-1}).
    \]
    In particular, we have
    \begin{align*}
        \overline\mu(gA)-\overline\mu(A) &\leq \left( m+D\mu(Bg^{-1}) \right) - \left( m_-+D\mu(B) \right) = D(1+\mu(Bg^{-1})-\mu(B)) \\ &\leq \frac{\delta}{2} \left( 1+\delta_r(\mu) \right)
    \end{align*}
    for all $g \in G$ and $A \subseteq G$.  Since $\delta_r(\mu) < 1$ and $\delta > 0$, this implies that $\delta_\ell(\overline\mu) \leq \frac{1+\delta_r(\mu)}{2} \delta < \delta$, proving~\ref{it:defects-left}.

    Part~\ref{it:defects-right} follows in a similar way to part~\ref{it:defects-left}, or by applying part~\ref{it:defects-left} to the finitely additive probability mean $\mu'$ on $G$ defined by $\mu'(A) = \mu(A^{-1})$.
\end{proof}

\begin{prop}
\label{prop: defect less than 1 is amenable}
    Let $\nu$ be a finitely additive probability mean on a group $G$, and suppose that $\delta_\ell(\nu) < 1$.  Then $G$ is amenable.
\end{prop}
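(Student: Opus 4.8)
The plan is to deduce amenability from Lemma~\ref{lem:defects} by a standard weak-$*$ compactness argument in the space $M(G)$ of finitely additive probability means on $G$. Identifying a mean with integration against it, $M(G)$ is a weak-$*$ closed subset of the unit ball of the dual of $\ell^\infty(G)$, hence weak-$*$ compact by Banach--Alaoglu; moreover it is convex, and for each fixed $A \subseteq G$, $g \in G$ the maps $\mu \mapsto \mu(gA)-\mu(A)$ and $\mu \mapsto \mu(Ag)-\mu(A)$ are weak-$*$ continuous, since they amount to evaluating $\mu$ at the fixed elements $\mathbbm{1}_{gA}-\mathbbm{1}_A$, $\mathbbm{1}_{Ag}-\mathbbm{1}_A \in \ell^\infty(G)$.

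First I would dispose of the degenerate case: if $\delta_\ell(\nu)=0$ then $\nu$ is already left-invariant and $G$ is amenable. Next I would arrange that $\delta_r(\nu)<1$ as well: if $\delta_r(\nu)=1$, then since $\delta_\ell(\nu)<1$ Lemma~\ref{lem:defects}\ref{it:defects-right} yields a mean $\overline\nu$ with $\delta_r(\overline\nu)<1$ and $\delta_\ell(\overline\nu)\le\delta_\ell(\nu)<1$, so after replacing $\nu$ by $\overline\nu$ I may assume $\delta_\ell(\nu)>0$ and $c\coloneqq\delta_r(\nu)<1$. I would then fix attention on the subset
\[
C_c \coloneqq \{\mu \in M(G) : \delta_r(\mu)\le c\} = \bigcap_{g\in G,\,A\subseteq G}\{\mu \in M(G) : |\mu(Ag)-\mu(A)|\le c\},
\]
which is non-empty (it contains $\nu$), convex, and weak-$*$ closed as an intersection of weak-$*$ closed sets; being a closed subset of the compact $M(G)$, it is weak-$*$ compact. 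The function $\delta_\ell$ restricted to $C_c$ is a supremum of the weak-$*$ continuous functions $\mu\mapsto|\mu(gA)-\mu(A)|$, hence lower semicontinuous, and therefore attains its infimum over $C_c$ at some $\mu^{*}\in C_c$.

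To finish, I would argue by contradiction: if $\delta_\ell(\mu^{*})>0$ then, since $\delta_r(\mu^{*})\le c<1$, Lemma~\ref{lem:defects}\ref{it:defects-left} applies to $\mu^{*}$ and produces a mean $\overline{\mu^{*}}$ with $\delta_\ell(\overline{\mu^{*}})<\delta_\ell(\mu^{*})$ and $\delta_r(\overline{\mu^{*}})\le\delta_r(\mu^{*})\le c$; in particular $\overline{\mu^{*}}\in C_c$, contradicting minimality of $\delta_\ell(\mu^{*})$ on $C_c$. Hence $\delta_\ell(\mu^{*})=0$, so $\mu^{*}$ is a left-invariant finitely additive probability mean and $G$ is amenable. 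The only step needing genuine care is the compactness/lower-semicontinuity bookkeeping ensuring that the infimum of $\delta_\ell$ over $C_c$ is actually attained; all the substantive content is already in Lemma~\ref{lem:defects}, and the reduction to the case $\delta_r(\nu)<1$ (so that part~\ref{it:defects-left} can be iterated) is essential because part~\ref{it:defects-left} requires $\delta_r<1$ as a hypothesis.
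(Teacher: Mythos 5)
Your proof is correct, and it rests on the same key ingredient as the paper's---both parts of Lemma~\ref{lem:defects}---with the same overall shape: produce a mean realising a minimal left defect, then contradict minimality via part~\ref{it:defects-left} unless that defect is zero. The difference is in how the minimiser is produced and in the order of the two reductions. The paper takes the global infimum $\delta_G$ of $\delta_\ell$ over \emph{all} means, realises it as an ultralimit $\mu(A)=\lim_\omega\mu_n(A)$ along a minimising sequence, and only then uses part~\ref{it:defects-right} to push $\delta_r$ below $1$ while preserving $\delta_\ell=\delta_G$, before invoking part~\ref{it:defects-left}. You instead apply part~\ref{it:defects-right} once at the outset to arrange $\delta_r\le c<1$, and then minimise $\delta_\ell$ over the weak-$*$ compact set $C_c$ using Banach--Alaoglu together with lower semicontinuity of $\delta_\ell$ as a supremum of weak-$*$ continuous evaluations; since part~\ref{it:defects-left} preserves the constraint $\delta_r\le c$, a single application yields the contradiction. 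The two compactness mechanisms are essentially interchangeable (an ultralimit along a minimising sequence versus extraction of a minimiser of an lsc function on a compact set), but your version is arguably cleaner in that the verification that the limiting object is a mean with the right defect bounds is absorbed into standard functional-analytic facts, and the constrained set $C_c$ makes transparent why only part~\ref{it:defects-left} is needed at the final step. One cosmetic remark: convexity of $M(G)$ and of $C_c$ plays no role in your argument, and after replacing $\nu$ by $\overline\nu$ you should note that if $\delta_\ell(\overline\nu)=0$ you are already done; neither point affects correctness.
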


\begin{proof}
    Throughout this proof, all means on $G$ will be assumed to be finitely additive probability means.

    Let $\delta_G = \inf_{\nu'} \delta_\ell(\nu')$, where the infimum ranges over all means $\nu'$ on $G$.  Then there is a sequence $(\mu_n)_{n \in \N}$ of means on $G$ such that $\delta_G = \lim_{n \to \infty} \delta_\ell(\mu_n)$.  Let $\omega$ be a non-principal ultrafilter on $\N$, and let $\mu\colon \Pset(G) \to [0,1]$ be defined by setting $\mu(A) = \lim_\omega \mu_n(A)$.  It is then easy to verify that $\mu$ is a finitely additive probability mean on $G$ and that $|\mu(gA)-\mu(A)| \leq \delta_G$ for all $g \in G$ and $A \subseteq G$.  In particular, we have $\delta_\ell(\mu) = \delta_G$.  Note that if $\delta_G = 0$ (respectively $\delta_r(\mu) = 0$), then $\mu$ is a left invariant (respectively right invariant) mean on $G$ and so $G$ is amenable.
    
    Thus, suppose for contradiction that $\delta_G > 0$ and $\delta_r(\mu) > 0$; note that we also have $\delta_G \leq \delta_\ell(\mu) < 1$.  By Lemma~\ref{lem:defects}\ref{it:defects-right}, there exists a mean $\overline\mu$ on $G$ satisfying $\delta_\ell(\overline\mu) \leq \delta_G$ (and so $\delta_\ell(\overline\mu) = \delta_G > 0$ by the definition of $\delta_G$) and $\delta_r(\overline\mu) < \delta_r(\mu)$ (and so $\delta_r(\overline\mu) < 1$).  But then, by Lemma~\ref{lem:defects}\ref{it:defects-left} (applied to $\overline\mu$), it follows that there exists a mean $\overline{\overline\mu}$ on $G$ satisfying $\delta_\ell(\overline{\overline\mu}) < \delta_\ell(\overline\mu) = \delta_G$, contradicting the definition of $\delta_G$.
\end{proof}

\begin{thm} \label{thm:defect}
    \ThmDefect
\end{thm}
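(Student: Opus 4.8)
The plan is to deduce the theorem essentially as a corollary of Lemma~\ref{lem:defects} and Proposition~\ref{prop: defect less than 1 is amenable}, together with the ultralimit construction that already appears in the proof of the latter.

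First I would establish that the infimum $\delta_G = \inf_\mu \delta_\ell(\mu)$ is attained. Choose a sequence $(\mu_n)_{n \in \N}$ of finitely additive probability means on $G$ with $\delta_\ell(\mu_n) \to \delta_G$, fix a non-principal ultrafilter $\omega$ on $\N$, and define $\mu \colon \Pset(G) \to [0,1]$ by $\mu(A) = \lim_\omega \mu_n(A)$. By the algebra of ultralimits (Lemma~\ref{lem:algebraoflimits}), $\mu$ is again a finitely additive probability mean, and for every $g \in G$ and every $A \subseteq G$ we have $|\mu(gA) - \mu(A)| = \lim_\omega |\mu_n(gA) - \mu_n(A)| \leq \lim_\omega \delta_\ell(\mu_n) = \delta_G$, so $\delta_\ell(\mu) \leq \delta_G$; the reverse inequality is immediate from the definition of $\delta_G$. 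Hence $\delta_\ell(\mu) = \delta_G$. This is exactly the mean produced in the proof of Proposition~\ref{prop: defect less than 1 is amenable}, so one may simply appeal to that argument rather than repeat it.

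Next I would dispose of the dichotomy. If $G$ is amenable, it admits a left invariant mean $\mu_0$, for which $\delta_\ell(\mu_0) = 0$, and so $\delta_G = 0$. Conversely, if $G$ is not amenable, then the contrapositive of Proposition~\ref{prop: defect less than 1 is amenable} shows that every finitely additive probability mean $\nu$ on $G$ satisfies $\delta_\ell(\nu) \geq 1$; since $\delta_\ell(\nu) \leq 1$ always holds, we get $\delta_\ell(\nu) = 1$ for all such $\nu$, whence $\delta_G = 1$. Taken together, these two cases show simultaneously that $\delta_G \in \{0, 1\}$ and that $\delta_G = 0$ precisely when $G$ is amenable, completing the proof.

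I do not expect a substantive obstacle at this stage: all the genuine work has been carried out in Lemma~\ref{lem:defects} (the averaging trick that strictly decreases the left defect whenever the right defect is below $1$) and in Proposition~\ref{prop: defect less than 1 is amenable} (which bootstraps this into full amenability as soon as some mean has left defect below $1$). The only point requiring any care is the ordering: the realizing mean $\mu$ with $\delta_\ell(\mu) = \delta_G$ must be exhibited before invoking the proposition, which is why the ultralimit step is performed first.
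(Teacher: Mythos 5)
Your proposal is correct and follows essentially the same route as the paper: the paper likewise realises $\delta_G$ via the ultralimit mean already constructed in the proof of Proposition~\ref{prop: defect less than 1 is amenable}, and then obtains the $0$--$1$ dichotomy exactly as you do, from the existence of an invariant mean in the amenable case and the contrapositive of that proposition otherwise. No gaps.
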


\begin{proof}
     Note that the proof of Proposition~\ref{prop: defect less than 1 is amenable} already shows that the defect of $G$ is realised on an actual mean, via ultralimits. However, we can also argue\textit{ a posteriori}. 
    
    If $G$ is amenable then it admits a left invariant mean $\mu$ whose defect is $0$, so $\delta_G=\delta_{\ell}(\mu) = 0$. If $G$ is not amenable then Proposition~\ref{prop: defect less than 1 is amenable} implies that $\delta_{\ell}(\mu) = 1$ for all means $\mu$ on $G$ and hence $\delta_G = 1$. 
\end{proof}

\section{Conjugacy classes}
\label{sec:conj-classes}

For a finite group, we have $\dc(G) = \cc(G)/|G| \eqqcolon k(G)$, where $\cc(G)$ is the number of conjugacy classes of $G$. For a residually finite group $G$, we similarly get $k_{\text{RF}}(G) \coloneqq \inf_{N \sgpnfin G} k(G/N) = \dcRF(G)$.  This motivates the following definition.

\begin{defn}
    Let $\mu$ be a CCM on a group $G$. Define
    \[
    k_\mu(G) \coloneqq \inf \{ \mu(S) : S \text{ is a set of representatives of conjugacy classes of } G \}.
    \]
\end{defn}

For the following result, note that any FAF group is amenable and therefore admits a finitely additive bi-invariant probability mean.

\begin{lem} \label{lem:cc-faf}
    Let $G$ be a FAF group equipped with a finitely additive bi-invariant probability mean $\mu$. Then $k_\mu(G) > 0$.
\end{lem}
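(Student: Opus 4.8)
The plan is to prove the identity $\mu(S) = \dc[\mu](G)$ for \emph{every} set $S$ of conjugacy class representatives of $G$; since the right-hand side is positive by Theorem~\ref{thm:dc-CCM-nonzero} and independent of $S$, this gives $k_\mu(G) = \dc[\mu](G) > 0$. For finite $G$ this is the classical orbit-counting identity $\dc(G) = \cc(G)/|G|$, so I would assume $G$ infinite; then $\mu$ vanishes on all finite sets.

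First I would reduce to the subgroup $\Gfin$. By Remark~\ref{rem:FCAF}, $G$ has a finite-index normal subgroup $H_0$ which is $2$-step nilpotent with finite derived subgroup, so $H_0 \subseteq \Gfin$; hence $\Gfin \unlhd G$ has finite index and $\mu(\Gfin) = 1/[G:\Gfin] > 0$. Since $\mu(S) \ge \mu(S \cap \Gfin)$, it suffices to bound $\mu(S_0)$ below, where $S_0 := S \cap \Gfin$ is a set of representatives of the finite conjugacy classes of $G$. By Corollary~\ref{cor:FAF-Um} we may write $\Gfin = \bigsqcup_{m=1}^{M_0} U_m$ with each $U_m = \{g \in G : [G:C_G(g)] = m\} \in \cosets$ conjugation-invariant and every class inside $U_m$ having exactly $m$ elements; so it is enough to prove $\mu(S_0 \cap U_m) = \mu(U_m)/m$ for each $m$, because then
\[
\mu(S_0) = \sum_{m=1}^{M_0} \frac{\mu(U_m)}{m} = \int_G \frac{\mathrm{d}\mu(g)}{[G:C_G(g)]} = \dc[\mu](G).
\]

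The combinatorial engine is the elementary fact that if $D \le G$ is a \emph{finite} subgroup, $E \subseteq G$ is a union of cosets of $D$, and $F \subseteq E$ meets each of these cosets exactly once, then $E = \bigsqcup_{d \in D} dF$, whence $\mu(E) = |D| \cdot \mu(F)$ by left invariance. This applies cleanly when $G$ is itself $2$-step nilpotent with $G' = [G,G]$ finite: there $x \mapsto [g,x]$ is a homomorphism $G \to G'$, so $\Gfin = G$ and each conjugacy class $g^G = g\,[g,G]$ is a coset of one of the finitely many subgroups of the finite group $G'$; grouping the classes in $U_m$ by this subgroup and applying the engine yields $\mu(S_0 \cap U_m) = \mu(U_m)/m$. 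A reformulation useful below is that in this case \emph{any two} sets of conjugacy class representatives have the same $\mu$-measure: if $T, T'$ are two such sets, then on each class $C$ where they differ one has $T \cap C = \{a_C\}$, $T' \cap C = \{b_C\}$ with $b_C = a_C d_C$ for some $d_C \in G'$, so $T \triangle T'$ decomposes, according to the finitely many possible values $d \in G'$ of $a_C^{-1}b_C$, into pieces $P_d \subseteq T \setminus T'$ together with their left-translates $dP_d \subseteq T' \setminus T$ (using that $d$ is central), whence $\mu(T' \setminus T) = \sum_d \mu(dP_d) = \sum_d \mu(P_d) = \mu(T \setminus T')$ and $\mu(T) = \mu(T')$.

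The main obstacle is the general FAF case: a finite conjugacy class $g^G$ need not be a coset of a subgroup, so the engine does not apply on the nose, and the tempting shortcut of averaging the identity $\mu(S_0) = \mu(S_0^{\,x})$ (valid for all $x$ since $\mu$ is conjugation-invariant) over the conjugators $x$ is illegitimate, since it would require a Fubini-type interchange for the finitely additive mean $\mu$ which genuinely fails (cf.\ Remark~\ref{rem:product-asymmetric}). The remedy is to exploit the finer structure theory of FAF groups. Passing to $\overline{G} = G/N$ with $N \unlhd G$ finite and $\overline{G}$ virtually abelian, Proposition~\ref{prop:vab} shows that only finitely many subgroups arise as centralisers $C_{\overline{G}}(\bar g)$ of elements $\bar g \in \overline{\Gfin}$, and Lemma~\ref{lem:FAF-technical} bounds the index $[\pi_N^{-1}(C_{\overline{G}}(\bar g)) : C_G(g)] \le |N|$; combining these one partitions each $U_m$ into finitely many conjugation-invariant pieces and, working inside $H_0$ (where conjugation by $H_0$ makes commutator maps linear into the finite group $H_0'$) while tracking the at most $[G:H_0]$ many $H_0$-classes lying in each $G$-class, reduces the computation of $\mu(S_0 \cap U_m)$ on each piece to the $2$-step situation above. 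Carrying out this bookkeeping is the technical heart of the argument; once it is in place, the displayed computation gives $\mu(S_0) = \dc[\mu](G)$, and hence $k_\mu(G) = \dc[\mu](G) > 0$.
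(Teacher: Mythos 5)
Your argument is complete only in the special case where $G$ itself is $2$-step nilpotent with finite central derived subgroup; there each finite conjugacy class is a coset $g[g,G]$ of a subgroup of $G'$, and your translation argument does give $\mu(S_0\cap U_m)=\mu(U_m)/m$. But the general FAF case is exactly where you stop: you write that the reduction to the $2$-step situation ``is the technical heart of the argument'' and do not carry it out, and this is a genuine gap rather than routine bookkeeping. In a general FAF group a finite conjugacy class need not be a coset of any subgroup (already in the infinite dihedral group the class of $t^n$ is $\{t^n,t^{-n}\}$), so your combinatorial engine does not apply; the $G$-class of $g$ splits into up to $[G:H_0]$ many $H_0$-classes $g'[g',H_0]$, the sets $[g',H_0]$ for $g'\notin H_0$ are only shown to be subgroups under the extra hypotheses of Lemma~\ref{lem:FAF-technical}, and --- as you yourself observe --- one cannot average the conjugation-invariance identity $\mu(S_0)=\mu(S_0^{\,x})$ over $x$ without an illegitimate Fubini interchange. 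Moreover, what you are trying to prove, namely the exact identity $\mu(S)=\dc[\mu](G)$ for \emph{every} set of representatives $S$ and every bi-invariant $\mu$, is substantially stronger than the lemma (which only asserts $k_\mu(G)>0$), so the missing step cannot be waved away as a weaker or softer version of something already established.

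For contrast, the paper's proof avoids all of this by not attempting any exact evaluation. Take $H\sgpnfin G$ with $N=[H,H]$ finite, and for each $h\in H$ choose $\varphi(h)\in G$ with $h^{\varphi(h)}\in S$. Partitioning $H$ according to the coset $\varphi(h)H$ and pigeonholing gives a single $g_0\in G$ with $\mu(T)\geq [G:H]^{-2}$, where $T=\{h\in H:\varphi(h)\in g_0H\}$; commutator calculus shows $g_0^{-1}Tg_0\subseteq SN=\bigcup_{z\in N}Sz$, and bi-invariance (conjugation invariance for $T\mapsto g_0^{-1}Tg_0$, right invariance for the translates $Sz$) yields $\mu(S)\geq \frac{1}{|N|\,[G:H]^2}$ directly. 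This needs neither the decomposition $\Gfin=\bigsqcup U_m$ nor any claim that $\mu(S)$ is independent of $S$. If you want to salvage your stronger statement, you would need to actually construct, for each $m$, a finite conjugation-invariant partition of $U_m$ on whose pieces the classes become cosets after the appropriate translations --- that construction is precisely what is absent from your write-up.
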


\begin{proof}
    Let $H \sgpnfin G$ be such that $N \coloneqq [H,H]$ is finite.  Suppose that $S$ is a set of representatives of conjugacy classes of $G$, so that we have
    \[
    H \cap S = \{ h^{\varphi(h)} : h \in H \},
    \]
    for some function $\varphi\colon H \to G$.  Given any coset $A = gH \in G/H$, let $T_A = \{ h \in H : \varphi(h) \in A \}$.  Since $\mu$ is finitely additive, we have $\frac{1}{[G:H]} = \mu(H) = \sum_{A \in G/H} \mu(T_A)$; in particular, we have $\mu(T_{A_0}) \geq \frac{1}{[G:H]^2}$ for some $A_0 = g_0H \in G/H$.

    Given any $h \in T_{A_0}$, note that we have $g_0 = \varphi(h) \psi(h)$ for some $\psi(h) \in H$, and therefore
    \[
    h^{g_0} = h^{\varphi(h) \psi(h)} = h^{\varphi(h)} [h^{\varphi(h)},\psi(h)] \in h^{\varphi(h)}N,
    \]
    implying that $g_0^{-1} T_{A_0} g_0 \subseteq SN = \bigcup_{z \in N} Sz$.  Since $\mu$ is bi-invariant, we get $\mu(g_0^{-1}T_{A_0}g_0) = \mu(T_{A_0})$, and thus
    \[
    \mu(S) = \frac{1}{|N|} \sum_{z \in N} \mu(Sz) \geq \frac{1}{|N|} \mu\!\left( \bigcup_{z \in N} Sz \right) \geq \frac{1}{|N|} \mu(T_{A_0}) \geq \frac{1}{|N| \cdot [G:H]^2},
    \]
    implying that $k_\mu(G) \geq \frac{1}{|N| \cdot [G:H]^2} > 0$, as required.
\end{proof}

The following example shows that the assumption that $\mu$ is bi-invariant, and not merely left invariant, is necessary in Lemma~\ref{lem:cc-faf}.  By inspecting the argument, however, one may deduce that an assumption that $\mu$ is conjugacy invariant and left invariant on cosets would be enough.

\begin{ex}
    Let $G = \Z^2 \rtimes (\Z/4\Z)$, where the generator $t$ of $\Z/4\Z$ acts on $H = \Z^2$ as the matrix $\begin{pmatrix} 0 & 1 \\ -1 & 0 \end{pmatrix}$. Then $G$ has a set $S$ of representatives of conjugacy classes of $G$ such that $S \cap H \subseteq \{ (x,y) \in \Z^2 : |y| \leq x \}$.  Moreover, one can verify that $G$ contains only finitely many conjugacy classes outside of $H$, and therefore $\mu(S) = \mu(S \cap H)$ for any CCM $\mu$ on $G$.

    Now let $\mu_1$ be a finitely additive left invariant probability mean on $\Z$, and define $\mu_2\colon \Pset(\Z^2) \to [0,1]$ by setting
    \[
    \mu_2(A) = \int \mu_1(A_x) \,\mathrm{d}\mu_1(x),
    \]
    where $A_x = \{ y \in \Z : (x,y) \in A \}$.  It is easy to see that $\mu_2$ is a finitely additive left invariant probability mean on $\Z^2$.  Finally, define $\mu\colon \Pset(G) \to [0,1]$ by setting
    \[
    \mu(A) = \frac{1}{4} \sum_{i=0}^3 \mu_2(t^iA \cap H),
    \]
    which is again a finitely additive left invariant probability mean.  We then get
    \[
    \mu(S) = \mu(S \cap H) = \frac{1}{4} \mu_2(S \cap H) \leq \frac{1}{4} \int_{x \in \N} \mu_1(\{-x,-x+1,\ldots,x\}) \,\mathrm{d}\mu_1(x) = \frac{1}{4} \int 0 \,\mathrm{d}\mu_1 = 0,
    \]
    even though $G$ is virtually abelian.
\end{ex}

\begin{prop}
    Let $G$ be a group equipped with a CCM $\mu$, and suppose that $k_\mu(G) > 0$. Then $G$ is FAF.
\end{prop}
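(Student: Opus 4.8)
The approach is to reduce to a situation already handled in the paper: I will show that $k_\mu(G) > 0$ forces $\mu(\G{n}) > 0$ for some $n \in \N$, and then invoke Proposition~\ref{prop:dc-faf}. For that last step, note $\dc[\mu](G) = \mu^{(2)}(\Comm(G)) = \int \mu(C_G(g)) \,\mathrm{d}\mu(g)$, and that $\mu(C_G(g)) = [G:C_G(g)]^{-1} \geq \tfrac1n$ for every $g \in \G{n}$ since $\mu$ is a CCM; hence $\dc[\mu](G) \geq \tfrac1n \mu(\G{n}) > 0$, and Proposition~\ref{prop:dc-faf} yields that $G$ is FAF.

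So I would argue by contradiction: assume $k_\mu(G) = \alpha > 0$ while $\mu(\G{n}) = 0$ for all $n \in \N$. The first point is that the hypothesis of Lemma~\ref{lem:small-CGhi} is then met: if $\G{n} = \Gfin$ for some $n$, then $\mu(\Gfin) = \mu(\G{n}) = 0$, which, as $\mu$ is a CCM on the subgroup $\Gfin$, forces $[G:\Gfin] = \infty$; thus in all cases either $\G{n} \neq \Gfin$ for every $n$ or $[G:\Gfin] = \infty$, and Lemma~\ref{lem:small-CGhi} applies.

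Now fix an integer $N > 2/\alpha$, and use Lemma~\ref{lem:small-CGhi} (with this $N$ and $\varepsilon = 1$) to obtain $h_1,\ldots,h_N \in G$ with $\sum_{1 \leq i < j \leq N} [G:C_G(h_i^{-1}h_j)]^{-1} < 1$. Fix any set $S$ of representatives of conjugacy classes of $G$ and set $A_i \coloneqq h_i^{-1} S h_i$. Since every conjugacy class of $G$ is invariant under conjugation, each $A_i$ is again a set of conjugacy class representatives, so $\mu(A_i) \geq k_\mu(G) = \alpha$; crucially, this uses only that $\mu$ is defined on $A_i$, not that $\mu$ is conjugation-invariant. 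On the other hand, if $x \in A_i \cap A_j$ with $i \neq j$, then $h_i x h_i^{-1}$ and $h_j x h_j^{-1}$ are conjugate elements of $G$ both lying in $S$, hence equal, which forces $h_j^{-1}h_i \in C_G(x)$, i.e.\ $x \in C_G(h_i^{-1}h_j)$; therefore $\mu(A_i \cap A_j) \leq \mu(C_G(h_i^{-1}h_j)) = [G:C_G(h_i^{-1}h_j)]^{-1}$, again because $\mu$ is a CCM. Combining this with the elementary Bonferroni inequality $\mu\!\left( \bigcup_{i=1}^N A_i \right) \geq \sum_{i=1}^N \mu(A_i) - \sum_{1 \leq i < j \leq N} \mu(A_i \cap A_j)$, which follows from finite additivity, gives
\[
1 \;\geq\; \mu\!\left( \bigcup_{i=1}^N A_i \right) \;\geq\; N\alpha - \sum_{1 \leq i < j \leq N} [G:C_G(h_i^{-1}h_j)]^{-1} \;>\; N\alpha - 1,
\]
so $N \leq 2/\alpha$, contradicting the choice of $N$. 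Hence $\mu(\G{n}) > 0$ for some $n$, and the reduction above completes the proof.

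The main obstacle, as I see it, is identifying the correct analogue of the pairwise-disjoint translates used in Corollary~\ref{cor:am-mu}: here one should take the conjugates $h_i^{-1}Sh_i$ of a single conjugacy transversal $S$, which are only \emph{almost} disjoint, with overlaps governed by the centralisers produced by Lemma~\ref{lem:small-CGhi}. The point that might appear to obstruct the argument is the absence of conjugation-invariance of $\mu$; this is sidestepped because each $\mu(A_i)$ is bounded below purely by virtue of $A_i$ being a conjugacy transversal, and the overlap estimates are exact since $\mu$ is a CCM. The only role of the assumption ``$\mu(\G{n}) = 0$ for all $n$'' is to guarantee that Lemma~\ref{lem:small-CGhi} is applicable.
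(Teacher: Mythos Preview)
Your proof is correct, but it takes a genuinely different route from the paper's. Both arguments reduce to showing $\mu(\G{n}) > 0$ for some $n$ and then appeal to Proposition~\ref{prop:dc-faf} (you make the link $\dc[\mu](G) \geq \tfrac{1}{n}\mu(\G{n})$ explicit, which is a nice touch). The paper, however, obtains $\mu(\G{n}) > 0$ by an elementary counting trick that avoids Lemma~\ref{lem:small-CGhi} entirely: pick $n$ with $k_\mu(G) > \tfrac{1}{n+1}$, and build $n{+}1$ transversals $S_0,\ldots,S_n$ that agree on $\G{n}$ but use $n{+}1$ \emph{distinct} representatives for each conjugacy class outside $\G{n}$ (possible because such classes have at least $n{+}1$ elements). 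Summing the $\mu(S_i)$ then yields $1 < (n{+}1)k_\mu(G) \leq \sum_i \mu(S_i) \leq \mu(G\setminus\G{n}) + (n{+}1)\mu(\G{n}) = 1 + n\,\mu(\G{n})$, forcing $\mu(\G{n}) > 0$ directly. Your approach instead conjugates a \emph{single} transversal, controls the pairwise overlaps via centralisers, and brings in Lemma~\ref{lem:small-CGhi} plus Bonferroni; this is a pleasing parallel with Corollary~\ref{cor:am-mu} (as you observe), but it imports machinery from \S\ref{sec:Am} that the paper's argument does not need. In short: the paper's proof is shorter and entirely self-contained within \S\ref{sec:conj-classes}, while yours unifies the picture by recycling the ``almost-disjoint translates'' idea already developed for the amenable setting.
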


\begin{proof}
    Let $n \in \N$ be such that $k_\mu(G) > \frac{1}{n+1}$.  We claim that then we have $\mu(\G{n}) > 0$, where $\G{n} = \{ g \in G : [G:C_G(g)] \leq n \}$.  The result will then follow from Proposition~\ref{prop:dc-faf}.

    Indeed, we can choose representatives $S_0,\ldots,S_n$ of conjugacy classes of $G$ as follows.  For $g \in \G{n}$, choose the representative of $[g]$ in each $S_i$ arbitrarily.  For $g \in G - \G{n}$, note that the conjugacy class of $g$ contains $[G:C_G(g)] \geq n+1$ elements, and so we can choose $n+1$ different representatives of $[g]$ in the sets $S_0,\ldots,S_n$.  Then each element of $\G{n}$ appears in the sets $S_0,\ldots,S_n$ at most $n+1$ times, whereas each element of $G - \G{n}$ appears there at most once.  Since $\mu$ is finitely additive, we then have
    \[
    1 < (n+1) k_\mu(G) \leq \sum_{i=0}^n \mu(S_i) \leq \mu(G - \G{n}) + (n+1)\mu(\G{n}) = 1 + n \cdot \mu(\G{n}),
    \]
    and thus $\mu(\G{n}) > 0$, as claimed.
\end{proof}

\appendix

\section{Using the Hahn--Banach Theorem to construct CCMs} \label{app:Hahn-Banach}

In this appendix we give another proof of the existence of CCMs for any group. The reason to do so is that in Theorem~\ref{thm: existence of CCMs} we invoke the Ultrafilter Lemma which is a weak version of the Axiom of Choice. However, one can also prove the existence of CCMs by using the Hahn--Banach Theorem which is strictly weaker than the Ultrafilter Lemma. 

\begin{thm}[Hahn--Banach Theorem for real normed vector spaces] \label{thm:hahn-banach}
    Let $(V, \|{\cdot}\|)$ be a real normed vector space. Suppose that $S$ is a subspace of $V$ and $f_S\colon S \to \R$ a bounded linear functional on~$S$. Then there exists an extension $f$ of $f_S$ to $V$ such that $f|_S=f_S$ and $\| f \|_{V^*} = \| f_S \|_{S^*} $. 
\end{thm}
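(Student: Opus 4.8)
The plan is to reduce the statement to the more primitive form of Hahn--Banach for \emph{sublinear} functionals, and then recover the norm equality by a careful choice of such a functional. Concretely, set $M = \|f_S\|_{S^*}$ and define $p\colon V \to \R$ by $p(x) = M\|x\|$. This $p$ is sublinear: it is subadditive, $p(x+y) \leq p(x)+p(y)$, by the triangle inequality, and positively homogeneous, $p(\lambda x) = \lambda p(x)$ for $\lambda \geq 0$, by homogeneity of the norm; moreover $f_S(s) \leq |f_S(s)| \leq M\|s\| = p(s)$ for all $s \in S$. I will prove that any linear functional on a subspace dominated by $p$ extends to a linear functional on all of $V$ still dominated by $p$; applied here this yields $f\colon V \to \R$ with $f|_S = f_S$ and $f(x) \leq p(x)$ for all $x$. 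Replacing $x$ by $-x$ gives $-f(x) = f(-x) \leq p(-x) = p(x)$, hence $|f(x)| \leq M\|x\|$ and so $\|f\|_{V^*} \leq M = \|f_S\|_{S^*}$; the reverse inequality is automatic since $f$ restricts to $f_S$. So everything comes down to the sublinear version.

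The heart of the matter is a \emph{one-step extension lemma}: if $W \subsetneq V$ is a subspace, $g\colon W \to \R$ is linear with $g \leq p$ on $W$, and $v \in V \setminus W$, then $g$ extends to $W \oplus \R v$ while still being dominated by $p$. Any linear extension is determined by the single value $c = g(v)$, and domination amounts to requiring $g(s) + \lambda c \leq p(s + \lambda v)$ for all $s \in W$, $\lambda \in \R$. Treating $\lambda = 0$ (already guaranteed) separately and rescaling the cases $\lambda > 0$ and $\lambda < 0$ by $|\lambda|$, this is equivalent to the two-sided bound $g(s') - p(s' - v) \leq c \leq p(s'' + v) - g(s'')$ for all $s', s'' \in W$; such a $c$ exists precisely when $\sup_{s' \in W}\bigl(g(s') - p(s' - v)\bigr) \leq \inf_{s'' \in W}\bigl(p(s'' + v) - g(s'')\bigr)$. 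This last inequality I would obtain from subadditivity of $p$ and $g \leq p$ on $W$: for any $s', s'' \in W$, $g(s') + g(s'') = g(s' + s'') \leq p(s' + s'') = p\bigl((s' - v) + (s'' + v)\bigr) \leq p(s' - v) + p(s'' + v)$, which rearranges to exactly the required comparison. Identifying this admissible interval for $c$ is the only genuinely non-trivial point, and I expect it to be the main obstacle.

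With the one-step lemma available, the proof is finished by a standard Zorn's lemma argument. Consider the poset $\mathcal{E}$ of pairs $(W, g)$ where $W$ is a subspace with $S \subseteq W \subseteq V$ and $g\colon W \to \R$ is linear, extends $f_S$, and satisfies $g \leq p$ on $W$, ordered by $(W_1, g_1) \leq (W_2, g_2)$ iff $W_1 \subseteq W_2$ and $g_2|_{W_1} = g_1$. It is non-empty since $(S, f_S) \in \mathcal{E}$, and the union of the domains of any chain, with the obvious common extension, again lies in $\mathcal{E}$ and is an upper bound; so Zorn's lemma gives a maximal element $(W_\ast, f)$. If $W_\ast \neq V$, choosing $v \in V \setminus W_\ast$ and applying the one-step lemma produces a strictly larger element of $\mathcal{E}$, contradicting maximality; hence $W_\ast = V$, and $f$ is the desired $p$-dominated extension. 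Running this with $p(x) = \|f_S\|_{S^*}\|x\|$, as in the first paragraph, delivers the extension $f$ with $\|f\|_{V^*} = \|f_S\|_{S^*}$, completing the proof.
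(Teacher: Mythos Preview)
Your proof is correct and is the standard textbook argument: reduce to the sublinear Hahn--Banach extension theorem via $p(x) = \|f_S\|_{S^*}\|x\|$, prove the one-step extension by finding an admissible value of $c$ from the key inequality $g(s'+s'') \leq p(s'-v) + p(s''+v)$, and then pass to the general case by Zorn's lemma. The norm computation at the end is also handled correctly.

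However, the paper does not actually prove this theorem. It is stated in the Appendix as a classical result (the Hahn--Banach Theorem for real normed vector spaces) and then \emph{used} as a black box in the proof of the subsequent theorem, which shows that Hahn--Banach implies the existence of a CCM on any group. So there is no ``paper's own proof'' to compare against: you have supplied a proof where the paper simply cites the result. One minor remark worth making is that the paper explicitly highlights Hahn--Banach as being strictly weaker than the Ultrafilter Lemma (and hence than full Choice), whereas your argument invokes Zorn's lemma, which is equivalent to full Choice; this is harmless for correctness, but if you wanted to match the spirit of the paper's foundational discussion you could note that the Hahn--Banach theorem itself is known not to require the full strength of Zorn.
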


\begin{rem}
    Recall that if $(V, \|{\cdot}\|)$ is a normed vector space with $V \neq \{ 0 \} $  and $f\colon V \to \R$ a linear functional then $\| f \|_{V^*} \coloneqq \sup_{0 \neq x \in V} \frac{|f(x)|}{\| x \|} $. 
\end{rem}

\begin{thm} \label{thm:HB implies CCM}
    The Hahn--Banach Theorem for real normed vector spaces implies the existence of a CCM for any group $G$. 
\end{thm}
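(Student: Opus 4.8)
The plan is to obtain a CCM on $G$ as the restriction to indicator functions of a norm-one linear functional on $\ell^\infty(G)$, built by extending---via Hahn--Banach---an explicit functional defined only on the coset indicators. Work inside the real normed vector space $\ell^\infty(G)$ with the supremum norm, and let $S \subseteq \ell^\infty(G)$ be the subspace $S = \mathrm{span}\{ \chi_{gH} : H \leq G,\ g \in G \}$ spanned by the indicator functions of cosets. Define $\phi_S \colon S \to \R$ on the spanning set by $\phi_S(\chi_{gH}) = \frac{1}{[G:H]}$ and extend linearly; the CCM will be $\mu(A) \coloneqq \phi(\chi_A)$, where $\phi$ is a Hahn--Banach extension of $\phi_S$.

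The heart of the argument---and the only place Neumann's Theorem~\ref{thm:neumann} intervenes---is the single inequality
\[
\Bigl| \sum_{i=1}^{k} \frac{c_i}{[G:H_i]} \Bigr| \ \leq\ \Bigl\| \sum_{i=1}^{k} c_i \chi_{g_iH_i} \Bigr\|_{\infty}
\]
for all $c_i \in \R$, $g_i \in G$ and $H_i \leq G$. Applied to linear combinations representing the zero function, it shows that $\phi_S$ is well defined; it also shows $\|\phi_S\|_{S^*} \leq 1$, and in fact $\|\phi_S\|_{S^*} = 1$ since $\chi_G \in S$ with $\phi_S(\chi_G) = 1 = \|\chi_G\|_\infty$. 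To prove the inequality, fix $\varepsilon > 0$ and use Lemma~\ref{lem:index-consistent-filter} to produce a finite set $F \subseteq G$ with $\bigl| \tfrac{|g_iH_i \cap F|}{|F|} - \tfrac{1}{[G:H_i]} \bigr| < \varepsilon$ for every $i$. Averaging over $F$,
\[
\frac{1}{|F|} \sum_{x \in F} \sum_{i=1}^{k} c_i \chi_{g_iH_i}(x) \;=\; \sum_{i=1}^{k} c_i \frac{|g_iH_i \cap F|}{|F|},
\]
whose absolute value is at most $\bigl\| \sum_i c_i \chi_{g_iH_i} \bigr\|_\infty$; since the right-hand side differs from $\sum_i c_i/[G:H_i]$ by at most $\varepsilon \sum_i |c_i|$, letting $\varepsilon \to 0$ gives the claim.

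Now apply the Hahn--Banach Theorem~\ref{thm:hahn-banach} to extend $\phi_S$ to a linear functional $\phi \colon \ell^\infty(G) \to \R$ with $\|\phi\| = \|\phi_S\|_{S^*} = 1$ and $\phi|_S = \phi_S$, and set $\mu(A) = \phi(\chi_A)$ for $A \subseteq G$. Then $\mu(G) = \phi(\chi_G) = \phi_S(\chi_G) = 1$ (probability mean); $\chi_{A \sqcup B} = \chi_A + \chi_B$ forces $\mu(A \sqcup B) = \mu(A) + \mu(B)$ by linearity (finitely additive); $\chi_{gH} \in S$ forces $\mu(gH) = \phi_S(\chi_{gH}) = \frac{1}{[G:H]}$ (left invariant on subgroups); and finally $\mu(A) \leq \|\phi\|\,\|\chi_A\|_\infty \leq 1$ while $\mu(A) = \phi(\chi_G) - \phi(\chi_{A^c}) = 1 - \phi(\chi_{A^c}) \geq 1 - \|\chi_{A^c}\|_\infty \geq 0$, so $\mu(A) \in [0,1]$. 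Hence $\mu$ is a CCM.

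The one delicate point is that the displayed inequality must hold \emph{uniformly} over the whole finite family $H_1,\dots,H_k$ at once---this is exactly the content extracted from Neumann's Theorem in Lemma~\ref{lem:index-consistent-filter}---and it is deliberately stated so as to deliver both well-definedness of $\phi_S$ and the bound $\|\phi_S\|_{S^*} \leq 1$ in one stroke, which is what makes the Hahn--Banach extension usable. Note also that although Hahn--Banach does not in general preserve positivity, here the equality $\phi_S(\chi_G) = \|\chi_G\|_\infty$ together with norm-preservation forces $0 \leq \mu(A) \leq 1$ for every $A$. Since the whole argument uses only the Hahn--Banach Theorem for real normed vector spaces, which is strictly weaker than the Ultrafilter Lemma, this reproves Theorem~\ref{thm: existence of CCMs} under a weaker choice principle.
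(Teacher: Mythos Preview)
Your proof is correct and follows the same overall scheme as the paper: define a linear functional on the span of coset indicators in $\ell^\infty(G)$, show it has norm $1$, extend by Hahn--Banach, and deduce positivity from the norm-$1$ condition together with $\phi(\chi_G)=1$. The difference lies in how you establish well-definedness and the norm bound. The paper goes through Proposition~\ref{prop:CCM on cosets}: it uses the direct-sum decomposition $\cosets = \cosetsfin \oplus \cosetsinf$ (which is where Neumann's theorem enters) to produce the unique CCM on $\cosets$ algebraically, then writes each element of the span as a step function over a finite partition in $\cosets$ to read off the functional and its bound. You instead prove the single inequality $\bigl|\sum_i c_i/[G:H_i]\bigr| \leq \bigl\|\sum_i c_i\chi_{g_iH_i}\bigr\|_\infty$ by averaging over a finite set $F$ furnished by Lemma~\ref{lem:index-consistent-filter} (which is where Neumann's theorem enters for you), obtaining well-definedness and the bound simultaneously. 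Your route is more analytic and arguably cleaner for this purpose---one inequality does all the work---while the paper's route is more structural and reuses the Boolean-ring machinery already developed for other parts of the paper. Note that invoking Lemma~\ref{lem:index-consistent-filter} is unproblematic for your purposes: its proof uses only Neumann's theorem and no choice principle, so the only appeal to choice in your argument is indeed Hahn--Banach.
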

\begin{proof}
    It is sufficient to prove the existence of a linear functional, $m\colon \ell^{\infty}(G) \to \R$, of norm~$1$ such that $m(\mathbbm{1}_{gH}) = \frac{1}{[G:H]}$ for all cosets $gH$ in $G$. Indeed, if this is the case then setting $\mu(A) = m(\mathbbm{1}_A)$ for any subset $A \subseteq G$ defines a CCM $\mu$ on $G$. To see that this defines a CCM observe that $\mu(G) = m(\mathbbm{1}_G)=1$ by definition, the finite additivity of $\mu$ is a consequence of the linearity of $m$, and positivity of $\mu$ follows from the fact that for all $A \subsetneq G$,  
    \[
    m(\mathbbm{1}_A) = m(\mathbbm{1}_G) - m(\mathbbm{1}_{A^c})= 1 - m(\mathbbm{1}_{A^c}) \geq 1 - \|m\|_{\ell^{\infty}(G)^*} \| \mathbbm{1}_{A^c} \|_{\infty} = 0. 
    \]
    Therefore, $\mu(A) \geq 0$ for all $A \subseteq G$. 

    Recall that $\cosets \subseteq \Pset(G)$ denotes the Boolean subring generated by cosets of subgroups of $G$, Definition~\ref{defn:cosets}. By Proposition~\ref{prop:CCM on cosets}, there exists a unique CCM $\mu$ on $\cosets$.

    Now let $\mathfrak{C} \leq \ell^\infty(G)$ be the subspace spanned by $\{ \mathbbm{1}_S : S \in \cosets \}$.  Note that, since $\cosets$ is a subring of $\Pset(G)$, any element of $\mathfrak{C}$ can be expressed as $\sum_{i=1}^n a_i \cdot \mathbbm{1}_{S_i}$, where $a_1,\ldots,a_n \in \R$ and where $S_1,\ldots,S_n \in \cosets$ form a partition of $G$.  We then define a functional $m_{\mathfrak{C}}\colon \mathfrak{C} \to \R$ by setting
    \[
    m_{\mathfrak{C}}\!\left( \sum_{i=1}^n a_i \cdot \mathbbm{1}_{S_i} \right) \coloneqq \sum_{i=1}^n a_i \cdot \mu(S_i).
    \]
    
    To see that $m_{\mathfrak{C}}$ is well-defined, note that if $\sum_{i=1}^n a_i \cdot \mathbbm{1}_{S_i} = \sum_{j=1}^m b_j \cdot \mathbbm{1}_{T_j}$ are two different expressions of an element $f \in \mathfrak{C}$ as above, then we have
    \[
    \sum_{i=1}^n a_i \cdot \mathbbm{1}_{S_i} = \sum_{i=1}^n \sum_{j=1}^m c_{ij} \cdot \mathbbm{1}_{U_{ij}} = \sum_{j=1}^m b_j \cdot \mathbbm{1}_{T_j},
    \]
    where $U_{ij} = S_i \cap T_j$ and where $a_i = c_{ij} = b_j$ whenever $U_{ij} \neq \varnothing$.  It then follows by the finite additivity of $\mu$ that
    \[
    \sum_{i=1}^n a_i \cdot \mu(S_i) = \sum_{i=1}^n \left( a_i \sum_{j=1}^m \mu(U_{ij}) \right) = \sum_{i=1}^n \sum_{j=1}^m c_{ij} \cdot \mu(U_{ij}) = \sum_{j=1}^m \left( b_j \sum_{i=1}^n \mu(U_{ij}) \right) = \sum_{j=1}^m b_j \cdot \mu(T_j),
    \]
    as required.  A similar argument shows that $m_{\mathfrak{C}}$ is linear.
    
    Finally, $m_{\mathfrak{C}}$ is bounded since for any $f = \sum_{i=1}^n a_i \cdot \mathbbm{1}_{S_i} \in \mathfrak{C}$ as above we have,
    \[
    |m_{\mathfrak{C}}(f)| \leq \sum_{i=1}^n |a_i| \cdot \mu(S_i) \leq \max_{1 \leq i \leq n} |a_i| \cdot \sum_{i=1}^n \mu(S_i) = \max_{1 \leq i \leq n} |a_i| \cdot \mu(G) = \max_{1 \leq i \leq n} |a_i| = \|f\|_{\infty}.
    \]
    Together with the fact that $m_{\mathfrak{C}}(\mathbbm{1}_G)= \| \mathbbm{1}_G \|_{\infty} =1$, this forces the norm of $m_{\mathfrak{C}}$ to be $1$.  

    By Theorem~\ref{thm:hahn-banach}, the functional $m_{\mathfrak{C}}$ then extends to a linear functional $m\colon \ell^\infty(G) \to \R$ which satisfies the required properties.
\end{proof}

\begin{rem}
    We note that Theorem~\ref{thm:neumann} from \cite{Neumann1954} is used in Proposition~\ref{prop:CCM on cosets} and so is an ingredient in both the construction using ultrafilters as well as this one using the Hahn--Banach Theorem. 
\end{rem}

\section{Constructing CCMs using random walks} \label{app:random-walks}

We note that our notion of a CCM is equivalent to the notion of a sequence of probability measures which measure index uniformly, as in \cite{Tointon2020}. In that paper it is shown---in different language---that all finitely generated groups admit a CCM. We shall briefly show how to conclude that all groups admit a CCM from this result. 

\begin{defn}[{\cite[Definition~1.7]{Tointon2020}}]
 We say that a sequence $M = (\mu_n)_{n=1}^{\infty}$ of probability measures on a group $G$ \emph{measures index uniformly} if $\mu_n(xH) \to \frac{1}{[G:H]}$ uniformly over all $x \in G$ and all subgroups $H$ of $G$. 
\end{defn}

\begin{rem}
\label{rem:sequenceareCCM}
    If we are given a sequence of probability measures that measure index uniformly, then this produces a CCM by taking an ultralimit with respect to any non-principal ultrafilter on the natural numbers. Conversely, any CCM produces a sequence of probability measures which measure index uniformly by taking  the constant sequence. 
\end{rem}

We note that \cite{Tointon2020} shows the following, which therefore implies that all finitely generated groups admit a CCM. 

\begin{thm}[random walks measure index uniformly {\cite[Theorem~1.11]{Tointon2020}}] \label{thm:random-walks}
Let \( G \) be a finitely generated group, and let \( \mu \) be a symmetric, finitely supported, generating probability measure on \( G \) such that \( \mu(1) > 0 \). Then the sequence \( M_\mu = (\mu^{*n})_{n=1}^\infty \) measures index uniformly.
\end{thm}

\begin{cor}
    Every finitely generated group admits a CCM. 
\end{cor}

We now argue how this can be used to give another proof of the existence of CCMs for arbitrary groups.

\begin{thm} \label{thm:CCMsviawalks}
    Suppose $G$ is a group, each of whose finitely generated subgroups admits a CCM. Then $G$ also admits a CCM. 
\end{thm}
\begin{proof}
    First consider $X$ to be the set of all finitely generated subgroups of $G$. We note that this is a directed poset under inclusion: if $H, K$ are finitely generated subgroups of $G$, then $\langle H, K \rangle$ is another finitely generated subgroup of $G$ containing both of them. 

    We may thus define, for any $H \in X$, the tail of $H$ to be: 
    \[
    \operatorname{Tail}(H) \coloneqq \{ K \in X  :  H \leq K  \}.
    \]
    We note that the fact that $X$ is directed implies that any finite intersection of tails is non-empty. In particular, the tails generate a filter and hence there is an ultrafilter $\mathcal{U}$ containing all the tails, by  Proposition~\ref{prop:filter generation} and Theorem~\ref{thm:UL}. 

    We can therefore define a CCM on $G$ as follows:
    \[
    \mu(A) \coloneqq \lim_{\mathcal{U}} \mu_H(A \cap H) \quad \text{for } A \subseteq G \text{ and } H \in X,
    \]
    where $\mu_H$ is a CCM on the finitely generated subgroup, $H$, which exists by hypothesis. (Formally, we are thinking of a function from $X$ to $\R$ which sends $H$ to $\mu_H(A \cap H)$, and taking the ultralimit of that with respect to $\mathcal{U}$.) 

    It is clear that this is a finitely additive probability mean. All that remains is to check that it is constant on the cosets of any given subgroup. To that end, suppose $L$ is an \textit{arbitrary } subgroup of $G$ and $x \in G$.

    Now consider some $H \in \operatorname{Tail}(\langle x \rangle)$; that is, a finitely generated subgroup of $G$ such that $x \in H$. Then one easily verifies that $x L \cap H = x ( L \cap H)$ and hence $\mu_H(xL \cap H) = \mu_H(x (L \cap H)) = \mu_H(L \cap H)$, since $L \cap H$ is a subgroup of $H$. Hence we get that $\mu_H(xL \cap H) =  \mu_H(L \cap H)$ for all $H$ which lie in $\operatorname{Tail}(\langle x \rangle)$. In particular, since these subsets are elements of $\mathcal{U}$ by construction, $\mu$ is invariant on cosets of $L$ for any $L$ and so $\mu$ is a CCM. 
\end{proof}

\begin{rem}
    The proof presented above uses the full power of the Axiom of Choice, by choosing a CCM on each finitely generated subgroup of $G$.  However, it is easy to modify the proof so that it only uses the Ultrafilter Lemma.  Indeed, instead of taking the poset $X$ of all finitely generated subgroups of $G$, one may take the poset $X'$ of all finite subsets of $G$ containing the identity, which is again a directed set.  Then one may use Theorem~\ref{thm:random-walks} to construct a sequence $M_S = (\nu_S^{*n})_{n=1}^\infty$ that measures index uniformly on $\langle S \rangle$ for any $S \in X'$, where $\nu_S$ is the uniform measure on $S$.
    
    The rest of the argument follows as in the proof of Theorem~\ref{thm:CCMsviawalks} above, by replacing $\mathcal{U}$ by an ultrafilter on $X'$, and replacing $\mu_H$ by $\lim_\omega \nu_S^{*n}$ for $H = \langle S \rangle$, where $S \in X'$ and $H \in X$, and where $\omega$ is a (fixed) non-principal ultrafilter on the natural numbers.  This only requires the choice of two non-principal ultrafilters---one on $X'$ and one on $\mathbb{N}$---which can be made by only using the Ultrafilter Lemma.
\end{rem}

\bibliographystyle{amsalpha}
\bibliography{ref}

\providecommand{\bysame}{\leavevmode\hbox to3em{\hrulefill}\thinspace}
\providecommand{\MR}{\relax\ifhmode\unskip\space\fi MR }
\providecommand{\MRhref}[2]{%
  \href{http://www.ams.org/mathscinet-getitem?mr=#1}{#2}
}
\providecommand{\href}[2]{#2}
\begin{thebibliography}{MTVV21}

\bibitem[AMV17]{AntolinMartinoVentura2017}
Yago Antol{\'i}n, Armando Martino, and Enric Ventura, \emph{Degree of
  commutativity of infinite groups}, Proc. Amer. Math. Soc. \textbf{145}
  (2017), no.~2, 479--485.

\bibitem[Bar18]{Bartholdi2018}
Laurent Bartholdi, \emph{Amenability of groups and {$G$}-sets}, Sequences,
  groups, and number theory (Valérie Berthé and Michel Rigo, eds.), Springer,
  2018, pp.~433--544.

\bibitem[DS88]{DunfordSchwartz1988}
Nelson Dunford and Jacob~T. Schwartz, \emph{Linear operators, {Part} 1:
  {General} theory}, Pure and Applied Mathematics, no. VII, Interscience
  Publishers, 1988.

\bibitem[DS14]{DiestelSpalsbury2014}
Joe Diestel and Angela Spalsbury, \emph{The joys of {Haar} measure}, Graduate
  Studies in Mathematics, no. 150, American Mathematical Society, 2014.

\bibitem[ET68]{ErdosTuran1968}
Paul Erd\H{o}s and Paul Tur{\'a}n, \emph{On some problems of a statistical
  group-theory. {IV}}, Acta Math. Hung. \textbf{19} (1968), no.~3-4, 413--435.

\bibitem[F{\o}l55]{Folner1955}
Erling F{\o}lner, \emph{On groups with full {Banach} mean value}, Math. Scand.
  \textbf{3} (1955), 243--254.

\bibitem[Gal70]{Gallagher1970}
Patrick~X. Gallagher, \emph{The number of conjugacy classes in a finite group},
  Math. Z. \textbf{118} (1970), no.~3, 175--179.

\bibitem[Gus73]{Gustafson1973}
William~H. Gustafson, \emph{What is the probability that two group elements
  commute?}, Am. Math. Mon. \textbf{80} (1973), no.~9, 1031--1034.

\bibitem[Hop21]{Hopfensperger}
John Hopfensperger, \emph{When is an invariant mean the limit of a {F{\o}lner}
  net?}, Topol. Appl. \textbf{300} (2021), 107755.

\bibitem[HR12]{HofmannRusso2012}
Karl~H. Hofmann and Francesco~G. Russo, \emph{The probability that $x$ and $y$
  commute in a compact group}, Math. Proc. Camb. Philos. Soc. \textbf{153}
  (2012), no.~3, 557--571.

\bibitem[LP00]{LevaiPyber2000}
L.~L{\'e}vai and L.~Pyber, \emph{Profinite groups with many commuting pairs or
  involutions}, Arch. Math. (Basel) \textbf{75} (2000), no.~1, 1--7.

\bibitem[MTVV21]{MTVV}
Armando Martino, Matthew C.~H. Tointon, Motiejus Valiunas, and Enric Ventura,
  \emph{Probabilistic nilpotence in infinite groups}, Isr. J. Math.
  \textbf{244} (2021), no.~2, 539--588.

\bibitem[Neu29]{Neumann1929}
John~von Neumann, \emph{Zur allgemeinen {Theorie} des {Masses}}, Fundam. Math.
  \textbf{13} (1929), 73--116.

\bibitem[Neu54]{Neumann1954}
B.~H. Neumann, \emph{Groups covered by permutable subsets}, J. London Math.
  Soc. \textbf{s1-29} (1954), no.~2, 236--248.

\bibitem[Neu89]{Neumann1989}
Peter~M. Neumann, \emph{Two combinatorial problems in group theory}, Bull.
  London Math. Soc. \textbf{21} (1989), no.~5, 456--458.

\bibitem[Rus79]{Rusin1979}
David Rusin, \emph{What is the probability that two elements of a finite group
  commute?}, Pac. J. Math. \textbf{82} (1979), no.~1, 237--247.

\bibitem[Toi20]{Tointon2020}
Matthew C.~H. Tointon, \emph{Commuting probabilities of infinite groups}, J.
  London Math. Soc. \textbf{101} (2020), no.~3, 1280--1297.

\bibitem[Val19a]{Valiunas2019negligibity}
Motiejus Valiunas, \emph{Negligibity of elliptic elements in ascending
  {HNN}-extensions of $\mathbb{Z}^m$}, Comm. Algebra \textbf{47} (2019),
  no.~12, 5101--5120.

\bibitem[Val19b]{Valiunas2019rational}
\bysame, \emph{Rational growth and degree of commutativity of graph products},
  J. Algebra \textbf{522} (2019), 309--331.

\bibitem[Wil03]{Willis2003}
George~A. Willis, \emph{Harmonic analysis and amenability}, Introduction to
  {Banach} Algebras, Operators, and Harmonic Analysis, London Mathematical
  Society Student Texts, no.~57, Cambridge University Press, 2003, pp.~73--134.

\end{thebibliography}

\end{document}